\providecommand{\E}{\mathbb{E}}
\providecommand{\prob}{\mathbb{P}}
\providecommand{\UH}{\mathbb{H}}
\providecommand{\R}{\mathbb{R}}
\providecommand{\N}{\mathbb{N}}
\DeclareMathOperator{\dist}{dist}
\DeclareMathOperator{\SLE}{SLE}
\newcommand{\squig}{{\scriptstyle\sim\mkern-4.3mu}}
\renewcommand{\epsilon}{\varepsilon}
\newtheorem{thm}{Theorem}[section]
\newtheorem{lem}[thm]{Lemma}
\newtheorem{prop}[thm]{Proposition}
\newtheorem{cor}[thm]{Corollary}
\theoremstyle{definition}
\newtheorem{rmk}[thm]{Remark}
\title{A multifractal boundary spectrum for $\SLE_\kappa(\rho)$}
\author{Lukas Schoug\thanks{lschoug@maths.cam.ac.uk}}
\affil{University of Cambridge}
\date{}
\begin{document}

\maketitle
\begin{abstract}
    We study $\SLE_\kappa(\rho)$ curves, with $\kappa$ and $\rho$ chosen so that the curves hit the boundary. More precisely, we study the sets on which the curves collide with the boundary at a prescribed ``angle'' and determine the almost sure Hausdorff dimensions of these sets. This is done by studying the moments of the spatial derivatives of the conformal maps $g_t$, by employing the Girsanov theorem and using imaginary geometry techniques to derive a correlation estimate.
\end{abstract}

\section{Introduction}
The Schramm-Loewner evolution ($\SLE$) is a one parameter family of random fractal curves, that was introduced by Oded Schramm as a conformally invariant candidate for the scaling limit of two-dimensional discrete models in statistical mechanics. Consider the half-plane Loewner differential equation
\begin{align} \label{eq:LDE}
    \partial_t g_t(z) = \frac{2}{g_t(z)-W_t}, \qquad g_0(z) = z,
\end{align}
where the driving function, $W_t$, is continuous and real-valued. The chordal Schramm-Loewner evolution with parameter $\kappa>0$ ($\SLE_\kappa$) is the curve with corresponding conformal maps given by the Loewner equation with $W_t = \sqrt{\kappa} B_t$. $\SLE_\kappa$ exhibits interesting geometric behaviour. If $0 < \kappa \leq 4$, the curves are simple and do not intersect the real line, if $\kappa > 4$ they have non-traversing self-intersections and collide with the real line and if $\kappa \geq 8$, the curves are space-filling. For $\kappa > 0$, the almost sure Hausdorff dimension is $d_\kappa = \min\{2,1+\frac{\kappa}{8}\}$, see e.g. \cite{Bef08}. For $\kappa > 4$, the intersection of an $\SLE_\kappa$ curve with the real line is a random fractal of almost sure Hausdorff dimension $\min\{2-8/\kappa,1\}$, see \cite{AS08} and \cite{SZ10}. In \cite{ABV16}, Alberts, Binder and Viklund studied and computed the almost sure Hausdorff dimension spectrum of random sets of points, where the $\SLE_\kappa$ curve, for $\kappa > 4$, hits the real line at a prescribed ``angle''.

If we again, consider the Loewner equation, but instead let $W_t$ be the solution to the following system of SDEs
\begin{align*}
    dW_t &= \sqrt{\kappa} dB_t + \frac{\rho dt}{W_t - V_t}, \quad W_0 = 0; \\
    dV_t &= \frac{2dt}{V_t - W_t}, \quad V_0 = x_R,
\end{align*}
where $x_R$ is a point on the real line, called the force point, and $\rho > -2$ is an associated weight, then the Loewner chain is generated by a random curve, called an $\SLE_\kappa(\rho)$ curve (for the definition when $\rho \leq -2$, see \cite{MS19}). This two parameter family of random fractal curves is a natural generalization of $\SLE_\kappa$, in which one keeps track of the force point as well as the curve. The weight $\rho$ determines a repulsion (if $\rho > 0$) or attraction (if $\rho < 0$) of the curve, from the boundary. For $\rho = 0$, it is the ordinary $\SLE_\kappa$. For $\kappa \in (0,8)$, $\rho \in ((-2)\vee(\frac{\kappa}{2}-4), \frac{\kappa}{2}-2)$ and $x_R=0^+$, the Hausdorff dimension of the intersection of $\SLE_\kappa(\rho)$ with the real line is almost surely $1 - (\rho+2)(\rho+4-\kappa/2)/\kappa$ (see \cite{MW17} and \cite{WW13}). What we are interested in studying in this article, is the dimension spectrum studied by Alberts, Binder and Viklund in \cite{ABV16}, but for $\SLE_\kappa(\rho)$. In \cite{AS18}, the authors apply the main result of this paper to describe the boundary hitting behaviour of the loops of the two-valued sets of the Gaussian free field.

Let $\eta$ be the $\SLE_\kappa(\rho)$ curve with $x_R = 0^+$ and let $\tau_s(x)$ be the first time $\eta$ is within distance $e^{-s}$ of $x$, i.e.,
\begin{align*}
    \tau_s(x) = \inf \{ t \geq 0: \dist(x,\eta([0,t])) \leq e^{-s}\}.
\end{align*}
Let $(g_t)$ be the $\SLE_\kappa(\rho)$ Loewner chain, let $g_t'$ denote the spatial derivative of $g_t$ and for $\beta \in \R$, define
\begin{align} \label{eq:Vbeta}
    V_\beta = \left\{ x > 0: \lim_{s \rightarrow \infty} \frac{1}{s} \log g_{\tau_s}'(x) = -\beta, \ \tau_s = \tau_s(x) < \infty \ \forall s>0 \right\},
\end{align}
that is, $V_\beta$ is the set of points $x$ in $\R_+$, at which $g_{\tau_s}'$ decays like $e^{-\beta s}$. This can be viewed as a generalized hitting ``angle'' of the real line (see the discussion in the introduction in \cite{ABV16}). We shall view the decay of $g_{\tau_s}'(x)$ as a decay of a certain harmonic measure. Indeed, let $r_t$ be the rightmost point of $\eta([0,t]) \cap \R$ and let $H_t$ be the unbounded connected component of $\UH \setminus \eta([0,t])$. Then the harmonic measure from infinity of $(r_t,x]$ in $H_t$ is defined as
\begin{align*}
    \omega_\infty((r_t,x],H_t) = \lim_{y \rightarrow \infty} y \omega(iy,(r_t,x],H_t).
\end{align*}
Assume that we have
\begin{align*}
    c^{-1} e^{-\beta s} \leq g_{\tau_s}'(x) \leq c e^{-\beta s}
\end{align*}
for some $\beta$. By using first Schwarz reflection and then the Koebe $1/4$ theorem, we have
\begin{align*}
    c^{-1} e^{-\alpha s} \leq \omega_\infty((r_{\tau_s},x],H_{\tau_s}) \leq c e^{-\alpha s}, \quad \alpha = \beta+1,
\end{align*}
for some constant $c$, possibly different from the previous, see Figure \ref{fig:harmview}. As we see in (\ref{eq:Vbeta}), however, we allow a subexponential error, as up to constant asymptotics are too restrictive to require.

\begin{figure}[ht!]
\centering
\includegraphics[width=120mm]{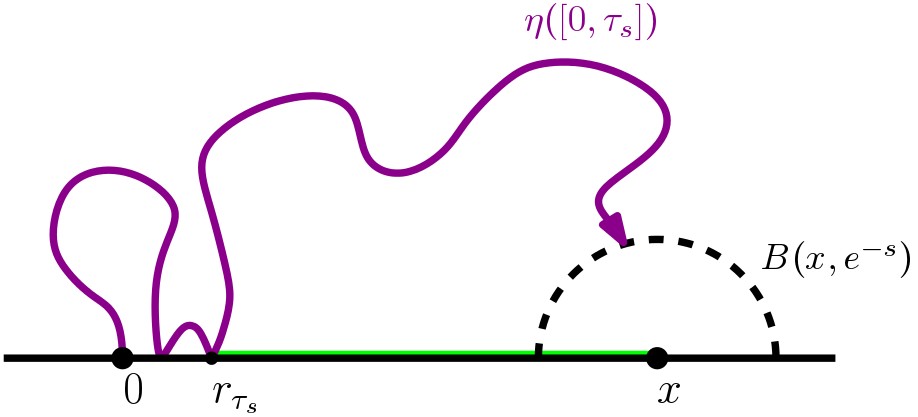}
\caption{Having a certain rate of decay of $g_{\tau_s}'(x)$ as $s\rightarrow \infty$ is equivalent to having a related decay rate of the harmonic measure from infinity of the set $(r_{\tau_s},x]$ (the green interval in the figure). \label{fig:harmview}}
\end{figure}

For fixed $\kappa>0$, $\rho \in \R$, let
\begin{align}\label{dim}
    d(\beta) = 1 - \frac{\beta}{\kappa} \left( \frac{\kappa-2\rho}{4} - \frac{1+\rho/2 + 2\beta}{\beta} \right)^2,
\end{align}
and write $\beta_- = \inf \{\beta:d(\beta)>0\}$ and $\beta_+ = \sup \{\beta:d(\beta)>0\}$. The main theorem of the paper is the following.
\begin{thm}
\label{mainresult}
Let $\kappa \in (0,4]$, $\rho \in (-2, \frac{\kappa}{2}-2)$, $x_R = 0^+$ and $\beta \in [\beta_-,\beta_+]$. Then, almost surely,
\begin{align*}
    \textup{dim}_H V_\beta = d(\beta).
\end{align*}
\end{thm}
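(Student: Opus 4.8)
The plan is to establish the dimension formula via the standard two-part strategy for multifractal spectra: an upper bound by a first-moment (union bound) argument and a lower bound by constructing a random measure supported on $V_\beta$ with finite energy, i.e.\ a Frostman-type argument. The central analytic input for both halves is a sharp estimate on the moments $\E[g_{\tau_s}'(x)^\lambda]$ (and, for the lower bound, on the two-point correlations $\E[g_{\tau_s}'(x)^\lambda g_{\tau_s}'(y)^\lambda]$), which should decay like $e^{-s\,\zeta(\lambda)}$ for an explicit exponent $\zeta$. I would first rewrite the problem in terms of the normalized derivative / harmonic-measure process as in the discussion around Figure~\ref{fig:harmview}, so that the event $\{x \in V_\beta\}$ becomes, up to subexponential errors, the event that a certain diffusion (the logarithm of the conformal radius seen from the tip, or equivalently the time-changed Bessel-type process driving the distance to the force point) has a prescribed drift $-\beta$ over the time interval $[0,s]$.

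For the moment bound I would use the Girsanov theorem: weighting the law of the $\SLE_\kappa(\rho)$ by $g_{\tau_s}'(x)^\lambda$ (suitably normalized by a compensator) produces a new measure under which the driving SDE has an extra drift term, and one recognizes the tilted process as another $\SLE_\kappa(\rho')$-type process with shifted parameters. Optimizing the Legendre-type relationship between $\lambda$ and $\beta$ — exactly the completion of the square visible in the formula \eqref{dim} — yields $\zeta(\lambda)$ and hence, by the union bound over a mesh of $e^{-s}$-separated points, the upper bound $\dim_H V_\beta \le d(\beta)$. For this step the main care is handling the constraint $\tau_s(x)<\infty$ for all $s$ (the curve must actually accumulate at $x$), which requires controlling the probability that the force point and the tip stay comparable to $\dist(x,\eta)$; this is where the hypotheses $\kappa\in(0,4]$ and $\rho\in(-2,\kappa/2-2)$ (so the curve is simple but boundary-hitting) enter.

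For the lower bound I would define, for each $s$, a measure $\mu_s$ on $\R_+$ with density proportional to $g_{\tau_s(x)}'(x)^\lambda e^{s\,\zeta(\lambda)}\mathbf{1}_{\{x \text{ is } e^{-s}\text{-approximately good}\}}$ at the optimal $\lambda=\lambda(\beta)$, show via the first-moment estimate that $\E\mu_s(\R_+)$ is bounded below, and via the two-point correlation estimate that $\E\iint |x-y|^{-d(\beta)+\epsilon}\,d\mu_s(x)\,d\mu_s(y)$ is bounded above uniformly in $s$. A subsequential weak limit $\mu_\infty$ is then (after discarding a null event and using a Paley--Zygmund / second-moment argument to ensure non-degeneracy) a nonzero measure on $V_\beta$ with finite $(d(\beta)-\epsilon)$-energy, giving $\dim_H V_\beta \ge d(\beta)-\epsilon$ and hence the bound after sending $\epsilon\to 0$.

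The hard part, as the abstract signals, will be the two-point correlation estimate $\E[g_{\tau_s}'(x)^\lambda g_{\tau_s}'(y)^\lambda]\lesssim e^{-s\zeta(\lambda)}(|x-y|\wedge 1)^{\text{something}}\cdots$ with the right power of $|x-y|$: once the curve comes within $e^{-s}$ of $x$ it must later come within $e^{-s}$ of $y$, and conditionally on the first event one needs to understand the law of the remaining curve near $y$, which is no longer an $\SLE_\kappa(\rho)$ in a clean domain. I would handle this by a commutation/restriction argument combined with imaginary geometry: realize the $\SLE_\kappa(\rho)$ as a flow line of a Gaussian free field, use the Markov property and the resampling properties of GFF flow lines to compare the conditional law near $y$, after the hitting time at $x$, to an independent copy of the relevant derivative process up to multiplicative errors that are powers of $|x-y|$, and then iterate the one-point Girsanov computation. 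Controlling these error terms uniformly — in particular showing they do not destroy the exponent — is the technical crux; everything else is a relatively routine assembly of the Frostman argument from the one- and two-point moment bounds.
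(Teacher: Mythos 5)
Your high-level strategy matches the paper's: a Girsanov tilting that turns $g'$ into a tractable Bessel-type diffusion, a one-point moment estimate giving exponential decay with exponent $\mu(1+\rho/2)$, a covering argument for the upper bound, and a Frostman/energy argument for the lower bound with the two-point estimate proved via the GFF coupling. The radial-time-change picture you invoke (watching $\log\mathrm{crad}$ decay linearly and tracking the ratio process $Q_t$) is exactly the paper's Section~2.2.2, and the Legendre-type relation between $\zeta$ (your $\lambda$) and $\beta$ is exactly \eqref{dim}.

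Where you diverge, and where the real work lives, is the two-point estimate and the construction of the Frostman measure. You propose to condition on the hitting time at $x$, then ``understand the law of the remaining curve near $y$'' via GFF resampling, iterating the one-point Girsanov computation and taking a measure $\mu_s$ with density $\propto g_{\tau_s(x)}'(x)^\lambda e^{s\zeta(\lambda)}\mathbf{1}_{\{\cdots\}}$. This is the approach of \cite{ABV16}, and the paper explicitly notes that it is insufficient here: the ABV16 method (separately estimating the finitely many geometries by which the curve can approach both points) gives the result only for $\beta\in[\beta_-,\beta_0]$ when $\kappa>4$. The paper instead follows \cite{MW17} and builds ``perfect points'': a nested sequence of flow lines $\eta^{x_k}$ launched from $x_k=x-\tfrac14\epsilon_k$ at exponentially shrinking scales $\epsilon_k$, together with the events $E^n(x)$ that at each scale the flow line hits the next ball, has the prescribed derivative decay (encoded by the $\tilde I^{u,\Lambda}$ events), \emph{and} merges with its successor (Lemma~\ref{SLEkrprop} analogues plus the flow-line merging property). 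The Frostman measure then has density $\sum_x E^n(x)/\E[E^n(x)]$, not a power of $g'$ --- the point being that $\{E^n(x)>0\ \forall n\}\subset V_\beta^*$ (Lemma~\ref{lemjust}), and the correlation bound $\E[E^n(x)E^n(y)]\lesssim \Psi(|x-y|^{-1})|x-y|^{d^*-1}\E[E^n(x)]\E[E^n(y)]$ (Proposition~\ref{2PEIG}) comes out by splitting the scale hierarchy at $|x-y|$ and applying the Radon--Nikodym/quasi-independence lemmas (Lemmas~\ref{lem1}--\ref{lem3}) separately to the coarse and fine pieces. This multi-scale decoupling, via merging of flow lines, is precisely what replaces the ``condition on $\tau_\cdot(x)$ then resample near $y$'' step in your sketch, and it is not a routine refinement --- without it the exponent in the two-point bound is not uniform across scales.

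Two smaller corrections: the hypotheses $\kappa\in(0,4]$, $\rho\in(-2,\kappa/2-2)$ are \emph{not} needed for the upper bound (the paper notes the upper bound holds for all parameters); $\rho<\kappa/2-2$ is only what makes the curve boundary-hitting, and $\kappa\le 4$ is used in the lower bound because the flow-line (resp.\ level-line at $\kappa=4$) machinery requires it. And the martingale you weight by is not literally $g_{\tau_s}'(x)^\lambda$ times a scalar compensator: in the radial time the correct local martingale is $\tilde{M}_s^\zeta\propto \tilde g_s'(x)^\zeta\,\tilde Q_s^\mu\,e^{a\mu(1+\rho/2)s}$, with the extra $\tilde Q_s^\mu$ factor being what makes $\tilde Q$ positive recurrent under $\prob^*$ and lets one read off the one-point exponent from the invariant density.
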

The reason for the restricting $\rho$ to values smaller than $\kappa/2 - 2$ is that this is the critical value for the curve to hit the boundary, that is, the curve will not hit the boundary if $\rho \geq \kappa/2 -2$.

An interesting observation is that there is a typical behaviour of the curve hidden in this theorem; there is a $\beta$ such that $V_\beta$ has full Hausdorff dimension, that is, such that $\text{dim}_H V_\beta = \text{dim}_H \eta \cap \R_+$. Indeed, if $\beta_0 = (4+2\rho)/(8-\kappa+2\rho)$, then
\begin{align*}
    d(\beta_0) = 1 - \frac{(\rho+2)(\rho+4-\kappa/2)}{\kappa},
\end{align*}
which was proven in \cite{MW17} to be the Hausdorff dimension of the curve intersected with the real line.

\begin{rmk}
If we let
\begin{align*}
    \Omega_\alpha = \left\{ x>0: \lim_{s \rightarrow \infty} \frac{1}{s} \log \omega_\infty((r_{\tau_s},x],H_{\tau_s}) = -\alpha, \ \tau_s = \tau_s(x) < \infty \ \forall s>0 \right\},
\end{align*}
then we can rephrase Theorem \ref{mainresult} as
\begin{align*}
    \textup{dim}_H \Omega_\alpha = 1-\frac{\alpha-1}{\kappa} \left( \frac{\kappa-2\rho}{4} - \frac{2\alpha -1 +\rho/2}{\alpha-1}\right)^2.
\end{align*}
\end{rmk}

To prove Theorem \ref{mainresult}, it will be more convenient to consider the sets
\begin{align} \label{eq:Vbeta*}
    V_\beta^* = \left\{ x > 0: \lim_{s \rightarrow \infty} \frac{1}{s} \log g_{\tau_s}'(x) = -\beta(1+\rho/2), \ \tau_s = \tau_s(x) < \infty \ \forall s>0 \right\},
\end{align}
and then use that $V_\beta = V_{\beta/(1+\rho/2)}^*$. 

We now give an overview of the paper. In Section \ref{prel}, we introduce the preliminary material needed in the rest of the paper, such as $\SLE_\kappa(\underline{\rho})$ processes, the Gaussian free field and the imaginary geometry coupling. In order to make the paper more self-contained, the section on imaginary geometry is more extensive than necessary. Furthermore, we use the Girsanov theorem to weight the measure with the local martingale given by the product of a time change of $g_t'(x)^\zeta$ (where $\zeta$ is a parameter in one-to-one correspondence with $\beta$) and a compensator. With this new measure we can compute the asymptotics of $g_{\tau_s}'(x)^\zeta$, which we use in Section \ref{secOPE} to find a one-point estimate. It turns out to be strong enough to give the upper bound on the dimension of $V_\beta$, so this can actually be achieved immediately after Corollary 3.2. The rest of Section \ref{secOPE} is dedicated to studying the mass concentration of the weighted measures, which we need for the correlation estimate. Section \ref{sec2PEIG} is dedicated to the proof of the two-point estimate needed to prove Theorem \ref{mainresult}. This is done by employing the coupling between SLE and the Gaussian free field. We finish the paper in Section \ref{secdim} by first establishing the upper bound on the dimension of $V_\beta$ using the one-point estimate of Section \ref{secOPE} and then constructing Frostman measures and using the two-point estimate to show that the $s$-dimensional energy is finite for every $s < d(\beta)$, and hence that the Hausdorff dimension can not be smaller than $d(\beta)$.

We believe Theorem \ref{mainresult} to be true for all $\kappa$ and $\beta \in [\beta_-(\kappa,\rho),\beta_+(\kappa,\rho)]$, and our upper bound is actually valid for all parameters. Given the spectrum for $\kappa \in (0,4]$ it seems natural to try and prove the result for $\kappa > 4$ using SLE duality, i.e., that the outer boundary of the $\SLE_\kappa(\rho)$ curves are variants of $\SLE_{16/\kappa}$ curves, similar to what is done in \cite{MW17}. This is not as straightforward here, however, as what we are interested in is not the dimension of the intersection of the curve with the real line, but the set of points where the curve intersects the boundary with the prescribed behaviour of the derivatives of the conformal maps (or equivalently, the decay of $\omega_\infty$) as the curve approaches the boundary. How to do this is not clear at the moment.

However, using the method of \cite{ABV16} to get a two-point estimate, one can deduce that in the case $\kappa > 4$, the theorem holds for $\beta \in [\beta_-,\beta_0]$. This is done by considering three events which exhaust the possible geometries of the curve approaching the two points (this is possible as for boundary interactions, the geometries are rather simple) and then separately estimating each of them. The correlation estimate that we have is actually more closely related to the one in \cite{MW17}.

An almost sure multifractal spectrum of SLE curves was first derived in \cite{JVL12}, where the reverse flow of SLE was used to study the behaviour of the conformal maps close to the tip of the curve. Another result in this direction is \cite{GMS18}, where the imaginary geometry techniques, developed and demonstrated in the articles \cite{MS16a}, \cite{MS16b}, \cite{MS16c} and \cite{MS17}, were used to find an almost sure bulk multifractal spectrum. In \cite{Mil18}, Miller used the imaginary geometry techniques to compute Hausdorff dimensions for other sets related to SLE. We also mention \cite{Law15}, where Lawler proved the existence of the Minkowski content of an SLE curve intersected with the real line, which is related to what was done in \cite{ABV16} and what we do here. Lastly, we mention \cite{BS09}, where the authors computed an average integral means spectrum of SLE.

As for notation, we write $f(x) \asymp g(x)$ if there is a constant $C$ such that $C^{-1} g(x) \leq f(x) \leq C g(x)$ and $f(x) \lesssim g(x)$ if there is a constant $C$ such that $f(x) \leq C g(x)$, and the constants do not depend on $f$, $g$ or $x$. We say that $\phi$ is a subpower function if $\lim_{x \rightarrow \infty} \phi(x) x^{-\epsilon} = 0$ for every $\epsilon > 0$. In the same way, we say that $\psi$ is a subexponential function if for every $\epsilon > 0$, $\lim_{x \rightarrow \infty} \psi(x) e^{-\epsilon x} = 0$. In what follows, implicit constants, subpower functions and subexponential functions may change between the lines, without a change of notation.

\section{Preliminaries}\label{prel}
We begin by introducing some preliminaries on complex analysis, $\SLE_\kappa(\underline{\rho})$ processes, the Gaussian free field and imaginary geometry.

\subsection{Measuring distances and sizes} \label{CA}
Let $D$ be a simply connected domain, $z \in D$, and let $\varphi: D \rightarrow \mathbb{D}$ be a conformal map of $D$ onto $\mathbb{D}$ such that $\varphi(z) = 0$. We define the \textit{conformal radius} of $D$ with respect to $z$ as
\begin{align*}
    \text{crad}_D(z) = \frac{1}{|\varphi'(z)|}.
\end{align*}
It behaves well under conformal transformations; if $f:D \rightarrow f(D)$ is a conformal transformation, then $\text{crad}_{f(D)}(f(z)) = |f'(z)| \text{crad}_D(z)$, $z \in D$, (that is, it is \textit{conformally covariant}) and by the Schwarz lemma and the Koebe 1/4 theorem, one easily sees that 
\begin{align}\label{eq:distcrad}
    \dist(z,\partial D) \leq \text{crad}_D(z) \leq 4 \dist(z,\partial D).
\end{align}

For any domain $\UH\setminus U$, where $U\subset \UH$ is bounded, we define the harmonic measure from infinity of $E \subset \partial(\UH \setminus U)$ as
\begin{align}\label{eq:hinfty}
    \omega_\infty(E,\UH\setminus U) = \lim_{y \rightarrow \infty} y \omega(iy,E,\UH\setminus U),
\end{align}
where $\omega$ denotes the usual harmonic measure. It turns out that $\omega_\infty$ is a very convenient notion of size of subsets of $\UH$. We say that $A$ is a compact $\UH$-hull if $A = \UH \cap \overline{A}$ and $\UH \setminus A$ is simply connected and we let $\mathcal{Q}$ be the set of compact $\UH$-hulls. By Proposition 3.36 of \cite{Law05}, there exists, for every $A\in\mathcal{Q}$, a unique conformal map $g_A:\UH\setminus A \rightarrow \UH$ which satisfies the hydrodynamic normalization, i.e.,
\begin{align}\label{eq:hydro}
    \lim_{z \rightarrow \infty} |g_A(z)-z| = 0.
\end{align}
The function $g_A$ is called the mapping-out function of $A$. By \eqref{eq:hydro} and the conformal invariance of $\omega$, we have that
\begin{align}\label{eq:harminflength}
    \omega_\infty(E,\UH\setminus A) &= \lim_{y\rightarrow \infty} y\omega(g_A(iy),g_A(E),\UH) \nonumber\\
    &= \lim_{y \rightarrow \infty} \int_{g_A(E)} \frac{y \text{Im}(g_A(iy))}{(t-\text{Re}(g_A(iy)))^2 + \text{Im}(g_A(iy))^2} \frac{dt}{\pi} \nonumber\\
    &= \int_{g_A(E)} \frac{dt}{\pi} = \frac{|g_A(E)|}{\pi},
\end{align}
where $|g_A(E)|$ is the total length of the set $g_A(E) \subset \R$.

While $\omega$ is defined on the boundary of the domain, it is convenient to speak about the harmonic measure of subsets of the domain, so we make the following extension: if $U \subset D$, then
\begin{align}\label{eq:harmextend}
    \omega(z,U,D) \coloneqq \omega(z,D \cap \partial U, D\setminus U).
\end{align}
This extends naturally to the case of $\omega_\infty$ as well.

\subsection{$\SLE_\kappa(\underline{\rho})$ processes}\label{secslekr}
In this section we will introduce $\SLE_\kappa$ and $\SLE_\kappa(\underline{\rho})$ processes. As stated in the introduction, a chordal $\SLE_\kappa$ Loewner chain is the collection of random conformal maps $(g_t)_{t \geq 0}$, given by solving (\ref{eq:LDE}) with $W_t = \sqrt{\kappa} B_t$, where $B_t$ is a standard Brownian motion with $B_0 = 0$ and filtration $\mathscr{F}_t$, satisfying \eqref{eq:hydro}. We define $(f_t)_{t\geq 0}$ to be the centered Loewner chain, that is,
\begin{align*}
    f_t(z) = g_t(z) - W_t.
\end{align*}
For fixed $z \in \UH$, the solution to (\ref{eq:LDE}) exists until time $T_z = \inf \{t \geq 0: f_t(z) = 0 \}$. The domain of $g_t$ is $H_t = \UH \setminus K_t$ where $K_t = \{z: T_z \leq t \} \in \mathcal{Q}$ is the SLE hull at time $t$ and $g_t$ is the unique conformal map from $H_t$ onto $\UH$ such that $\lim_{z \rightarrow \infty} |g_t(z) - z| = 0$. Rohde and Schramm proved that the family of $\SLE_\kappa$ hulls is almost surely generated by a curve $\eta:[0,\infty] \rightarrow \overline{\UH}$, i.e., $H_t$ is the unbounded component of $\UH \setminus \eta([0,t])$ (see \cite{RS05}). We call $\eta$ the $\SLE_\kappa$ process or $\SLE_\kappa$ curve and say that $K_t$ is the filling of $\eta([0,t])$.

Now we will define the $\SLE_\kappa(\underline{\rho})$ process. Let $\underline{x}_L = (x_{l,L},\hdots,x_{1,L})$, $\underline{x}_R = (x_{1,R},\hdots,x_{r,R})$, where $x_{l,L} < \hdots < x_{1,L} \leq 0 \leq x_{1,R} < \hdots < x_{r,R}$. Also, let $\underline{\rho}_L = (\rho_{1,L},\hdots,\rho_{l,L})$, $\underline{\rho}_R = (\rho_{1,R},\hdots,\rho_{r,R})$, where $\rho_{j,q} \in \R$, $q \in \{L,R\}$. We call $\rho_{j,q}$ the weight of $x_{j,q}$. Let $W_t$ be the solution to the system of SDEs
\begin{align}\label{eq:krdriver}
    dW_t &= \sum_{j=1}^l \frac{\rho_{j,L}}{W_t - V_t^{j,L}} dt + \sum_{j=1}^r \frac{\rho_{j,R}}{W_t-V_t^{j,R}} dt + \sqrt{\kappa}dB_t, \\
    dV_t^{j,q} &= \frac{2}{V_t^{j,q}-W_t} dt, \quad V_0^{j,q} = x_{j,q}, \quad j =1,\dots,N_q, \quad q \in \{L,R\}, \nonumber
\end{align}
where $N_L= l$ and $N_R = r$. An $\SLE_\kappa(\underline{\rho}_L;\underline{\rho}_R)$ Loewner chain with force points $(\underline{x}_L;\underline{x}_R)$ is the family of conformal maps $(g_t)_{t \geq 0}$ obtained by solving (\ref{eq:LDE}) with $W_t$ being the solution to ($\ref{eq:krdriver}$). The $\SLE_\kappa(\underline{\rho}_L;\underline{\rho}_R)$ hulls, $(K_t)$, are defined analogously and they are almost surely generated by a continuous curve, $\eta$, the $\SLE_\kappa(\underline{\rho}_L;\underline{\rho}_R)$ process or $\SLE_\kappa(\underline{\rho}_L;\underline{\rho}_R)$ curve (see Theorem 1.3 in \cite{MS16a}). $\SLE_\kappa(\underline{\rho}_L;\underline{\rho}_R)$ is a generalization of $\SLE_\kappa$ ($\SLE_\kappa$ = $\SLE_\kappa(0;0)$), where one also keeps track of the force points and their assigned weights either attract ($\rho_{j,q} < 0$) or repel ($\rho_{j,q} > 0$) the curve. If $\eta$ is an $\SLE_\kappa(\underline{\rho})$ curve, we write $\eta \sim \text{SLE}_\kappa(\underline{\rho})$. Exactly how the weights of the force points affect the curve $\eta$ is explained in Lemma \ref{SLEkrprop}.

The solution to the system of SDEs (\ref{eq:krdriver}) exists up until the \textit{continuation threshold} is hit, that is, the first time $t$ such that either
\begin{align*}
    \sum_{j:V_t^{j,L}=W_t} \rho_{j,L} \leq -2 \quad \text{or} \quad \sum_{j:V_t^{j,R}=W_t} \rho_{j,R} \leq -2,
\end{align*}
as is explained in Section 2.2 of \cite{MS16a}. Moreover, for every $t>0$ before the continuation threshold, $\prob(W_t=V_t^{j,q}) = 0$ for $j \in \N$ and $q\in \{L,R\}$.

Geometrically, hitting the continuation threshold means the curve $\eta$ swallowing force points on either side such that the sum of their weights is less than $-2$, that is, $\eta$ hits an interval $(x_{m+1,L},x_{m,L})$ (or $(x_{n,R},x_{n+1,R})$) such that $\sum_{j=1}^m \rho_{j,L} \leq -2$ (resp. $\sum_{j=1}^n \rho_{j,R} \leq -2$). 

Now, we write $\rho_{0,L} = \rho_{0,R} = 0$, $x_{0,L} = 0^-$, $x_{l+1,L} = -\infty$, $x_{0,R} = 0^+$ and $x_{r+1,R} = +\infty$, and let for $q \in \{L,R\}$ and $j \in \N$
\begin{align*}
    \overline{\rho}_{j,q} = \sum_{k=0}^j \rho_{k,q}.
\end{align*}
The following lemma describes the interaction $\eta$ with the real line. It is written down in \cite{MW17}, and just as they did, we refer to Remark 5.3 and Theorem 1.3 of \cite{MS16a} and Lemma 15 of \cite{Dub09} for the proof. 
\begin{lem}\label{SLEkrprop}
Let $\eta$ be an $\SLE_\kappa(\underline{\rho}_L;\underline{\rho}_R)$ curve in $\UH$, from $0$ to $\infty$, with force points $(\underline{x}_L;\underline{x}_R)$. Then,
\begin{enumerate}[(i)]
\item if $\overline{\rho}_{k,R} \geq \frac{\kappa}{2}-2$, then $\eta$ almost surely does not hit $(x_{k,R},x_{k+1,R})$,
\item if $\kappa \in (0,4)$ and $\overline{\rho}_{k,R} \in (\frac{\kappa}{2}-4,-2]$, then $\eta$ can hit $(x_{k,R},x_{k+1,R})$, but then can not be continued afterwards,
\item if $\kappa > 4$ and $\overline{\rho}_{k,R} \in (-2,\frac{\kappa}{2}-4]$, then $\eta$ can hit $(x_{k,R},x_{k+1,R})$, be continued afterwards and $\eta \cap (x_{k,R},x_{k+1,R})$ is almost surely an interval,
\item if $\overline{\rho}_{k,R} \in ((-2)\vee (\frac{\kappa}{2}-4),\frac{\kappa}{2}-2)$, then $\eta$ can hit and bounce off of $(x_{k,R},x_{k+1,R})$ and $\eta \cap (x_{k,R},x_{k+1,R})$ has empty interior.
\end{enumerate}
The same holds if we replace $R$ by $L$ and consider $(x_{k+1,L},x_{k,L})$.
\end{lem}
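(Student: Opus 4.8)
This statement is classical, and as indicated we only sketch the argument, full details being in \cite{MS16a} and \cite{Dub09}. The plan is to read the boundary behaviour of $\eta$ off the driving system (\ref{eq:krdriver}), the key point being that the interaction of $\eta$ with a fixed interval $(x_{k,R},x_{k+1,R})$ is governed by a single Bessel-type process. Suppose $\eta$ is about to hit $(x_{k,R},x_{k+1,R})$; then $x_{1,R},\dots,x_{k,R}$ are being swallowed simultaneously while all other force points remain at a positive distance, and we track the gap $Z_t = V_t^{k,R}-W_t \geq 0$. By (\ref{eq:krdriver}),
\begin{align*}
    dZ_t = \frac{2}{Z_t}\,dt - \sqrt{\kappa}\,dB_t + \sum_{j=1}^{k}\frac{\rho_{j,R}}{V_t^{j,R}-W_t}\,dt + (\text{bounded drift}),
\end{align*}
where the bounded drift collects the uniformly bounded contributions of the force points staying away from $W_t$. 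By the reduction of Lemma 15 of \cite{Dub09}, which shows that the hitting behaviour depends on $\rho_{1,R},\dots,\rho_{k,R}$ only through their sum, we may collapse $x_{1,R},\dots,x_{k,R}$ to a single force point of weight $\overline{\rho}_{k,R}$; the cross-terms then combine to $\overline{\rho}_{k,R}/Z_t\,dt$, so that $Z_t/\sqrt{\kappa}$ is, modulo the bounded perturbation, a Bessel process of dimension
\begin{align*}
    \delta_k = 1 + \frac{2(\overline{\rho}_{k,R}+2)}{\kappa}.
\end{align*}

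Now $\eta$ can hit $(x_{k,R},x_{k+1,R})$ if and only if $Z$ reaches $0$ --- and since the bounded perturbation does not affect this (one may compare with Bessel processes of slightly perturbed dimension) this happens if and only if $\delta_k < 2$, equivalently $\overline{\rho}_{k,R} < \frac{\kappa}{2}-2$, which gives (i) together with the ``can hit'' parts of (ii)--(iv). Whether $\eta$ may be continued past such a hitting time is then decided by the continuation threshold: hitting $(x_{k,R},x_{k+1,R})$ swallows force points of total weight $\overline{\rho}_{k,R}$, so the threshold is reached at that instant exactly when $\overline{\rho}_{k,R}\le -2$, which is case (ii), the hypothesis $\kappa\in(0,4)$ there being precisely what makes the window $(\frac{\kappa}{2}-4,-2]$ nonempty. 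If instead $\overline{\rho}_{k,R}>-2$, the chain continues, the continuity of $\eta$ through and beyond the hitting time being part of the general continuity theorem for $\SLE_\kappa(\underline{\rho})$ (Theorem 1.3 of \cite{MS16a}), which is itself proved via the imaginary geometry coupling.

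It remains, in the cases where $\eta$ hits and continues, to decide whether $\eta\cap(x_{k,R},x_{k+1,R})$ is an interval or has empty interior. For $\kappa\le 4$ the curve is simple and meets the interval only along a fractal set, namely the zero set of $Z$, which for $\delta_k<2$ has empty interior; this gives (iv) when $\kappa\le 4$. For $\kappa>4$ the curve is non-simple, and each approach to the boundary swallows a macroscopic boundary arc; one then has to control how these arcs accumulate, and a finer analysis --- using the $\kappa>4$ geometry, and ultimately SLE duality as in \cite{MW17} --- shows that they cover an interval exactly when $\overline{\rho}_{k,R}\le\frac{\kappa}{2}-4$ (equivalently $\delta_k\le 2-\frac{4}{\kappa}$) and otherwise leave a set with empty interior, which gives (iii) and (iv) when $\kappa>4$. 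The left-sided statement follows by reflection across the imaginary axis.

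I expect the real work to be exactly where the cited references concentrate their effort: first, turning the heuristic ``one relevant Bessel process'' into a theorem in the presence of several simultaneously merging force points --- this is the content of Lemma 15 of \cite{Dub09}; and second, and more substantially, the continuity of $\eta$ across boundary hits together with the interval-versus-empty-interior dichotomy for $\kappa>4$, neither of which can be seen from the driving SDE alone, the former requiring the GFF/imaginary-geometry coupling of \cite{MS16a} and the latter an input from SLE duality.
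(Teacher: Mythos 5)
The paper does not prove this lemma: it explicitly defers to Remark 5.3 and Theorem 1.3 of \cite{MS16a} and Lemma 15 of \cite{Dub09}, exactly as \cite{MW17} does, so there is no internal argument to compare against. Your sketch correctly reconstructs the mechanism behind those references: the gap $Z_t=V_t^{k,R}-W_t$ behaves, after Dub\'edat's reduction of the simultaneously merging force points to a single one of weight $\overline{\rho}_{k,R}$, like a Bessel process of dimension $\delta_k=1+\tfrac{2(\overline{\rho}_{k,R}+2)}{\kappa}$ up to a bounded drift, and the threshold $\delta_k<2\iff\overline{\rho}_{k,R}<\tfrac{\kappa}{2}-2$ governs hitting, while $\overline{\rho}_{k,R}\le -2$ governs the continuation threshold; you also correctly locate the nontrivial inputs (continuity across boundary hits from the imaginary-geometry coupling, and the interval-versus-empty-interior dichotomy for $\kappa>4$ from counterflow-line/duality considerations). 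The only imprecision is attributing the $\kappa>4$ dichotomy to ``SLE duality as in \cite{MW17}'' --- \cite{MW17} itself just cites Remark 5.3 of \cite{MS16a}, which is where the counterflow-line argument actually lives --- but since duality and counterflow lines are two faces of the same coin, this is a matter of attribution rather than a gap. Your proposal is a valid and correct fleshing-out of a lemma the paper treats as a black box.
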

Note that in \textit{(ii)} in the above lemma, the curve has swallowed force points with a total weight at least as negative as $-2$, and hence it cannot be continued.  In \textit{(iii)} and \textit{(iv)}, the total weight of the force points swallowed is greater than $-2$, and hence the curve can be continued.

The following was proved in \cite{MS16a} (see also Lemma 2.2 in \cite{MW17}).
\begin{lem}\label{krconv}
Fix $\kappa>0$ and let $(\underline{x}_L^n)$ and $(\underline{x}_R^n)$ be sequences of vectors of numbers $x_{l,L}^n<\dots<x_{1,L}^n<0<x_{1,R}^n<\dots<x_{r,R}^n$, converging to vectors $(\underline{x}_L)$ and $(\underline{x}_R)$ such that $x_{1,L} = 0^-$ and $x_{1,R} = 0^+$. For each $n$, denote by $(W^n,V^{n,L},V^{n,R})$ the driving processes of an $\SLE_\kappa(\underline{\rho}_L,\underline{\rho}_R)$ process with force points $(\underline{x}_L^n;\underline{x}_R^n)$. Then $(W^n,V^{n,L},V^{n,R})$ converges weakly in law, with respect to the local uniform topology, to the driving process $(W,V^L,V^R)$ of an $\SLE_\kappa(\underline{\rho}_L;\underline{\rho}_R)$ with force points $(\underline{x}_L;\underline{x}_R)$, as $n \rightarrow \infty$.
\end{lem}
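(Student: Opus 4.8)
The plan is to prove this by the familiar \emph{tightness — identification of the limit — uniqueness} scheme for Loewner driving functions with singular drift; the only genuinely delicate point is that the innermost force points $x_{1,L}^n$ and $x_{1,R}^n$ converge to $0$, which is simultaneously the starting point of $W^n$. Suppressing the superscript $n$, I would work on a fixed compact interval $[0,T]$, stopped just before the continuation threshold if necessary (the behaviour at and after the threshold being handled as in \cite[Section~2.2]{MS16a}). The flow preserves the ordering
\begin{align*}
    V_t^{l,L} \leq \cdots \leq V_t^{1,L} \leq W_t \leq V_t^{1,R} \leq \cdots \leq V_t^{r,R},
\end{align*}
so on each side the relevant quantity to control is the \emph{innermost gap}: all gaps $V_t^{j,q}-W_t$ with $j\geq 2$ are then at least as large, and the corresponding drift terms in \eqref{eq:krdriver} are controlled by the innermost ones.

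The first step is to establish a priori bounds that are uniform in $n$. Writing $O_t^q$ for the innermost gap on side $q$ (so $O_t^R = V_t^{1,R}-W_t\geq 0$ and $O_t^L = W_t - V_t^{1,L}\geq 0$), a direct computation from \eqref{eq:krdriver} shows that, up to a linear time change and up to corrections from the outer force points that are controlled via the ordering, $O_t^q/\sqrt{\kappa}$ is comparable in law on $[0,T]$ to a Bessel process of dimension $\delta_q = 1 + \tfrac{2(\overline{\rho}_{1,q}+2)}{\kappa} > 1$ started from $|x_{1,q}^n|/\sqrt{\kappa}$ — a comparison already implicit in the construction of $\SLE_\kappa(\underline{\rho})$ in \cite{MS16a}. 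The crucial fact is that a Bessel process of dimension strictly larger than $1$ started at the origin spends zero Lebesgue time at $0$ and satisfies $\int_0^T (\mathrm{BES}_s)^{-1}\,ds < \infty$ almost surely, with all moments finite and \emph{bounded uniformly in the (vanishing) starting point} $|x_{1,q}^n|\to 0$. This yields uniform-in-$n$ moment bounds on $\int_0^T |W_s - V_s^{j,q}|^{-1}\,ds$, hence on the increments of the drift part of $W^n$; together with the trivial bound for the $\sqrt{\kappa}B$ term and a Kolmogorov-type continuity estimate, this gives equicontinuity and tightness of $(W^n)$ in $C[0,T]$. Tightness of the force-point coordinates then follows, since $\partial_t V_t^{j,q} = 2/(V_t^{j,q}-W_t)$ exhibits them as monotone, controlled functionals of $W^n$.

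The second step is to identify any subsequential limit $(W,V^L,V^R)$. By the Skorokhod representation theorem I may assume the convergence holds almost surely in the local uniform topology. The Brownian term passes to the limit immediately, and for the drift I would prove
\begin{align*}
    \int_0^t \frac{\rho_{j,q}}{W_s^n - V_s^{n,j,q}}\,ds \longrightarrow \int_0^t \frac{\rho_{j,q}}{W_s - V_s^{j,q}}\,ds
\end{align*}
by dominated convergence, using the uniform integrability of the inverse gaps from the first step and the fact that in the limit $W_s - V_s^{j,q}$ vanishes only on a Lebesgue-null set of times (again because the limiting innermost gaps are time-changed Bessel processes of dimension $>1$). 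Thus $(W,V^L,V^R)$ solves \eqref{eq:krdriver} with force points $(\underline{x}_L;\underline{x}_R)$, and since that system — including the configuration in which the innermost force points sit at $0^\pm$ — has a unique-in-law solution (\cite[Section~2.2]{MS16a}, see also \cite[Lemma~15]{Dub09}), all subsequential limits agree and the full sequence converges weakly. I expect the main obstacle to be precisely the uniformity near $t=0$ in the a priori bounds of the first step: one must check that the Bessel comparison does not degenerate as $x_{1,q}^n\to0$, which is exactly why a Bessel process of dimension $>1$ started from the origin — whose inverse is locally integrable with moments bounded uniformly in the starting point — is the correct comparison object.
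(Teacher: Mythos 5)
The paper does not prove this lemma at all: it is stated as a citation to \cite{MS16a} (where the relevant construction is carried out in Section~2.2) and to Lemma~2.2 of \cite{MW17}, whose proof in turn refers back to the \cite{MS16a} argument. So there is no in-paper proof to compare against; what you have produced is a reconstruction of the kind of SDE argument that \cite{MS16a} uses, and in spirit it is the right one. The tightness/identification/uniqueness scheme and the use of Bessel processes of dimension $\delta_q = 1 + 2(\overline{\rho}_{1,q}+2)/\kappa > 1$ with vanishing starting points are precisely the tools that make the construction of $\SLE_\kappa(\underline{\rho})$ from $0^\pm$ work.

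That said, the central step --- the a priori bound --- is stated at a level of vagueness that hides a genuine gap. You assert that $O_t^q$ is ``comparable in law'' to a Bessel process ``up to corrections from the outer force points that are controlled via the ordering.'' The ordering alone does not give a usable bound: it yields $|W_t - V_t^{j,q}|^{-1} \leq (O_t^q)^{-1}$ for $j\geq 2$, which is exactly the singular quantity you are trying to control, so it cannot serve as an error term. What you actually need is a localization: stop the process before any \emph{outer} force point ($j\geq 2$, which starts at a fixed nonzero location) enters a small fixed neighbourhood of $W_t$; on that event the outer drift contributions are bounded by a constant, and you can then either apply Girsanov (using the Schramm--Wilson martingale \eqref{eq:krmtg} to remove the outer force points at the price of a bounded Radon--Nikodym derivative) or a comparison theorem for one-dimensional SDEs to squeeze $O_t^q$ between Bessel processes of dimension $\delta_q \pm \epsilon$. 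Only then do the uniform moment bounds on $\int_0^T (O_s^q)^{-1}\,ds$ follow from the Bessel theory, and one must also check that the stopping time itself is bounded away from $0$ uniformly in $n$, which requires a separate (but easy) argument that $\sup_{[0,t]}|W^n|$ is small with high probability for small $t$. Finally, in the identification step, ``dominated convergence'' should really be Vitali: you have uniform integrability of the inverse gaps and pointwise convergence off a Lebesgue-null set, but no pointwise dominating function, so Vitali is the theorem you are actually invoking. None of these issues is fatal --- you correctly identify where the delicacy lies --- but as written, the first step is closer to a plan than a proof.
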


It turns out that if we, using the Girsanov theorem, reweight an $\SLE_\kappa$ process by a certain martingale (how this is done is explained briefly below), then we obtain an $\SLE_\kappa(\underline{\rho})$ process at least until the first time that $W_t = V_t^{j,q}$ for some $(j,q)$. Let $x_{1,L} < 0 < x_{1,R}$ and define 
\begin{align}\label{eq:krmtg}
    M_t = &\prod_{j,q} |g_t'(x_{j,q})|^{\frac{(4-\kappa+\rho_{j,q})\rho_{j,q}}{4\kappa}} \prod_{j,q} |W_t-V_t^{j,q}|^{\frac{\rho_{j,q}}{\kappa}} \nonumber \\
    &\times \prod_{(j_1,q_1) \neq (j_2,q_2)} |V_t^{j_1,q_1}-V_t^{j_2,q_2}|^{\frac{\rho_{j_1,q_1}\rho_{j_2,q_2}}{2\kappa}}.
\end{align}
Then $M_t$ is a local martingale and an $\SLE_\kappa$ process weighted by $M_t$ has the law of an $\SLE_\kappa(\underline{\rho}_L;\underline{\rho}_R)$ process with force points $(\underline{x}_L;\underline{x}_R)$ (see Theorem 6 of \cite{SW05}).

So far, we have only defined chordal $\SLE_\kappa(\underline{\rho})$ processes in $\UH$, but we can define them in any Jordan domain, by a conformal coordinate change. More precisely, an $\SLE_\kappa(\underline{\rho}_L;\underline{\rho}_R)$ in a Jordan domain $D$, from $z_0$ to $z_\infty$, with force points $(\underline{x}_L,\underline{x}_R)$ is the image of an $\SLE_\kappa(\underline{\rho}_L;\underline{\rho}_R)$ in $\UH$ from $0$ to $\infty$ under a conformal map $\varphi$ such that $\varphi(\UH)=D$, $\varphi(0) = z_0$, $\varphi(\infty)=z_\infty$ and such that the force points of the $\SLE_\kappa(\underline{\rho}_L;\underline{\rho}_R)$ in $\UH$ are mapped to $(\underline{x}_L;\underline{x}_R)$. We say that the constructed $\SLE_\kappa(\underline{\rho}_L;\underline{\rho}_R)$ in $D$ is an $\SLE_\kappa(\underline{\rho}_L;\underline{\rho}_R)$ process with \textit{configuration} 
\begin{align}\label{config}
    c = (D, z_0, \underline{x}_L, \underline{x}_R, z_\infty).
\end{align}    
The configuration of the $\SLE_\kappa(\underline{\rho}_L;\underline{\rho}_R)$ process we defined in the beginning of the section is \newline
$(\UH, 0, \underline{x}_L, \underline{x}_R, \infty)$.

\subsubsection{The case of one force point}
Let $a=2/\kappa$. We will parametrize the $\SLE_\kappa(\rho)$ so that $\text{hcap}(K_t) = at$, i.e., as the solution to
\begin{align}
    \partial_t g_t(z) = \frac{a}{g_t(z) - W_t}, \qquad g_0(z) = z,
\end{align}
with
\begin{align} \label{eq:driver}
    dW_t = dB_t + \frac{a\rho/2}{W_t - V_t}dt, \ W_0 = 0; \ dV_t = \frac{a}{V_t - W_t}dt, \ V_0 = x_R,
\end{align}
where $B_t$ is a one-dimensional standard Brownian motion with $B_0 = 0$ and filtration $\mathscr{F}_t$. We say that the conformal maps $(g_t)_{t \geq 0}$ are driven by $W$. The solution to (\ref{eq:driver}) exists for all $t \geq 0$, if $\kappa > 0$ and $\rho > -2$, so henceforth, we assume this. If $\rho \geq \frac{\kappa}{2}-2$, then $\eta$ does not hit the real line, almost surely, and hence we are interested in the case $\rho < \frac{\kappa}{2} -2$. If $\kappa > 4$ and $\rho \in (-2,\frac{\kappa}{2}-4]$, then by Lemma \ref{SLEkrprop}, $\eta \cap (x_R,\infty)$ is almost surely an interval, and thus we will consider $\rho \in ((-2) \vee (\frac{\kappa}{2}-4), \frac{\kappa}{2}-2)$.

Fix $\kappa > 0$ and $\rho > -2$ and let $(K_t, t \geq 0)$ be the hulls of an $\SLE_\kappa(\rho)$ process with force point $x_R$. The $\SLE_\kappa(\rho)$ satisfies two important properties. The first is the following scaling rule: for any $m > 0$, $(m^{-1} K_{m^2 t},t \geq 0)$ has the same law as the hulls of an $\SLE_\kappa(\rho)$ process with force point $x_R/m$. If $x_R = 0^+$, then it is scaling invariant. The second is the \textit{domain Markov property}: for any finite stopping time, $\tau$, the curve defined as $(\hat{\eta}(t) = f_\tau(\eta(t+\tau)), t \geq 0)$ is an $\SLE_\kappa(\rho)$ curve with force point $V_\tau - W_\tau$, where again, $f_t =g_t-W_t$.

Note that $f_t$ follows the SDE
\begin{align} \label{eq:dh}
    df_t(z) = \frac{a}{f_t(z)}dt - dW_t = \left[ \frac{a}{f_t(z)} + \frac{a\rho/2}{W_t - V_t} \right]dt - dB_t.
\end{align}
Taking the spatial derivative of (\ref{eq:dh}) results in an ODE that, upon solving, yields
\begin{align}\label{eq:gprime}
   g_t'(z) = f_t'(z) = \exp \left\{ -\int_0^t \frac{a}{f_s(z)^2}ds \right\}.
\end{align}
While $g_t$ is defined in $H_t$, it (and hence $f_t$ and $g_t'$) extends continuously to the real line and is real-valued there. For $x \in \R_+$, $g_t'(x) \in [0,1]$ and is decreasing in $t$. Due to symmetry, it is enough to consider $x, x_R \in \R_+$. By applying Itô's formula to $\log f_t(z)$ and exponentiating, we see that
\begin{align}\label{eq:ft}
    f_t(z) = z\exp \left\{ \int_0^t \frac{a-1/2}{f_s(z)^2}ds - \int_0^t \frac{a\rho/2}{f_s(z)(W_s-V_s)}ds - \int_0^t \frac{dB_s}{f_s(z)} \right\}.
\end{align}
The same procedure, applied to $v_t(z) \coloneqq g_t(z) - V_t = f_t(z) + W_t - V_t$, yields
\begin{align}\label{eq:vt}
    v_t(z) = (z-x_R)\exp \left\{ \int_0^t \frac{a}{f_s(z)(W_s-V_s)}ds \right\}.
\end{align}
Observe that, considered as functions on $\R_+$, $f_t$, $g_t'$ and $v_t$ are increasing in $x$. We will mostly work with these functions on or close to $\R_+$.

\subsubsection{Local martingales and weighted measures}\label{weighted}
Fix $\kappa > 0$, $\rho \in ((-2)\vee(\frac{\kappa}{2}-4), \frac{\kappa}{2} - 2)$ and let, as above, $a = 2/\kappa$. For each such pair $(\kappa,\rho)$, we will define a one-parameter family of local martingales which will play a major role in our analysis. Let $\zeta$ be the variable with which we will parametrize the martingales and let
\begin{align}
    \mu_c = 2a - \frac{1}{2} + \frac{a\rho}{2}.
\end{align}
For $-\frac{\mu_c^2}{2a} < \zeta < \infty$, define the parameters
\begin{align}
    \mu = \mu_c + \sqrt{\mu_c^2 + 2a\zeta}, \quad \beta = \frac{a}{\sqrt{\mu_c^2 + 2a\zeta}}.
\end{align}
Note that with our choice of $\zeta$, $\mu > \mu_c$ and $\beta > 0$. The restriction $\mu > \mu_c$ is necessary for a certain invariant density of a diffusion to exist, see \eqref{eq:indensQ} and the appendix. The parameters are related as follows:
\begin{align*}
    \mu_c < \mu < \infty &: \zeta = \frac{\mu}{2a}(\mu-2\mu_c), \quad \beta = \frac{a}{\mu-\mu_c}, \\
    0 < \beta < \infty &: \mu = \frac{a}{\beta} + \mu_c, \quad \zeta = \frac{1}{2a}\left(\frac{a}{\beta}+\mu_c \right) \left(\frac{a}{\beta}-\mu_c \right).
\end{align*}
For each $x > 0$, we have by equations \eqref{eq:gprime}, \eqref{eq:ft} and \eqref{eq:vt}, that
\begin{align}
    M_t^\zeta(x) &= g_t'(x)^{\zeta + \mu(1+\rho/2)} f_t(x)^{-\mu} v_t(x)^{-\mu\rho/2} \nonumber \\
    &= x^{-\mu} (x-x_R)^{-\mu\rho/2} \exp \left\{ \mu \int_0^t \frac{dB_s}{f_s(x)} - \frac{\mu^2}{2} \int_0^t \frac{ds}{f_s(x)^2} \right\}
\end{align}
is a local martingale on $0 \leq t \leq T_x$ (see Theorem 6 of \cite{SW05}), such that
\begin{align*}
    \frac{dM_t^\zeta (x)}{M_t^\zeta (x)} = \frac{\mu}{f_t(x)} dB_t, \quad M_0^\zeta (x) = x^{-\mu} (x-x_R)^{-\mu\rho/2}.
\end{align*}
Note that under the measure weighted by $M_t^\zeta$, the $\SLE_\kappa(\rho)$ process becomes an $\SLE_\kappa(\rho,-\mu\kappa)$ process with force points $(x_R,x)$. We shall write the local martingale in a different way, which is very convenient for our analysis. Define the random processes
\begin{align}\label{eq:deltanormal}
    \delta_t (x) = \frac{v_t(x)}{g_t'(x)} = \frac{g_t(x)-V_t}{g_t'(x)}
\end{align}
and
\begin{align}\label{eq:Q}
    Q_t(x) = \frac{v_t(x)}{f_t(x)} = \frac{g_t(x)-V_t}{g_t(x)-W_t}.
\end{align}
We will often not write out the dependence on $x$, as it will cause no confusion. Since we $V_t \geq W_t$, we see that $Q_t(x) \in [0,1]$ for every $t\geq 0$. We note that if $x_R = 0^+$, then $Q_t(x)$ is the ratio between the harmonic measure from infinity of two sets, more precisely, if $\eta^R$ denotes the right side of the curve $\eta$ and $r_t$ is the rightmost point of $\eta([0,t])\cap\R$, then
\begin{align}\label{eq:Qharm}
    Q_t(x) = \frac{\omega_\infty([r_t,x],\UH \setminus \eta([0,t]))}{\omega_\infty(\eta^R([0,t])\cup [r_t,x],\UH\setminus \eta([0,t]))}.
\end{align}
With these processes, we can write
\begin{align} \label{eq:mtg}
    M_t^\zeta(x) = g_t'(x)^\zeta Q_t(x)^\mu \delta_t(x)^{-\mu(1+\frac{\rho}{2})}.
\end{align}
This will be very convenient, as we will relate the process $\delta_t(x)$ to the conformal radius at the point $x$, and thus, it will be comparable to $\dist(x,\eta([0,t]))$. With this in mind, we will then make a random time change so that the time-changed process decays deterministically, and it will give us good control over the decay of the distance between the curve and a certain point. It does not, however, make sense to talk about the conformal radius of a boundary point, so we will begin by sorting this out.

We let $\widehat{H}_t = H_t \cup \{z:\overline{z} \in H_t \} \cup \{x \in \R_+: t<T_x \}$ be the union of the reflected domain and the points on $\R_+$ that have not been swallowed at time $t$. Then, it makes sense to talk about $\text{crad}_{\widehat{H}_t}(x)$ for $0<x\in \widehat{H}_t$, and a calculation shows that if $x_R = 0^+$, then for $t < T_x$,
\begin{align*}
    \delta_t(x) = \frac{1}{4} \text{crad}_{\widehat{H}_t}(x).
\end{align*}
By \eqref{eq:distcrad}, this implies that
\begin{align}
    \frac{1}{4} \dist(x,\eta([0,t])) \leq \delta_t(x) \leq \dist(x,\eta([0,t])).
\end{align}
While this only holds for $x_R = 0^+$, similar bounds can be acquired for other $x_R$, as the following lemma shows. See also \cite{WW15}.
\begin{lem}\label{deltadist}
Let $\kappa > 0$, $\rho > -2$, and let $\eta$ be an $\SLE_\kappa(\rho)$ curve with force point $x_R \geq 0$. Let $(g_t)_{t \geq 0}$ be the conformal maps, driven by $W$, and let $T_x$ be the swallowing time of $x$. Let $x > x_R$ and $0<t<T_x$, then
\begin{align*}
    \frac{x-x_R}{4x}\dist(x,\eta([0,t])) \leq \delta_t(x) \leq 4 \dist(x,\eta([0,t])).
\end{align*}
\end{lem}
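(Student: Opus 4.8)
The plan is to reduce the general case to the already-settled case $x_R = 0^+$. Keeping the curve $\eta$ (and hence the flow $(g_t)$, the driving function $W$, and $T_x$) fixed, I introduce the auxiliary process $\widetilde V_t$ which equals $g_t(0^+)$ — the image under $g_t$ of the right prime end at the origin — until $0^+$ is disconnected from $\infty$, and equals $g_t(r_t^{\,+})$ afterwards, where $r_t$ is the rightmost point of $\eta([0,t])\cap\R_{\ge 0}$; thus $\widetilde V_t$ is the solution of $d\widetilde V_t = \frac{a}{\widetilde V_t - W_t}\,dt$ started from $0^+$. Since $\delta_t(x)$ in \eqref{eq:deltanormal} and the quantity $\widetilde\delta_t(x) := (g_t(x) - \widetilde V_t)/g_t'(x)$ are built from the \emph{same} flow, they differ only through their numerators $v_t(x) = g_t(x)-V_t$ and $\widetilde v_t(x) := g_t(x)-\widetilde V_t$, where $V_t = g_t(x_R)$ is the force-point process of \eqref{eq:driver}. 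The identity $\widetilde\delta_t(x) = \frac14\,\text{crad}_{\widehat H_t}(x)$ — the ``calculation'' recorded just above, which only concerns the Loewner flow and hence applies to our curve — together with \eqref{eq:distcrad} gives $\frac14\dist(x,\eta([0,t])) \le \widetilde\delta_t(x) \le \dist(x,\eta([0,t]))$, so it suffices to sandwich $\delta_t(x)$ between $\frac{x-x_R}{x}\widetilde\delta_t(x)$ and $\widetilde\delta_t(x)$.

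The first step is the monotonicity $\widetilde V_t \le V_t$ for all $t < T_x$. Because $g_t$ is increasing on the unswallowed part of $\R_+$ and $0^+ \le x_R$, this is clear whenever both points have not yet been swallowed; once $0^+$ (and possibly $x_R$) is cut off one uses that the corresponding process then equals $g_t(r_t^{\,+})$ and that the ordering of Loewner trajectories is preserved through the measure-zero set of swallowing/reflection times. Granting this, and noting that $v_t(x), \widetilde v_t(x) > 0$ since $t < T_x$ and $x > x_R \ge 0$, the upper bound is immediate:
\begin{align*}
    \delta_t(x) = \frac{g_t(x) - V_t}{g_t'(x)} \;\le\; \frac{g_t(x) - \widetilde V_t}{g_t'(x)} = \widetilde\delta_t(x) \;\le\; \dist(x,\eta([0,t])) \;\le\; 4\dist(x,\eta([0,t])).
\end{align*}

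For the lower bound I would use the explicit representation \eqref{eq:vt}, which — being a consequence only of $\partial_t g_t = a/f_t$ and the ODE for the force point — also applies with the force point at $0^+$: $v_t(x) = (x-x_R)\exp\{\int_0^t \frac{a\,ds}{f_s(x)(W_s - V_s)}\}$ and $\widetilde v_t(x) = x\exp\{\int_0^t \frac{a\,ds}{f_s(x)(W_s - \widetilde V_s)}\}$, with the same $f_s(x) > 0$. From $\widetilde V_s \le V_s$ we get $0 < \widetilde V_s - W_s \le V_s - W_s$, hence $\frac{1}{W_s - V_s} \ge \frac{1}{W_s - \widetilde V_s}$, so the exponent for $v_t$ dominates that for $\widetilde v_t$ and therefore $\frac{v_t(x)}{x - x_R} \ge \frac{\widetilde v_t(x)}{x}$. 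Dividing by $g_t'(x)$,
\begin{align*}
    \delta_t(x) \;\ge\; \frac{x-x_R}{x}\,\widetilde\delta_t(x) \;\ge\; \frac{x-x_R}{x}\cdot\frac14\dist(x,\eta([0,t])) = \frac{x-x_R}{4x}\dist(x,\eta([0,t])),
\end{align*}
which is the desired inequality.

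The algebra here is short; the genuinely delicate part is the bookkeeping around swallowing times — verifying that $\widetilde V_t = g_t(0^+)$ extends continuously to $g_t(r_t^{\,+})$ through the times at which $0^+$ is disconnected from $\infty$, that the ordering $\widetilde V_t \le V_t$ survives these (and the reflection) times, and that the deterministic inputs ($\widetilde\delta_t(x) = \frac14\text{crad}_{\widehat H_t}(x)$ and \eqref{eq:vt}) are correctly quoted for the curve with force point $x_R$ rather than only for the curve with force point $0^+$. Once these are in hand, the two displayed chains of inequalities finish the proof.
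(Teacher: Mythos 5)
Your proposal is correct and follows essentially the same strategy as the paper: reduce to the $x_R=0^+$ case by comparing $\delta_t(x)=(g_t(x)-V_t)/g_t'(x)$ with the quantity $\widetilde\delta_t(x)=(g_t(x)-\widetilde V_t)/g_t'(x)$, where $\widetilde V_t$ is what the paper writes as $O_t=g_t(r_t)$, and use the Schwarz-reflection/conformal-radius identity to control $\widetilde\delta_t$. The only technical divergence is in how the ratio bound $g_t(x)-\widetilde V_t \leq \frac{x}{x-x_R}\,(g_t(x)-V_t)$ is obtained: the paper introduces $G_u(t)=\frac{g_t(x)-g_t(u)}{g_t(x)-g_t(x_R)}$ for fixed $u\in(0,x_R]$, shows $G_u'(t)\leq 0$ by a one-line Loewner-equation computation, and lets $u\downarrow r_t$ (obtaining the slightly sharper constant $\frac{x-r_t}{x-x_R}$); you instead write both $v_t$ and $\widetilde v_t$ via \eqref{eq:vt} and compare the exponents termwise. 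These are the same computation in two guises, since the sign of $G_u'(t)$ is exactly the sign of the difference of integrands in your exponents. One small remark: you flag the monotonicity $\widetilde V_t\leq V_t$ through swallowing/reflection times as the genuinely delicate point, but the paper avoids any such argument by splitting on $t<T_{x_R}$ versus $t\geq T_{x_R}$. For $t\geq T_{x_R}$ one has $V_t=O_t$ identically and there is nothing to prove, and for $t<T_{x_R}$ the bound $\widetilde V_t=g_t(r_t^+)\leq g_t(x_R)=V_t$ is immediate from monotonicity of $g_t$ on $(r_t,\infty)$; the ordering of two reflected Loewner trajectories past absorption times never actually needs to be invoked. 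With that simplification your proof matches the paper's in substance.
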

\begin{proof}
Denote by $(K_t)_{t \geq 0}$ the compact hulls corresponding to the $\SLE_\kappa(\rho)$ process. Extend the maps $(g_t)_{t \geq 0}$ by Schwarz reflection to $\mathbb{C} \setminus ( K_t \cup \{ z : \overline{z} \in K_t \} \cup \R_-)$, let $r_t$ be the rightmost point of $K_t \cap \R$ and let $O_t = g_t(r_t)$. Then, by the Koebe 1/4 theorem,
\begin{align*}
    \frac{1}{4} \frac{\dist(g_t(x),O_t)}{\dist(x,\eta([0,t]))} \leq g_t'(x) \leq 4\frac{\dist(g_t(x),O_t)}{\dist(x,\eta([0,t]))},
\end{align*}
that is,
\begin{align*}
    \frac{1}{4} \dist(x,\eta([0,t])) \leq \frac{g_t(x)-O_t}{g_t'(x)} \leq 4 \dist(x,\eta([0,t])),
\end{align*}
since $\dist(g_t(x),O_t) = g_t(x) - O_t \geq g_t(x) - V_t$, and thus the upper bound is done. Since $V_t = g_t(x_R)$, we have that if $x_R = 0^+$, $V_t = O_t$, and the proof is done. Thus, we now consider $x_R > 0$. For $t \geq T_{x_R}$, again $O_t = V_t$, so we now consider $t<T_{x_R}$. Let $u \in (0,x_R]$ and define
\begin{align*}
    G_u(t) = \frac{g_t(x)-g_t(u)}{g_t(x)-g_t(x_R)}.
\end{align*}
for $t< T_u$. Then,
\begin{align*}
    G_u(0) = \frac{x-u}{x-x_R}.
\end{align*}
Using the Loewner equation, we see that
\begin{align*}
    G_u'(t) = \frac{a(g_t(x)-g_t(u))(g_t(u)-g_t(x_R))}{(g_t(x)-g_t(x_R))(g_t(x)-W_t)(g_t(x_R)-W_t)(g_t(u)-W_t)} \leq 0,
\end{align*}
since $g_t(x)-g_t(u) \geq 0$ and $g_t(u)-g_t(x_R) \leq 0$. Thus, for every $u \in (0,x_R]$ and $t < T_u$,
\begin{align*}
    \frac{g_t(x)-g_t(u)}{g_t(x)-g_t(x_R)} \leq \frac{x-u}{x-x_R}.
\end{align*}
Fixing $t$ and letting $u \rightarrow r_t$, we get
\begin{align*}
    \frac{g_t(x)-O_t}{g_t(x)-V_t} = \frac{g_t(x)-g_t(r_t)}{g_t(x)-g_t(x_R)} \leq \frac{x-r_t}{x-x_R} \leq \frac{x}{x-x_R},
\end{align*}
which implies that
\begin{align*}
    \frac{1}{4} \dist(x,\eta([0,t])) \leq \frac{g_t(x) - O_t}{g_t'(x)} \leq \frac{x}{x-x_R} \frac{g_t(x)-V_t}{g_t'(x)},
\end{align*}
and thus
\begin{align*}
    \frac{x-x_R}{4x}\dist(x,\eta([0,t])) \leq \frac{g_t(x) - V_t}{g_t'(x)} \leq 4 \dist(x,\eta([0,t])).
\end{align*}
\end{proof}
With this lemma in mind, we will proceed and make a random time change. The process $\delta_t$ satisfies the (stochastic) ODE
\begin{align}
    d\delta_t(x) = - a \delta_t(x) \frac{v_t(x)}{f_t(x)^2(V_t-W_t)}dt = - a \delta_t \frac{Q_t(x)}{1-Q_t(x)}\frac{dt}{f_t(x)^2},
\end{align}
so we define the process $\tilde{t}(s) = \tilde{t}_x(s)$ as the solution to the equation
\begin{align}\label{eq:timechange}
    s = \int_0^{\tilde{t}(s)} \frac{Q_u(x)}{1-Q_u(x)}\frac{du}{f_u(x)^2}.
\end{align}
Then, $\tilde{\delta}_s(x) \coloneqq \delta_{\tilde{t}(s)}(x)$ satisfies the ODE $d\tilde{\delta}_s(x) = -a\tilde{\delta}_s(x) ds$, i.e.,
\begin{align} \label{eq:delta}
    \tilde{\delta}_s(x) = \delta_0(x) e^{-as} = (x-x_R) e^{-as}.
\end{align}
This time change is called the radial parameterization. Note that this time change is depending on $x$. We let $\tilde{g}_s = g_{\tilde{t}(s)}$ etc. denote the time-changed processes. They are all adapted to the filtration $\tilde{\mathscr{F}}_s = \mathscr{F}_{\tilde{t}(s)}$. By differentiating \eqref{eq:timechange} to get an expression for $\frac{d}{ds}\tilde{t}(s)$, and combining this with equations \eqref{eq:gprime}, \eqref{eq:ft} and \eqref{eq:vt}, we have that in this new parametrization,
\begin{align} \label{eq:g'tc}
    \tilde{g}_s'(x) = \exp\left\{-a \int_0^s \frac{1-\tilde{Q}_u(x)}{\tilde{Q}_u(x)} du \right\},
\end{align}
and $\tilde{Q}_s$ follows the SDE
\begin{align*}
    d\tilde{Q}_s = ((1-2a-a\rho/2) - (1-a)\tilde{Q}_s)ds + \sqrt{\tilde{Q}_s(1-\tilde{Q}_s)}d\tilde{B}_s,
\end{align*}
where $\tilde{B}_s$ is a Brownian motion with respect to the filtration $\tilde{\mathscr{F}}_s$. Combining (\ref{eq:mtg}) and (\ref{eq:delta}), we see that
\begin{align}\label{eq:Mtilde}
    \tilde{M}_s^\zeta(x) = (x-x_R)^{-\mu(1+\rho/2)} \tilde{g}_s'(x)^\zeta \tilde{Q}_s(x)^\mu e^{a\mu(1+\rho/2)s}.
\end{align}
For each $x>0$ and $\zeta > -\frac{\mu_c^2}{2a}$, we define the new probability measure, which we denote by $\prob^* = \prob_{x,\zeta}^*$, as
\begin{align*}
    \prob^*(A) = \tilde{M}_0^\zeta(x)^{-1} \E\left[ \tilde{M}_s^\zeta(x)1_A \right],
\end{align*}
for every $A \in \tilde{\mathscr{F}}_s$. We denote the expectation with respect to $\prob^*$ by $\E^*$. Let $\tilde{B}_s^*$ denote a Brownian motion with respect to $\prob^*$. By the Girsanov theorem, we have that under the measure $\prob^*$, $\tilde{Q}_s$ follows the SDE
\begin{align}\label{eq:Qs}
    d\tilde{Q}_s = \left[(1-2a-a\rho/2+\mu) - (1-a+\mu)\tilde{Q}_s \right]ds + \sqrt{\tilde{Q}_s(1-\tilde{Q}_s)} d\tilde{B}_s^*.
\end{align}
Under $\prob^*$, $\tilde{Q}_s$ is positive recurrent and has the invariant density
\begin{align}\label{eq:indensQ}
    p_{\tilde{Q}}(x) = \tilde{c}x^{1-4a-a\rho+2\mu}(1-x)^{2a+a\rho-1},
\end{align}
where
\begin{align*}
    \tilde{c} = \frac{\Gamma(2-2a+2\mu)}{\Gamma(2a+a\rho)\Gamma(2-4a-a\rho+2\mu)},
\end{align*}
see Corollary \ref{Qprop}.
Throughout, we will denote by $\tilde{X}_s$, a process that follows the same SDE as $\tilde{Q}_s$, but started according to the invariant density. For $s \geq 0$ and $y > 0$, short calculations show that
\begin{align}\label{eq:basiccalc}
    \prob^*( \tilde{X}_s \leq y ) \lesssim y^{2\mu-4a-a\rho+2}, \quad \E^* \big[ \tilde{X}_s^{-\mu} \big] = \frac{\Gamma(2-2a+2\mu) \Gamma(2-4a-a\rho+\mu)}{\Gamma(2-2a+\mu) \Gamma(2-4a-a\rho+2\mu)}.
\end{align}
In Section \ref{secOPE} we will use that $\prob^*(\tilde{t}(s)<\infty) = 1$ for every $s$ (which is shown in Appendix A), and the following lemma.
\begin{lem}
\label{time}
Fix $x$ and let $\tau_s$ and $\tilde{t}(s)$ be as above. Then
\begin{align*}
    \tilde{t}\left(\left(\frac{s}{a} + \frac{1}{a} (\log (x-x_R) - \log 4)\right) \vee 0\right) \leq \tau_s \leq \tilde{t}\left(\frac{s}{a} + \frac{1}{a}(\log x + \log 4)\right),
\end{align*}
for all $s > \max\{0,-\log(x-x_R)\}$.
\end{lem}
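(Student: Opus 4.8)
The plan is to obtain both inequalities directly from the deterministic decay $\delta_{\tilde t(s)}(x) = (x - x_R)e^{-as}$ in \eqref{eq:delta} together with the two-sided comparison between $\delta_t(x)$ and $\dist(x,\eta([0,t]))$ of Lemma \ref{deltadist}, using only that $t \mapsto \dist(x,\eta([0,t]))$ is continuous and non-increasing and that $s \mapsto \tilde t(s)$ is non-decreasing with $\tilde t(0) = 0$. Write $d_t = \dist(x,\eta([0,t]))$, so that $\tau_s = \inf\{t \ge 0 : d_t \le e^{-s}\}$, and note that the hypothesis $s > \max\{0, -\log(x - x_R)\}$ forces $s + \log x \ge s + \log(x - x_R) > 0$.

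For the upper bound, put $u = \frac{s}{a} + \frac{1}{a}(\log x + \log 4)$, which is positive by the previous remark, so that $au = s + \log(4x)$ and \eqref{eq:delta} gives $\delta_{\tilde t(u)}(x) = (x - x_R)e^{-au} = \frac{x - x_R}{4x}\,e^{-s}$. Applying the lower bound in Lemma \ref{deltadist} at time $\tilde t(u)$ yields $\frac{x - x_R}{4x}\, d_{\tilde t(u)} \le \delta_{\tilde t(u)}(x) = \frac{x - x_R}{4x}\, e^{-s}$, hence $d_{\tilde t(u)} \le e^{-s}$; thus the time $\tilde t(u)$ lies in the set defining $\tau_s$, and $\tau_s \le \tilde t(u)$.

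For the lower bound, put $v = \frac{s}{a} + \frac{1}{a}(\log(x - x_R) - \log 4)$. If $v \le 0$ then $\tilde t(v \vee 0) = \tilde t(0) = 0 \le \tau_s$ and there is nothing to prove; if $v > 0$, then $av = s + \log\frac{x - x_R}{4}$ and \eqref{eq:delta} gives $\delta_{\tilde t(v)}(x) = (x - x_R)e^{-av} = 4e^{-s}$, so the upper bound in Lemma \ref{deltadist} at time $\tilde t(v)$ gives $4e^{-s} = \delta_{\tilde t(v)}(x) \le 4\, d_{\tilde t(v)}$, i.e.\ $d_{\tilde t(v)} \ge e^{-s}$. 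Since $t \mapsto d_t$ is non-increasing, $d_t \ge e^{-s}$ for every $t \le \tilde t(v)$, so $\tau_s \ge \tilde t(v)$, which is the claim.

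I do not anticipate a genuine obstacle here: the argument is just the two substitutions above, and the points that deserve a line of care are purely bookkeeping — namely that the arguments $u$ and $v \vee 0$ of $\tilde t$ lie in $[0,\infty)$ (which is exactly what the restriction on $s$ guarantees) and that Lemma \ref{deltadist} is applied at times strictly before the swallowing time $T_x$ (equivalently, within the range of the radial parametrization; when this fails the relevant $\tilde t(\cdot)$ should be read as $+\infty$ and the corresponding inequality is vacuous). The only slightly delicate point is the borderline case $d_{\tilde t(v)} = e^{-s}$ in the lower bound, which is non-generic since the Koebe bounds underlying Lemma \ref{deltadist} are strict away from degenerate configurations and which in any case is immaterial for the $\frac1s\log$-asymptotics for which Lemma \ref{time} is used.
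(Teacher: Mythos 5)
Your argument is correct and is essentially the paper's own proof run in mirror image: the paper applies Lemma \ref{deltadist} at the time $\tau_s$ and then uses that $t \mapsto \delta_t$ is (strictly) decreasing to locate $\tau_s$ between the two radial times, whereas you evaluate $\delta$ at the radial times via \eqref{eq:delta} and use monotonicity of $t \mapsto \dist(x,\eta([0,t]))$ — the same two ingredients in the opposite order. The borderline case you flag in the lower bound cannot actually occur, since a flat stretch of the distance at level $e^{-s}$ before $\tilde t(v)$ would force $\delta_{t_0}\le 4e^{-s}=\delta_{\tilde t(v)}$ at some $t_0<\tilde t(v)$, contradicting the strict decrease of $\delta_t$, which is exactly the monotonicity the paper's version of the argument leans on.
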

\begin{proof}
Assume that $s > \max\{0,-\log (x-x_R)\}$. Trivially, $\tau_s \geq \tilde{t}(0) = 0$. By Lemma \ref{deltadist}, we have that $\delta_{\tau_s} \leq 4 e^{-s}$ and thus, if $s - \log 4 +\log (x-x_R) > 0$, then (recall that $\delta_0 = x-x_R$)
\begin{align*}
    \delta_{\tau_s} \leq 4 e^{-s} = (x-x_R) e^{-s + \log 4 -\log (x-x_R)} = \delta_{\tilde{t}(\frac{s}{a} - \frac{1}{a} \log 4 + \frac{1}{a} \log (x-x_R))},
\end{align*}
and since $t \mapsto \delta_t$ is decreasing, $\tau_s \geq \tilde{t}((\frac{s}{a} + \frac{1}{a} \log (x-x_R) - \frac{1}{a} \log 4) \vee 0)$.

Also, by Lemma \ref{deltadist}, we get
\begin{align*}
    \delta_{\tau_s} \geq \frac{x-x_R}{4x}e^{-s} = (x-x_R) e^{-s -\log 4 -\log x} = \delta_{\tilde{t}(\frac{s}{a} + \frac{1}{a}\log x + \frac{1}{a} \log 4)}.
\end{align*}
Thus, $\tau_s \leq \tilde{t}\left(\frac{s}{a} + \frac{1}{a}(\log x + \log 4)\right)$, and the proof is done.
\end{proof}
In what follows, $x$ and $x_R$ will be kept constant (time is the parameter which will change), and hence we will instead treat the inequality as
\begin{align}\label{C*}
    \tilde{t}\left(\left(\frac{s}{a} -C^* \right) \vee 0\right) \leq \tau_s \leq \tilde{t}\left(\frac{s}{a} + C^*\right),
\end{align}
as we can choose such a $C^* = C^*(x,x_R)$ for each $x>1$, $x_R < x$. The only place where we have to be careful with this is in the proof of Lemma \ref{lem3}, but we will discuss that there.

\subsection{The Gaussian free field}
We will now introduce and discuss the Gaussian free field (GFF), for more on the GFF, see \cite{She07}. Let $D \subset \mathbb{C}$ be a Jordan domain and let $C_c^\infty(D)$ be the set of compactly supported smooth functions on $D$. The \textit{Dirichlet inner product} on $D$ is defined as
\begin{align*}
(f,g)_\nabla = \frac{1}{2\pi} \int_D \nabla f(z) \cdot \nabla g(z) dz,
\end{align*}
and the Hilbert space closure of $C_c^\infty(D)$ with this inner product is the Sobolev space $H(D) \coloneqq H_0^1(D)$. Let $( \phi_n )_{n=1}^\infty$ be a $(\cdot,\cdot)_\nabla$-orthonormal basis of $H(D)$. Then, the zero-boundary GFF $h$ on $D$ can be expressed as
\begin{align*}
    h = \sum_{n=1}^\infty \alpha_n \phi_n,
\end{align*}
where $( \alpha_n )_{n=1}^\infty$ is a sequence of i.i.d. $N(0,1)$ random variables. This does not converge in any space of functions, however, it converges almost surely in a space of distributions. The GFF is conformally invariant, i.e., if $h$ is a GFF on $D$ and $\varphi: D \rightarrow \widetilde{D}$ is a conformal transformation, then $h \circ \varphi^{-1} = \sum_n \alpha_n \phi_n \circ \varphi^{-1}$ is a GFF on $\widetilde{D}$.

If we denote the standard $L^2(D)$ inner product by $(\cdot,\cdot)$ and $U \subseteq D$ is open, then for $f \in C_c^\infty(U)$, $g \in C_c^\infty(D)$, we obtain by integration by parts that
\begin{align*}
    (f,g)_\nabla = -\frac{1}{2\pi} (f,\Delta g),
\end{align*}
that is, every function in $C_c^\infty(D)$ which is harmonic in $U$ is $(\cdot,\cdot)_\nabla$-orthogonal to every function in $C_c^\infty(U)$. From this, one can see that $H(D)$ can be $(\cdot,\cdot)_\nabla$-orthogonally decomposed as $H(U) \oplus H^\perp(U)$, where $H^\perp(U)=H_D^\perp(U)$ is the set of functions in $H(D)$ which are harmonic in $U$ and have finite Dirichlet energy. Hence, we may write
\begin{align*}
    h = h_U + h_{U^\perp} = \sum_n \alpha_n^U \phi_n^U + \sum_n \alpha_n^{U^\perp} \phi_n^{U^\perp},
\end{align*}
where $( \alpha_n^U )$ and $( \alpha_n^{U^\perp} )$ are independent sequences of i.i.d. $N(0,1)$ random variables and $( \phi_n^U)$ and $( \phi_n^{U^\perp} )$ are orthonormal bases of $H(U)$ and $H^\perp(U)$, respectively. Note that $h_U$ is a GFF on $U$ and that $h_{U^\perp}$ is a random distribution which agrees with $h$ on $D \setminus U$ and can be viewed as a harmonic function on $U$ and that $h_U$ and $h_{U^\perp}$ are independent. Hence, the law of $h$ restricted to $U$, given the values of $h$ restricted to $\partial U$, is that of a GFF on $U$ plus the harmonic extension of the values of $h$ on $\partial U$. This is the so-called Markov property of the GFF. With this in mind, one can make sense of GFF with non-zero boundary conditions: let $f: \partial D \rightarrow \R$, let $F$ be the harmonic extension of $f$ to $D$ and let $h$ be a zero-boundary GFF on $D$, then the law of the of the GFF with boundary condition $f$ is given by the law of $h+F$.

The GFF also exhibits certain absolute continuity properties, the key (for us) of which we state now. (This is the content of Proposition 3.4 (ii) in \cite{MS16a}, where the reader can find a proof.)

\begin{prop}\label{abscont}
Let $D_1,D_2$ be simply connected domains, such that $D_1 \cap D_2 \neq \emptyset$ and for $j=1,2$, let $h_j$ and $F_j$ be a zero-boundary GFF and a harmonic function on $D_j$, respectively. If $U \subseteq D_1 \cap D_2$ is a bounded simply connected domain and $U' \supset \overline{U}$ is such that $\overline{D}_1 \cap U' = \overline{D}_2 \cap U'$ and $F_1-F_2$ tends to zero when approaching $\partial D_j \cap U'$ for $j=1,2$, then the laws of $(h_1+F_1)|_U$ and $(h_2+F_2)|_U$ are mutually absolutely continuous.
\end{prop}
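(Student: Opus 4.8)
The engine of the proof is the Cameron--Martin theorem (Girsanov's theorem for Gaussian measures): if $\mathring h$ is a zero-boundary GFF on a Jordan domain $V$ and $\psi \in H(V)$, then the law of $\mathring h + \psi$ is mutually absolutely continuous with respect to the law of $\mathring h$, with Radon--Nikodym derivative $\exp\!\big((\mathring h,\psi)_\nabla - \tfrac12 (\psi,\psi)_\nabla\big)$. The plan is to use the Markov property of the GFF to peel off, on a domain where $D_1$ and $D_2$ agree, the singular (zero-boundary GFF) part, whose law does not distinguish the two setups, and to absorb the remaining discrepancy --- which the hypotheses will force to have finite Dirichlet energy --- by Cameron--Martin.

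First, since adding a fixed deterministic distribution to both $(h_1+F_1)|_U$ and $(h_2+F_2)|_U$ is a bijection of distribution space preserving mutual absolute continuity, it suffices to compare $h_1|_U$ with $(h_2 + F)|_U$, where $F := (F_2 - F_1)|_{D_1 \cap D_2}$ is harmonic and, by hypothesis, tends to $0$ along $\partial D_j \cap U'$. Because $\overline D_1 \cap U' = \overline D_2 \cap U'$, the open sets $D_1 \cap U'$ and $D_2 \cap U'$ coincide; let $W$ be the connected component containing $U$ --- which we may take to be a simply connected Jordan domain, shrinking $U'$ slightly if needed --- and set $\Gamma := \partial W \cap U'$, an arc of the common boundary $\partial D_1 \cap U' = \partial D_2 \cap U'$. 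Since $\overline U$ is a compact subset of the open set $U'$, there is a neighbourhood $N$ of $\overline U$ with $\overline N \subseteq U'$, so $\partial W$ meets $\overline N$ only along $\Gamma$. As $F$ is harmonic in $W$ and extends continuously by $0$ to $\Gamma$, a Schwarz reflection across $\Gamma$ (after conformally straightening it) shows that $F$ extends harmonically across $\Gamma$; hence $F$ is harmonic on a neighbourhood of $\overline U$, vanishes on $\partial U \cap \Gamma$, and $F|_U \in H(U)$ with boundary trace supported on the compact set $\partial U \setminus \Gamma \subseteq W$.

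Now apply the Markov property of the GFF on $D_j$ relative to $W$: $h_j|_W = \mathring h_j + \mathfrak h_j$, where $\mathring h_j$ is a zero-boundary GFF on $W$, $\mathfrak h_j$ is the random harmonic function on $W$ obtained by harmonically extending the boundary values of $h_j$ on $\partial W$, and $\mathring h_j$ is independent of $\mathfrak h_j$. The law of $\mathring h_j$ depends only on $W$, hence is the same for $j = 1,2$. Since $h_j$ has zero boundary conditions along $\Gamma \subseteq \partial D_j$, the harmonic function $\mathfrak h_j$ likewise extends harmonically across $\Gamma$, so near $\overline U$ it is a genuine random harmonic function whose rough boundary data lies at a positive distance from $\overline U$; from this (reflecting $h_j$ oddly across $\Gamma$, if one wishes, to reduce to an interior situation) and the standard local absolute continuity of the GFF on overlapping domains, one deduces that the laws of $\mathfrak h_1|_U$ and $\mathfrak h_2|_U$ are mutually absolutely continuous. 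Writing $h_1|_U = \mathring h_1|_U + \mathfrak h_1|_U$ and $(h_2 + F)|_U = \mathring h_2|_U + \mathfrak h_2|_U + F|_U$, we combine: the two occurrences of $\mathring h_j|_U$ have the same law; the laws of $\mathfrak h_1|_U$ and $\mathfrak h_2|_U$ are mutually absolutely continuous; and adding the deterministic $F|_U \in H(U)$, smooth up to $\partial U$, is an absolutely continuous perturbation by Cameron--Martin (applied to the Gaussian boundary field arising in the Markov decomposition on $U$, in whose Cameron--Martin space the smooth trace of $F$ lies). Chaining these equivalences, and using independence of the GFF part from the harmonic part, gives mutual absolute continuity of $(h_1+F_1)|_U$ and $(h_2+F_2)|_U$.

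The step I expect to be the main obstacle is exactly the one the hypotheses are designed to handle: the possibility that $\overline U$ touches the common boundary $\partial D_1 = \partial D_2$. This is why one works with the auxiliary window $U'$ rather than assuming $\overline U \subseteq D_1 \cap D_2$, why the assumption is that $F_1 - F_2 \to 0$ along $\partial D_j \cap U'$ --- not merely finite Dirichlet energy --- and why a Schwarz reflection across the common boundary arc is needed: together they guarantee that, after the reduction, the leftover perturbation vanishes on the part of $\partial U$ lying on the common boundary and hence extends harmonically across it, placing it in the relevant Cameron--Martin space. The remaining points --- sufficient regularity of the arcs to reflect across, and measurability of the Radon--Nikodym densities --- are routine given that $D_1, D_2, U'$ are Jordan domains and that $U'$ may be shrunk freely.
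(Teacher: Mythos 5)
The paper does not contain its own proof of this proposition; it simply cites Proposition~3.4~(ii) of \cite{MS16a} and refers the reader there. So there is no in-paper argument to compare against, and your proposal has to be judged on its own.

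Your overall framework --- Markov decomposition on a common sub-domain $W$, Schwarz reflection across the shared boundary arc $\Gamma$, and a Cameron--Martin shift --- is exactly the right set of ingredients and matches the Miller--Sheffield argument. But the way you combine them has two genuine gaps at the crucial step. First, you assert that ``the laws of $\mathfrak h_1|_U$ and $\mathfrak h_2|_U$ are mutually absolutely continuous'' by appealing to ``the standard local absolute continuity of the GFF on overlapping domains.'' That result is about GFFs; the harmonic parts $\mathfrak h_j$ are not GFFs --- their laws are carried by harmonic functions and are singular with respect to any GFF law on $U$ --- so the cited fact does not apply to them, and the claim is left unjustified. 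The parenthetical suggestion of reflecting $h_j$ oddly across $\Gamma$ does not rescue this: an odd reflection of a zero-boundary GFF on $D_j$ produces only the odd-symmetric part of a GFF on the doubled domain, not a full GFF, so one cannot then invoke the interior absolute-continuity statement on the doubled domain. Second, the Cameron--Martin step for $F|_U$ is not coherent as written: you correctly note that $F$ has nonzero boundary trace on $\partial U \setminus \Gamma$, so $F|_U \notin H(U) = H^1_0(U)$, and the appeal to ``the Gaussian boundary field arising in the Markov decomposition on $U$'' has no content, since the decomposition of the already-harmonic $\mathfrak h_2|_U$ relative to $U$ has trivial GFF part and hence no Cameron--Martin space to speak of.

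Both gaps are closed by the same move, which you have all the pieces for but do not make: \emph{condition} on $\mathfrak h_j$ rather than trying to compare the laws of $\mathfrak h_1|_U$ and $\mathfrak h_2|_U$ directly, and then apply Cameron--Martin to the conditional zero-boundary GFF on $W$, absorbing the harmonic-part discrepancy and $F_1-F_2$ in a single shift. Concretely: for fixed realizations $g_j$ of $\mathfrak h_j$, the conditional law of $(h_j+F_j)|_U$ is that of $(\mathring h + g_j + F_j)|_U$, with $\mathring h$ a zero-boundary GFF on $W$ whose law does not depend on $j$. The function $\Phi := g_1+F_1-g_2-F_2$ is harmonic in $W$, vanishes as one approaches $\Gamma$, and hence by Schwarz reflection is bounded and harmonic on a neighbourhood of $\overline U$. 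Taking a smooth cutoff $\phi$ that is $1$ on $U$, supported in $W$, and at positive distance from $\partial W\setminus\Gamma$, one gets $\phi\Phi\in H^1_0(W)$ with $\phi\Phi|_U=\Phi|_U$, so Cameron--Martin applied to $\mathring h$ shows the conditional laws of $(h_1+F_1)|_U$ and $(h_2+F_2)|_U$ have the same null sets; integrating out the conditioning finishes the proof. As written, your argument instead tries to establish two separate absolute-continuity statements --- one for the harmonic parts and one for $F$ --- neither of which is available, and the single conditional Cameron--Martin step is what dissolves both difficulties at once.
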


In other words, if $h_1$ and $h_2$ are GFFs on $D_1$ and $D_2$, whose boundary conditions agree in some set $E \subset \partial D_1 \cap \partial D_2$, then the laws of $h_1$ and $h_2$ restricted to any simply connected bounded subdomain $U$ of $D_1$ and $D_2$, such that $\dist(\partial U, (\partial D_1 \cap \partial D_2) \setminus E) >0$, are mutually absolutely continuous.

The result holds for unbounded domains $U$ as well, but we shall only need the bounded case.

\subsection{Imaginary geometry}
In this section, we describe the coupling of SLE with the GFF. As stated in the introduction, this section will be slightly longer than necessary, in order to make this paper more self-contained.

Suppose for now that $h$ is a smooth, real-valued function on a Jordan domain $D$ and fix constants $\chi > 0$ and $\theta \in [0,2\pi)$. A flow line of the complex vector field $e^{i(h/\chi + \theta)}$, with initial point $z$, is a solution to the ordinary differential equation
\begin{align}\label{eq:sfl}
    \eta'(t) = e^{i(h(\eta(t))/\chi + \theta)} \quad \text{for} \quad t>0, \quad \eta(0) = z.
\end{align}
If $\eta$ is a flow line of $e^{ih/\chi}$ and $\psi: \widetilde{D} \rightarrow D$ is a conformal map, then $\tilde{\eta} = \psi^{-1} \circ \eta$ is a flow line of $e^{i\tilde{h}/\chi}$, where $\tilde{h} \coloneqq h \circ \psi - \chi \arg \psi'$ is a smooth function on $\widetilde{D}$. This follows by the chain rule and the fact that a reparametrization of a flow line is a flow line. Hence, the following definition makes sense: we say that an \textit{imaginary surface} is an equivalence class of pairs $(D,h)$ under the equivalence relation
\begin{align}\label{eq:isurface}
    (D,h) \sim (\psi^{-1}(D), h \circ \psi - \chi \arg \psi') = (\widetilde{D}, \tilde{h}).  
\end{align}
We say that $\psi$ is a \textit{conformal coordinate change} of the imaginary surface.

The idea is that if $h$ is a GFF, then we are interested in the flow lines of $h$ and we want to see that these are $\SLE_\kappa(\underline{\rho})$ curves. However, while (\ref{eq:isurface}) makes sense if $h$ is a GFF, the ODE (\ref{eq:sfl}) does not, as $h$ is then a random distribution and not a continuous function. Thus, the approach to defining the flow lines of the GFF will be a little less ``direct''. Instead, the following characterization will be used: let $h$ be a smooth function and $\eta$ a smooth simple curve in $\overline{\UH}$, with $\eta(0) = 0$ and $\eta(t) \in \UH$ for $t>0$, starting in the vertical direction (that is, $\eta'(0)$ has zero real part and positive imaginary part), so that as $t \rightarrow 0$ the winding number is $\approx \pi/2$. Furthermore, let $(f_t)_{t\geq 0}$ be the centered Loewner chain of $\eta$. Then, for any $t>0$, we have two parametrizations of $\eta |_{[0,t]}$:
\begin{align*}
    s \mapsto f_t^{-1}(-s)|_{(s_-,0)} \quad \text{and} \quad s \mapsto f_t^{-1}(s)|_{(0,s_+)},
\end{align*}
where $s_-$ and $s_+$ are the left and right images of zero under $f_t$, respectively. Since $\eta$ is smooth, there exist smooth, decreasing functions $\phi_-,\phi_+:(0,\infty) \rightarrow (0,\infty)$ such that
\begin{align*}
    \eta(s) = f_t^{-1}(-\phi_-(s)) \quad \text{and} \quad \eta(s) = f_t^{-1}(\phi_+(s)).
\end{align*}
By differentiation and (\ref{eq:sfl}), it is then easy to see that $\eta$ is a flow line of $h$ if and only if for each $z \in \eta((0,t))$
\begin{align}
    \chi \arg f_t'(w) \rightarrow -h(z)-\chi\pi/2
\end{align}
as $w$ approaches $z$ from the left side of $\eta$ and
\begin{align}
    \chi \arg f_t'(w) \rightarrow -h(z)+\chi\pi/2
\end{align}
as $w$ approaches $z$ from the right side of $\eta$. With this in mind, we will now introduce the coupling.

With the notation as in Section \ref{secslekr}, the coupling of SLE and GFF is given by the following theorem (for the proof, see \cite{MS16a}).
\begin{thm}\label{coupling}
Fix $\kappa > 0$ and a vector of weights $(\underline{\rho}_L;\underline{\rho}_R)$. Let $(K_t)$ and $(f_t)$ be the hulls and centered Loewner chain, respectively, of an $\SLE_\kappa(\underline{\rho}_L;\underline{\rho}_R)$ process in $\UH$ from $0$ to $\infty$ with force points $(\underline{x}_L;\underline{x}_R)$ and let $h$ be a zero-boundary GFF in $\UH$. Furthermore, let 
\begin{align*}
    \chi = \frac{2}{\sqrt{\kappa}} - \frac{\sqrt{\kappa}}{2}, \quad \lambda = \frac{\pi}{\sqrt{\kappa}},
\end{align*}
let $\Phi_t^0$ be the harmonic function in $\UH$ with boundary values
\begin{align*}
    -\lambda(1+\overline{\rho}_{j,L}) \quad &\text{if} \quad x \in [f_t(x_{j+1,L}),f_t(x_{j,L})), \\
    \lambda(1+\overline{\rho}_{j,R}) \quad &\text{if} \quad x \in (f_t(x_{j,R}),f_t(x_{j+1,R})],
\end{align*}
and define
\begin{align*}
    \Phi_t(z) = \Phi_t^0(f_t(z))-\chi\arg f_t'(z).
\end{align*}
If $\tau$ is any stopping time for the $\SLE_\kappa(\underline{\rho})$ process which almost surely occurs before the continuation threshold, then the conditional law of $(h+\Phi_0)|_{\UH \setminus K_\tau}$ given $K_\tau$ is equal to the law of $h \circ f_\tau + \Phi_\tau$.
\end{thm}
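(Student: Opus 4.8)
This is the Miller--Sheffield imaginary-geometry coupling; I would prove it by the martingale method of Schramm--Sheffield and Dubédat, adapted to the $\SLE_\kappa(\underline{\rho})$ boundary data. Fix a test function $\phi \in C_c^\infty(\UH)$ and set $\Psi_t(\phi) = \int_{\UH\setminus K_t}\Phi_t(z)\phi(z)\,dz$ and $\mathrm{Var}_t(\phi) = \iint \phi(z)\,G_t(z,w)\,\phi(w)\,dz\,dw$, where $G_t$ is the Green's function of $\UH\setminus K_t$; by conformal invariance $G_t(z,w) = G_\UH(f_t(z),f_t(w))$, and since $W_t$ is real the differences $f_t(z)-f_t(w)$ and $f_t(z)-\overline{f_t(w)}$ equal $g_t(z)-g_t(w)$ and $g_t(z)-\overline{g_t(w)}$, so $\mathrm{Var}_t(\phi)$ depends on $t$ only through the finite-variation processes $g_t(z),g_t(w)$ and is itself of finite variation in $t$. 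A coupling of Gaussian Hilbert spaces is determined by first and second moments, and the GFF has the domain-Markov decomposition recalled before Proposition~\ref{abscont}; hence it suffices to build a coupling of $h$ with the $\SLE_\kappa(\underline{\rho})$ Loewner chain (obtained, once the martingale property below is in hand, from the resulting consistent family of conditional Gaussian laws) such that, for every such $\phi$ and every stopping time $\tau$ before the continuation threshold with $\dist(\mathrm{supp}\,\phi,K_\tau)>0$ (the general case following by exhaustion over $\phi$),
\begin{align*}
    \E\bigl[\, e^{\,i(h+\Phi_0,\phi)} \mid \mathscr{F}_\tau \,\bigr] = \exp\Bigl( i\,\Psi_\tau(\phi) - \tfrac12\mathrm{Var}_\tau(\phi) \Bigr).
\end{align*}
This identity is exactly the assertion that, conditionally on $\mathscr{F}_\tau$, the field $(h+\Phi_0)|_{\UH\setminus K_\tau}$ is a zero-boundary GFF on $\UH\setminus K_\tau$ with mean $\Phi_\tau$, which by conformal invariance is the law of $h\circ f_\tau + \Phi_\tau$. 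Since the left side is a bounded martingale in $t$ (tower property) and the two sides agree at $t=0$ (where $\Psi_0(\phi)=(\Phi_0,\phi)$ and $\mathrm{Var}_0(\phi)=\iint\phi\,G_\UH\,\phi$), the theorem reduces to showing that the right side is a martingale in $t$.

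The core is an Itô computation. Differentiate $\Psi_t(\phi)$ using the Loewner equation $\partial_t g_t(z)=a/(g_t(z)-W_t)$, the driving SDE $dW_t = dB_t + \sum_{j,q}\tfrac{a\rho_{j,q}/2}{W_t-V_t^{j,q}}\,dt$, the representation $\Phi_t(z)=\Phi_t^0(f_t(z))-\chi\arg f_t'(z)$ with $\partial_t\log f_t'(z)=-a/f_t(z)^2$, and the fact that $\Phi_t^0$ is the harmonic extension of a step function whose jump heights are the fixed numbers $\lambda(1+\overline{\rho}_{j,q})$ and whose break-points $f_t(x_{j,q})=V_t^{j,q}-W_t$ evolve by the driving SDEs. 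One finds that the $dt$-drift of $\Psi_t(\phi)$ \emph{vanishes}: the contribution of the force-point drift of $W_t$ is cancelled by that of the $\overline{\rho}_{j,q}$ in the jump heights, and the residual $\SLE_\kappa$ part cancels precisely because $\lambda=\pi/\sqrt\kappa$, $\chi=2/\sqrt\kappa-\sqrt\kappa/2$, $a=2/\kappa$ satisfy the required relation (for $\underline\rho=\underline0$ this is the single identity that pins down $\chi$). Hence $\Psi_t(\phi)$ is a continuous local martingale with $d\Psi_t(\phi)=\sigma_t(\phi)\,dB_t$, where $\sigma_t(\phi)$ is an explicit pairing of $\phi$ against a derivative of the harmonic extension / Poisson kernel of $\UH\setminus K_t$. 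Separately, the Hadamard variation formula for $G_\UH(g_t(z),g_t(w))$ along the flow gives the deterministic identity $\tfrac{d}{dt}\mathrm{Var}_t(\phi)=-\sigma_t(\phi)^2$, i.e.\ $d\langle\Psi_\cdot(\phi)\rangle_t=-\,d\,\mathrm{Var}_t(\phi)$. Writing $X_t=i\Psi_t(\phi)-\tfrac12\mathrm{Var}_t(\phi)$, so that $dX_t=i\sigma_t(\phi)\,dB_t+\tfrac12\sigma_t(\phi)^2\,dt$ and $d\langle X\rangle_t=-\sigma_t(\phi)^2\,dt$, Itô's formula yields $d(e^{X_t})=e^{X_t}\bigl(dX_t+\tfrac12 d\langle X\rangle_t\bigr)=e^{X_t}i\sigma_t(\phi)\,dB_t$, so $\exp(i\Psi_t(\phi)-\tfrac12\mathrm{Var}_t(\phi))$ is a continuous local martingale of modulus at most $1$.

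To finish, I would promote this local martingale to a true one and apply optional stopping. Localize by $T_n=\inf\{t:\dist(K_t,\mathrm{supp}\,\phi)\le 1/n,\ \text{or}\ |f_t'|\notin[1/n,n]\ \text{on}\ \mathrm{supp}\,\phi,\ \text{or}\ |W_t-V_t^{j,q}|\le 1/n\ \text{for some}\ (j,q)\}$; on $[0,T_n]$ the quantities $\Psi_t(\phi),\mathrm{Var}_t(\phi),\sigma_t(\phi)$ are bounded, so the stopped process is a bounded martingale, optional stopping at $\tau\wedge T_n$ applies, and letting $n\to\infty$ (using $\dist(\mathrm{supp}\,\phi,K_\tau)>0$ and that $\tau$ precedes the continuation threshold) gives the displayed identity for $\tau$. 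Ranging over $\phi$ and using that a random distribution is the Gaussian field with a given mean and covariance as soon as all its test-function pairings are Gaussian with those first and second moments identifies the conditional law of $(h+\Phi_0)|_{\UH\setminus K_\tau}$; conformal invariance of the GFF rewrites it as $h\circ f_\tau+\Phi_\tau$, and general stopping times reduce to deterministic ones by approximating $\tau$ from above by dyadic stopping times and using continuity of $t\mapsto(\Phi_t,\phi)$ and of the Loewner flow. A shorter alternative, given what is already in the paper, is to prove the $\underline\rho=\underline0$ (ordinary $\SLE_\kappa$) case by the above computation and then obtain the $\SLE_\kappa(\underline{\rho})$ coupling by the Girsanov theorem, using that the local martingale $M_t$ of \eqref{eq:krmtg} which turns $\SLE_\kappa$ into $\SLE_\kappa(\underline{\rho})$ corresponds on the GFF side exactly to inserting the harmonic bumps that change the boundary data from $\pm\lambda$ to $\pm\lambda(1+\overline{\rho}_{j,q})$. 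The step I expect to be the real work is the drift computation for $\Psi_t(\phi)$: it needs the Hadamard/Loewner variation formulas for the Poisson kernel and the Green's function, careful bookkeeping of the jumps of $\Phi_t^0$ across the moving points $f_t(x_{j,q})$ and of the winding term $-\chi\arg f_t'$, and then seeing the whole expression collapse by the precise values of $\lambda,\chi,a$ and the jump heights $\lambda(1+\overline{\rho}_{j,q})$; the secondary point is controlling everything uniformly up to the continuation threshold, where force points may merge with $W_t$.
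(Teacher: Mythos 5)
The paper does not prove this theorem but imports it verbatim from Miller--Sheffield \cite{MS16a} (see the sentence preceding the statement), and your sketch is precisely the martingale argument used there, building on Schramm--Sheffield and Dub\'edat: reduce to the local-martingale property of $\exp\bigl(i\Psi_t(\phi)-\tfrac12\mathrm{Var}_t(\phi)\bigr)$, observe that $\mathrm{Var}_t(\phi)$ is of finite variation because $g_t$ satisfies an ODE, show that the $dt$-drift of $\Psi_t(\phi)$ cancels for exactly the stated $\lambda$, $\chi$ and jump heights $\lambda(1+\overline{\rho}_{j,q})$, invoke Hadamard's variation formula for $d\langle\Psi(\phi)\rangle_t=-\,d\,\mathrm{Var}_t(\phi)$, and localize before applying optional stopping. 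Your outline is correct (and the Girsanov shortcut from the $\underline{\rho}=\underline{0}$ case via the martingale \eqref{eq:krmtg} is a legitimate alternative); the one step you defer rather than execute is the drift computation for $\Psi_t(\phi)$, which you rightly identify as the substantive content.
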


\begin{figure}[ht!]
\centering
\includegraphics[width=150mm]{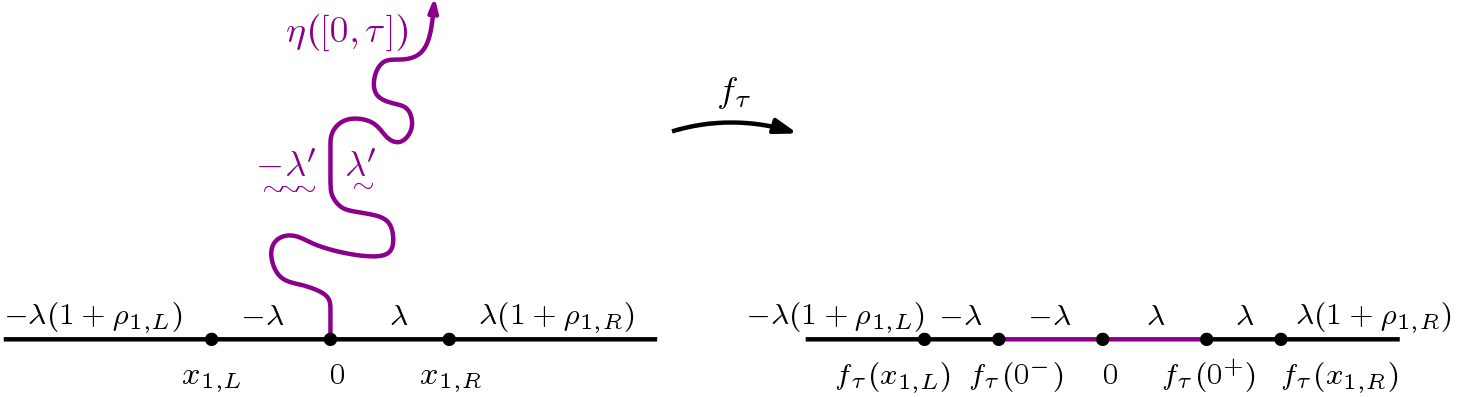}
\caption{Theorem \ref{coupling}. \label{fig:couplingfig}}
\end{figure}

In this coupling, $\eta \sim \text{SLE}_\kappa(\underline{\rho})$ is almost surely determined by $h$, that is, $\eta$ is a deterministic function of $h$ (see Theorem 1.2 of \cite{MS16a}). When $\kappa \in (0,4)$, a \textit{flow line} of the GFF $h+\Phi_0$ on $\UH$ is an $\SLE_\kappa(\underline{\rho})$ curve, $\eta$, coupled with $h+\Phi_0$ as in Theorem \ref{coupling}. This definition can be extended to other simply connected domains than $\UH$, using the conformal coordinate change described in (\ref{eq:isurface}), see Remark \ref{otherdomains}. If we add $\theta \chi$ to the boundary values, i.e., replace $h+\Phi_0$ by $h+\Phi_0 +\theta \chi$ then the resulting flow line is called a \textit{flow line of angle} $\theta$, and we denote it by $\eta_\theta$.

Note that if $\kappa \in (0,4)$ then $\chi>0$ and if $\kappa>4$ then $\chi<0$. If we let $\kappa \in (0,4)$ and write $\kappa' = 16/\kappa$, then $\kappa'>4$ and $\chi(\kappa) = -\chi(\kappa')$. From Theorem \ref{coupling}, it is clear that the conditional law of $h+\Phi_0$ given an $\SLE_\kappa$ or $\SLE_{\kappa'}$ curve is transformed in the same way under a conformal map, up to a sign change, which motivates the following definition. A \textit{counterflow line} of the GFF $h+\Phi_0$ is an $\SLE_{\kappa'}(\underline{\rho})$ curve coupled with $-(h+\Phi_0)$ as in Theorem \ref{coupling}. Note that the sign of the GFF is changed so that it matches the sign of $\chi(\kappa')$ and that in the notation of the theorem, the $\lambda$ is replaced by $\lambda' = \frac{\pi}{\sqrt{\kappa'}} = \lambda - \frac{\pi}{2}\chi$.

In the figures, we often write $\underset{^\squig}{a}$, where $a$ is some real number. This is to be interpreted as $a$ plus $\chi$ times the winding of the curve, see Figure \ref{fig:winding}. This makes perfect sense for piecewise smooth curves, but for fractal curves the winding is not defined pointwise. However, the harmonic extension of the winding of the curve makes sense, as we can map conformally to $\UH$ with piecewise constant boundary conditions. The term $-\chi \arg f_\tau'(z)$ in Theorem \ref{coupling} is interpreted as $\chi$ times the harmonic extension of the winding of the curve $\eta$. 

\begin{figure}[ht!]
\centering
\includegraphics[width=155mm]{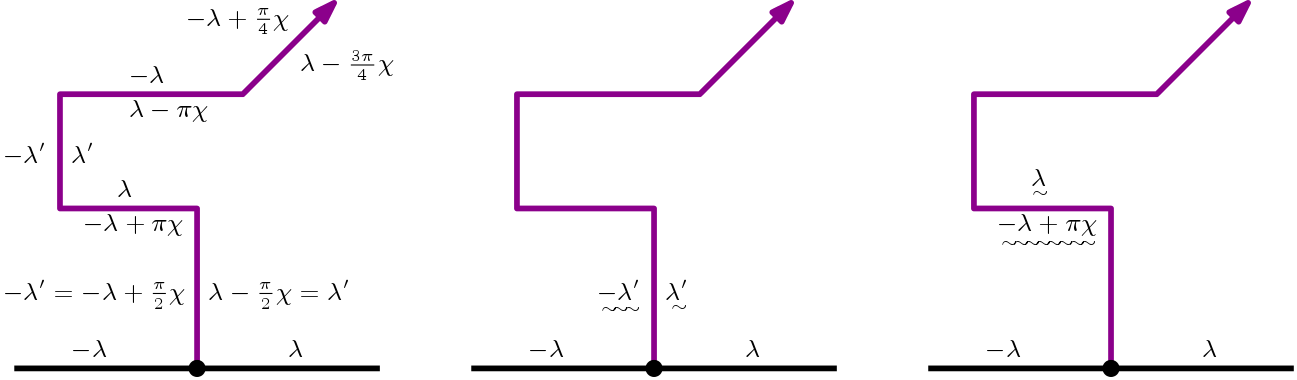}
\caption{Each time the curve makes a turn to the right, the boundary values decrease by $\chi$ times the angle and each time it makes a turn to the left the angle increases by $\chi$ times the angle, for example, a quarter turn to the right (left) decreases (increases) the boundary values by $\frac{\pi}{2} \chi$. We illustrate $a + \chi\cdot \text{winding}$ as $\underset{^\squig}{a}$, and hence, the three pictures of this figure give the same information. \label{fig:winding}}
\end{figure}

\begin{rmk}\label{otherdomains}
Let $D$ be a simply connected domain, with $x,y \in \partial D$ distinct and let $\psi: D \rightarrow \UH$ be a conformal transformation with $\psi(x) = 0$ and $\psi(y) = \infty$. Let $\underline{x}_L$ ($\underline{x}_R$) consist of $l$ ($r$) marked prime ends in the clockwise (counterclockwise) segment of $\partial D$, which are in clockwise (counterclockwise) order. The orientation of $\partial D$ is as defined by $\psi$. Write $x_{0,L} = x_{0,R} = x$ and $x_{l+1,L} = x_{r+1,R} = y$ and let $\underline{\rho}_L$ and $\underline{\rho}_R$ be vectors of weights corresponding to the points in $\underline{x}_L$ and $\underline{x}_R$ respectively. Let $h$ be a GFF on $D$ with boundary values given by
\begin{align*}
    - &\lambda(1+\overline{\rho}_{j,L})-\chi\arg\psi' \quad \text{for} \quad z' \in (x_{j,L},x_{j+1,L}) \\
    &\lambda(1+\overline{\rho}_{j,R})-\chi\arg\psi' \quad \text{for} \quad z' \in (x_{j,R},x_{j+1,R})
\end{align*}
where $(x_{j,L},x_{j+1,L})$ denotes the clockwise segment of $\partial D$ from $x_{j,L}$ to $x_{j+1,L}$ and $(x_{j,R},x_{j+1,R})$ the counterclockwise segment of $\partial D$ from $x_{j,R}$ to $x_{j+1,R}$. Let $\kappa \in (0,4)$. We say that an $\SLE_\kappa(\underline{\rho})$ curve, $\eta$, from $x$ to $y$ in $D$, coupled with $h$, is a \textit{flow line} of $h$ if $\psi(\eta)$ is coupled as a flow line of the GFF $h \circ \psi^{-1} - \arg(\psi^{-1})'$ on $\UH$.

The same statement holds for counterflow lines with $\kappa' \in (4,\infty)$ if we replace $\lambda$ with $\lambda'$ in the boundary values (but keep $\chi = \chi(\kappa)$).
\end{rmk}

We write the following statements for flow lines in $\UH$, but they hold true for other simply connected domains as well.

Let $h$ be a GFF in $\UH$ with piecewise constant boundary values. It turns out that (Theorem 1.4, \cite{MS16a}) if $\eta'$ is a counterflow line of $h$ in $\UH$, from $\infty$ to $0$, then the range of $\eta'$ is almost surely equal to the points that can be reached by the flow lines in $\UH$, from $0$ to $\infty$, with angles in the interval $\left[ -\frac{\pi}{2},\frac{\pi}{2} \right]$. Also, it almost surely holds that the left boundary of $\eta'$ is equal to the trace of the flow line of angle $-\frac{\pi}{2}$ and the right boundary is equal to the trace of the flow line of angle $\frac{\pi}{2}$ (seen from the viewpoint of travelling along $\eta'$, from the flow lines' point of view, it is the other way). Here, we talk about counterflow lines corresponding to the parameter $\kappa'$ and flow lines corresponding to $\kappa$, so that they can be coupled with the same GFF.

\begin{figure}[ht!]
\centering
\includegraphics[width=80mm]{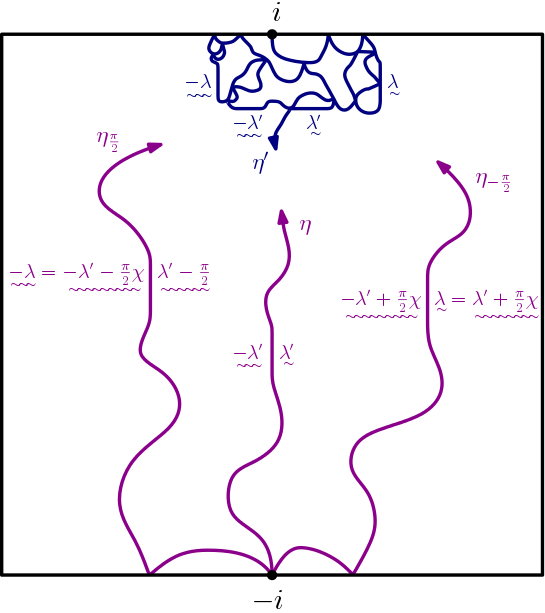}
\caption{The flow lines $\eta_{-\frac{\pi}{2}}$, $\eta$, $\eta_\frac{\pi}{2}$ from $-i$ to $i$, and the counterflow line $\eta'$ from $i$ to $-i$ in the square $[-1,1]^2$. $\eta_{-\frac{\pi}{2}}$ and $\eta_{\frac{\pi}{2}}$ will hit and merge with the respective sides of $\eta'$, as they will be the outer boundary of $\eta'$. Note that the outer boundary conditions agree. \label{fig:flowcounterflow}}
\end{figure}

Again, let $h$ be a GFF in $\UH$ with piecewise constant boundary values. For each $x \in \R$ and $\theta \in \R$, we denote by $\eta_\theta^x$ the flow line of $h$ from $x$ to $\infty$ with angle $\theta$. Fix $x_1,x_2 \in \R$ such that $x_1 \geq x_2$, then the following holds (see Figure \ref{fig:interactions} for illustrations).
\begin{enumerate}[(i)]
\item If $\theta_1 < \theta_2$, then $\eta_{\theta_1}^{x_1}$ almost surely stays to the right of $\eta_{\theta_2}^{x_2}$. If $\theta_2 - \theta_1 < \frac{\pi \kappa}{4-\kappa}$, then the paths might hit and bounce off of each other, otherwise they almost surely never collide away from the starting point.
\item If $\theta_1 = \theta_2$, then $\eta_{\theta_1}^{x_1}$ and $\eta_{\theta_2}^{x_2}$ can intersect and if they do, they merge and never separate.
\item If $\theta_2 < \theta_1 < \theta_2 + \pi$, then $\eta_{\theta_1}^{x_1}$ and $\eta_{\theta_2}^{x_2}$ can intersect, and if they do, they cross and never cross back. If $\theta_1 - \theta_2 < \frac{\pi\kappa}{4-\kappa}$, then they can hit and bounce off of each other, otherwise they never intersect after crossing.
\end{enumerate}
The above flow line interactions are the content of Theorem 1.5 of \cite{MS16a}. We shall make use of property (ii), as it is instrumental in our two-point estimate.

\begin{figure}[ht!]
\begin{subfigure}{.5\linewidth}
\centering
\includegraphics[width=55mm]{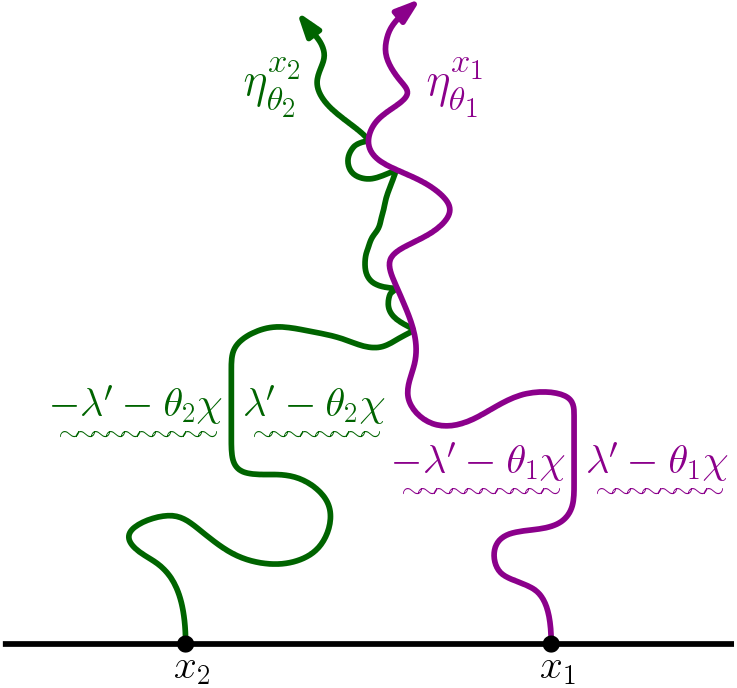}
\caption{If $\theta_1 < \theta_2$, then $\eta_{\theta_1}^{x_1}$ stays to the right of $\eta_{\theta_2}^{x_2}$.}
\label{fig:int1}
\end{subfigure}%
\begin{subfigure}{.5\linewidth}
\centering
\includegraphics[width=55mm]{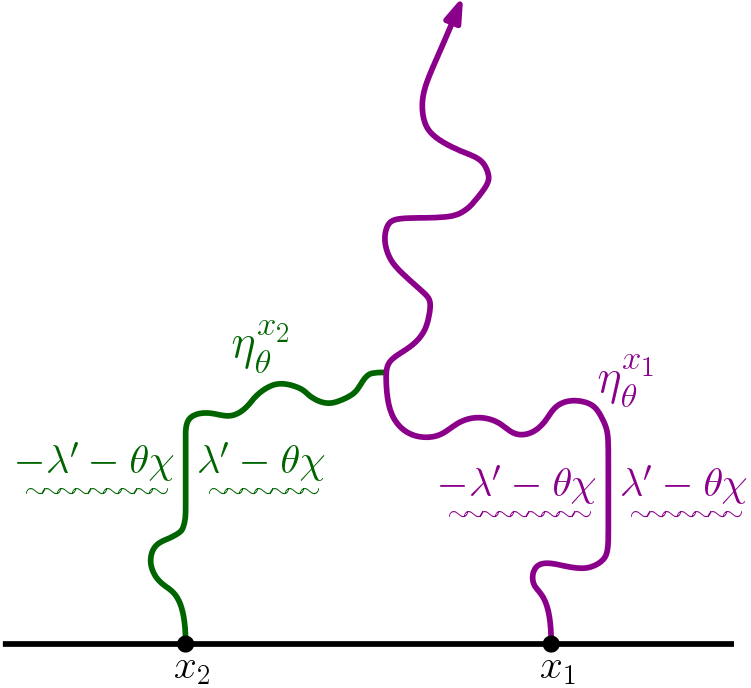}
\caption{If $\theta = \theta_1 = \theta_2$, then $\eta_{\theta_1}^{x_1}$ and $\eta_{\theta_2}^{x_2}$ merge when intersecting.}
\label{fig:int2}
\end{subfigure}\\[1ex]
\begin{subfigure}{\linewidth}
\centering
\includegraphics[width=55mm]{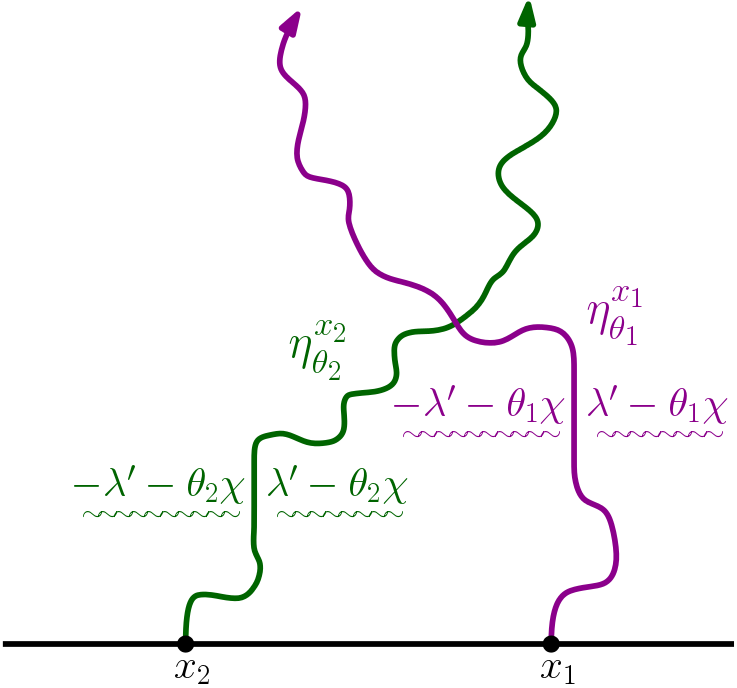}
\caption{If $\theta_2 < \theta_1 < \theta_2 + \pi$, then $\eta_{\theta_1}^{x_1}$ and $\eta_{\theta_2}^{x_2}$ cross when intersecting for the first time, and never cross back.}
\label{fig:int3}
\end{subfigure}
\caption{Flow line interactions for different pairs $(\theta_1,\theta_2)$.}
\label{fig:interactions}
\end{figure}

\subsubsection{Level lines}
The coupling is valid for $\kappa = 4$ as well. We then interpret the resulting $\SLE_\kappa(\underline{\rho})$ curve as the level line of the GFF. Note that $\chi(4)=0$, that is, there is no extra winding term, and hence the boundary values of the level line are constant along the curve, $-\lambda$ on the left and $\lambda$ on the right. As in the case of flow and counterflow lines, level lines can be defined in other domains and with different starting and ending points via conformal maps. For level lines, the terminology is a bit different: we say that $\eta$ is a level line of height $u\in\R$ if it is a level line of the GFF $h+u$. The same interactions as for flow lines hold for level lines. We let $\eta_u^x$ denote the level line of height $u$ starting from $x$. Let $x_1 \geq x_2$, then 

\begin{enumerate}[(i)]
\item If $u_1 < u_2$, then $\eta_{u_1}^{x_1}$ almost surely stays to the right of $\eta_{u_2}^{x_2}$.
\item If $u_1 = u_2$, then $\eta_{u_1}^{x_1}$ and $\eta_{u_2}^{x_2}$ can intersect and if they do, they merge and never separate.
\end{enumerate}

For more on the level lines of a GFF with piecewise constant boundary data, see \cite{WW17}.

\subsubsection{Deterministic curves and Radon-Nikodym derivatives}
We now recall some consequences of Proposition \ref{abscont}: two lemmas about flow and counterflow line behaviour and two lemmas on absolute continuity, all from \cite{MW17}, which we will need in Section \ref{sec2PEIG}. Note that while they are proven for $\kappa \neq 4$, the case $\kappa = 4$ follows by the same argument as $\kappa < 4$, when the $\SLE_4(\underline{\rho})$ curves are coupled as level lines.

\begin{figure}[ht!]
\centering
\includegraphics[width=100mm]{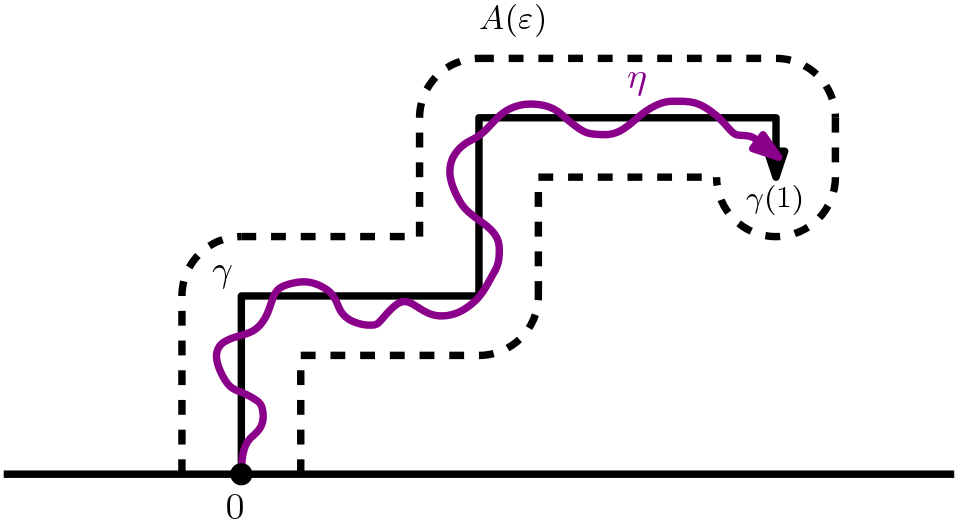}
\caption{Lemma \ref{IGlem1} states that if $\eta$ is an $\SLE_\kappa(\underline{\rho}_L,\underline{\rho}_R)$ curve in $\UH$ from $0$ to $\infty$, with $x_{1,L} = 0^-$, $x_{1,R}=0^+$ and $\rho_{1,L},\rho_{1,R} > -2$, then for any fixed deterministic curve $\gamma:[0,1] \rightarrow \overline{\UH}$, such that $\gamma(0) = 0$ and $\gamma((0,1]) \subset \UH$, and each $\epsilon>0$, the probability that $\eta$ comes within distance $\epsilon$ of $\gamma(1)$ before leaving the $\epsilon$-neighborhood of $\gamma$ is positive. \label{fig:IGlem1}}
\end{figure}

\begin{lem}[Lemma 2.3 of \cite{MW17}]\label{IGlem1}
Fix $\kappa > 0$ and let $\eta$ be an $\SLE_\kappa(\underline{\rho}_L;\underline{\rho}_R)$ curve in $\UH$ from $0$ to $\infty$, with force points $(\underline{x}_L;\underline{x}_R)$ such that $x_{1,L} = 0^-$, $x_{1,R} = 0^+$ and $\rho_{1,L},\rho_{1,R} > -2$. Let $\gamma:[0,1] \rightarrow \overline{\UH}$ be a deterministic curve such that $\gamma(0) = 0$ and $\gamma((0,1]) \subset \UH$. Fix $\epsilon > 0$, write $A(\epsilon) = \{ z: \dist(z,\gamma([0,1])) < \epsilon \}$ and define the stopping times
\begin{align*}
    \sigma_1 = \inf \{t \geq 0: |\eta(t) - \gamma(1)| \leq \epsilon \}, \quad \sigma_2 = \inf \{ t \geq 0: \eta(t) \notin A(\epsilon) \}.
\end{align*}
Then $\prob(\sigma_1 < \sigma_2) > 0$.
\end{lem}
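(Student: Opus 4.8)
The plan is to reduce the statement to a standard fact about $\SLE_\kappa(\underline\rho)$ processes: any deterministic neighbourhood of a deterministic curve gets traced with positive probability. First I would observe that by the domain Markov property (applied at a suitable sequence of stopping times) and by a compactness argument on $\gamma([0,1])$, it suffices to prove the claim in the case that $\gamma$ is short, say contained in a small disk around $0$; the general case then follows by concatenating finitely many such positive-probability events, using the Markov property of $\SLE_\kappa(\underline\rho)$ described after \eqref{eq:driver} (the curve continued after a stopping time is again an $\SLE_\kappa(\underline\rho)$ with force points at the images of the original ones), together with Lemma \ref{krconv} to control the perturbation of the force point locations.

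Next I would invoke the absolute continuity of $\SLE_\kappa(\underline\rho)$ with respect to ordinary $\SLE_\kappa$ up to the first time a force point is hit. Concretely, let $\sigma = \sigma_1 \wedge \sigma_2 \wedge \tau$, where $\tau$ is the first time $\eta$ swallows (or comes microscopically close to) a force point; on the event $\{\sigma_1 < \sigma_2\}$ restricted to times before $\sigma$, the Radon--Nikodym derivative of the $\SLE_\kappa(\underline\rho)$ law against the $\SLE_\kappa$ law is given by the local martingale $M_t$ of \eqref{eq:krmtg} (equivalently the one-force-point version $M_t^\zeta$), which is bounded above and below on $\{t < \sigma\}$ because $g_t'(x_{j,q})$, $|W_t - V_t^{j,q}|$ and the pairwise differences $|V_t^{j_1,q_1} - V_t^{j_2,q_2}|$ all stay bounded away from $0$ and $\infty$ there (the force points are at positive distance from $\gamma$ if $\epsilon$ is small). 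Hence it is enough to prove $\prob_{\SLE_\kappa}(\sigma_1 < \sigma_2) > 0$.

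For ordinary $\SLE_\kappa$ this is classical: the driving function is $\sqrt\kappa B_t$, and by a support-type theorem for Brownian motion (Cameron--Martin / the fact that the law of $(B_t)_{t \le T}$ charges every open set in $C([0,T])$), we can force $W_t$ to follow, to within arbitrarily small uniform error, the driving function of the deterministic Loewner chain whose hull grows along (a reparametrization of) $\gamma$; continuity of the Loewner map in the Carathéodory sense with respect to uniform perturbations of the driving function — together with the fact that $\gamma((0,1]) \subset \UH$ keeps everything at positive capacity-distance from the boundary — then guarantees that the resulting curve stays in $A(\epsilon)$ and reaches within $\epsilon$ of $\gamma(1)$, all with positive probability. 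I would phrase this last deduction by citing the relevant continuity estimates for the Loewner equation (e.g. from \cite{Law05} or \cite{RS05}).

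The main obstacle is the step tying a uniformly-small perturbation of the driving function to a uniformly-small perturbation of the traced curve: Loewner evolution is only Carathéodory-continuous in general, so one must be careful that $\eta$ does not make a large excursion even while its driving function stays close to the target. This is handled because $\gamma$ is a fixed curve bounded away from $\R$ on $(0,1]$, so one works on a compact subset of $\UH$ where the Loewner flow is genuinely (locally uniformly) continuous in the driving function; splitting $\gamma$ into short pieces via the Markov property keeps each perturbation in this good regime, and the bounded Radon--Nikodym derivative then transfers the positive-probability conclusion from $\SLE_\kappa$ to $\SLE_\kappa(\underline\rho)$.
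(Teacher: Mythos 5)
The paper does not prove this lemma; it imports it as Lemma~2.3 of \cite{MW17}, where the proof goes through the imaginary-geometry coupling. Your outline (support theorem for the driving function of $\SLE_\kappa$, then transfer to $\SLE_\kappa(\underline\rho)$ by Girsanov) is a natural first attempt, but the Girsanov step has a real gap. You assert that the local martingale $M_t$ of \eqref{eq:krmtg} is bounded above and below on $\{t<\sigma\}$ because the factors $|W_t-V_t^{j,q}|$ and $|V_t^{j_1,q_1}-V_t^{j_2,q_2}|$ stay bounded away from $0$, ``the force points [being] at positive distance from $\gamma$ if $\epsilon$ is small''. That premise fails: the hypothesis is $x_{1,L}=0^-$ and $x_{1,R}=0^+$, so those two force points sit \emph{at} $\gamma(0)=0$ and cannot be pushed outside $A(\epsilon)$ by shrinking $\epsilon$. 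At $t=0$ one has $|W_0-V_0^{1,L}|=|W_0-V_0^{1,R}|=|V_0^{1,L}-V_0^{1,R}|=0$, so the normalizing constant $M_0$ is either $0$ or $+\infty$ (unless the exponents conspire to cancel), and the Radon--Nikodym derivative $M_t/M_0$ is not bounded on any $\{t<\sigma\}$. This degeneracy of the Girsanov reweighting when the force points start immediately adjacent to the seed is precisely the non-trivial content of the lemma, and it is why \cite{MW17} argue instead through the flow-line coupling (Theorem~\ref{coupling}) and GFF absolute continuity (Proposition~\ref{abscont}), which are statements about the field rather than about the Loewner martingale and thereby sidestep the $t=0$ singularity.

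Your intended patch --- move the force points to $\pm\delta$, run the Girsanov argument there, and pass to $\delta\to 0$ via Lemma~\ref{krconv} --- does not close the gap as stated. Lemma~\ref{krconv} gives weak convergence of the driving triple; to deduce $\prob(\sigma_1<\sigma_2)>0$ from $\prob_\delta(\sigma_1<\sigma_2)>0$ you would need a lower bound that is \emph{uniform in $\delta$}, and your Girsanov constant deteriorates as $\delta\to 0$ because $M_0$ acquires factors like $\delta^{\rho_{1,q}/\kappa}$ and $(2\delta)^{\rho_{1,L}\rho_{1,R}/(2\kappa)}$. A correct proof must first establish, by some other means, that $\eta$ exits a small disc $B(0,r)$ through a prescribed arc while staying inside $A(\epsilon)$, with probability bounded below independently of the remaining data; only then does the domain Markov property reduce the rest of $\gamma$ to the benign case of force points at positive distance from the tip, where your Girsanov-plus-support-theorem argument genuinely applies. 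That initial step near $0$ is not supplied by the tools you cite.
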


\begin{figure}[ht!]
\centering
\includegraphics[width=100mm]{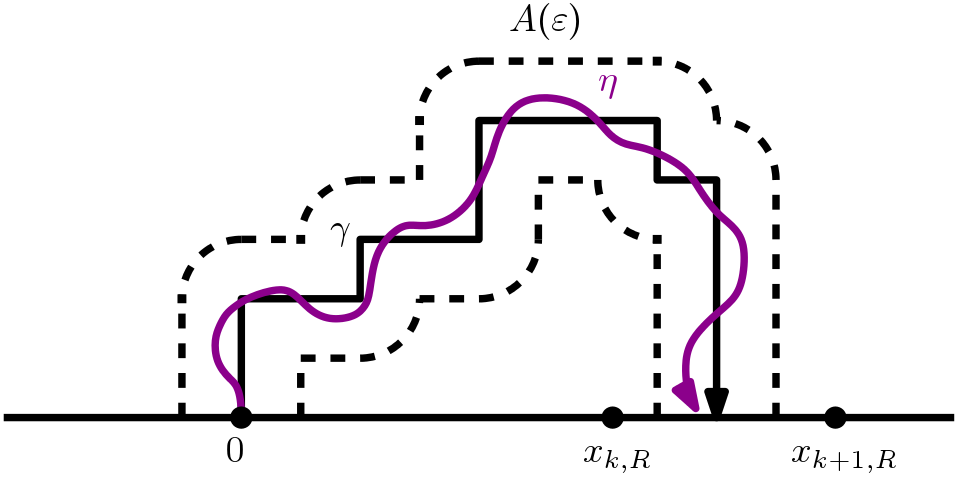}
\caption{By Lemma \ref{IGlem3} we have that if $\eta$ is an $\SLE_\kappa(\underline{\rho}_L,\underline{\rho}_R)$ curve in $\UH$ from $0$ to $\infty$, with $x_{1,L} = 0^-$, $x_{1,R}=0^+$ and $\rho_{1,L},\rho_{1,R} > -2$, then for any fixed deterministic curve $\gamma:[0,1] \rightarrow \overline{\UH}$, such that $\gamma(0) = 0$, $\gamma(1) \in [x_{k,R},x_{k+1,R}]$ and $\gamma((0,1)) \subset \UH$, where $k$ is such that $\overline{\rho}_{k,R} \in (\frac{\kappa}{2}-4,\frac{\kappa}{2}-2)$, and each $\epsilon>0$, the probability that $\eta$ hits $[x_{k,R},x_{k+1,R}]$ before leaving the $\epsilon$-neighborhood of $\gamma$ is positive. \label{fig:IGlem3}}
\end{figure}

\begin{lem}[Lemma 2.5 of \cite{MW17}]\label{IGlem3}
Fix $\kappa > 0$ and let $\eta$ be an $\SLE_\kappa(\underline{\rho}_L;\underline{\rho}_R)$ curve in $\UH$ from $0$ to $\infty$, with force points $(\underline{x}_L;\underline{x}_R)$ such that $x_{1,L} = 0^-$, $x_{1,R} = 0^+$ and $\rho_{1,L},\rho_{1,R} > -2$. Fix $k \in \N$ such that $\overline{\rho}_{k,R} \in (\frac{\kappa}{2}-4,\frac{\kappa}{2}-2)$ and an $\epsilon > 0$ such that $|x_{2,q}| \geq \epsilon$ for $q \in \{L,R\}$, $x_{k+1,R}-x_{k,R} \geq \epsilon$ and $x_{k,R} \leq \epsilon^{-1}$. Let $\gamma:[0,1] \rightarrow \overline{\UH}$, with $\gamma(0) = 0$, $\gamma((0,1)) \subset \UH$ and $\gamma(1) \in [x_{k,R},x_{k+1,R}]$ and $A(\epsilon) = \{ z: \dist(z,\gamma([0,1])) < \epsilon \}$ and define the stopping times
\begin{align*}
    \sigma_1 = \inf \{ t\geq 0: \eta(t) \in (x_{k,R},x_{k+1,R}) \}, \quad \sigma_2 = \inf \{ t\geq 0: \eta(t) \notin A(\epsilon) \}.
\end{align*}
Then there exists a $p_1 = p_1(\kappa,\max_{j,q} |\rho_{j,q}|,\overline{\rho}_{k,R},\epsilon) > 0$, such that $\prob(\sigma_1 < \sigma_2) \geq p_1$.
\end{lem}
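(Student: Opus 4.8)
The plan is to combine Lemma \ref{IGlem1} — which lets the curve shadow the deterministic path $\gamma$ until it is close to $\gamma(1)$ — with a second step in which, having arrived near the boundary interval, the curve actually hits $(x_{k,R},x_{k+1,R})$; this second step is available precisely because $\overline{\rho}_{k,R}\in(\frac{\kappa}{2}-4,\frac{\kappa}{2}-2)$, which by Lemma \ref{SLEkrprop} is the range of aggregate weight for which an $\SLE_\kappa(\cdot)$ curve touches that interval. Compared with Lemma \ref{IGlem1} the extra content is (a) upgrading ``comes within $\epsilon$ of $\gamma(1)$'' to ``hits the interval'', and (b) making the positive probability uniform over all force-point configurations obeying the stated constraints; (b) is the crux.

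Fix $\gamma$ and $\epsilon$. For the fixed-configuration positivity I would first apply Lemma \ref{IGlem1} to a perturbed curve $\widetilde{\gamma}$ that agrees with $\gamma$ away from the endpoint but is bent upward near it so that $\widetilde{\gamma}(1)=\gamma(1)+i\epsilon/4\in\UH$ (so that Lemma \ref{IGlem1} applies) and so that its $(\epsilon/4)$-tube lies inside $A(\epsilon)$. On the resulting positive-probability event, at the first time $\sigma$ that $\eta$ is within $\epsilon/4$ of $\widetilde{\gamma}(1)$ the curve has stayed in $A(\epsilon)$, it is within $\epsilon/2$ of $\gamma(1)\in[x_{k,R},x_{k+1,R}]$, and the unbounded component of $\UH\setminus\eta([0,\sigma])$ still contains a half-disk of definite radius around $\gamma(1)$. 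By the domain Markov property the continuation of $\eta$ is an $\SLE_\kappa(\cdot)$ started from $\eta(\sigma)$; comparing it, via Proposition \ref{abscont}, with an $\SLE_\kappa(\overline{\rho}_{k,R})$ started near a boundary interval, which by Lemma \ref{SLEkrprop} hits that interval with positive probability, and noting that to do so $\eta$ need travel only a distance $O(\epsilon)$ and hence stays in $A(\epsilon)$, we conclude that $\eta$ hits $(x_{k,R},x_{k+1,R})$ before leaving $A(\epsilon)$ with positive probability. Thus $\prob(\sigma_1<\sigma_2)>0$ for every fixed admissible configuration.

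To obtain the uniform lower bound $p_1$, I would use the absolute-continuity machinery — Proposition \ref{abscont} together with the explicit Radon--Nikodym martingale \eqref{eq:krmtg} — to reduce to a compact family of configurations: force points beyond a radius $T=T(\kappa,\max_{j,q}|\rho_{j,q}|,\epsilon)$ contribute, on the curve up to exiting $A(\epsilon)$, a Radon--Nikodym factor tending to $1$ as they recede, so they may be discarded at bounded multiplicative cost, while the remaining ones lie in $[-T,T]$ with $x_{k,R}\le\epsilon^{-1}$; the constraints $|x_{2,q}|\ge\epsilon$ and $x_{k+1,R}-x_{k,R}\ge\epsilon$ keep the comparison and the geometry uniform. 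For the reduced, compact family Lemma \ref{krconv} gives continuity of the driving processes in the configuration, and since $\{\sigma_1<\sigma_2\}$ can be sandwiched between events that are open and closed in the local-uniform topology (replace $\epsilon$ by $\epsilon\pm\delta$ and let $\delta\to0$), the map sending a configuration to $\prob(\sigma_1<\sigma_2)$ is lower semicontinuous, hence attains a positive infimum on the compact parameter set, which is the desired $p_1$.

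The main obstacle is this last reduction: absorbing arbitrarily many, possibly distant, force points into a bounded-complexity reference configuration while retaining uniform control of the Radon--Nikodym derivative on $A(\epsilon)$, in particular when $\gamma$ (and hence the curve) comes close to the real line at an intermediate force point, so that one cannot simply invoke absolute continuity away from the affected boundary arcs. Making this quantitative, using exactly the hypotheses $|x_{2,q}|\ge\epsilon$, $x_{k+1,R}-x_{k,R}\ge\epsilon$ and $x_{k,R}\le\epsilon^{-1}$, is the technical heart; the non-openness of $\{\sigma_1<\sigma_2\}$ in the compactness step is a comparatively minor nuisance handled by the standard $\epsilon\pm\delta$ sandwiching.
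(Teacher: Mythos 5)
The paper does not give its own proof of this lemma; it is imported verbatim from \cite{MW17} (Lemma~2.5 there), so there is no in-paper argument to compare your proposal against. Judged on its own, your two-stage plan (steer the curve along a bent-up $\tilde\gamma$ via Lemma~\ref{IGlem1}, then use the domain Markov property together with Lemma~\ref{SLEkrprop} and absolute continuity to convert ``close to $\gamma(1)$'' into ``hits $(x_{k,R},x_{k+1,R})$'') is a reasonable outline for the fixed-configuration positivity, and it is the same spirit as the argument in \cite{MW17}.

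The genuine gap is the uniformity step, which you yourself flag as ``the technical heart'' and ``the main obstacle'' without closing it. Concretely, your compactness argument cannot work as stated: the constant $p_1$ is only allowed to depend on $\kappa$, $\max_{j,q}|\rho_{j,q}|$, $\overline{\rho}_{k,R}$ and $\epsilon$, but not on the number $l+r$ of force points or on their locations inside $[-T,T]$, and the family of admissible $(\underline{x}_L,\underline{\rho}_L;\underline{x}_R,\underline{\rho}_R)$ is therefore not pre-compact in any sense that Lemma~\ref{krconv} applies to (Lemma~\ref{krconv} compares two configurations with the \emph{same} vector of weights and a fixed number of force points; it says nothing about collapsing or discarding force points). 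Truncating far-away force points also does not reduce to a finite-dimensional family, because arbitrarily many force points can sit between $x_{2,R}$ and $x_{k,R}$, at mutual distances well below $\epsilon$, and these are precisely the ones the curve must pass near while following $\gamma$. A complete proof must control the Radon--Nikodym derivative (e.g. via the explicit martingale \eqref{eq:krmtg}, or via Lemma~\ref{IGlem4} after Lemma~\ref{IGlem3} has been reformulated appropriately) by a quantity depending only on $\kappa$, $\max_{j,q}|\rho_{j,q}|$, $\overline{\rho}_{k,R}$ and $\epsilon$, uniformly over all such intermediate configurations — that is the step your proposal outlines as desirable but does not carry out, and softening it to a lower-semicontinuity-plus-compactness argument does not circumvent it.
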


Next, we shall describe the Radon-Nikodym derivatives between $\SLE_\kappa(\underline{\rho})$ processes in different domains, see also \cite{Dub09} and \cite{MW17}. The results that we need are Lemma \ref{IGlem4} and Lemma \ref{IGlem5}, which we will use in Section \ref{sec2PEIG}. Let 
\begin{align*}
    c = (D,z_0,\underline{x}_L,\underline{x}_R,z_\infty)
\end{align*}
be a configuration, that is, a Jordan domain $D$ with boundary points $z_0$, $\underline{x}_L$, $\underline{x}_R$ and $z_\infty$, and let $U$ be an open neighborhood of $z_0$. Denote the law of an $\SLE_\kappa(\underline{\rho}_L;\underline{\rho}_R)$ process with configuration $c$, stopped the first time $\tau$ it exits $U$, by $\mu_c^U$. Let $H_D$ be the Poisson excursion kernel of $D$, that is, if $\varphi: D \rightarrow \UH$ is conformal, then
\begin{align*}
    H_D(x,y) = \frac{\varphi'(y)}{(\varphi(y)-\varphi(x))^2}.
\end{align*}
Furthermore, let $\rho_\infty = \kappa-6-\sum_{j,q} \rho_{j,q}$ and 
\begin{align*}
    Z(c) &= H_D(z_0,z_\infty)^{-\frac{\rho_\infty}{2\kappa}} \times \prod_{j,q} H_D(z_0,x_{j,q})^{-\frac{\rho_{j,q}}{2\kappa}} \\
    &\times \prod_{(j_1,q_1) \neq (j_2,q_2)} H_D(x_{j_1,q_1},x_{j_2,q_2})^{-\frac{\rho_{j_1,q_1} \rho_{j_2,q_2}}{4\kappa}} \times \prod_{j,q} H_D(x_{j,q},z_\infty)^{-\frac{\rho_{j,q} \rho_\infty}{4\kappa}}.
\end{align*}
Moreover, let
\begin{align*}
    c_\tau = (D \setminus K_\tau, \eta(\tau),\underline{x}_L^\tau,\underline{x}_R^\tau,z_\infty),
\end{align*}
where $x_{j,q}^\tau = x_{j,q}$ if $x_{j,q}$ is not swallowed by $\eta$ at time $\tau$ and $x_{j,q}^\tau$ the leftmost (resp. rightmost) point of $K_\tau \cap \partial D$ on the clockwise (resp. counterclockwise) arc of $\partial D$ if $q=L$ (resp. $q=R$). Moreover, let $\mu^\text{loop}$ be the Brownian loop measure, a $\sigma$-finite measure on unrooted loops (see \cite{LW04}), and write
\begin{align*}
    m(D;K,K') = \mu^\text{loop}(l:l \subseteq D, l \cap K \neq \emptyset, l \cap K' \neq \emptyset),
\end{align*}
where
\begin{align*}
    \xi = \frac{(6-\kappa)(8-3\kappa)}{2\kappa}.
\end{align*}

\begin{lem}[Lemma 2.7 of \cite{MW17}]\label{IGlem4}
Let $c = (D,z_0,\underline{x}_L,\underline{x}_R,z_\infty)$ and $\tilde{c} = (\widetilde{D},z_0,\underline{\tilde{x}}_L,\underline{\tilde{x}}_R,\tilde{z}_\infty)$ be configurations and $U$ an open neighborhood of $z_0$ such that $c$ and $\tilde{c}$ and the weights of the marked points agree in $U$, and the distance from $U$ to the marked points of $c$ and $\tilde{c}$ which differ, is positive. Then, the probability measures $\mu_c^U$ and $\mu_{\tilde{c}}^U$ are mutually absolutely continuous and the Radon-Nikodym derivative between them are given by
\begin{align*}
    \frac{d\mu_{\tilde{c}}^U}{d\mu_{c}^U} (\eta) = \frac{Z(\tilde{c}_\tau)/Z(\tilde{c})}{Z(c_\tau)/Z(c)} \exp \left( -\xi m(D;K_\tau,D \setminus \widetilde{D}) + \xi m(\widetilde{D};K_\tau,\widetilde{D} \setminus D) \right).
\end{align*}
\end{lem}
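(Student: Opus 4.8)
The plan is to assemble the identity from the partition-function description of $\SLE_\kappa(\underline\rho)$ (as in \cite{SW05} and \cite{Dub09}), the conformal covariance of $Z$, and the Brownian-loop-soup form of conformal restriction, and then to reduce the general two-domain statement to the basic case $\widetilde D\subseteq D$. First I would use that the $\SLE_\kappa(\underline\rho)$ law is conformally natural (it is defined as an equivalence class under coordinate change) while $Z$ is conformally covariant with the standard boundary weights at $z_0$, at each $x_{j,q}$ and at $z_\infty$; together these make the double ratio $\bigl(Z(\tilde c_\tau)/Z(\tilde c)\bigr)/\bigl(Z(c_\tau)/Z(c)\bigr)$ and the two loop terms invariant under a common conformal precomposition, so one may assume $D=\UH$, $z_0=0$, $z_\infty=\infty$. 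Next I would reduce to the case $\widetilde D\subseteq D$: given general $c,\tilde c$ agreeing in $U$, pick a Jordan domain $V$ with $U\subseteq V\subseteq D\cap\widetilde D$ that agrees with both $D$ and $\widetilde D$ near $z_0$ and keeps the target $z_\infty$; applying the restriction case to the pairs $(c,c^V)$ and $(\tilde c,c^V)$ and dividing the two Radon--Nikodym derivatives yields the claim, since the $Z$-ratios telescope and, for loops $l\subseteq D$, the dichotomy ``$l\subseteq\widetilde D$, or $l$ meets $D\setminus\widetilde D$'' gives $m(D;K_\tau,D\setminus V)-m(\widetilde D;K_\tau,\widetilde D\setminus V)=m(D;K_\tau,D\setminus\widetilde D)-m(\widetilde D;K_\tau,\widetilde D\setminus D)$.

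In the restriction case $\widetilde D=\UH\setminus A$ with $A$ a compact $\UH$-hull at positive distance from $\overline U$, I would first peel off the force points: by \eqref{eq:krmtg}, inside a fixed domain an $\SLE_\kappa(\underline\rho)$ process is chordal $\SLE_\kappa$ weighted by the local martingale $M_t$, and $M_t$ is --- up to the deterministic constant $M_0$, a product of Poisson-kernel values that is absorbed into $Z(c)$ --- exactly the ratio of the partition functions of the time-$t$ configurations; likewise in $\widetilde D$ after transporting the force points by the fixed map $\psi\colon\widetilde D\to\UH$ with $\psi(0)=0$, $\psi(\infty)=\infty$. Carrying out both reweightings, the comparison of $\mu_c^U$ and $\mu_{\tilde c}^U$ reduces to comparing chordal $\SLE_\kappa$ from $0$ to $\infty$ in $\UH$ with its image under $\psi^{-1}$, i.e.\ chordal $\SLE_\kappa$ from $0$ to $\infty$ in $\widetilde D$, both stopped at the exit time $\tau$ of $U$, the entire force-point dependence now sitting inside the $Z$-ratio.

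For this last comparison I would invoke (or re-derive) the conformal restriction formula with its loop-soup correction: coupling the two Loewner flows and letting $\psi_t$ be the conformal map that compares them at time $t$ (a normalized mapping-out of $g_t(A)$, defined on $\{K_t\cap A=\emptyset\}$), the candidate Radon--Nikodym derivative is $\psi_t'(W_t)^{(6-\kappa)/(2\kappa)}\exp\!\bigl(-\xi\,m(\UH;K_t,A)\bigr)$; applying It\^o's formula with the driving SDE $dW_t=\sqrt\kappa\,dB_t$ and the Loewner ODEs, the drift generated by $\psi_t'(W_t)^{(6-\kappa)/(2\kappa)}$ must be cancelled by $-\xi$ times the drift of $t\mapsto m(\UH;K_t,A)$. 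The identity $\tfrac{d}{dt}m(\UH;K_t,A)=(\text{explicit local functional of }\psi_t,W_t)$ is Lawler's formula (via the Brownian loop measure of \cite{LW04}) for the rate at which loops get absorbed between the growing hull and the fixed hull $A$, and matching it against the remaining drift is exactly what pins down $\xi=(6-\kappa)(8-3\kappa)/(2\kappa)$. I expect this loop-measure drift computation to be the main obstacle: it is the only genuinely analytic step --- the rest is bookkeeping of conformal weights and partition functions --- and it is where the central charge enters.

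Finally I would check integrability on $[0,\tau]$: the curve stays in $\overline U$, which is at positive distance from $A$ and from the marked points where $c$ and $\tilde c$ differ, so $m(\UH;K_t,A)$ and all the Poisson-kernel factors stay bounded above and below; hence the candidate is a true martingale and equals the Radon--Nikodym derivative evaluated at time $\tau$. Undoing $\psi$ and reinstating the two force-point weightings converts the time-$\tau$ value into the asserted expression, and the gluing step of the first paragraph then removes the hypothesis $\widetilde D\subseteq D$.
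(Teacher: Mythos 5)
There is nothing in the paper to compare against here: Lemma~\ref{IGlem4} is imported verbatim as Lemma~2.7 of \cite{MW17} and is not proved in this paper, so your attempt can only be measured against the argument in the cited literature. Your outline is essentially that argument: absorb the force points (and, implicitly, the target) into the partition function via the Girsanov reweighting of \eqref{eq:krmtg} from \cite{SW05}, compare chordal $\SLE_\kappa$ in $\UH$ with chordal $\SLE_\kappa$ in $\UH\setminus A$ through the restriction-type local martingale $\Psi_t'(W_t)^{(6-\kappa)/(2\kappa)}\exp(-\xi\, m(\UH;K_t,A))$ (the conformal covariance of $H_D$ makes the $\Psi'$-factor exactly the chordal part of the $Z$-ratio, since $\rho_\infty=\kappa-6$ for no force points), and then pass from $\widetilde D\subseteq D$ to general pairs by an intermediate domain; your loop-measure bookkeeping $m(D;K_\tau,D\setminus V)-m(\widetilde D;K_\tau,\widetilde D\setminus V)=m(D;K_\tau,D\setminus\widetilde D)-m(\widetilde D;K_\tau,\widetilde D\setminus D)$ is correct, by splitting loops in $D$ (resp.\ $\widetilde D$) according to whether they stay in $D\cap\widetilde D$.

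Two caveats keep this at the level of a plan rather than a proof. First, the entire analytic content sits in the step you defer: the It\^o drift of $\Psi_t'(W_t)^{(6-\kappa)/(2\kappa)}$ (a Schwarzian-derivative term) must be matched against the time derivative of $t\mapsto m(\UH;K_t,A)$, and this is the only place where the exponent and $\xi=\frac{(6-\kappa)(8-3\kappa)}{2\kappa}$ are actually determined; since $\xi$ is sensitive to the normalization of $\mu^{\textup{loop}}$ (several references carry the loop term with coefficient ``central charge over two''), asserting that the matching ``pins down'' the stated constant without doing the computation is exactly where a factor-of-two error would hide. Second, the reduction glosses the case $z_\infty\neq\tilde z_\infty$: a bounded intermediate Jordan domain $V\subseteq D\cap\widetilde D$ cannot literally ``keep the target $z_\infty$'', and your basic restriction case compares curves whose targets may differ; this is handled by treating the target as a force point of weight $\kappa-6$ (which is precisely why $\rho_\infty=\kappa-6-\sum_{j,q}\rho_{j,q}$ and the factors $H_D(z_0,z_\infty)$, $H_D(x_{j,q},z_\infty)$ appear in $Z$), and that coordinate change has to be woven explicitly through both the reweighting and the gluing step, together with a check that the boundedness of $K_\tau$ and the positive-distance hypotheses make all loop masses finite and the candidate local martingale a true martingale on $[0,\tau]$ (which you do note).
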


\begin{lem}[Lemma 2.8 of \cite{MW17}]\label{IGlem5}
Assume that we have the same setup as in Lemma \ref{IGlem4}, with $D = \UH$, $\widetilde{D} \subseteq \UH$, $U \subset \UH$ bounded and $z_0 = 0$. Fix $\varsigma > 0$ and suppose that $\dist(U,\UH \setminus \widetilde{D}) > \varsigma$ and that the force points which are outside $U$ are at least at distance $\varsigma$ from $U$. Then there exists a constant $C \geq 1$, depending on $U$, $\varsigma$, $\kappa$ and the weights of the force points, such that 
\begin{align*}
    \frac{1}{C} \leq \frac{d\mu_{\tilde{c}}^U}{d\mu_{c}^U} \leq C.
\end{align*}
\end{lem}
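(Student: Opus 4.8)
The plan is to read the bound off the explicit Radon--Nikodym formula of Lemma~\ref{IGlem4}: with $\tau$ the first time $\eta$ exits $U$, so that $K_\tau\subseteq\overline U$, that formula writes $\frac{d\mu_{\tilde c}^U}{d\mu_c^U}(\eta)$ as the product of the partition-function ratio $\big(Z(\tilde c_\tau)/Z(\tilde c)\big)\big/\big(Z(c_\tau)/Z(c)\big)$ and the exponential of the Brownian-loop masses $\mp\xi\, m(\cdot;K_\tau,\cdot)$. It suffices to bound each of these two factors above and below by positive constants, uniformly over curves $\eta$ stopped at $\tau$.

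The loop factor is the easy one. As $\widetilde D\subseteq\UH$ we have $\widetilde D\setminus\UH=\emptyset$, so $m(\widetilde D;K_\tau,\widetilde D\setminus\UH)=0$. For $m(\UH;K_\tau,\UH\setminus\widetilde D)$, every Brownian loop in $\UH$ meeting both $K_\tau\subseteq\overline U$ and $\UH\setminus\widetilde D$ has diameter at least $\dist(U,\UH\setminus\widetilde D)>\varsigma$ while meeting the fixed bounded set $\overline U$; the loop measure of loops of diameter $\ge\varsigma$ meeting $\overline U$ is a finite constant $M_0=M_0(\overline U,\varsigma)$ by the standard integrability estimate for the Brownian loop measure (small loops excluded by the diameter bound, large ones by the $t^{-2}$ decay of its time-duration density against the cost of a large displacement). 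Hence $0\le m(\UH;K_\tau,\UH\setminus\widetilde D)\le M_0$ and the loop factor lies in $[e^{-|\xi|M_0},e^{|\xi|M_0}]$.

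For the partition-function ratio, I would not treat $Z(c_\tau)/Z(c)$ and $Z(\tilde c_\tau)/Z(\tilde c)$ separately, since either can degenerate --- when $\eta$ approaches a force point inside $U$, or, if $z_0=0^{\pm}$ carries force points, through the formal singularities of $Z(c)$ and $Z(\tilde c)$. Instead I keep the quotient and split each $Z(\cdot_\tau)$ into a ``local'' part, built from the Poisson excursion kernels among $\eta(\tau)$ and the marked points lying in $\overline U$ --- the images of the force points originally inside $U$, which coincide for $c_\tau$ and $\tilde c_\tau$ because $\UH$ and $\widetilde D$ agree in a neighbourhood of $\overline U$ --- and a ``remainder'' part, built from the kernels involving a force point outside $U$ or the target point $\infty$/$\tilde z_\infty$. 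In the quotient the local parts become products of ratios $\big(H_{\widetilde D\setminus K_\tau}(p^\tau,q^\tau)/H_{\UH\setminus K_\tau}(p^\tau,q^\tau)\big)^{\pm}$ of kernels between the \emph{same} pair of points in two domains differing only by the compact set $\Delta:=\UH\setminus\widetilde D$, which sits at distance $\ge\varsigma$ from $\overline U$; reading $H$ as a Brownian-excursion mass, each such ratio is $\le 1$ and $\ge c_0>0$ (with probability $\ge c_0$ an excursion between two points of $\overline U$ stays within the $\varsigma/2$-neighbourhood of $\overline U$, hence avoids $\Delta$), uniformly in the points, in $K_\tau$ and in $\widetilde D$ --- so any blow-up as $\eta(\tau)$ hits a local force point, and any $0^{\pm}$-singularity, occurs identically in numerator and denominator and cancels. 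The remainder parts, reorganised together with the matching factors of $Z(c)$ and $Z(\tilde c)$, involve only kernels at points at distance $\ge\varsigma$ from the varying hull $K_\tau\subseteq\overline U$; removing a hull contained in the fixed bounded set $\overline U$ perturbs such a kernel by a factor in a fixed interval, so these contributions are two-sidedly bounded as well. Finally there are only finitely many, fixed, exponents, and a fixed positive constant comes from the $z_0=0^{\pm}$ force points (whose singular contributions to $Z(c)$ and $Z(\tilde c)$ cancel because the weights there agree); collecting everything gives the bound on the partition-function ratio.

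The main obstacle is the uniformity over all stopped hulls $K_\tau$: one must show, uniformly over every compact hull $K\subseteq\overline U$, (i) that a Brownian excursion between two prescribed boundary points of $\overline U$ in $\UH\setminus K$ stays within the $\varsigma/2$-neighbourhood of $\overline U$ with probability bounded below, and (ii) that a Poisson excursion kernel between points at distance $\ge\varsigma$ from $\overline U$ is comparable in $\UH\setminus K$ and in $\UH$. Both are boundary-Harnack/Beurling-type statements (or follow from a compactness argument over hulls contained in $\overline U$ together with continuity of excursion masses); once they are in hand, the rest of the argument is bookkeeping of which singular factors cancel in the quotient.
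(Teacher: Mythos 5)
The paper does not supply a proof of this statement: it is quoted verbatim as Lemma~2.8 of \cite{MW17} and used as a black box, so there is no ``paper's own proof'' to compare against. That said, your reconstruction is sound and is the natural way to derive the bound from Lemma~\ref{IGlem4}: you correctly observe that $\widetilde D\setminus\UH=\emptyset$ kills one loop term, that the other is controlled by the finite Brownian-loop mass of loops of diameter $\ge\varsigma$ meeting $\overline U$, and that the partition-function ratio should be reorganised so that the potentially singular kernel factors (at the tip $\eta(\tau)$, at interior force points, and at $0^{\pm}$) appear as ratios $H_{\widetilde D\setminus K_\tau}/H_{\UH\setminus K_\tau}$ between the \emph{same} pair of points in two domains agreeing near $\overline U$, which are two-sidedly bounded by domain monotonicity of excursion masses plus a boundary-Harnack/compactness estimate uniform over hulls $K_\tau\subseteq\overline U$; the remaining ``far'' kernel factors involve points at distance $\ge\varsigma$ from the hull and are perturbed only boundedly by removing $K_\tau$. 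The obstacles you flag (uniformity over stopped hulls, and the pairing needed so that the $0^{\pm}$ singularities cancel) are exactly the points that require care, and your outline of how to handle them is correct; this matches the strategy indicated in \cite{MW17}.
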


\section{One-point estimates}\label{secOPE}
In this section we will find first moment estimates, which will be of importance, as they will give us the means to get good two-point estimates as well as give us the upper bound of the dimension of $V_\beta^*$. Recall that $\tilde{g}_s = g_{\tilde{t}(s)}$ is the Loewner chain under the radial time change, see Section \ref{weighted}. 
\begin{prop}
\label{1PEts}
Let $\zeta > -\mu_c^2/2a$ and $x_R \geq 0$. For all $x > x_R$, we have
\begin{align*}
    \E\left[ \tilde{g}_s'(x)^\zeta 1\{\tilde{t}(s)<\infty\} \right] = K \left( \frac{x-x_R}{x} \right)^\mu e^{-a\mu(1+\rho/2)s}(1+O(e^{-(1-a+\mu) s})),
\end{align*}
where $K = \frac{\Gamma(2-2a+2\mu) \Gamma(2-4a-a\rho+\mu)}{\Gamma(2-2a+\mu) \Gamma(2-4a-a\rho+2\mu)}$.
\end{prop}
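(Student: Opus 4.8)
The plan is to use the local martingale $\tilde M_s^\zeta(x)$ of \eqref{eq:Mtilde} to trade the derivative $\tilde g_s'(x)^\zeta$ for a negative moment of the diffusion $\tilde Q_s(x)$ under the weighted measure $\prob^*$, and then to read off the asymptotics from the ergodic behaviour of that diffusion. First I would invert \eqref{eq:Mtilde}: solving for $\tilde g_s'(x)^\zeta$ gives, on the event $\{\tilde t(s)<\infty\}$,
\begin{align*}
    \tilde g_s'(x)^\zeta = (x-x_R)^{\mu(1+\rho/2)}\, e^{-a\mu(1+\rho/2)s}\, \tilde M_s^\zeta(x)\, \tilde Q_s(x)^{-\mu}.
\end{align*}
Since $\{\tilde t(s)<\infty\}$ and $\tilde Q_s(x)^{-\mu}$ are $\tilde{\mathscr{F}}_s$-measurable and nonnegative, the definition of $\prob^* = \prob_{x,\zeta}^*$ (applied to an increasing approximation of this product) together with $\prob^*(\tilde t(s)<\infty)=1$ from Appendix A yields
\begin{align*}
    \E\!\left[\tilde g_s'(x)^\zeta 1\{\tilde t(s)<\infty\}\right]
    = (x-x_R)^{\mu(1+\rho/2)}\, e^{-a\mu(1+\rho/2)s}\, \tilde M_0^\zeta(x)\, \E^*\!\left[\tilde Q_s(x)^{-\mu}\right].
\end{align*}
Using $\tilde M_0^\zeta(x) = x^{-\mu}(x-x_R)^{-\mu\rho/2}$ the prefactor collapses to $\bigl((x-x_R)/x\bigr)^\mu$, so the proposition is equivalent to
\begin{align*}
    \E^*\!\left[\tilde Q_s(x)^{-\mu}\right] = K\bigl(1 + O(e^{-(1-a+\mu)s})\bigr), \qquad K = \frac{\Gamma(2-2a+2\mu)\Gamma(2-4a-a\rho+\mu)}{\Gamma(2-2a+\mu)\Gamma(2-4a-a\rho+2\mu)}.
\end{align*}

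It then remains to prove this last display. Under $\prob^*$ the process $\tilde Q_s(x)$ is the Jacobi-type diffusion \eqref{eq:Qs}, started from the deterministic point $\tilde Q_0(x) = (x-x_R)/x \in (0,1)$; by Corollary \ref{Qprop} it is positive recurrent and reversible with respect to the Beta-type density $p_{\tilde Q}$ of \eqref{eq:indensQ}, and the constant $K$ is precisely its stationary expectation $\E^*[\tilde X_s^{-\mu}] = \int_0^1 y^{-\mu}p_{\tilde Q}(y)\,dy$ computed in \eqref{eq:basiccalc}. So everything reduces to a quantitative convergence-to-equilibrium statement for \eqref{eq:Qs}. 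The generator $\mathcal{L}^*$ of \eqref{eq:Qs} is self-adjoint on $L^2(p_{\tilde Q})$ with purely discrete spectrum (its eigenfunctions are Jacobi polynomials); the first non-zero eigenvalue is $\nu_1 = 1-a+\mu$, as one sees by checking that $q\mapsto q-\bar q$ (with $\bar q$ the mean of $p_{\tilde Q}$) satisfies $\mathcal{L}^* f = -(1-a+\mu)f$, and all higher eigenvalues obey $\nu_n \ge 2\nu_1$ for $n\ge 2$. Hence, for a test function $f$ in the domain, $\bigl|\E^*[f(\tilde Q_s)] - \int f\,dp_{\tilde Q}\bigr| \lesssim e^{-(1-a+\mu)s}$, with the $n\ge 2$ modes absorbed into the error.

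The main obstacle is that the observable of interest, $f(y)=y^{-\mu}$, blows up as $y\to 0$ and need not lie in $L^2(p_{\tilde Q})$ — it lies only in $L^1(p_{\tilde Q})$, which is exactly the condition for the Gamma factors in $K$ to be finite — so the spectral estimate cannot be applied to it directly. I would get around this with the semigroup identity $\E^*[\tilde Q_s^{-\mu}] = \E^*\!\bigl[(P^*_{s_0}f)(\tilde Q_{s-s_0})\bigr]$ for a fixed small $s_0>0$, where $P^*_{s_0}$ is the time-$s_0$ transition operator of \eqref{eq:Qs} under $\prob^*$: since that diffusion has a smooth transition density which at any positive time is bounded near the endpoints by a constant multiple of $p_{\tilde Q}$, the function $P^*_{s_0}f$ is bounded and continuous on $[0,1]$, so the estimate of the previous paragraph applies to it and yields $|\E^*[\tilde Q_s^{-\mu}] - K| \lesssim e^{-(1-a+\mu)(s-s_0)} \lesssim e^{-(1-a+\mu)s}$ (the implicit constant being allowed to depend on $s_0$ and on $x$ through $\tilde Q_0(x)$, which is harmless here). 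An equivalent hands-on route would split $\E^*[\tilde Q_s^{-\mu}] = \E^*[\tilde Q_s^{-\mu}1\{\tilde Q_s>\epsilon\}] + \E^*[\tilde Q_s^{-\mu}1\{\tilde Q_s\le\epsilon\}]$, apply the mixing estimate to the bounded observable $y\mapsto y^{-\mu}1\{y>\epsilon\}$, bound the second term uniformly in $s$ via $\sup_s\E^*[\tilde Q_s^{-\mu(1+\delta)}]<\infty$ for some $\delta>0$ together with its stationary analogue, and then let $\epsilon\to 0$. Either way, tracking the constant gives the leading term $K\bigl((x-x_R)/x\bigr)^\mu e^{-a\mu(1+\rho/2)s}$, which is the assertion of the proposition.
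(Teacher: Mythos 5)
Your proposal is correct and follows the same route as the paper: invert \eqref{eq:Mtilde}, change to $\prob^*$, reduce to $\E^*[\tilde Q_s^{-\mu}] = K(1+O(e^{-(1-a+\mu)s}))$, and identify $K = \E^*[\tilde X_s^{-\mu}]$ from the invariant density. The only divergence is that the paper's proof simply cites Corollary~\ref{Qprop}, which already asserts the $L^1(\nu_{\tilde Q})$ mixing estimate \eqref{invconvQ} for $f\in L^1$; you instead re-derive it via a spectral argument, correctly noting that $y^{-\mu}$ may fail to be in $L^2(p_{\tilde Q})$ and working around this with a smoothing step $\E^*[\tilde Q_s^{-\mu}] = \E^*[(P^*_{s_0}f)(\tilde Q_{s-s_0})]$. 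That workaround is essentially what the paper's Appendix~A establishes (a uniform multiplicative bound $p_Y(t,x,y) = p_Y(y)(1+O(e^{-(1-a+\mu)t}))$ for $t>\epsilon$, which directly gives the $L^1$ estimate), so your detour is sound but redundant given the stated corollary; your eigenvalue identification $\nu_1 = \tfrac{\delta_++\delta_-}{4} = 1-a+\mu$ matches the paper's.
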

\begin{proof}
By \eqref{eq:basiccalc}, $f(x) = x^{-\mu}$ is in $L^1(\nu_{\tilde{Q}})$ (see Corollary \ref{Qprop}) and thus Corollary \ref{Qprop} gives that 
\begin{align}\label{eq:QasympX}
    \E^*[\tilde{Q}_s^{-\mu}] = \E^*[\tilde{X}_s^{-\mu}](1+O(e^{-(1-a+\mu) s})).
\end{align}
Thus, since $\prob^*(\tilde{t}(s) < \infty) = 1$ for every $s$, we have
\begin{align*}
&\E[ \tilde{g}_s'(x)^\zeta 1\{ \tilde{t}(s)<\infty\}] = \E \left[ \tilde{M}_s^\mu \tilde{Q}_s^{-\mu} (x-x_R)^{\mu(1+\rho/2)} e^{-a\mu(1+\rho/2)s} 1\{ \tilde{t}(s)<\infty\} \right] \\
&= (x-x_R)^{\mu(1+\rho/2)} e^{-a\mu(1+\rho/2)} \E\left[ \tilde{M}_s^\mu \tilde{Q}_s^{-\mu} 1\{ \tilde{t}(s) < \infty \} \right] \\
&= (x-x_R)^{\mu(1+\rho/2)} e^{-a\mu(1+\rho/2)} x^{-\mu} (x-x_R)^{-\mu \rho/2} \E^*[\tilde{Q}_s^{-\mu}] \\
&= \left( \frac{x-x_R}{x} \right)^\mu e^{-a\mu (1+\rho/2)s} \E^*[\tilde{Q}_s^{-\mu}] \\
&= \left( \frac{x-x_R}{x} \right)^\mu e^{-a\mu (1+\rho/2)s} \E^*[\tilde{X}_s^{-\mu}] (1+O(e^{-(1-a+\mu) s})) \\
&= K \left( \frac{x-x_R}{x} \right)^\mu e^{-a\mu (1+\rho/2)s} (1+O(e^{-(1-a+\mu) s})),
\end{align*}
using \eqref{eq:Mtilde} for the first equality, changing to the measure $\prob^*$ for the third inequality, using \eqref{eq:QasympX} in the fifth and \eqref{eq:basiccalc} in the last equality.
\end{proof}
Using Proposition \ref{1PEts} and Lemma \ref{time}, recalling that we can choose a $C^* = C^*(x,x_R)$ such that $\eqref{C*}$ holds, we get the following corollary.
\begin{cor}
\label{1PE}
Suppose $\zeta \geq 0$ and $x_R \geq 0$. For every $x > x_R$, there is a constant $C=C(x,x_R)$ such that 
\begin{align*}
    \frac{1}{C} e^{-\mu(1+\rho/2)s} \leq \E \Big[ g_{\tau_s}'(x)^\zeta 1\{ \tau_s < \infty\} \Big] \leq C e^{-\mu(1+\rho/2)s}.
\end{align*}
\end{cor}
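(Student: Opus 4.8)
The idea is to transfer the estimate for $\tilde g_s'(x)^\zeta$ from Proposition~\ref{1PEts}, which is stated in the radial time parameterization, to an estimate for $g_{\tau_s}'(x)^\zeta$, which is what the corollary asks for. The bridge between the two is Lemma~\ref{time}, rewritten in the form \eqref{C*} as
\begin{align*}
    \tilde t\left(\left(\tfrac{s}{a}-C^*\right)\vee 0\right)\le \tau_s\le \tilde t\left(\tfrac{s}{a}+C^*\right),
\end{align*}
valid for $s$ large enough, with $C^*=C^*(x,x_R)$. Since $\zeta\ge 0$ and $t\mapsto g_t'(x)$ is decreasing on $\R_+$ with values in $[0,1]$ (see \eqref{eq:gprime} and the surrounding discussion), monotonicity of $t\mapsto g_t'(x)^\zeta$ gives, on the event $\{\tau_s<\infty\}$ (which forces $\tilde t(s/a+C^*)<\infty$ as well, since $\tilde t$ is increasing and $\tau_s$ dominates $\tilde t((s/a-C^*)\vee 0)$ — more carefully one checks $\{\tau_s<\infty\}\subseteq\{\tilde t(\tfrac{s}{a}+C^*)<\infty\}$ using continuity of the Loewner flow up to $T_x$), the sandwich
\begin{align*}
    \tilde g_{s/a+C^*}'(x)^\zeta\,1\{\tilde t(\tfrac{s}{a}+C^*)<\infty\}\ \le\ g_{\tau_s}'(x)^\zeta\,1\{\tau_s<\infty\}\ \le\ \tilde g_{(s/a-C^*)\vee 0}'(x)^\zeta\,1\{\tilde t((\tfrac{s}{a}-C^*)\vee 0)<\infty\}.
\end{align*}

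**Taking expectations.** Now apply $\E[\cdot]$ throughout and invoke Proposition~\ref{1PEts} with the time arguments $s/a+C^*$ and $(s/a-C^*)\vee 0$. Writing $r=s/a\pm C^*$, the proposition gives
\begin{align*}
    \E\left[\tilde g_r'(x)^\zeta 1\{\tilde t(r)<\infty\}\right]=K\left(\tfrac{x-x_R}{x}\right)^\mu e^{-a\mu(1+\rho/2)r}(1+O(e^{-(1-a+\mu)r})).
\end{align*}
For the upper bound take $r=(s/a-C^*)\vee 0$: the leading factor is $e^{-a\mu(1+\rho/2)(s/a-C^*)}=e^{a\mu(1+\rho/2)C^*}e^{-\mu(1+\rho/2)s}$ for $s$ large (and for the finitely many small $s$ where the $\vee 0$ is active, the left side is trivially $\le 1\le C e^{-\mu(1+\rho/2)s}$ after enlarging $C$), and the $O$-term is bounded, so absorbing $K$, $((x-x_R)/x)^\mu$, $e^{a\mu(1+\rho/2)C^*}$ and the $O$-bound into a single constant $C=C(x,x_R)$ yields the right-hand inequality. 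For the lower bound take $r=s/a+C^*$: the leading factor is $e^{-a\mu(1+\rho/2)C^*}e^{-\mu(1+\rho/2)s}$, and since the error term $1+O(e^{-(1-a+\mu)r})$ is bounded below by a positive constant for all $r$ large (and we only care about large $s$, the small-$s$ range being handled by shrinking $1/C$), we again absorb everything into $C=C(x,x_R)$ to get the left-hand inequality. This proves the claim for all $s$ once $C$ is chosen large enough to also cover the bounded initial range of $s$.

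**Main obstacle.** The only genuinely delicate point is the bookkeeping around the indicator events: one must be sure that $\{\tau_s<\infty\}$ and the events $\{\tilde t(\cdot)<\infty\}$ appearing on the two sides line up correctly with the monotonicity inequalities, so that the sandwich is valid \emph{as stated with the indicators attached}. This is where \eqref{C*} and the monotonicity of $s\mapsto\tilde t(s)$ (together with the fact that $\tau_s$ is itself an increasing function of $s$ bounded by a swallowing time) are used; everything else is elementary manipulation of the exponential factors and absorption of constants. The restriction $\zeta\ge 0$ (rather than merely $\zeta>-\mu_c^2/2a$) is needed precisely so that $t\mapsto g_t'(x)^\zeta$ is monotone decreasing, which is what powers the sandwich.
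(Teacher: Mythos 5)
Your proof is correct and follows essentially the same route as the paper: sandwich $g_{\tau_s}'(x)^\zeta$ between $\tilde g_{s/a+C^*}'(x)^\zeta$ and $\tilde g_{s/a-C^*}'(x)^\zeta$ using Lemma~\ref{time} (in the form~\eqref{C*}) together with the monotonicity of $t\mapsto g_t'(x)$ (for which $\zeta\ge 0$ is needed), then apply Proposition~\ref{1PEts} to both ends and absorb all $x,x_R$-dependent factors into the constant $C$. The paper is slightly terser about the indicator bookkeeping and the small-$s$ range, but you have identified the same argument.
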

\begin{proof}
By Lemma \ref{time} and that the map $t \mapsto g_t'$ is decreasing, we have
\begin{align*}
    \E\left[ \tilde{g}_{\frac{s}{a}+C^*}'(x)^\zeta 1\{\tilde{t}(s/a +C^*)<\infty\} \right] \leq \E \Big[ g_{\tau_s}'(x)^\zeta 1\{ \tau_s < \infty\} \Big] \leq \E\left[ \tilde{g}_{\frac{s}{a} -C^*}'(x)^\zeta 1\{\tilde{t}(s/a - C^*)<\infty\} \right].
\end{align*}
By the previous proposition, we have 
\begin{align*}
    \E\left[ \tilde{g}_{\frac{s}{a}+C^*}'(x)^\zeta 1\{\tilde{t}(s/a+C^*)<\infty\} \right] = K \left( \frac{x-x_R}{x} \right)^\mu e^{-\mu(1+\rho/2)(s+aC^*)} (1+O(e^{-\frac{1}{a}(1-a+\mu) s})),
\end{align*}
and 
\begin{align*}
    \E\left[ \tilde{g}_{\frac{s}{a} -C^*}'(x)^\zeta 1\{\tilde{t}(s/a -C^*)<\infty\} \right] = K \left( \frac{x-x_R}{x} \right)^\mu e^{-\mu(1+\rho/2)(s-aC^*)} (1+O(e^{-\frac{1}{a}(1-a+\mu) s})),
\end{align*}
where the constants in $O$ depend on $x$ and $x_R$ (since $C^*$ does). Thus, the proof is done.
\end{proof}
At this point, we already have what is needed for the upper bound of the dimension of $V_\beta^*$.

\subsection{Mass concentration}
In this subsection, we will see that the mass of the weighted measure $\prob^*$ is concentrated on an event where the behaviour of $\tilde{g}_s'(x)$, for fixed $x$, is nice. On this event, we will show that $\tilde{g}_s'(x)$ satisfies a number of inequalities which will be helpful in proving the two-point estimate of the next section. The ideas here are similar to those of Section 7 of \cite{Law09}.

We define the process $\tilde{L}_s$, by recalling (\ref{eq:g'tc}), as
\begin{align*}
    \tilde{L}_s = - \frac{1}{a} \log \tilde{g}_s'(x) = \int_0^s \tilde{Q}_u^{-1} (1-\tilde{Q}_u)du.
\end{align*}
As stated in Section \ref{weighted} (and shown in the appendix), $\tilde{Q}_s$ has an invariant distribution under $\prob^*$, with density $p_{\tilde{Q}}$ (recall \eqref{eq:indensQ}). Therefore, by the ergodicity of $\tilde{Q}_s$ (Corollary \ref{Qprop}) and a computation,
\begin{align*}
    \lim_{s \rightarrow \infty} \frac{\tilde{L}_s}{s} = \int_0^1 y^{-1}(1-y) p_{\tilde{Q}}(y) dy = \beta(1+\rho/2),
\end{align*}
holds $\prob^*$-almost surely, that is, the time average converges $\prob^*$-almost surely to the space average. We shall prove that, roughly speaking, as $s \rightarrow \infty$, $\tilde{L}_s \approx \beta(1+\rho/2)s$, with an error of order $\sqrt{s}$. To prove this, we need to prove the next lemma first.

\begin{lem}
\label{concentration}
Let $\zeta > -\mu_c^2/2a$. There is a positive constant $c<\infty$ such that for $p>0$ sufficiently small, and $t\geq 1$,
\begin{align*}
    \E^* \left[ \exp \left\{ p \frac{|\tilde{L}_t - \beta(1+\rho/2)t|}{\sqrt{t}} \right\} \right] \leq c.
\end{align*}
\end{lem}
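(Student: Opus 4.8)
The plan is to prove a uniform (in $t \geq 1$) exponential moment bound for the fluctuations $\tilde L_t - \beta(1+\rho/2)t$ on the scale $\sqrt t$, and the natural route is to control exponential moments of the additive functional $\tilde L_t = \int_0^t \tilde Q_u^{-1}(1-\tilde Q_u)\,du$ of the positive-recurrent diffusion $\tilde Q_s$ via a Poisson equation / Lyapunov argument. Concretely, write $\varphi(y) = y^{-1}(1-y) - \beta(1+\rho/2)$, which has mean zero against the invariant density $p_{\tilde Q}$. By Corollary \ref{Qprop} the diffusion is exponentially ergodic, so the Poisson equation $\mathcal{L} u = -\varphi$ (with $\mathcal{L}$ the generator of \eqref{eq:Qs}) has a solution $u$ on $(0,1)$ with controlled growth at the endpoints; then by Itô's formula
\begin{align*}
    \tilde L_t - \beta(1+\rho/2)t = u(\tilde Q_0) - u(\tilde Q_t) + \int_0^t u'(\tilde Q_s)\sqrt{\tilde Q_s(1-\tilde Q_s)}\,d\tilde B_s^*.
\end{align*}
The first difference $u(\tilde Q_0) - u(\tilde Q_t)$ is $O(1)$ in a suitable sense (its exponential moments are bounded using the invariant-density tail bounds in \eqref{eq:basiccalc} and the boundary behaviour of $\tilde Q$), and the stochastic integral $N_t$ is a continuous martingale with quadratic variation $\langle N\rangle_t = \int_0^t u'(\tilde Q_s)^2 \tilde Q_s(1-\tilde Q_s)\,ds$.

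The key point is then that $u'(y)^2 y(1-y)$ is bounded on $(0,1)$: one checks from the ODE for $u$ (explicitly, $u'(y) = p_{\tilde Q}(y)^{-1} y^{-1}(1-y)^{-1}\int_0^y \varphi p_{\tilde Q}$, up to the speed/scale normalization) that near $y = 0$ and $y = 1$ the apparent singularities of $u'$ are tamed by the weight $y(1-y)$, so that $\langle N\rangle_t \leq C t$ deterministically for some finite $C$. Granted a deterministic bound $\langle N\rangle_t \leq Ct$, the exponential martingale $\exp\{\lambda N_t - \tfrac{\lambda^2}{2}\langle N\rangle_t\}$ is a supermartingale, hence
\begin{align*}
    \E^*\!\left[ e^{\lambda N_t} \right] \leq e^{\frac{\lambda^2}{2} C t},
\end{align*}
and taking $\lambda = p/\sqrt t$ gives $\E^*[e^{(p/\sqrt t)|N_t|}] \leq 2 e^{p^2 C/2}$, which is the desired bound for the martingale part; the $O(1)$ boundary term is absorbed by Cauchy--Schwarz at the cost of halving $p$ and using its own (uniform) exponential moment bound. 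Putting the two pieces together with Cauchy--Schwarz yields the claim with a constant $c$ independent of $t$, for all $p$ small enough.

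The main obstacle I expect is the endpoint analysis of the Poisson solution $u$: one must verify that $u'(y)^2 y(1-y)$ really is bounded as $y \to 0^+$ and $y \to 1^-$, which requires matching the blow-up of $\int_0^y \varphi\, p_{\tilde Q}$ and of $p_{\tilde Q}(y)^{-1}$ against the $y^{-1}(1-y)^{-1}$ factor — this is where the precise exponents $1-4a-a\rho+2\mu$ and $2a+a\rho-1$ in \eqref{eq:indensQ}, together with the restriction $\mu > \mu_c$ (equivalently $\zeta > -\mu_c^2/2a$), enter, and it is the only genuinely delicate computation. A secondary technical point is that $\tilde Q_0$ under $\prob^* = \prob^*_{x,\zeta}$ is not distributed according to the invariant law (it starts from $\tilde Q_0 = Q_0(x) \in (0,1)$, a deterministic value depending on $x, x_R$), but since we want a bound for fixed $x$ this only affects the constant, not the uniformity in $t$; alternatively one runs the argument for $\tilde X_s$ started from stationarity and transfers via Corollary \ref{Qprop}. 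If the clean deterministic bound $\langle N\rangle_t \leq Ct$ fails at the endpoints, the fallback is to localize $\tilde Q$ away from $\{0,1\}$ on most of $[0,t]$ (occupation-time estimates from \eqref{eq:basiccalc}) and handle the excursions to the boundary separately, but I expect the direct bound to go through.
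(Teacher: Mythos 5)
Your Poisson-equation/martingale-decomposition route is genuinely different from the paper's, but the key step on which it rests is false. With the generator of \eqref{eq:Qs} and the invariant density $p_{\tilde Q}(y) = \tilde{c}y^A(1-y)^B$, where $A = 1-4a-a\rho+2\mu > 0$ and $B = 2a+a\rho-1 > -1$, the flux-free solution of the Poisson equation satisfies $u'(y) = -2\big(y(1-y)p_{\tilde Q}(y)\big)^{-1}\int_0^y \varphi\, p_{\tilde Q}$. Near $y=0$ one has $\varphi(y) \sim y^{-1}$ and $p_{\tilde Q}(y) \sim \tilde c\, y^A$, so $\int_0^y \varphi\, p_{\tilde Q} \sim \tilde c\, y^A/A$ and hence $u'(y) \sim -2/(Ay)$; therefore $u'(y)^2\, y(1-y) \sim 4/(A^2 y)$, which blows up as $y\to 0^+$ rather than being bounded. (The endpoint $y=1$ is fine: there $\varphi$ is bounded, $\int_0^y \varphi p_{\tilde Q} = -\int_y^1\varphi p_{\tilde Q} \sim \beta(1+\rho/2)\tilde c(1-y)^{B+1}/(B+1)$, and $u'$ tends to a constant.) Consequently $\langle N\rangle_t$ is not deterministically $\lesssim t$ — it is comparable to $\int_0^t \tilde Q_s^{-1}\,ds$ — so the exponential-supermartingale inequality you invoke does not follow. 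A Cauchy–Schwarz detour through $\E^*[e^{c\langle N\rangle_t/t}]$ is also problematic because $\int_0^1 e^{c/y}\,y^A\,dy = \infty$ for every $c>0$, so that any rescue must rely on nontrivial time-averaging of the excursions to the boundary; your ``fallback'' of localizing away from $\{0,1\}$ is not spelled out, and it is in fact where all the work would lie.

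The paper avoids the Poisson equation entirely by exploiting the explicit one-parameter family of local martingales. Perturbing $\zeta\mapsto\zeta+\delta$ in \eqref{eq:Mtilde} and taking the ratio $\tilde M^{\zeta+\delta}_s/\tilde M^{\zeta}_s$ yields the nonnegative $\prob^*$-local (hence super-) martingale $\tilde N_s = e^{-a\delta\tilde L_s}\,e^{a(1+\rho/2)(\mu(\zeta+\delta)-\mu(\zeta))s}\,\tilde Q_s^{\mu(\zeta+\delta)-\mu(\zeta)}$. Setting $\delta = \pm\epsilon/\sqrt t$ and using $\mu(\zeta+\delta)-\mu(\zeta)=\delta\beta+O(\delta^2)$, the inequality $\E^*[\tilde N_t]\leq\tilde N_0$ immediately encodes a two-sided exponential moment bound for $(\tilde L_t - \beta(1+\rho/2)t)/\sqrt t$; the extraneous factor $\tilde Q_t^{\mp\epsilon\beta/\sqrt t+O(\epsilon^2/t)}$ is $\geq 1$ for one sign of $\delta$ and, for the other sign, is handled by splitting on $\{\tilde Q_t\le y\}$ with $y=e^{-c'\sqrt t}$ and using $\prob^*(\tilde Q_t\le y)\lesssim y^{A+1}$. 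This sidesteps the boundary blow-up of the Poisson solution entirely and is structurally available here because the one-point martingale is explicit in $\zeta$.
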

The proof idea is as follows. 
Observe that if we view $\mu = \mu_c + \sqrt{\mu_c^2 + 2a\zeta}$ as a function of $\zeta$, then $\mu'(\zeta) = a/\sqrt{\mu_c^2 + 2a\zeta} = \beta$. We define the process
\begin{align*}
    \tilde{N}_s = e^{-a\delta \tilde{L}_s} e^{a(1+\rho/2)(\mu(\zeta+\delta) - \mu(\zeta))s} \tilde{Q}_s^{\mu(\zeta+\delta) - \mu(\zeta)},
\end{align*}
which by Itô's formula is seen to be a local martingale under $\prob^*$. Since it is bounded from below, it is a supermartingale. Then we use that $\mu(\zeta+\delta) - \mu(\zeta) = \delta\beta + O(\delta^2)$ and that we have good control of $\tilde{Q}_s$.
\begin{proof}
We have that
\begin{align*}
    \log \tilde{N}_t -(\mu(\zeta+\delta)-\mu(\zeta)) \log \tilde{Q}_t &= -a\delta \tilde{L}_t + at(1+\rho/2)(\mu(\zeta+\delta) - \mu(\zeta)) \\
    &= -a\delta(\tilde{L}_t - \beta(1+\rho/2)t) + O(\delta^2 t),
\end{align*}
since $\mu(\zeta+\delta)-\mu(\zeta)=\delta\beta + O(\delta^2)$. This implies that
\begin{align*}
    \log \tilde{N}_t = -a\delta(\tilde{L}_t - \beta(1+\rho/2)t) + O(\delta^2 t) + (\delta\beta + O(\delta^2)) \log\tilde{Q}_t.
\end{align*}
Let $\delta = \pm \frac{\epsilon}{\sqrt{t}}$, where $\epsilon$ is small enough for $\tilde{N}_t$ to be well-defined. Then,
\begin{align*}
    \log \tilde{N}_t = \mp a\frac{\epsilon}{\sqrt{t}}(\tilde{L}_t - \beta(1+\rho/2)t) + O(\epsilon^2) + (\beta\frac{\pm \epsilon}{\sqrt{t}} + O(\epsilon^2/t))\log\tilde{Q}_t,
\end{align*}
and exponentiating, we get
\begin{align*}
    \E^*\left[ \exp\left\{ \mp a\epsilon \frac{\tilde{L}_t-\beta(1+\rho/2)t}{\sqrt{t}} \right\} \tilde{Q}_t^{\beta \frac{\pm \epsilon}{\sqrt{t}}+O(\epsilon^2/t)} \right] \leq c,
\end{align*}
since $\tilde{N}_t$ is a supermartingale and hence $\E[\tilde{N}_t] \leq \E[\tilde{N}_0] = \tilde{Q}_0^{\mu(\zeta+\delta)-\mu(\zeta)} = \left(\frac{x-x_R}{x}\right)^{\mu(\zeta+\delta)-\mu(\zeta)}$. Consider the case $\delta<0$, i.e.,
\begin{align*}
    \E^*\left[ \exp\left\{a\epsilon \frac{\tilde{L}_t-\beta(1+\rho/2)t}{\sqrt{t}} \right\} \tilde{Q}_t^{-\beta \frac{\epsilon}{\sqrt{t}}+O(\epsilon^2/t)} \right] \leq c.
\end{align*}
Since $\tilde{Q}_t \in [0,1], \beta > 0$, we have $\tilde{Q}_t^{-\beta \frac{\epsilon}{\sqrt{t}} + O(\epsilon^2/t)} \geq 1$ for sufficiently small $\epsilon$, and thus
\begin{align*}
    \E^*\left[ \exp\left\{a\epsilon \frac{\tilde{L}_t-\beta(1+\rho/2)t}{\sqrt{t}} \right\} \right] \leq c.
\end{align*}
Consider the case $\delta>0$. We will split the expectation into the cases $\tilde{Q}_t \leq y$ and $\tilde{Q}_t > y$ for some $y \in (0,1]$. First,
\begin{align*}
    c &\geq \E^*\left[ \exp\left\{-a\epsilon \frac{\tilde{L}_t-\beta(1+\rho/2)t}{\sqrt{t}} \right\} \tilde{Q}_t^{\beta \frac{\epsilon}{\sqrt{t}}+O(\epsilon^2/t)} 1\left\{ \tilde{Q}_t > y \right\} \right] \\
    &\geq \E^*\left[ \exp\left\{-a\epsilon \frac{\tilde{L}_t-\beta(1+\rho/2)t}{\sqrt{t}} \right\} y^{\beta \frac{\epsilon}{\sqrt{t}}+O(\epsilon^2/t)} 1\left\{ \tilde{Q}_t > y \right\} \right],
\end{align*}
which implies
\begin{align*}
    \E^*\left[ \exp\left\{-a\epsilon \frac{\tilde{L}_t-\beta(1+\rho/2)t}{\sqrt{t}} \right\} 1\left\{ \tilde{Q}_t > y \right\} \right] \leq cy^{-\beta \frac{\epsilon}{\sqrt{t}} + O(\epsilon^2/t)} \leq cy^{-2\beta \frac{\epsilon}{\sqrt{t}}}
\end{align*}
for sufficiently small $\epsilon$. For the other part, note that since $\tilde{L}_t \geq 0$, 
\begin{align*}
    &\E^* \left[ \exp\left\{ -a\epsilon \frac{\tilde{L}_t - \beta(1+\rho/2)t}{\sqrt{t}} \right\} 1\left\{ \tilde{Q}_t \leq y \right\} \right] \leq \E^* \left[ e^{a\epsilon\beta(1+\rho/2)\sqrt{t}} 1\left\{ \tilde{Q}_t \leq y \right\} \right] \\
    &= e^{a\epsilon\beta(1+\rho/2)\sqrt{t}} \prob^*(\tilde{Q}_t \leq y) \leq c' e^{a\epsilon\beta(1+\rho/2)\sqrt{t}} y^{2\mu-4a-a\rho+2},
\end{align*}
for some constant, $c'$, where the last equality follows by Corollary \ref{Qprop} and \eqref{eq:basiccalc}. If we let
\begin{align*}
    y = \exp\left\{-\frac{a\epsilon\beta(1+\rho/2)}{2\mu-4a-a\rho+2}\sqrt{t} \right\},
\end{align*}
then we see that both the ``$\tilde{Q}_t \leq y$''-part and the ``$\tilde{Q}_t > y$''-part are bounded by positive constants. Thus, we are done.
\end{proof}

With the previous lemma at hand, we can now prove the following.

\begin{prop}\label{set}
There exists a constant, $c$, such that if we fix $t > 0$ and let $\tilde{I}_t^u$ be the event that for all $0 \leq s \leq t$,
\begin{align*}
    |\tilde{L}_s-\beta(1+\rho/2)s| \leq u\sqrt{s} \log(2+s) + c,
\end{align*}
then, for every $\epsilon > 0$ there exists a $u < \infty$ such that
\begin{align*}
    \prob^*(\tilde{I}_t^u) \geq 1-\epsilon
\end{align*}
for every $t$.
\end{prop}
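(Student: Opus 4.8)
The plan is to deduce Proposition~\ref{set} from the exponential moment bound of Lemma~\ref{concentration} by a union bound over a geometrically spaced sequence of times, combined with a chaining/monotonicity trick to control the supremum over each dyadic block. First, I would fix a reference constant $c$ (the same $c$ appearing in the statement), whose size will be pinned down at the end so as to absorb the behaviour on the initial interval $s \in [0,1]$, where Lemma~\ref{concentration} does not directly apply; on $[0,1]$ one has $\tilde L_s \in [0, \tilde L_1]$ and $\beta(1+\rho/2)s \le \beta(1+\rho/2)$, and a crude bound (using, say, that $\tilde g_s'(x) \in [0,1]$ is decreasing so $\tilde L_s$ is increasing, together with an $L^1$ bound on $\tilde L_1$ coming from integrating the exponential moment bound at $t=1$) shows $\sup_{s \le 1}|\tilde L_s - \beta(1+\rho/2)s|$ has an exponential tail, hence is $\le c$ on an event of probability $\ge 1 - \epsilon/2$ for $c$ large. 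So it remains to handle $s \in [1,t]$.

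Next, for $s \in [1,t]$ I would discretize: let $s_k = 2^k$ for $k = 0,1,\dots,K$ with $K = \lceil \log_2 t \rceil$. For each fixed $s_k$, Markov's inequality applied to Lemma~\ref{concentration} gives
\begin{align*}
    \prob^*\!\left( |\tilde L_{s_k} - \beta(1+\rho/2)s_k| > \lambda \sqrt{s_k} \right) \le c\, e^{-p\lambda}.
\end{align*}
The point of the extra $\log(2+s)$ factor in the definition of $\tilde I_t^u$ is precisely to make the union bound over $k$ summable: taking $\lambda = \lambda_k = u' \log(2 + s_k) \asymp u' k$ for a constant $u'$ to be chosen, the probabilities $c\,e^{-p u' k}$ form a convergent geometric series whose total is $\le \epsilon/4$ once $u'$ is large (uniformly in $t$, which is the key uniformity claim). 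The remaining issue is interpolating between the grid points $s_k$ and $s_{k+1}$. Here I would exploit that $s \mapsto \tilde L_s$ is nondecreasing (since $\tilde g_s'(x)$ is decreasing in $s$) and $s \mapsto \beta(1+\rho/2)s$ is affine: for $s \in [s_k, s_{k+1}]$,
\begin{align*}
    \tilde L_s - \beta(1+\rho/2)s \le \tilde L_{s_{k+1}} - \beta(1+\rho/2)s_k = \big(\tilde L_{s_{k+1}} - \beta(1+\rho/2)s_{k+1}\big) + \beta(1+\rho/2)(s_{k+1}-s_k),
\end{align*}
and the last term is $\beta(1+\rho/2)s_k \le \beta(1+\rho/2)s$, which is comparable to $\sqrt{s}\log(2+s)$ only for small $s$ — so in fact I would use a finer grid. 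Rather than $s_k = 2^k$, I would take $s_k = (1+\theta)^k$ or even a linear-in-$k$ spacing near small times and dyadic for large times; the cleanest is probably $s_k = k^2$ or a mesh fine enough that $s_{k+1} - s_k \lesssim \sqrt{s_k}$, i.e.\ $s_k \asymp k^2$, giving $K \asymp \sqrt{t}$ grid points. Then the monotonicity interpolation contributes only an $O(\sqrt{s_k}) = O(\sqrt{s})$ error, absorbed into $u\sqrt{s}\log(2+s)$, while the union bound over $K \asymp \sqrt{t}$ points with tail $e^{-p u' \log(2+s_k)} = (2+s_k)^{-pu'} \asymp k^{-2pu'}$ is summable in $k$ provided $2pu' > 1$, again uniformly in $t$.

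The main obstacle, and the step requiring the most care, is getting the bound \emph{uniform in $t$}: both the number of grid points and the block-interpolation error grow with $t$, and the whole scheme works only because the $\log(2+s)$ (or, with the $k^2$ mesh, the polynomial-in-$k$) gain in the threshold beats the polynomial growth in the number of terms. I would therefore set things up so that, writing $\lambda_k = u\log(2+s_k)$, the one-sided deviation at $s_k$ has $\prob^*$-probability $\le c(2+s_k)^{-pu}$, choose $u$ with $pu > 2$ (say), and check $\sum_k (2+s_k)^{-pu} = \sum_k (2+k^2)^{-pu} < \infty$ with sum $\le \epsilon/4$ for $u$ large — independent of $t$. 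The two-sided bound follows by applying the same argument to $-\delta$ in Lemma~\ref{concentration} (both signs are already in its proof), and combining the $[0,1]$ event, the grid events, and the monotone interpolation gives $\prob^*(\tilde I_t^u) \ge 1 - \epsilon$ for a single $u = u(\epsilon)$ and all $t$.
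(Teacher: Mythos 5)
Your proposal is correct and is essentially the paper's argument: control the deviation at a discrete set of times via Chebyshev's inequality applied to Lemma \ref{concentration}, use the monotonicity of $s \mapsto \tilde{L}_s$ (and linearity of $s \mapsto \beta(1+\rho/2)s$) to pass from grid points to the supremum over each block, and sum the resulting tails $(2+s_k)^{-pu}$, the $\log(2+s)$ factor making the union bound summable uniformly in $t$. The paper simply takes unit-length blocks $[k,k+1]$, absorbing the $O(1)$ interpolation error into the constant $c$, so the $s_k \asymp k^2$ mesh and the separate treatment of $[0,1]$ are unnecessary refinements, and the worry about the number of grid points growing with $t$ is moot since one bounds by the full infinite series.
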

\begin{proof}
There is a constant, $c$, such that for any $k \in \N$,
\begin{align*}
    &\prob^* \left( |\tilde{L}_s - \beta(1+\rho/2)s| > u \sqrt{s} \log(2+s) + c \ \text{for some} \ s \in [k,k+1] \right) \\
    &\leq \prob^* \left( |\tilde{L}_{k+1} - \beta(1+\rho/2)(k+1)| > u \sqrt{k+1} \log(2+(k+1)) \right).
\end{align*}
Thus, by splitting into subintervals of length 1, Chebyshev's inequality and Lemma \ref{concentration} (with $p>0$ accordingly)
\begin{align*}
    &\prob^* \left( |\tilde{L}_s - \beta(1+\rho/2)s| > u \sqrt{s} \log(2+s) + c \ \text{for some} \ s \geq 0 \right) \\
    &\leq \sum_{k=1}^\infty \prob^* \left( |\tilde{L}_k - \beta(1+\rho/2)k| > u \sqrt{k} \log(2+k) \right) \\
    &= \sum_{k=1}^\infty \prob^* \left( p\frac{|\tilde{L}_k - \beta(1+\rho/2)k|}{\sqrt{k}} > pu \log(2+k) \right) \\
    &\leq \sum_{k=1}^\infty \E^* \left[ \exp \left\{ p \frac{|\tilde{L}_k-\beta(1+\rho/2)k|}{\sqrt{k}} \right\} \right] (2+k)^{-pu} \\
    &\leq \sum_{k=1}^\infty c_0 (2+k)^{-pu},
\end{align*}
which is $o(1)$ in $u$.
\end{proof}
We shall denote both the event and the indicator function of the event as $\tilde{I}_t^u$, and we will more often than not drop the $u$ in the notation and write $\tilde{I}_t$. Straightforward calculations, using that $\tilde{g}_s'(x) = e^{-a\tilde{L}_s}$, show that on the event of the above proposition, we have
\begin{align}\label{eq:setbounds}
    \psi_0(s)^{-1} e^{-a\beta(1+\rho/2)s} \leq \tilde{g}_s'(x) \leq \psi_0(s) e^{-a\beta(1+\rho/2)s}
\end{align}
where $\psi_0$ is the subexponential function $\psi_0(s) = e^{au \sqrt{s} \log(2+s) +c}$.

Next, we want to convert these facts into the corresponding for $g_{\tau_t}'(x)$. We let $C^* = C^*(x,x_R)$ denote the constant as remarked after Lemma \ref{time}, that is, the constant such that, for $t>0$, $\tilde{t}((t/a -C^*) \vee 0) \leq \tau_t(x) \leq \tilde{t}(t/a + C^*)$. What we will do now, is to define an $\mathscr{F}_{\tau_t}$-measurable version of $\tilde{I}_t^u$ (the indicator of the event of Proposition \ref{set}) and the natural way is to define this as the conditional expectation with respect to this filtration. Fix $u>0$ and write
\begin{align}\label{eventcond}
    I_t^u = \E \left[ \tilde{I}_{\frac{t}{a} + C^*}^u \middle| \mathscr{F}_{\tau_t} \right].
\end{align}
In the next proposition, we will see that this indeed works the same way for $g_{\tau_t}'(x)$ as $\tilde{I}_t^u$ does for $\tilde{g}_t'(x)$. We will omit the superscript and write $I_t = I_t^u$.
\begin{lem}
\label{condset}
Let $u>0$ and $I_t = I_t^u$ be as above. Then there is a subexponential function $\psi$ such that for $\max(0,-\log(x-x_R)) \leq s \leq t$,
\begin{align*}
    \psi(s)^{-1}  e^{-\beta(1+\rho/2)s} I_t \leq g_{\tau_s}'(x) I_t \leq \psi(s) e^{-\beta(1+\rho/2)s} I_t,
\end{align*}
where the implicit constants depend on $x$ and $x_R$. 
\end{lem}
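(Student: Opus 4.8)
The plan is to first establish a \emph{pathwise} two-sided bound on $g_{\tau_s}'(x)$ valid on the event $\tilde{I}_{t/a+C^*}^u$ of Proposition \ref{set}, and then to pass to the conditional expectation defining $I_t$ by pulling out the $\mathscr{F}_{\tau_t}$-measurable factors. Throughout, $C^* = C^*(x,x_R)$ is the constant for which \eqref{C*} holds.

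For the pathwise bound, fix $s$ with $\max(0,-\log(x-x_R)) \le s \le t$. By \eqref{C*} and the fact that $t \mapsto g_t'(x)$ is decreasing (with $g_0'(x)=1$), we have
\[ \tilde{g}_{\frac{s}{a}+C^*}'(x) \le g_{\tau_s}'(x) \le \tilde{g}_{(\frac{s}{a}-C^*)\vee 0}'(x). \]
On the event $\tilde{I}_{t/a+C^*}^u$ the bound \eqref{eq:setbounds} holds at every argument in $[0,\tfrac{t}{a}+C^*]$; since both $\tfrac{s}{a}+C^* \le \tfrac{t}{a}+C^*$ and $(\tfrac{s}{a}-C^*)\vee 0 \le \tfrac{t}{a}+C^*$, I would apply \eqref{eq:setbounds} to the two outer quantities. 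Writing $a\beta(1+\rho/2)(\tfrac{s}{a}+C^*) = \beta(1+\rho/2)s + a\beta(1+\rho/2)C^*$, setting $C(x,x_R) = e^{a\beta(1+\rho/2)C^*}$ and $\psi(s) = C(x,x_R)\,\psi_0(\tfrac{s}{a}+C^*)$, and using that $\psi_0$ is increasing (so the upper endpoint is dominated by $\psi_0(\tfrac{s}{a}+C^*)$, even in the regime $s<aC^*$ where it collapses to $\tilde{g}_0'(x)=1$ and $e^{\beta(1+\rho/2)s}$ is a bounded factor absorbable into $C(x,x_R)$), one gets that on $\tilde{I}_{t/a+C^*}^u$
\[ \psi(s)^{-1} e^{-\beta(1+\rho/2)s} \le g_{\tau_s}'(x) \le \psi(s)\, e^{-\beta(1+\rho/2)s}. \]
Here $\psi_0(\tfrac{s}{a}+C^*) = \exp\{au\sqrt{\tfrac{s}{a}+C^*}\,\log(2+\tfrac{s}{a}+C^*)+c\}$ is subexponential in $s$ since $\sqrt{s}\log s = o(\epsilon s)$ for every $\epsilon>0$, and multiplying by the constant $C(x,x_R)$ keeps it so.

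Finally, I would multiply both displayed inequalities by the indicator $\tilde{I}_{t/a+C^*}^u$ (they hold trivially off the event, both sides being $0$) and take $\E[\,\cdot \mid \mathscr{F}_{\tau_t}]$. Since $\tau_s \le \tau_t$ for $s \le t$, the random variable $g_{\tau_s}'(x)$ is $\mathscr{F}_{\tau_t}$-measurable and nonnegative, so it factors out of the conditional expectation and out of the inequalities, while $\psi(s)e^{\pm\beta(1+\rho/2)s}$ is deterministic; recalling $I_t = \E[\tilde{I}_{t/a+C^*}^u \mid \mathscr{F}_{\tau_t}]$ gives exactly
\[ \psi(s)^{-1} e^{-\beta(1+\rho/2)s} I_t \le g_{\tau_s}'(x) I_t \le \psi(s)\, e^{-\beta(1+\rho/2)s} I_t. \]
One should also note $\tilde{I}_{t/a+C^*}^u \subseteq \{\tilde{t}(\tfrac{t}{a}+C^*)<\infty\} \subseteq \{\tau_s<\infty \text{ for all } s \le t\}$, so $g_{\tau_s}'(x)$ is well-defined wherever $I_t>0$.

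The substantive point — rather than any hard estimate — is the interplay between the two parametrizations: the time-change inequality \eqref{C*} must land us at times still inside the window $[0,\tfrac{t}{a}+C^*]$ on which the mass-concentration bound \eqref{eq:setbounds} is available, and the monotonicity of $g_\cdot'(x)$ together with the $\mathscr{F}_{\tau_t}$-measurability of $g_{\tau_s}'(x)$ for $s\le t$ is what makes conditioning on $\mathscr{F}_{\tau_t}$ harmless. Everything else is bookkeeping: tracking the $x,x_R$-dependence through $C^*$ and verifying that the resulting prefactor $\psi$ remains subexponential.
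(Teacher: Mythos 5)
Your proposal is correct and takes essentially the same route as the paper's proof: you use \eqref{C*} and the monotonicity of $t \mapsto g_t'(x)$ to squeeze $g_{\tau_s}'(x)$ between $\tilde{g}'$ at radial times $s/a \pm C^*$, apply \eqref{eq:setbounds} on $\tilde{I}_{t/a+C^*}^u$, absorb the $C^*$-shift into a subexponential factor, and finally exploit the $\mathscr{F}_{\tau_t}$-measurability of $g_{\tau_s}'(x)$ to push the pathwise bound through the conditional expectation defining $I_t$. The only organizational difference is that you derive the full pathwise two-sided bound on the event first and condition afterwards, whereas the paper interleaves the two steps; you also handle the $(\tfrac{s}{a}-C^*)\vee 0$ truncation and the well-definedness of $\tau_s$ more explicitly, which the paper leaves implicit.
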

\begin{proof}
Fix $u>0$ and write $s_+ = s/a + C^*$ and $s_- = s/a -C^*$ (where $C^*$ is as described above). Since $t \mapsto g_t'(x)$ is decreasing, we have 
\begin{align*}
    \tilde{g}_{s_+}'(x) \leq g_{\tau_s}'(x) \leq \tilde{g}_{s_-}'(x).
\end{align*}
Hence, by \eqref{eq:setbounds}
\begin{align*}
    g_{\tau_s}'(x) I_t &= \E\left[ g_{\tau_s}'(x) \tilde{I}_{t_+} \middle| \mathscr{F}_{\tau_t} \right] \leq \E\left[ \tilde{g}_{s_-}'(x) \tilde{I}_{t_+} \middle| \mathscr{F}_{\tau_t} \right] \leq \psi_0(s_-) e^{-a\beta(1+\rho/2)s_-} I_t \\
    &=\psi_0(s_-) e^{C^* a\beta(1+\rho/2)} e^{-\beta(1+\rho/2)s} I_t.
\end{align*}
In the same way,
\begin{align*}
    g_{\tau_s}'(x) I_t \geq \E\left[ \tilde{g}_{s_+}'(x) \tilde{I}_{t_+} \middle| \mathscr{F}_{\tau_t} \right] \geq \psi_0(s_+)^{-1} e^{-C^* a\beta(1+\rho/2)} e^{-\beta(1+\rho/2)s} I_t,
\end{align*}
and the lemma is proven.
\end{proof}
\begin{rmk}\label{setrmk}
We remark that we can allow a larger constant in the definition of the event in Proposition $\ref{set}$. The choice of constant $c$ is not important, as if we let $\tilde{I}_t^{u,\tilde{c}}$ denote the event where we replace $c$ by $\tilde{c}>c$, the same estimates hold with the subexponential function $\psi_1(s) = e^{au\sqrt{s} \log(2+s)+\tilde{c}}$ in place of $\psi_0$. Hence, the correct asymptotic behaviour of $\tilde{g}_s'(x)$ is preserved on $\tilde{I}_t^{u,\tilde{c}}$. Furthermore, $\tilde{I}_t^u \subset \tilde{I}_t^{u,\tilde{c}}$.
\end{rmk}


\section{Two-point estimate}\label{sec2PEIG}
\subsection{Outline}
In this section, we use the imaginary geometry techniques to prove a two-point estimate that we need for the lower bound on the dimension of $V_\beta^*$ (and hence $V_\beta$). We follow the ideas of Section 3.2 of \cite{MW17} and we will keep the notation similar. Note that we will write the proof for flow lines, i.e., $\kappa <4$, but the merging property and every lemma that we will need, hold for the level lines of the GFF as well, so the method also gives the two-point estimate in the case $\kappa = 4$. The main idea is to use the merging of the flow lines and the approximate independence of GFF in disjoint regions to ``move the problem between scales'' and separate the points when at the right scale.

We let $h$ be a GFF in $\UH$ with boundary conditions such that the flow line $\eta$ from $0$ is an $\SLE_\kappa(\rho)$ process from $0$ to $\infty$. We define a sequence of random variables $E^n(x)$, for $x \in \R$ and $n \in \N$, such that if $E^n(x)>0$ for every $n \in \N$, then $x \in V_\beta^*$ and we say that $x$ is a \textit{perfect point}. The idea for the construction of the random variables is as follows. Consider the event $A_0^1(x)$, that $\eta$ hits the ball $B(x,\epsilon_1)$, $\epsilon_1 = e^{-\alpha_1}$ and let $E^0(x) = 1_{A_0^1(x)} I_{\alpha_1}^{u,\Lambda}$, where $I_{\alpha_1}^{u,\Lambda}$ is the random variable of \eqref{eventcond} but with a larger constant $\Lambda$. That is, if $E^0(x)>0$, then $\eta$ gets within distance $\epsilon_1$ of $x$ and the derivative $g_{\tau_t}'(x)$ decays approximately as $e^{-\beta(1+\rho/2)t}$ until $\eta$ hits $B(x,\epsilon_1)$.

We proceed inductively. Assume that $E^k(x)$ is defined and that $\epsilon_j = e^{-\sum_{l=1}^j \alpha_l}$, $\alpha_l > 0$. Let $\eta^{x_{k+1}}$ be the flow line started from the point $x_{k+1} = x-\epsilon_{k+1}/4$. Let $A_{k+1}^1(x)$ be the event that $\eta^{x_{k+1}}$ hits $B(x,\epsilon_{k+2})$, plus some regularity conditions. Furthermore, let $I_{k+1}^{u,\Lambda,k+1}$ denote the random variable corresponding to \eqref{eventcond}, but for $\eta^{x_{k+1}}$ until hitting $B(x,\epsilon_{k+2})$. Next, given that $A_k^1(x)$ and $A_{k+1}^1(x)$ occur, let $A_{k+1}^2(x)$ be the event that $\eta^{x_k}$ hits $\eta^{x_{k+1}}$ plus some regularity conditions. We then set $E^{k+1}(x) = E^k(x) 1_{A_{k+1}^1(x) \cap A_{k+1}^2(x)} I_{k+1}^{u,\Lambda,k+1}$.

In short, we let a sequence of flow lines, on smaller and smaller scales, approach the point $x$, such that each flow line has the correct geometric behaviour as it approaches $x$. Moreover, each flow line hits and merges with the next. In this way, the $\SLE_\kappa(\rho)$ process $\eta$ inherits its geometric behaviour from each of the flow lines. This is very convenient when deriving the two-point estimate, that is, when proving that the correlation of $E^n(x)$ and $E^n(y)$ is small when $|x-y|$ is large. The key property that we use is that the flow lines started within the balls $B(x, |x-y|/(2+\delta_0))$ and $B(y,|x-y|/(2+\delta_0))$ are approximately independent when $\delta_0 > 0$ (in the sense that the Radon-Nikodym derivative between the measures with and without the other set of flow lines present is bounded above and below by a constant). Moreover, the flow lines outside of those balls will also be approximately independent, in the same sense, see Lemma \ref{lem1}. Furthermore, the probability of two subsequent flow lines merging is proportional to $1$, see Lemma \ref{lem2}.

Having a certain decay rate of the derivatives of the conformal maps is equivalent to having a certain decay rate of the harmonic measure from infinity of some set on the real line. This will be essential to us, as it is the tool with which we show that the perfect points actually belong to $V_\beta^*$. Moreover, it is important that $\alpha_j \rightarrow \infty$, but not too quickly. If $\alpha_j$ would not tend to $\infty$, then the perfect points would just be points where $\lim_{s\rightarrow \infty} \frac{1}{s} \log g_{\tau_s}'(x) \in [-\beta(1+\rho/2) - c,-\beta(1+\rho/2) + c]$.

In the next subsection, there will be parameters which at first may look redundant, but in fact play important roles in the regularity conditions. We conclude this subsection by listing them and give brief descriptions of how they are used.
\begin{itemize}
    \item $\underline{\delta \in (0,\frac{1}{2})}$: Chosen to be very small and makes sure that the curve $\eta^{x_k}$ does not hit $B(x,\frac{1}{M}\epsilon_k)$ or $B(x,\epsilon_{k+1})$ too close to the real line. Important, as it makes sure that the probability of $\eta^{x_k}$ and $\eta^{x_{k+1}}$ merging does not decrease in $k$. Furthermore, it is needed in the one-point estimate, Lemma \ref{lem3}, as it gives control of a certain martingale.
    \item $\underline{M>0}$: Crucial in the proof that the perfect points belong to $V_\beta^*$. It makes sure that the probability of exiting in the interval between the rightmost point on $\R$ of $\eta^{x_{k+1}}$ (stopped upon hitting $B(x,\epsilon_{k+2})$) and $x$ for a Brownian motion started in $B(x,\epsilon_{k+1})$ depends mostly on $\eta^{x_{k+1}}$ and not on $\eta^{x_k}$. It is chosen to be large, so that the process $Q^k$, under the measure $\prob^*$, will be close in law to its $\prob^*$-invariant distribution when $\eta^{x_{k+1}}$ reaches $B(x,\frac{1}{M}\epsilon_k)$. Moreover, this also makes sure that the probability of $\eta^{x_k}$ and $\eta^{x_{k+1}}$ merging does not decrease in $k$.
    \item $\underline{\Lambda>0}$: Chosen large so that the event $\tilde{I}_t^{u,\Lambda}$ for $\eta^{x_k}$ contains the event $\tilde{I}_t^u$ for the image of $\eta^{x_k}$ under some map $F$.
    \item $\underline{u>0}$: Chosen large enough, so that the event $\tilde{I}_t^u$ has sufficiently large $\prob^*$-probability.
\end{itemize}

\subsection{Perfect points and the two-point estimate}
Throughout this section we fix $\kappa \in (0,4)$ and $\rho \in (-2,\frac{\kappa}{2}-2)$ and let $h$ be a GFF in $\UH$ with boundary values $-\lambda$ on $\R_-$ and $\lambda(1+\rho)$ on $\R_+$, so that the flow line $\eta$ from $0$ to $\infty$ is an $\SLE_\kappa(\rho)$ curve with force point located at $0^+$ (so the configuration is $(\UH,0,0^+,\infty)$). Note that the interval for $\rho$ is chosen so that $\eta$ can hit $\R_+$. We denote the flow line from $x$ by $\eta^x$ and note that for $x>0$, $\eta^x$ is an $\SLE_\kappa(2+\rho,-2-\rho;\rho)$ with configuration $(\UH,x,(0,x^-),x^+,\infty)$. We fix $\delta \in (0,\frac{1}{2})$, $M>0$ large and an increasing sequence, $\alpha_j \rightarrow \infty$, write $\overline{\alpha}_k = \sum_{j=1}^k \alpha_j$ and let $\epsilon_k = e^{-\overline{\alpha}_k}$. The constants $\delta$ and $M$ will be chosen later. As for $\alpha_j$, we define it as $\alpha_j = \alpha_0+\log j$, where $\alpha_0 = \log N$ for some large integer $N$. For $x \geq 1$ and $k \in \N$, we write

\begin{equation*}
x_k = 
\begin{cases}
x-\frac{1}{4}\epsilon_k & \text{if } k \geq 1,\\
0 & \text{if } k = 0.
\end{cases}
\end{equation*}
For $U \subset \UH$, we define
\begin{align*}
    \sigma^x(U) = \inf\{ t\geq 0: \eta^x(t) \in \overline{U} \}
\end{align*}
(when $x=0$, we omit the superscript) and
\begin{align*}
    \sigma_k^x = \sigma^{x_k}(B(x,\epsilon_{k+1})),
\end{align*}
and note that $\sigma(B(x,\epsilon_k))=\tau_{\overline{\alpha}_k}(x)$. Furthermore, let $\sigma_{k,M}^x = \sigma^{x_k}(B(x,\frac{1}{M}\epsilon_k))$. We let $\eta^{x_k,R}$ denote the right side of the flow line $\eta^{x_k}$, $r_t^k = \max \{ \eta^{x_k}([0,t]) \cap \R\}$ and define $Q_t^k$ by
\begin{align*}
    Q_t^k(x) = \frac{\omega_\infty((r_t^k,x],\UH \setminus \eta^{x_k}([0,t]))}{\omega_\infty(\eta^{x_k,R}([0,t]) \cup (r_t^k,x],\UH \setminus \eta^{x_k}([0,t]))}.
\end{align*}
Recall that by \eqref{eq:Qharm}, $Q_t(x)=Q_t^0(x)$ is the diffusion \eqref{eq:Q}. For $k \geq 0$, let $\tilde{I}_t^{u,\Lambda,k}=\tilde{I}_t^{u,\Lambda,k}(x)$ denote the event (as well as the indicator of the event) of Proposition \ref{set}, with constant $\Lambda$ (see Remark \ref{setrmk}) but for the flow line $\eta^{x_k}$, and
\begin{align*}
    I_k^{u,\Lambda,k} = \E \left[ \tilde{I}_{\frac{\alpha_{k+1}}{a}+C_k^*(x)}^{u,\Lambda,k} \middle| \mathscr{F}_{\sigma_k^x} \right],
\end{align*}
as previously. The constants $u$ and $\Lambda$ will be chosen in Lemma \ref{lem3}. Note that the event $\tilde{I}_t^{u,\Lambda,k}$ is a condition on the geometry of the curve which does not change when we rescale (it can be expressed in terms of $Q^k(x)$, which is invariant under scaling of the $\SLE_\kappa(\underline{\rho})$ process). Moreover, if we let $\eta_*^{x_k} = \varphi_k(\eta^{x_k})$, where $\varphi_k(z) = (z-x)/\epsilon_k$ (so that $\eta_*^{x_k}$ is an $\SLE_\kappa(2+\rho,-2-\rho;\rho)$ process with configuration $(\UH,-1/4,(-x/\epsilon_k,0^-),0^+,\infty)$) and let $(g_t^{*,k})_{t \geq 0}$ denote its Loewner chain, then on the event $\{ I_k^{u,\Lambda,k} > 0\}$,
\begin{align}\label{eq:flowset}
    \psi(\alpha_{k+1})^{-1} e^{-\beta(1+\rho/2)\alpha_{k+1}} \leq (g_{\sigma_k^x}^{*,k})'(0) \leq \psi(\alpha_{k+1}) e^{-\beta(1+\rho/2)\alpha_{k+1}},
\end{align}
for some subexponential function $\psi=\psi_{u,\Lambda}$.

We let $A_k^1(x) = A_k^1(x,\delta,M,\alpha_0)$ be the event that 
\begin{enumerate}[(i)]
    \item $\sigma_k^x < \infty$,
    \item $Q_{\sigma_{k,M}^x}^k(x),Q_{\sigma_k^x}^k(x) \in [\delta,1-\delta]$, and
    \item $\sigma_k^x < \sigma^{x_k}(\UH \setminus B(x,\frac{1}{2} \epsilon_k))$,
\end{enumerate}
that is, $\eta^{x_k}$ hits $B(x,\epsilon_{k+1})$ before exiting $B(x,\frac{1}{2}\epsilon_k)$ and it does not hit $B(x,\frac{1}{M}\epsilon_k)$ or $B(x,\epsilon_{k+1})$ ``too far down'' (the latter being due to the condition on $Q_t^k(x)$). Now, we set
\begin{align*}
    E_k^1(x) = 1_{A_k^1(x)} I_k^{u,\Lambda,k}.
\end{align*}
We let $A_k^2(x)=A_k^2(x,\delta,M,\alpha_0)$ be the event that on $A_k^1(x)$ and $A_{k+1}^1(x)$,
\begin{enumerate}[(i)]
    \item $\eta^{x_{k-1}}|_{[\sigma_{k-1}^x,\infty)}$ merges with $\eta^{x_k}|_{[0,\sigma_k^x)}$ before exiting $B(x,\frac{3}{2} \epsilon_k) \setminus B(x,\frac{1}{M}\epsilon_k)$,
    \item $\arg(\eta^{x_{k-1}}(t)-x) \geq \frac{2}{3} \min(\arg(\eta^{x_{k-1}}(\sigma_{k-1}^x)-x),\arg(\eta^{x_k}(\sigma_k^x)-x))$ for $t> \sigma_{k-1}^x$ but before merging with $\eta^{x_k}$,
\end{enumerate}
that is, property (ii) makes sure that the curve does not get ``too close'' to $\R_+$, see Figure \ref{fig:motivation}. We let $E_k^2(x) = 1_{A_k^2(x)}$ be the indicator of that event. Next, we let $E_k(x) = E_k^1(x) E_k^2(x)$ and write
\begin{align*}
    E^{m,n}(x) = E_{m+1}^1(x) \prod_{k=m+2}^n E_k(x),
\end{align*}
and $E^n(x) = E^{-1,n}(x)$. 

\begin{figure}[ht!]
\centering
\includegraphics[width=140mm]{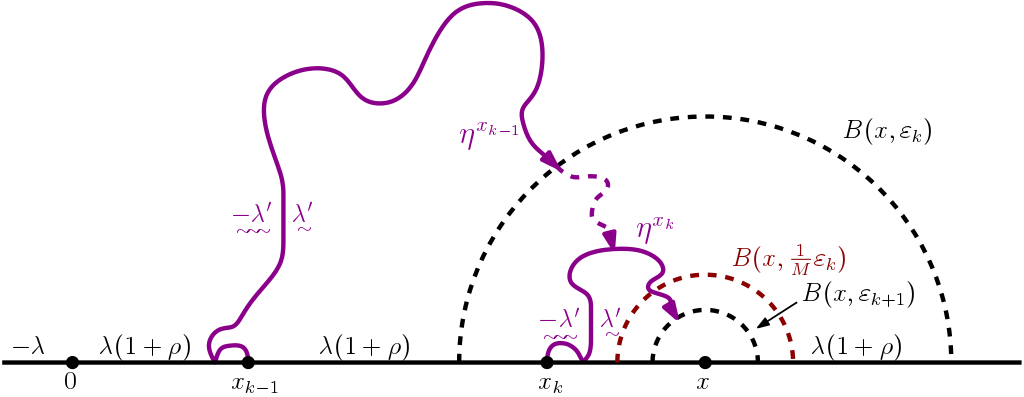}
\caption{If $E_k^1(x)>0$, then $\eta^{x_{k-1}}$ hits $B(x,\epsilon_k)$, $Q_{\sigma_{k,M}^x},Q_{\sigma_k^x}^k \in [\delta, 1-\delta]$ and the derivatives of the Loewner chain for $\eta^{x_{k-1}}$ behave as we want. Furthermore, given that $E_k^1(x) > 0$, we have that if $E_k^2(x) = 1$, then $\eta^{x_{k-1}}$ merges with $\eta^{x_k}$ before exiting $B(x,\frac{1}{2} \epsilon_k) \setminus B(x,\frac{1}{M}\epsilon_k)$. \label{fig:event}}
\end{figure}

\begin{figure}[ht!]
\centering
\includegraphics[width=150mm]{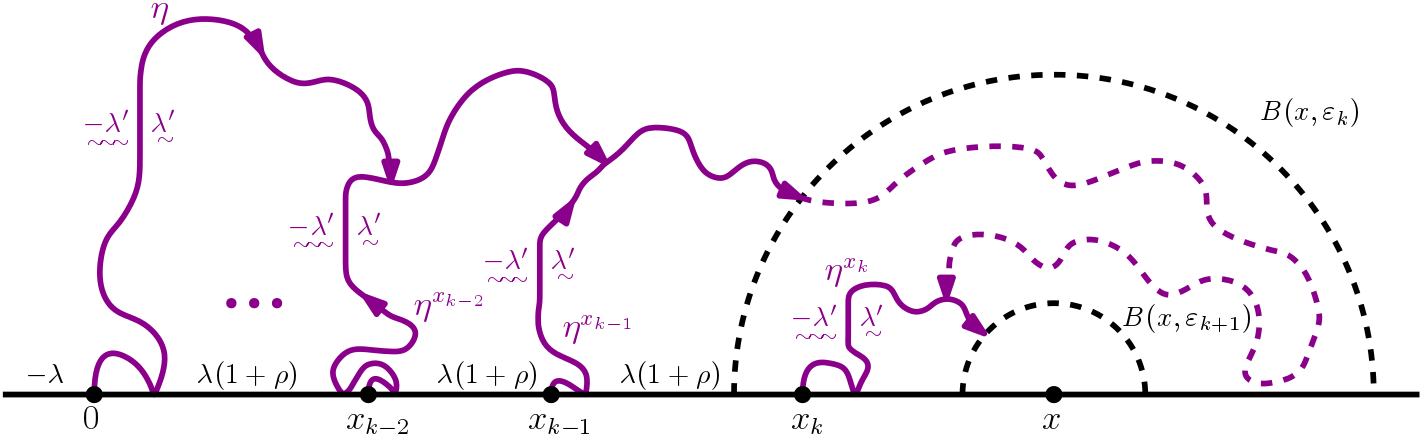}
\caption{Condition (ii) on $A_k^2(x)$ ensures that we will not have the case in the above figure -- instead there will be some sector which the flow lines will not enter. \label{fig:motivation}}
\end{figure}

Why this is the right setting and these conditions are the correct ones to look for might not be clear at first sight. This, we prove in the next lemma.
\begin{lem}\label{lemjust}
If $E^n(x)>0$ for each $n\in \N$, then $x \in V_\beta^*$.
\end{lem}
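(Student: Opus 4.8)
The plan is to show that if $E^n(x) > 0$ for all $n$, then two things hold: first, that $\tau_s(x) < \infty$ for all $s > 0$ (so $\eta$ comes arbitrarily close to $x$), and second, that $\frac{1}{s}\log g_{\tau_s}'(x) \to -\beta(1+\rho/2)$. The first point is relatively direct: if $E_k^1(x) > 0$ then $\sigma_k^x < \infty$, meaning $\eta^{x_k}$ reaches $B(x,\epsilon_{k+1})$; and if $E_k^2(x) = 1$ then $\eta^{x_{k-1}}$ merges with $\eta^{x_k}$ before leaving a small annulus around $x$, so by the merging property of flow lines (property (ii) of the flow line interactions), $\eta = \eta^{x_0}$ merges successively with each $\eta^{x_k}$ and hence also reaches every $B(x,\epsilon_{k+1})$. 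Since $\epsilon_k \to 0$ and the $\epsilon_k$ form a sufficiently ``dense'' decreasing sequence (because $\alpha_j = \alpha_0 + \log j$ grows slowly, so $\overline{\alpha}_k \to \infty$ with controlled increments), this gives $\tau_s(x) < \infty$ for all $s$, with $\sigma(B(x,\epsilon_k)) = \tau_{\overline{\alpha}_k}(x)$.

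The heart of the argument is the derivative asymptotics. The key identity is that once $\eta^{x_{k-1}}$ merges with $\eta^{x_k}$ inside $B(x,\tfrac{3}{2}\epsilon_k)\setminus B(x,\tfrac{1}{M}\epsilon_k)$, the curve $\eta$, viewed from $x$ at scales below $\epsilon_k$, agrees with $\eta^{x_k}$; so one can decompose $\log g_{\tau_{\overline{\alpha}_{k+1}}}'(x)$ as a telescoping sum of increments, each increment being controlled by \eqref{eq:flowset} applied to the flow line $\eta^{x_k}$ on the event $\{I_k^{u,\Lambda,k} > 0\}$. More precisely, using the chain rule for Loewner maps and the conformal covariance of the setup, the ratio of derivatives between consecutive scales $\epsilon_k$ and $\epsilon_{k+1}$ is comparable (up to bounded factors coming from the merging happening in a bounded annulus, controlled by Koebe and the regularity condition (ii) of $A_k^2$ keeping things away from $\R_+$) to $(g_{\sigma_k^x}^{*,k})'(0)$, which \eqref{eq:flowset} pins down as $e^{-\beta(1+\rho/2)\alpha_{k+1}}$ up to a subpower factor $\psi(\alpha_{k+1})$. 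Summing over $k$ from $1$ to $n-1$ and using $\overline{\alpha}_n = \sum_{j=1}^n \alpha_j$, one gets
\begin{align*}
    \log g_{\tau_{\overline{\alpha}_n}}'(x) = -\beta(1+\rho/2)\overline{\alpha}_n + \sum_{k} O(\log\psi(\alpha_{k+1})) + O(n),
\end{align*}
where the $O(n)$ collects the bounded per-scale merging corrections. Since $\psi$ is subpower, $\log\psi(\alpha_{k+1}) = o(\alpha_{k+1})$, and since $\alpha_j \to \infty$, both error sums are $o(\overline{\alpha}_n)$; interpolating for general $s$ between consecutive $\overline{\alpha}_n$ (again using that $\alpha_{n+1}/\overline{\alpha}_n \to 0$ and monotonicity of $t \mapsto g_t'(x)$) yields $\frac{1}{s}\log g_{\tau_s}'(x) \to -\beta(1+\rho/2)$, i.e. $x \in V_\beta^*$.

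The main obstacle I anticipate is making rigorous the claim that the per-scale derivative increment is genuinely comparable to $(g_{\sigma_k^x}^{*,k})'(0)$ up to a \emph{bounded} (or at worst subpower) multiplicative error. This requires: (a) that after merging, $\eta$ and $\eta^{x_k}$ generate the same hull near $x$ at all smaller scales, so the Loewner maps differ only by the conformal map absorbing the pre-merging portion; (b) that this pre-merging map has derivative at $x$ bounded above and below by absolute constants, which is where conditions (i)–(ii) of $A_k^2(x)$ (merging occurs in the bounded annulus $B(x,\tfrac{3}{2}\epsilon_k)\setminus B(x,\tfrac{1}{M}\epsilon_k)$, and the curve stays in a sector bounded away from $\R_+$) combined with the Koebe distortion theorem and the constraint $Q^k \in [\delta,1-\delta]$ are exactly what is needed; and (c) keeping track of the $x$-dependent constants $C_k^*(x)$ from Lemma \ref{time} uniformly in $k$ — here one uses that $x \geq 1$ is fixed and $x_R$-type force points stay at comparable distances, so the $C_k^*$ can be taken bounded. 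Handling the harmonic-measure reformulation (that the derivative decay is equivalent to the decay of $\omega_\infty((r_{\tau_s},x],H_{\tau_s})$) is the clean conceptual tool that makes the merging argument work, since harmonic measure of the relevant boundary interval only depends on the outer boundary of the hull, which is inherited from $\eta^{x_k}$ after merging.
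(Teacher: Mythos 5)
Your overall strategy matches the paper's: combine the per-scale control from $\{I_k^{u,\Lambda,k}>0\}$ (i.e.\ \eqref{eq:flowset}) across the scales $\epsilon_k$, use the merging from $A_k^2$ so that $\eta$ inherits the geometry of each $\eta^{x_k}$, reformulate via $\omega_\infty$ and Koebe, and finally interpolate between the $\overline{\alpha}_n$ using monotonicity of $t\mapsto g_t'(x)$. Where you differ from the paper is in the \emph{decomposition}: you pose the argument as a telescoping of the derivative, claiming that the per-scale ratio $g_{\tau_{\overline{\alpha}_{k+1}}}'(x)\big/g_{\tau_{\overline{\alpha}_k}}'(x)$ is comparable to $(g_{\sigma_k^x}^{*,k})'(0)$ up to a bounded factor; the paper never states or proves this ratio comparison. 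Instead, the paper works entirely with $\omega_\infty$ from the outset: it builds the hulls $\widehat{K}_n$ from $\eta\cup\eta^{x_n}$, shows $\omega_\infty((r_{\sigma_n^x}^n,x],\UH\setminus\widehat{K}_n)\asymp\omega_\infty((r_{\tau_{\overline{\alpha}_{n+1}}},x],\UH\setminus K_{\tau_{\overline{\alpha}_{n+1}}})$ via a Brownian-motion barrier argument, and then obtains a product bound across scales by the strong Markov property for Brownian motion (conditioning on exits through arcs $S_j\subset\partial B(x,\tfrac12\epsilon_j)$ in the lower bound, and obstacle-removal in the upper bound). This is cleaner because harmonic measure is conformally and scale invariant, so the per-scale factor depends only on $\eta^{x_j}$ restricted to the annulus, without needing to argue that the upstream conformal map distorts nicely.

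The gap in your sketch is that the per-scale derivative comparison is precisely the hard step, and your points (a)--(c) do not close it. In particular, your point (b) is stated incorrectly: the relevant derivative of the pre-merging Loewner map at $x$ is \emph{not} bounded by absolute constants (it decays exponentially in $k$); what must be bounded is the \emph{correction factor} $\left(g_{\tau_{\overline{\alpha}_{k+1}}}'(x)/g_{\tau_{\overline{\alpha}_k}}'(x)\right)\big/(g_{\sigma_k^x}^{*,k})'(0)$. Establishing that this factor is uniformly bounded requires essentially the same harmonic-measure estimates the paper carries out (the Markov decomposition with arcs $S_j$, the Harnack/comparability step \eqref{eq:needthis}, and the barrier argument for \eqref{harmequiv}), which you gesture at in your last paragraph but do not execute. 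So the approach is right and the missing input is identified correctly, but the proof as written does not yet produce a valid argument; to make it rigorous you would in effect be rediscovering the paper's harmonic-measure decomposition.
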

\begin{proof}
First, note that we are considering the decay of the conformal maps at a sequence of times $\overline{\alpha}_k \rightarrow \infty$, rather than as the limit over a continuum. However, by the monotonicity of the map $t \mapsto g_t'(x)$, this is sufficient. By the Koebe 1/4 theorem,
\begin{align}\label{eq:quarter}
    \frac{e^{-\overline{\alpha}_k}}{4}g_{\tau_{\overline{\alpha}_k}}'(x) \leq \omega_\infty((r_{\overline{\alpha}_k},x],\UH \setminus K_{\overline{\alpha}_k}) \leq 4 e^{-\overline{\alpha}_k} g_{\tau_{\overline{\alpha}_k}}'(x)
\end{align}
for each integer $k \in \N$. Hence, it is enough to see that the decay rate of $\omega_\infty$ is the correct one.

Let $\widehat{K}_n$ denote the closure of the complement of the unbounded connected component of $\UH \setminus (\eta([0,\tau_{\overline{\alpha}_{n+1}}]) \cup \eta^{x_n}([0,\sigma_n^x]))$. Clearly, on the event $\{E^n(x)>0\}$,
\begin{align*}
    \omega_\infty((r_{\sigma_n^x}^{n},x],\UH \setminus \widehat{K}_n) \leq \omega_\infty((r_{\tau_{\overline{\alpha}_{n+1}}},x],\UH \setminus K_{\tau_{\overline{\alpha}_{n+1}}}),
\end{align*}
since $K_{\tau_{\overline{\alpha}_{n+1}}} \subset \widehat{K}_n$ and $(r_{\sigma_n^x}^n,x] \subset (r_{\tau_{\overline{\alpha}_{n+1}}},x]$. In view of $\omega$ as the hitting probability of a Brownian motion, it is easy to see that
\begin{align}\label{eq:harmineq}
    \omega_\infty((r_{\sigma_n^x}^n,x],\UH \setminus \widehat{K}_n) \gtrsim \omega_\infty((r_{\tau_{\overline{\alpha}_{n+1}}},x],\UH \setminus K_{\tau_{\overline{\alpha}_{n+1}}}),
\end{align}
where the implicit constant is independent of $n$. Indeed, if $L_n$ denotes the line segment $[x,\eta^{x_n}(\sigma_n^x)]$, then a Brownian motion, started in the unbounded connected component of $\UH \setminus (\widehat{K}_n \cup L_n)$, which exits $\UH \setminus \widehat{K}_n$ in either of the two intervals $(r_{\tau_{\overline{\alpha}_{n+1}}},r_{\sigma_n^x}^n]$ and $(r_{\sigma_n^x}^n,x]$, must first hit the line segment $L_n$. However, from any point $z \in L_n$, $\dist(z,(r_{\tau_{\overline{\alpha}_{n+1}}},r_{\sigma_n^x}^n]) > \epsilon_{n+1}$, $\dist(z,(r_{\sigma_n^x}^n,x]) \leq \epsilon_{n+1}$ and $x-r_{\sigma_n^x}^n > \epsilon_{n+1}$. Hence, the conditional probability of the Brownian motion exiting in $(r_{\sigma_n^x}^n,x]$, given that it will exit in $(r_{\tau_{\overline{\alpha}_{n+1}}},x]$ is greater than some $\hat{p}>0$. That the constant is independent of $n$ follows from scale invariance. See Figure \ref{fig:justification} for the illustration of \eqref{eq:harmineq}. Thus, we have proven that on the event $\{E^n(x)>0\}$,
\begin{align}\label{harmequiv}
    \omega_\infty((r_{\sigma_n^x}^{n},x],\UH \setminus \widehat{K}_n) \asymp \omega_\infty((r_{\tau_{\overline{\alpha}_{n+1}}},x],\UH \setminus K_{\tau_{\overline{\alpha}_{n+1}}}).
\end{align}

Finally, we shall prove that $\omega_\infty((r_{\sigma_n^x}^{n},x],\UH \setminus \widehat{K}_n)$ has the correct decay rate, that is, that
\begin{align*}
    \lim_{n \rightarrow \infty} \frac{1}{\overline{\alpha}_{n+1}} \log \omega_\infty((r_{\sigma_n^x}^{n},x],\UH \setminus \widehat{K}_n) = -1-\beta(1+\rho/2).
\end{align*}
We start with the upper bound. Then, since $\eta([0,\tau_{\alpha_1}]) \cup \bigcup_{j=1}^n \eta^{x_j}([0,\sigma_j^x])) \subset \widehat{K}_n$, we have, by the Markov property for Brownian motion,
\begin{align}\label{eq:UBMarkov}
    \omega_\infty((r_{\sigma_n^x}^{n},x],\UH \setminus \widehat{K}_n) &\leq \omega_\infty\Big((r_{\sigma_n^x}^n,x],\UH \setminus (\eta([0,\tau_{\alpha_1}]) \cup \bigcup_{j=1}^n \eta^{x_j}([0,\sigma_j^x])) \Big) \nonumber \\
    &\leq \omega_\infty\left(\partial B\left(x,\frac{\epsilon_1}{2} \right),\UH\setminus\eta([0,\tau_{\alpha_1}])\right) \nonumber \\
    &\quad \times \prod_{j=1}^{n-1} \sup_{z \in \partial B\left(x,\frac{1}{2} \epsilon_{j}\right)} \omega\left(z,\partial B\left(x,\frac{\epsilon_{j+1}}{2} \right), \UH\setminus\eta^{x_j}([0,\sigma_j^x])\right) \nonumber \\
    &\quad \times \sup_{z \in \partial B\left(x,\frac{1}{2} \epsilon_n\right)} \omega\left(z,(r_{\sigma_n^x}^n,x], \UH\setminus\eta^{x_n}([0,\sigma_n^x])\right),
\end{align}
using that if $K^1 \subset K^2 \subset \UH$, then $\omega(z,E,K^1) \geq \omega(z,E,K^2)$ for $E \subset \R \setminus \partial K^2$ and $z \in \UH\setminus K^2$ (by removing obstacles, we allow more Brownian paths, and hence the probability of exiting in that interval increases). Next, note that for $z \in \partial B\left(x,\frac{1}{2} \epsilon_j\right)$, we have that
\begin{align*}
    \omega\left(z,\partial B\left(x,\frac{\epsilon_{j+1}}{2} \right), \UH\setminus\eta^{x_j}([0,\sigma_j^x])\right) \asymp  \omega(z,(r_{\sigma_j^x}^j,x],\UH\setminus \eta^{x_j}([0,\sigma_j^x])),
\end{align*}
where the implicit constant is independent of both $z$ and $j$. This holds since the sizes of, as well as distances to $z$ from $\partial B(x,\frac{\epsilon_{j+1}}{2})$ and $(r_{\sigma_j^x}^j,x]$ are of the same order. Thus,
\begin{align*}
    \sup_{z \in \partial B\left(x,\frac{1}{2} \epsilon_j\right)} \omega\left(z,\partial B\left(x,\frac{\epsilon_{j+1}}{2} \right), \UH\setminus\eta^{x_j}([0,\sigma_j^x])\right) \leq C \sup_{z \in \partial B\left(x,\frac{1}{2} \epsilon_j\right)} \omega(z,(r_{\sigma_j^x}^j,x],\UH\setminus \eta^{x_j}([0,\sigma_j^x])),
\end{align*}
where $C$ is independent of $j$. Moreover, note that on the event $\{E^n(x) > 0\}$, the condition $I_j^{u,\Lambda,j}>0$ implies that \eqref{eq:flowset} holds, and using the Koebe 1/4 theorem as in \eqref{eq:quarter}, we have
\begin{align}\label{eq:quarterharm}
    \psi(\alpha_{j+1})^{-1} e^{-\alpha_{j+1}(1+\beta(1+\rho/2))} \leq \omega_\infty((\varphi_j(r_{\sigma_j^x}^j),0],\UH\setminus\eta_*^{x_j}([0,\sigma_j^x])) \leq \psi(\alpha_{j+1}) e^{-\alpha_{j+1}(1+\beta(1+\rho/2))}
\end{align}
for some subexponential function $\psi = \psi_{u,\Lambda}$. Next, we need that
\begin{align}\label{eq:needthis}
    \omega_\infty((\varphi_j(r_{\sigma_j^x}^j),0],\UH\setminus\eta_*^{x_j}([0,\sigma_j^x])) \asymp \sup_{z \in \partial B\left(x,\frac{1}{2} \epsilon_j\right)} \omega(z,(r_{\sigma_j^x}^j,x],\UH\setminus \eta^{x_j}([0,\sigma_j^x])),
\end{align}
where the implicit constant is independent of $j$. By Harnack's inequality, we can choose an arc $S \subset \partial B(0,\frac{1}{2})$, depending only on the parameters $\delta$, $M$, $\Lambda$ and $u$, such that for each $z \in \varphi_j^{-1}(S) \subset \partial B\left(x,\frac{1}{2} \epsilon_j\right)$,
\begin{align}\label{eq:Sequiv}
    \omega(z,(r_{\sigma_j^x}^j,x],\UH\setminus \eta^{x_j}([0,\sigma_j^x])) \asymp \sup_{z \in \partial B\left(x,\frac{1}{2} \epsilon_j\right)} \omega(z,(r_{\sigma_j^x}^j,x],\UH\setminus \eta^{x_j}([0,\sigma_j^x])).
\end{align}
The fact that this will hold for every $j$ follows since the same geometric restrictions are imposed on each flow line $\eta^{x_j}$. We let $\varrho$ and $\tau_*$ denote the first exit time of $\UH\setminus B(0,\frac{1}{2})$ and $\UH \setminus \eta_*^{x_j}([0,\sigma_j^x])$, respectively (recall that $\eta_*^{x_j} = \varphi_j(\eta^{x_j})$). Then,
\begin{align*}
    \omega_\infty((\varphi_j(r_{\sigma_j^x}^j),0],\UH\setminus\eta_*^{x_j}([0,\sigma_j^x])) &\asymp \lim_{y \rightarrow \infty} y\prob^{iy}(B_\varrho \in S, B_{\tau_*} \in (\varphi_j(r_{\sigma_j^x}^j),0]) \\
    &= \lim_{y \rightarrow \infty} y\int_S \prob(B_{\tau_*} \in (\varphi_j(r_{\sigma_j^x}^j),0] | B_\varrho = z) d\prob^{iy}(B_\varrho = z) \\
    &= \lim_{y \rightarrow \infty} y\int_S \omega(\varphi_j^{-1}(z),(r_{\sigma_j^x}^j,x],\UH\setminus \eta^{x_j}([0,\sigma_j^x])) d\prob^{iy}(B_\varrho = z) \\
    &\asymp \sup_{z \in \partial B\left(x,\frac{1}{2} \epsilon_j\right)} \omega(z,(r_{\sigma_j^x}^j,x],\UH\setminus \eta^{x_j}([0,\sigma_j^x])) \lim_{y \rightarrow \infty} y\prob^{iy}(B_\varrho \in S) \\
    &\asymp \sup_{z \in \partial B\left(x,\frac{1}{2} \epsilon_j\right)} \omega(z,(r_{\sigma_j^x}^j,x],\UH\setminus \eta^{x_j}([0,\sigma_j^x])) 
\end{align*}
where we used the fact that $\omega_\infty(\partial B(0,\frac{1}{2}),\UH\setminus B(0,\frac{1}{2})) \asymp \omega_\infty(S, \UH \setminus B(0,\frac{1}{2}))$ together with \eqref{eq:Sequiv} on the first line, the conformal invariance of Brownian motion on the third line, \eqref{eq:Sequiv} on the fourth line and that $\omega_\infty(S,\UH \setminus B(0,\frac{1}{2})) \asymp 1$ on the fifth line. Thus \eqref{eq:needthis} holds. Combining this with \eqref{eq:quarterharm}, we have
\begin{align*}
    \sup_{z \in \partial B\left(x,\frac{1}{2} \epsilon_j\right)} \omega(z,(r_{\sigma_j^x}^j,x],\UH\setminus \eta^{x_j}([0,\sigma_j^x])) \leq \hat{C} \psi(\alpha_{j+1}) e^{-\alpha_{j+1}(1+\beta(1+\rho/2))},
\end{align*}
for some constant $\hat{C}$. Thus, combining this with \eqref{eq:UBMarkov}
\begin{align*}
    \omega_\infty((r_{\sigma_n^x}^{n},x],\UH \setminus \widehat{K}_n) \leq \tilde{C}^{n+1} \left(\prod_{j=1}^{n+1} \psi(\alpha_j) \right) e^{-\overline{\alpha}_{n+1} (1+\beta(1+\rho/2))},
\end{align*}
that is,
\begin{align*}
    \lim_{n \rightarrow \infty} \frac{1}{\overline{\alpha}_{n+1}} \log \omega_\infty((r_{\sigma_n^x}^n,x],\UH \setminus \widehat{K}_n) &\leq \lim_{n \rightarrow \infty} \frac{(n+1) \log \tilde{C}}{\overline{\alpha}_{n+1}} + \frac{\sum_{j=1}^{n+1} \psi(\alpha_j)}{\overline{\alpha}_{n+1}} - 1-\beta(1+\rho/2) \\
    &= -1-\beta(1+\rho/2).
\end{align*}
We now turn to the lower bound. We begin by writing $\tilde{\tau}_0 = \inf\{ t>0: \eta(t) \in \eta^{x_1}([0,\sigma_1^x]) \}$. Next, $A_1^2(x)$ and (ii) of $A_0^1(x)$ and $A_1^1(x)$ imply that $\eta((\tau_{\alpha_1},\tilde{\tau}_0])$ is not ``too close'' to $\R$, in the sense that there will be a sector $\{ z: 0<\arg (z-x) < c\}$ that the curve will not enter, and the distance from $\eta([0,\tilde{\tau}_0])$ to $x$ is of the same order as the distance from $\eta([0,\tau_{\alpha_1}])$ to $x$. Thus,
\begin{align*}
    \omega_\infty((r_{\tau_{\alpha_1}},x],\UH\setminus \eta([0,\tilde{\tau}_0])) \asymp \omega_\infty((r_{\tau_{\alpha_1}},x],\UH\setminus \eta([0,\tau_{\alpha_1}])),
\end{align*}
where the implicit constant depends only on $\delta$, $M$ and $\Lambda$. In fact, the probability of the Brownian motion hitting $(r_{\alpha_1},x]$ and some arc $S_1 = \{ \frac{1}{2} \epsilon_1 e^{i\theta}: \theta \in [\theta_1(\delta), \theta_2(\delta)], \ \theta_1(\delta) > 0 \}$, such that $S_1 \cap \eta([0,\tilde{\tau}_0]) = \emptyset$ is proportional to the probability of the Brownian motion hitting $(r_{\alpha_1},x]$. That is, let $\upsilon$ denote the exit time of $\UH \setminus \eta([0,\tilde{\tau}_0])$ for the Brownian motion, then 
\begin{align}\label{harmp1}
    \lim_{y \rightarrow \infty} y\prob^{iy}( B_\upsilon \in (r_{\tau_{\alpha_1}},x], \ B_{[0,\upsilon]} \cap S_1 \neq \emptyset) \asymp \omega_\infty((r_{\tau_{\alpha_1}},x],\UH\setminus \eta([0,\tau_{\alpha_1}])),
\end{align}
where, again, the implicit constants depend only on $\delta$, $M$ and $\Lambda$. By the same reasoning, if $S_k = \{ \frac{1}{2}\epsilon_k e^{i\theta}: \theta \in [\theta_1(\delta), \theta_2(\delta)], \ \theta_1(\delta) > 0 \}$, then for every $z \in S_{k-1}$,
\begin{align}\label{harmpk}
    \prob^z(B_{\upsilon_k} \in (r_{\sigma_k^x}^k,x], B_{[0,\upsilon_k]} \cap S_k \neq \emptyset) \asymp \omega(z,(r_{\sigma_k^x}^k,x],\UH \setminus \eta^{x_k}([0,\sigma_k^x])),
\end{align}
where $\upsilon_k$ denotes the exit time from $\UH \setminus \widehat{K}_k$ for the Brownian motion, and the implicit constant does not depend on $k$. Thus, by the Markov property, \eqref{harmp1}, and \eqref{harmpk} together with \eqref{eq:quarterharm},
\begin{align*}
    \omega_\infty((r_{\sigma_n^x}^{n},x],\UH \setminus \widehat{K}_n) &\geq \lim_{y \rightarrow \infty} y\prob^{iy}( B_\upsilon \in (r_{\tau_{\alpha_1}},x], \ B_{[0,\upsilon]} \cap S_1 \neq \emptyset) \\
    &\quad \times \prod_{j=1}^n \inf_{z \in S_j} \prob^z(B_{\upsilon_j} \in (r_{\sigma_j^x}^j,x], B_{[0,\upsilon_j]} \cap S_j \neq \emptyset) \\
    &\geq \tilde{c}^{n+1} \left( \prod_{j=1}^{n+1} \psi(\alpha_j)^{-1} \right) e^{-\overline{\alpha}_{n+1} (1+\beta(1+\rho/2))}.
\end{align*}
Hence, 
\begin{align*}
    \lim_{n \rightarrow \infty} \frac{1}{\overline{\alpha}_{n+1}} \log \omega_\infty((r_{\sigma_n^x}^n,x],\UH \setminus \widehat{K}_n) &\geq \lim_{n \rightarrow \infty} \frac{(n+1) \log \tilde{c}}{\overline{\alpha}_{n+1}} - \frac{\sum_{j=1}^{n+1} \psi(\alpha_j)}{\overline{\alpha}_{n+1}} - 1-\beta(1+\rho/2) \\
    &= -1-\beta(1+\rho/2).
\end{align*}
Therefore, by \eqref{harmequiv},
\begin{align*}
    \lim_{n \rightarrow \infty} \frac{1}{\overline{\alpha}_{n+1}} \log \omega_\infty((r_{\tau_{\overline{\alpha}_{n+1}}},x],\UH \setminus K_{\tau_{\overline{\alpha}_{n+1}}}) = -1-\beta(1+\rho/2),
\end{align*}
and consequently by \eqref{eq:quarter},
\begin{align*}
    \lim_{n \rightarrow \infty} \frac{1}{\overline{\alpha}_n} \log g_{\tau_{\overline{\alpha}_n}}'(x) = -\beta(1+\rho/2).
\end{align*}
Thus, if $E^n(x)>0$ for every $n \in \N$, then $x \in V_\beta^*$.
\end{proof}

\begin{figure}[ht!]
\centering
\includegraphics[width=140mm]{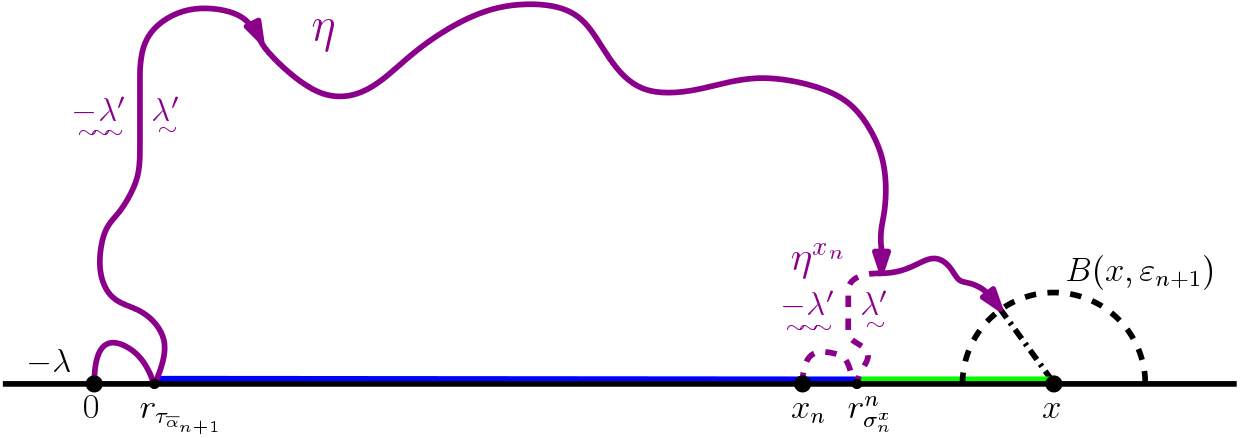}
\caption{A Brownian motion started from the dash-dotted line will hit the green interval with probability proportional to that of hitting the union of the green and the blue intervals. \label{fig:justification}}
\end{figure}

The two-point estimate which we will acquire here is the following. We consider $x,y\in [1,2]$ out of convenience, but it will be clear that the same proof works for every compact interval.
\begin{prop}\label{2PEIG}
For each sufficiently small $\delta \in (0,\frac{1}{2})$ there exists a subpower function $\tilde{\Psi}_\delta$ such that for all $x,y \in [1,2]$ and $m \in \N$ such that $2\epsilon_{m+2} \leq |x-y| \leq \frac{1}{2} \epsilon_m$, we have
\begin{align*}
    \E[E^n(x)E^n(y)] \leq \tilde{\Psi}_\delta(\epsilon_{m+2}^{-1}) \epsilon_{m+2}^{(\zeta\beta-\mu)(1+\rho/2)} \E[E^n(x)] \E[E^n(y)].
\end{align*}
\end{prop}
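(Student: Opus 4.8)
The plan is to prove Proposition \ref{2PEIG} by the standard ``decoupling between scales'' argument adapted from Section 3.2 of \cite{MW17}. Fix $x,y \in [1,2]$ and let $m$ be such that $2\epsilon_{m+2} \leq |x-y| \leq \tfrac12\epsilon_m$. For $k \leq m$ the two systems of flow lines $(\eta^{x_j})_{j\le k}$ and $(\eta^{y_j})_{j\le k}$ are built from initial points $x_j = x - \tfrac14\epsilon_j$ and $y_j = y - \tfrac14\epsilon_j$ which, for $j \leq m$, are separated by a distance comparable to $|x-y| \gtrsim \epsilon_j$. By Lemma \ref{IGlem5} (the bounded Radon--Nikodym derivative between $\SLE_\kappa(\underline\rho)$ processes whose configurations agree near the starting point) applied on the balls $B(x,\tfrac12\epsilon_j)$ and $B(y,\tfrac12\epsilon_j)$, adding the force points coming from the other cluster of flow lines changes the law of each flow line, run until it exits its ball, only by a multiplicative constant. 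Hence
\begin{align*}
    \E[E^{-1,m}(x) E^{-1,m}(y)] \leq C^{m}\, \E[E^{-1,m}(x)]\, \E[E^{-1,m}(y)],
\end{align*}
and since $m \leq \log(\epsilon_{m+2}^{-1})$ (because $\alpha_j = \alpha_0 + \log j$ grows, so $\overline\alpha_{m+2} \gtrsim m$), the factor $C^m$ is a subpower function of $\epsilon_{m+2}^{-1}$; this is absorbed into $\tilde\Psi_\delta$.

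The second ingredient handles the scales $k \geq m+2$, where the two curves must be made to interact: beyond scale $m$ the flow lines $\eta^{x_j}$ and $\eta^{y_j}$ live on balls of radius $\epsilon_j \leq \epsilon_{m+2} \leq \tfrac12|x-y|$, so the clusters near $x$ and near $y$ are again on disjoint balls, but now one must also know the \emph{cost} of forcing $E_{m+1}^1(x)$ and the corresponding event for $y$ to occur at the crossover scale $m+1$, and that after scale $m+2$ the remaining products $\prod_{k>m+2}$ for $x$ and for $y$ contribute independently (up to a constant) by the same Lemma \ref{IGlem5} decoupling on balls $B(x,\cdot)$ and $B(y,\cdot)$. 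The key quantitative input is that the probability of the event $E_{m+1}^1(z)$ --- i.e.\ the flow line $\eta^{z_{m+1}}$ hits $B(z,\epsilon_{m+2})$ with the prescribed regularity and the one-point behaviour $I^{u,\Lambda}$ --- costs a factor $\asymp \psi(\alpha_{m+2}) \epsilon_{m+2}^{(\zeta\beta - \mu)(1+\rho/2)}$ \emph{relative to what is already accounted for in $\E[E^n(x)]$ and $\E[E^n(y)]$}. This is exactly the content of the one-point estimate: by Corollary \ref{1PE} and \eqref{eq:flowset}, $\E[E_k^1(z)\,|\,\mathscr F_{\sigma_{k-1}^z}]$, which weights by $g'$ raised to $\zeta$ and by the event, contributes $e^{-\mu(1+\rho/2)\alpha_{k}}$ worth of mass, whereas the genuine ``two-point'' correction is that one of the two clusters has to pay an \emph{extra} $\zeta\beta(1+\rho/2)\log\epsilon_{m+2}^{-1}$ in the exponent coming from the derivative factor at the merging scale; writing $(\zeta\beta-\mu)(1+\rho/2)$ as the difference of these two exponents and collecting subexponential/subpower errors into $\tilde\Psi_\delta$ yields the stated bound.

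Concretely I would organize the argument as follows. (1) Write $E^n(x) = E^{-1,m}(x)\,E_{m+1}^1(x)\,E_{m+2}^2(x)\,E^{m+2,n}(x)$ and similarly for $y$, and condition successively on $\mathscr F_{\sigma_m^x \vee \sigma_m^y}$, then on $\mathscr F_{\sigma_{m+1}^x \vee \sigma_{m+1}^y}$, using the tower property. (2) For the ``coarse'' part $E^{-1,m}(x)E^{-1,m}(y)$, invoke Lemma \ref{IGlem5} on the disjoint balls $B(x,\tfrac12\epsilon_j)$, $B(y,\tfrac12\epsilon_j)$, $j\le m$, to decouple at cost $C^m$, and note $C^m$ is subpower in $\epsilon_{m+2}^{-1}$. (3) For the ``fine'' part $E^{m+2,n}(x)E^{m+2,n}(y)$, again decouple by Lemma \ref{IGlem5} on the balls of radius $\le \epsilon_{m+2} \le \tfrac12|x-y|$, at cost $C$ per scale, again subpower. (4) The crossover scale: bound $\E[E_{m+1}^1(x)E_{m+2}^2(x)E_{m+1}^1(y)E_{m+2}^2(y)\,|\,\text{the coarse }\sigma\text{-field}]$ by the product of the individual conditional expectations times a constant (disjoint balls), and estimate each such conditional expectation via Corollary \ref{1PE}, \eqref{eq:flowset}, Proposition \ref{set} and the mass-concentration estimates of Section \ref{secOPE}; the ratio of this to $\E[E_{m+1}^1(x)|\cdots]\E[E_{m+1}^1(y)|\cdots]$ picks up the factor $\epsilon_{m+2}^{(\zeta\beta-\mu)(1+\rho/2)}$ up to a subpower error. (5) Reassemble all factors, absorbing every constant-to-the-$m$ and every subexponential-in-$\alpha_{m+2}$ term into a single subpower function $\tilde\Psi_\delta(\epsilon_{m+2}^{-1})$.

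The main obstacle will be step (4): making precise how the weighted measure $\prob^*$ (under which $\tilde g'$ decays deterministically and $\tilde Q$ is close to its invariant density, which is why $M$ was chosen large) lets one read off the exponent $(\zeta\beta-\mu)(1+\rho/2)$ at the merging scale, while simultaneously controlling the regularity conditions (ii), (iii) in $A_k^1$ and condition (ii) in $A_k^2$ so that the merging probability (Lemma \ref{lem2}, referenced in the outline) stays bounded below uniformly in $k$ --- i.e.\ that forcing $\eta^{x_m}$ and $\eta^{x_{m+1}}$ to merge near $x$ does not destroy the one-point asymptotics. Keeping the dependence on $\delta$, $M$, $\Lambda$, $u$ uniform in $m$ and $n$ throughout, so that the final subpower function depends only on $\delta$, is the delicate bookkeeping that the rest of the section (Lemmas \ref{lem1}--\ref{lem3}) is designed to support.
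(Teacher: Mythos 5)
Your high-level architecture is right and matches the paper's: isolate a ``coarse'' block of shared scales ($k\le m$), an ``independent fine'' block ($k\ge m+2$), use the bounded Radon--Nikodym estimates (Lemma~\ref{IGlem5} via Lemma~\ref{lem1}) to decouple on disjoint balls at a per-scale cost, and absorb all $C^m$-type factors into a subpower function of $\epsilon_{m+2}^{-1}$. That part of the plan is sound.

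However, your identification of where the polynomial factor $\epsilon_{m+2}^{(\zeta\beta-\mu)(1+\rho/2)}$ comes from is incorrect, and as written your step~(4) would not produce it. You attribute it to the cost of the single crossover event $E^1_{m+1}(z)$ and to ``the ratio of [$\E[E_{m+1}^1(x)E_{m+2}^2(x)\cdots|\cdots]$] to $\E[E_{m+1}^1(x)|\cdots]\E[E_{m+1}^1(y)|\cdots]$''. But that ratio is $\asymp 1$ (it is essentially $\E[E_{m+2}^2|\cdots]$, which Lemma~\ref{lem2} shows is bounded above and below). Also note that by \eqref{lem3,1} the single-scale cost of $E^1_{m+1}(z)$ is $\asymp e^{\alpha_{m+2}(\zeta\beta-\mu)(1+\rho/2)}$, with \emph{unbarred} $\alpha_{m+2}$, whereas $\epsilon_{m+2}^{(\zeta\beta-\mu)(1+\rho/2)}=e^{-\overline\alpha_{m+2}(\zeta\beta-\mu)(1+\rho/2)}$. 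Since $(\zeta\beta-\mu)(1+\rho/2)<0$, one of these quantities is small and the other is large: they are not comparable, and the one you need is the large one. So a single-scale estimate at the crossover cannot supply the factor.

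The actual source of the factor is the \emph{mismatch between one and two copies of the coarse-scale mass}. The paper's proof writes
\begin{align*}
\E[E^n(x)E^n(y)] \;\lesssim\; \E[E^{m-1}(x)]\,\E[E^{m+1,n}(x)]\,\E[E^{m+1,n}(y)]
\;=\;\frac{\E[E^{m-1}(x)]}{\E[E^{m+1}(x)]\,\E[E^{m+1}(y)]}\cdot \prod_{z\in\{x,y\}}\E[E^{m+1}(z)]\E[E^{m+1,n}(z)],
\end{align*}
then uses Lemma~\ref{lem2} to replace $\E[E^{m+1}(z)]\E[E^{m+1,n}(z)]$ by $\E[E^n(z)]$, leaving the ratio $\E[E^{m-1}(x)]/(\E[E^{m+1}(x)]\E[E^{m+1}(y)])$ to be estimated. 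This is the crucial step you are missing. By Lemma~\ref{lem3} and the remark \eqref{eq:IGUB}, that ratio has exponent
\begin{align*}
(\overline\alpha_m - 2\overline\alpha_{m+2})(\zeta\beta-\mu)(1+\rho/2)
= -(\overline\alpha_{m+2}+\alpha_{m+1}+\alpha_{m+2})(\zeta\beta-\mu)(1+\rho/2),
\end{align*}
which equals $(1+o_m(1))\log\epsilon_{m+2}\cdot(\zeta\beta-\mu)(1+\rho/2)$, giving $\epsilon_{m+2}^{(1+o_m(1))(\zeta\beta-\mu)(1+\rho/2)}$ after collecting the subexponential factors. (Your factorization $E^n(x)=E^{-1,m}(x)E_{m+1}^1(x)E_{m+2}^2(x)E^{m+2,n}(x)$ is also off --- it drops $E_{m+1}^2$, $E_{m+2}^1$, $E_{m+3}^2$ --- but that is a bookkeeping slip compared to the exponent misattribution, which is the genuine gap.)
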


\begin{rmk}
Noting that $\epsilon_{m+2} = \epsilon_m^{1+o_m(1)}$ as $m \rightarrow \infty$, we can write Proposition \ref{2PEIG} as
\begin{align*}
    \E[E^n(x)E^n(y)] \leq \Psi_\delta(1/|x-y|) |x-y|^{(\zeta\beta-\mu)(1+\rho/2)} \E[E^n(x)] \E[E^n(y)],
\end{align*}
where $\Psi_\delta(s)$ is a subpower function.
\end{rmk}
The main ingredients in the proof are divided into three lemmas; the first of which establishes ``approximate independence'' between flow line interactions in different regions; the second states that merging of these flow lines happens with high enough probability and the third is a one-point estimate.

\begin{figure}[ht!]
\centering
\includegraphics[width=150mm]{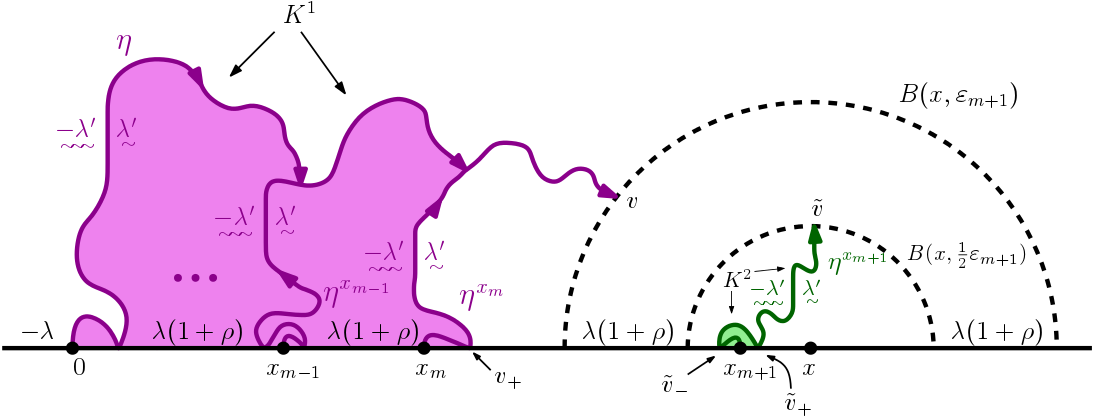}
\caption{$K^1$, shown in purple, and $K^2$, shown in green, are the closures of the complement of the unbounded connected component of $\UH \setminus (\eta([0,\tau_{\overline{\alpha}_{m+1}}]) \cup \eta^{x_m}([0,\sigma_m^x]))$ and $\UH \setminus \eta^{x_{m+1}}([0,\tau])$ respectively, where $\tau$ is the first exit time of $U = B(x,\frac{1}{2} \epsilon_{m+1})$ for $\eta^{x_{m+1}}$. \label{fig:lem1}}
\end{figure}

\begin{lem}\label{lem1}
For every $x \geq 1$ and $m,n \in \N$ such that $m \leq n$, it holds that
\begin{align*}
    \E[E^m(x) E^{m,n}(x)] \asymp \E[E^m(x)]\E[E^{m,n}(x)].
\end{align*}
Furthermore, if $y$ is such that $2\epsilon_{m+2} \leq |x-y| \leq \frac{1}{2} \epsilon_m$, then
\begin{align*}
    \E[E^{m-1}(x) E^{m+1,n}(x) E^{m+1,n}(y)] \asymp \E[E^{m-1}(x)]\E[E^{m+1,n}(x)]\E[E^{m+1,n}(y)].
\end{align*}
The constants in $\asymp$ may depend on $\kappa$ and $\rho$.
\end{lem}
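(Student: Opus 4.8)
The plan is to deduce both displays from the absolute continuity results for $\SLE_\kappa(\underline\rho)$ processes in different domains, namely Lemma \ref{IGlem5} (and its consequence via Lemma \ref{IGlem4}), applied to the relevant flow lines, together with the Markov property of the GFF in the form of Proposition \ref{abscont}. The key structural observation is that each event $E_k(x)$ and $E_k^1(x)$ is measurable with respect to the flow line $\eta^{x_{k-1}}$ (resp. $\eta^{x_k}$) run until it hits a ball of radius $\tfrac12\epsilon_k$ (resp. $\tfrac12\epsilon_{k+1}$) around $x$, together with the regularity data $I_k^{u,\Lambda,k}$, which is itself a conditional expectation measurable with respect to the same stopped flow line. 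Hence $E^m(x)$ depends only on the flow lines $\eta^{x_0},\dots,\eta^{x_{m-1}}$ run until they enter $B(x,\tfrac12\epsilon_j)$ for the appropriate $j$, all of which are supported in $B(x,\epsilon_1)\subset B(x,c)$ for a fixed scale, while $E^{m,n}(x)$ depends only on the flow lines $\eta^{x_{m+1}},\dots,\eta^{x_{n-1}}$ (and the one-point regularity events on them), all of which are supported inside $B(x,\tfrac12\epsilon_{m+1})$, a region separated from $\UH\setminus B(x,\tfrac12\epsilon_m)$ by a definite conformal distance once we rescale by $\epsilon_m^{-1}$.

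For the first display, I would condition on $\mathscr{F}_{\sigma_{m-1}^x}$ (or rather on the $\sigma$-algebra generated by $\eta^{x_0},\dots,\eta^{x_{m-1}}$ up to their stopping times, enlarged to carry $E^m(x)$). Given this data, and given $\{E^m(x)>0\}$, the conditional law of the GFF restricted to $B(x,\tfrac12\epsilon_{m+1})$ is, by the Markov property of the GFF (Theorem \ref{coupling} together with Proposition \ref{abscont}), mutually absolutely continuous with the law of a GFF in $\UH$ whose boundary data near $x$ generates a fresh $\SLE_\kappa(\underline\rho)$ flow line from $x_{m+1}$, with Radon--Nikodym derivative bounded above and below by a constant depending only on $\kappa$ and $\rho$ — here one rescales by $\varphi_{m+1}(z)=(z-x)/\epsilon_{m+1}$ so that the domain, force points and boundary data all sit at unit scale, and applies Lemma \ref{IGlem5} with a fixed $\varsigma$. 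Since $E^{m,n}(x)$ is a bounded functional of that rescaled flow line, integrating out the conditioning gives $\E[E^m(x)E^{m,n}(x)]\asymp \E[E^m(x)]\,\E[E^{m,n}(x)]$, with the constant absorbing the (scale-invariant) Radon--Nikodym bound. The scale invariance of the $\SLE_\kappa(\underline\rho)$ process with force point at $0^+$, and of the events $\tilde I^{u,\Lambda,k}$ (which depend only on the diffusion $Q^k$), is what makes the constant uniform in $m$ and $n$.

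For the second display, the argument is the same but carried out simultaneously at $x$ and $y$. Since $2\epsilon_{m+2}\le|x-y|\le\tfrac12\epsilon_m$, the balls $B(x,\tfrac12\epsilon_{m+1})$ and $B(y,\tfrac12\epsilon_{m+1})$ are disjoint and each is at distance comparable to $|x-y|$ from the other, and both are at distance comparable to $\epsilon_m$ from $\UH\setminus B(x,\tfrac12\epsilon_m)$; rescaling by $|x-y|^{-1}$ puts everything at unit scale. Conditioning on the data determining $E^{m-1}(x)$ (supported at scale $\epsilon_1$, hence ``far'' from both small balls) and applying Proposition \ref{abscont} / Lemma \ref{IGlem5} twice — once to decouple the $x$-flow-lines $\eta^{x_{m+1}},\dots$ from the conditioning and the $y$-side, and once to decouple the $y$-flow-lines — yields $\E[E^{m-1}(x)E^{m+1,n}(x)E^{m+1,n}(y)]\asymp\E[E^{m-1}(x)]\,\E[E^{m+1,n}(x)]\,\E[E^{m+1,n}(y)]$. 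The main obstacle is bookkeeping: one must check carefully that each $E^{m,n}$ really is measurable with respect to a flow line \emph{stopped upon entering a ball of radius comparable to} (not smaller than) the scale separating it from the rest, so that the hypotheses ``$\dist(U,\UH\setminus\widetilde D)>\varsigma$'' and ``force points outside $U$ are at distance $\ge\varsigma$ from $U$'' in Lemma \ref{IGlem5} hold with a $\varsigma$ that is uniform after rescaling — this is exactly why the events $A_k^1,A_k^2$ were designed to confine $\eta^{x_k}$ to the annulus $B(x,\tfrac12\epsilon_k)\setminus B(x,\tfrac1M\epsilon_k)$ and to keep the curve away from $\R_+$, and one invokes those confinement properties together with the $Q^k\in[\delta,1-\delta]$ conditions to produce the required uniform separation.
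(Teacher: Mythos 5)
Your plan is correct and takes essentially the same route as the paper: condition on the data determining $E^m(x)$ (resp.\ $E^{m-1}(x)$ and the $x$-side events), observe that the remaining events live in a ball separated from that data at a scale-invariant conformal distance, rescale, and invoke Lemma \ref{IGlem4}/Lemma \ref{IGlem5} to bound the Radon--Nikodym derivative by constants uniform in $m,n$. The paper works directly with the $\SLE$-configuration Radon--Nikodym machinery of Lemmas \ref{IGlem4} and \ref{IGlem5} (introducing the hulls $K^1,K^2$ and checking $\dist(K^1,K^2)/\mathrm{diam}(U)\gtrsim 1$) rather than routing through Proposition \ref{abscont}, and it handles the case $n\geq m+2$ by a second application of the same argument together with the conditional independence of the outer and inner flow lines given the merged pair; these are details of execution rather than of strategy, and your outline captures the essential mechanism.
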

\begin{proof}
In order to prove the first part, it suffices to prove that 
\begin{align*}
    \E[E^{m,n}(x)|E^m(x)] 1_{\{E^m(x)>0\}} \asymp \E[E^{m,n}(x)] 1_{\{E^m(x)>0\}}.
\end{align*}
Let $v = \eta(\tau_{\overline{\alpha}_{m+1}})$, denote by $K^1$ the closure of the unbounded component of 
\begin{align*}
    \UH \setminus (\eta([0, \tau_{\overline{\alpha}_{m+1}}])\cup \eta^{x_m}([0,\sigma_m^x]))
\end{align*}
(see Figure \ref{fig:lem1}) and let $v_+ = \max \{K^1 \cap \R \}$. As stated above, $\eta^{x_{m+1}}$ is an $\SLE_\kappa(2+\rho,-2-\rho;\rho)$ curve with configuration
\begin{align*}
c = (\UH,x_{m+1},(0,x_{m+1}^-),x_{m+1}^+,\infty).
\end{align*}
Thus, recalling Remark \ref{otherdomains} and Figure \ref{fig:winding} and noting the boundary conditions illustrated in Figure \ref{fig:lem1}, the conditional law of $\eta^{x_{m+1}}$ given $\eta|_{[0,\tau_{\overline{\alpha}_{m+1}}]}$, $\eta^{x_m}|_{[0,\sigma_m^x]}$ and $E^m(x)$, restricted to the event $\{E^m(x) > 0\}$ (recall that $E^m(x)$ is determined by $\eta|_{[0,\tau_{\overline{\alpha}_{m+1}}]}$ and $\eta^{x_m}|_{[0,\sigma_m^x]}$) is that of an $\SLE_\kappa(2,\rho,-2-\rho;\rho)$ process with configuration
\begin{align*}
    \tilde{c} = (\UH \setminus K^1, x_{m+1}, (v,v_+,x_{m+1}^-),x_{m+1}^+,\infty).
\end{align*}
(Note that the boundary data to the left of $v$ on $K^1$ is the same as on $\R_-$, so that the leftmost force point is indeed $v$.)

Let $U = B(x,\frac{1}{2} \epsilon_{m+1})$ and let $\tau = \sigma^{x_{m+1}}(\UH \setminus U)$ be the exit time of $U$. Also, let $K^2$ be the closure of the complement of the unbounded component of $\UH \setminus \eta^{x_{m+1}}([0,\tau])$, $\tilde{v} = \eta^{x_{m+1}}(\tau)$, $\tilde{v}_- = \min\{K^2 \cap \R\}$ and $\tilde{v}_+ = \max\{K^2 \cap \R\}$. Furthermore, let
\begin{align*}
    &c_\tau = (\UH \setminus K^2, \tilde{v},(0,\tilde{v}_-),\tilde{v}_+,\infty), \\
    &\tilde{c}_\tau = (\UH \setminus (K^1 \cap K^2), \tilde{v},(v,v_+,\tilde{v}_-),\tilde{v}_+,\infty).
\end{align*}
Then, by Lemma \ref{IGlem4},
\begin{align*}
\frac{d\mu_{\tilde{c}}^U}{d\mu_c^U} = \frac{Z(\tilde{c}_\tau)/Z(\tilde{c})}{Z(c_\tau)/Z(c)} \exp \left( -\xi m(\UH,K^1,K^2) \right).
\end{align*}
Since $\dist(K^1,x) = \epsilon_{m+1}$, and $K^2 \subseteq \overline{B(x,\frac{1}{2} \epsilon_{m+1})}$, we have that $\dist(K^1,K^2) \gtrsim \epsilon_{m+1}$. Also, $\text{diam}(U) = \epsilon_{m+1}$ and hence
\begin{align}\label{eq:canrescale}
    \frac{\dist(K^1,K^2)}{\text{diam}(U)} \gtrsim 1.
\end{align}
Thus, after rescaling, we are in the setting of Lemma \ref{IGlem5} and thus there exists a constant $C \geq 1$ such that 
\begin{align*}
    \frac{1}{C} \leq \frac{d\mu_{\tilde{c}}^U}{d\mu_c^U} \leq C,
\end{align*}
that is, the Radon-Nikodym derivative between the law of $\eta^{x_{m+1}}$ stopped upon exiting $U$, given $\eta([0,\tau_{\overline{\alpha}_{m+1}}])$, $\eta^{x_m}([0,\sigma_m^x])$ and $E^m(x)$, restricted to the event $\{E^m(x) > 0\}$, and the law of $\eta^{x_{m+1}}$ stopped upon exiting $U$ is bounded above and below by constants. Moreover, by \eqref{eq:canrescale}, the constant $C$ is independent of $m$. Hence,
\begin{align*}
    \E[E^{m,m+1}(x)|E^m(x)] 1_{\{E^m(x)>0\}} \asymp \E[E^{m,m+1}(x)] 1_{\{E^m(x)>0\}},
\end{align*}
and the case $n = m+1$ is proven. Now, suppose that $n \geq m+2$. Obviously, the Radon-Nikodym derivative between the law of $\eta^{x_n}$ stopped upon leaving the component of $B(x,\frac{1}{2}\epsilon_n) \setminus \eta^{x_{m+1}}([0,\tau])$ in which it starts growing and the law where we condition on $K^1$ and $E^m(x)$, restricted to $\{E^m(x) > 0\}$, as well, is bounded above and below by a constant, by the very same argument as above. (Note that the distances have the same lower bound and that $U$ is unchanged.) Furthermore, conditional on $\eta^{x_{m+1}}([0,\sigma^{x_{m+1}}(B(x,\epsilon_{n+1}))])$ and $\eta^{x_n}([0,\sigma_n^x])$ merging, the joint laws of $\eta^{x_j}|_{[0,\sigma_j^x]}$ for $j = 1,\dots,m$ and $\eta^{x_k}|_{[0,\sigma_k^x]}$ for $k=m+2,\dots,n-1$ are independent, hence the proof of the first part is done.

The second part is proven in the same way, noting that if $\tau^y = \sigma^{y_{m+2}}(\UH \setminus B(y,\frac{1}{2}\epsilon_{m+2}))$, $\tilde{\tau}^y = \inf\{ t>0: \eta^{y_n}(t) \notin B(y,\frac{1}{2}\epsilon_n) \setminus \eta^{y_{m+2}}([0,\tau^y]) \}$ and $K^3$ is the closure of the complement of the unbounded connected component of $\UH \setminus \left(\eta^{y_{m+2}}|_{[0,\tau^y]} \cup \eta^{y_n}|_{[0,\tilde{\tau}^y]} \right)$, then $\dist(K^1 \cup K^2,K^3) \gtrsim \epsilon_{m+2}$ and $\text{diam}(B(y,\frac{1}{2}\epsilon_{m+2})) = \epsilon_{m+2}$. Thus, as above, we can rescale and apply Lemma \ref{IGlem5}, to see that
\begin{align*}
    \E[E^{m+1,n}(y)|E^{m-1}(x),E^{m+1,n}(x)] 1_{\{E^{m-1}(x)>0, E^{m+1,n}(x)>0\}} \asymp \E[E^{m+1,n}(y)] 1_{\{E^{m-1}(x)>0, E^{m+1,n}(x)>0\}},
\end{align*}
that is,
\begin{align*}
    \E[E^{m-1}(x) E^{m+1,n}(x) E^{m+1,n}(y)] \asymp \E[E^{m-1}(x)E^{m+1,n}(x)]\E[E^{m+1,n}(y)].
\end{align*}
Repeating the above argument, noting that $\dist(K^1,K^2)/\textup{diam}(B(x,\frac{1}{2} \epsilon_{m+2})) \gtrsim 1$, we have that
\begin{align*}
    \E[E^{m-1}(x)E^{m+1,n}(x)] \asymp \E[E^{m-1}(x)] \E[E^{m+1,n}(x)],
\end{align*}
and the proof is done.
\end{proof}

\begin{lem}\label{lem2}
For each $x \geq 1$ and $m,n \in \N$ such that $m \leq n$, it holds that
\begin{align*}
    \E[E^n(x)] \asymp \E[E^m(x)] \E[E^{m,n}(x)],
\end{align*}
where the constants can depend on $\kappa$, $\rho$, $\delta$ and $M$.
\end{lem}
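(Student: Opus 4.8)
The plan is to reduce the claim to a uniform‑in‑$(m,n)$ lower bound on the probability that the flow lines $\eta^{x_m}$ and $\eta^{x_{m+1}}$ merge, and then invoke Lemma \ref{lem1}. Assume $m<n$ (which the decomposition below requires). Unwinding the definitions one has the pointwise identity $E^n(x)=E^m(x)\,E^{m,n}(x)\,E^2_{m+1}(x)$, since $E_{m+1}(x)=E^1_{m+1}(x)E^2_{m+1}(x)$ while every other factor — namely $E^1_0(x)$, $E_1(x),\dots,E_m(x)$, $E^1_{m+1}(x)$, $E_{m+2}(x),\dots,E_n(x)$ — occurs in both $E^n(x)$ and $E^m(x)E^{m,n}(x)$. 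As $E^2_{m+1}(x)\in\{0,1\}$, the upper bound is immediate: $\E[E^n(x)]\le\E[E^m(x)E^{m,n}(x)]\asymp\E[E^m(x)]\E[E^{m,n}(x)]$ by the first part of Lemma \ref{lem1}. For the matching lower bound it suffices, again by Lemma \ref{lem1}, to prove $\E[E^n(x)]\gtrsim\E[E^m(x)E^{m,n}(x)]$.

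Let $\mathcal G$ be the $\sigma$‑algebra generated by the curves $\eta^{x_j}|_{[0,\sigma_j^x]}$, $0\le j\le n$, together with the merging segments $\eta^{x_{j-1}}|_{[\sigma_{j-1}^x,\,\cdot\,]}$ for $1\le j\le m$ and for $m+2\le j\le n$ — exactly the data determining $E^m(x)$ and $E^{m,n}(x)$, but not the continuation $\eta^{x_m}|_{[\sigma_m^x,\,\cdot\,)}$. Thus $E^m(x)$ and $E^{m,n}(x)$ are $\mathcal G$‑measurable, and on the $\mathcal G$‑measurable event $\{E^m(x)>0,\ E^{m,n}(x)>0\}$ the events $A^1_m(x)$ and $A^1_{m+1}(x)$ hold, so that $A^2_{m+1}(x)$ becomes the event that the continuation of $\eta^{x_m}$ from time $\sigma_m^x$ touches $\eta^{x_{m+1}}|_{[0,\sigma_{m+1}^x)}$ — hence merges with it, both being angle‑$0$ flow lines of the same field, by the flow line interaction property (ii) — before leaving $B(x,\tfrac32\epsilon_{m+1})\setminus B(x,\tfrac1M\epsilon_{m+1})$ and without leaving the angular sector of condition (ii). Writing $\E[E^n(x)]=\E\big[E^m(x)E^{m,n}(x)\,\prob(A^2_{m+1}(x)\mid\mathcal G)\big]$, the task is to produce $p_2=p_2(\kappa,\rho,\delta,M)>0$, independent of $m$, $n$, $x$, with $\prob(A^2_{m+1}(x)\mid\mathcal G)\ge p_2$ on $\{E^m(x)>0,\ E^{m,n}(x)>0\}$.

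By the domain Markov property of flow lines (Theorem \ref{coupling}), conditionally on $\mathcal G$ the continuation $\eta^{x_m}|_{[\sigma_m^x,\,\cdot\,)}$ is an $\SLE_\kappa(\underline\rho)$ process to $\infty$ in the slit domain $\UH\setminus K^1$, with $K^1$ the hull of the $\mathcal G$‑curves, with the force points flanking its tip of weight $>-2$ (using $\rho>-2$) and with the trace of $\eta^{x_{m+1}}$ entering as a boundary arc whose weight lies, by the flow line interaction rules, in the range for which merging (equivalently, hitting that arc) is possible; so Lemmas \ref{IGlem1} and \ref{IGlem3} apply. Rescale by $z\mapsto(z-x)/\epsilon_{m+1}$: the tip $\eta^{x_m}(\sigma_m^x)$ then lies on $\partial B(0,1)$, the target $\eta^{x_{m+1}}$ passes through $B(0,\tfrac12)$, the annulus becomes $B(0,\tfrac32)\setminus B(0,\tfrac1M)$, and — provided $\alpha_0=\log N$ is large, as assumed throughout, so that $\epsilon_{m+2}\ll\epsilon_{m+1}\ll\epsilon_m$ — the only parts of the rescaled $K^1$ meeting $B(0,2)$ are the rescalings of the final stretch of $\eta^{x_m}|_{[0,\sigma_m^x]}$ and of $\eta^{x_{m+1}}|_{[0,\sigma_{m+1}^x]}$, all other curves lying far outside; the conditions $Q^{m}_{\sigma_{m,M}^x},Q^{m}_{\sigma_m^x},Q^{m+1}_{\sigma_{m+1,M}^x},Q^{m+1}_{\sigma_{m+1}^x}\in[\delta,1-\delta]$ and the angular restrictions already in force then keep these two curves a definite distance from $\R$. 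Consequently the rescaled configurations — slit domain, starting point, target arc — range over a compact family depending only on $\kappa,\rho,\delta,M$. For any such configuration fix a deterministic path $\gamma$ from the starting point to a point of the target arc, contained in $B(0,\tfrac32)\setminus B(0,\tfrac1M)$ and in the prescribed sector; by Lemma \ref{IGlem3} (applied in the slit domain, with the arc of $\eta^{x_{m+1}}$ in the role of the hittable boundary interval and with $\gamma$ ending on it) the rescaled continuation hits that arc before leaving a thin tubular neighbourhood of $\gamma$ with probability at least some $p_2>0$; taking the tube thin enough forces this to happen within the annulus and the sector, i.e.\ $A^2_{m+1}(x)$ occurs. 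Scale invariance of $\SLE_\kappa(\underline\rho)$ together with compactness of the family of configurations makes $p_2$ uniform in $m,n,x$, which finishes the argument.

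The step I expect to be the main obstacle is this last uniformity: one must verify that after rescaling by $\epsilon_{m+1}$ the part of the conditioned‑on geometry relevant to the merging event lies in a compact family of configurations, uniformly in $m$ and $n$. This is precisely where one exploits that the $Q$‑conditions and angular conditions in the definitions of the $A^1_k(x)$ and $A^2_k(x)$ are scale invariant and have the same form for every $k$, together with the separation of scales (which pins down the required largeness of $\alpha_0$ and the role of $M$); granted the compactness, Lemma \ref{IGlem3} (and Lemma \ref{IGlem1}) and the merging property (ii) do the rest.
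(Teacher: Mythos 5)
Your overall strategy coincides with the paper's: upper bound via the first part of Lemma \ref{lem1}, reduction of the lower bound to a uniform‑in‑$m$ lower bound on the conditional probability that $\eta^{x_m}$ merges with $\eta^{x_{m+1}}$ in the prescribed annulus and sector, and an appeal to Lemma \ref{IGlem3} for that probability. The decomposition $E^n(x)=E^m(x)E^{m,n}(x)E^2_{m+1}(x)$ and the use of flow‑line interaction (ii) to turn ``touching'' into ``merging'' are also as in the paper.

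The gap is in how you obtain uniformity for Lemma \ref{IGlem3}, and you have (correctly) flagged it as the dangerous step. Lemma \ref{IGlem3} is stated in $\UH$; what it requires is control on the $\UH$‑positions of the force points once the slit domain $\UH\setminus(K^1\cup K^2)$ has been mapped out. Those positions are governed not by the Euclidean geometry but by harmonic measure from infinity: by \eqref{eq:harminflength}, the lengths of the image intervals are exactly $\pi\,\omega_\infty(\cdot,\UH\setminus K)$. Your Euclidean rescaling $z\mapsto(z-x)/\epsilon_{m+1}$ normalizes Euclidean distances but does not normalize these harmonic measures; indeed $\omega_\infty((r_{\sigma_m^x}^m,x],\UH\setminus K^1)\asymp \delta_{\sigma_m^x}(x)\,g_{\sigma_m^x}'(x)\asymp\epsilon_{m+1}e^{-\beta(1+\rho/2)\alpha_{m+1}}$ on the good event, so after dividing by $\epsilon_{m+1}$ the image intervals still shrink with $m$. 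In addition, the claim that the rescaled slit configurations ``range over a compact family depending only on $\kappa,\rho,\delta,M$'' is not justified: the $Q$‑ and angular conditions constrain a few boundary harmonic‑measure ratios and the location where $\eta^{x_k}$ enters two balls, but they do not constrain the fine geometry of $\eta^{x_m}$ near $\partial B(x,\epsilon_{m+1})$, so there is no a priori compactness of the slit domains and no uniform modulus of continuity for the Lemma \ref{IGlem3} probability.

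The paper avoids both issues by working directly in the $g_K$‑image: it conformally maps by $g_K$, rescales by $k=\omega_\infty(K^{1,R},\UH\setminus K)^{-1}$ (a conformal, not Euclidean, normalization), and then proves — using \eqref{eq:obvineq}, \eqref{eq:obvineq2}, \eqref{eq:bypropii}, which are consequences of the $Q$‑condition (ii) of $A_m^1(x)$ plus elementary Brownian‑motion estimates — that $\omega_\infty(K^{1,R},\UH\setminus K)$, $\omega_\infty(K_M^{2,L},\UH\setminus K)$ and $\omega_\infty(K^{1,R}\cup(r^m_{\sigma_m^x},l^{m+1}_{\hat\tau}),\UH\setminus K)$ are all comparable with $m$‑independent constants. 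This is precisely what puts the mapped configuration inside the hypotheses of Lemma \ref{IGlem3} with a fixed $\epsilon$. To repair your proposal you would need to replace the Euclidean rescaling and compactness appeal by this (or an equivalent) verification that the images of $K^{1,R}$, $K_M^{2,L}$ and the gap between them have comparable lengths after mapping out, uniformly in $m$.
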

\begin{proof}
We begin by noting that by the first part of Lemma \ref{lem1},
\begin{align*}
    \E[E^n(x)] \leq \E[E^m(x) E^{m,n}(x)] \lesssim \E[E^m(x)] \E[E^{m,n}(x)].
\end{align*}
Thus, it remains to show that $\E[E^n(x)] \gtrsim \E[E^m(x)] \E[E^{m,n}(x)]$. In order to prove this, it suffices to prove that
\begin{align*}
    \E[E_{m+1}^2(x) | E^m(x),E^{m,n}(x)] 1_{\{E^m(x)>0,E^{m,n}(x)>0\}} \asymp 1_{\{E^m(x)>0,E^{m,n}(x)>0\}},
\end{align*}
that is, that with positive probability (which is independent of $m$ and $n$), $\eta^{x_m}|_{[\sigma_m^x,\infty)}$ will merge with $\eta^{x_{m+1}}|_{[0,\sigma_{m+1}^x)}$ before exiting $B(x,\frac{3}{2} \epsilon_{m+1})\setminus B(x,\frac{1}{M}\epsilon_{m+1})$ and not come too close to $\R$ (in the sense of property (ii) of $A_{m+1}^2(x)$). For the remainder of the proof, assume that $E^m(x),E^{m,n}(x)>0$ (if not, we are done). Let $K^1$ and $K^2$ be the closure of the complement of the unbounded connected component of $\UH \setminus (\eta([0,\tau_{\overline{\alpha}_{m+1}}]) \cup \eta^{x_m}([0,\sigma_m^x]))$ and $\UH \setminus( \eta^{x_{m+1}}([0,\hat{\tau}]) \cup \eta^{x_n}([0,\sigma_n^x]))$, respectively, where $\hat{\tau} = \sigma^{x_{m+1}}(B(x,\epsilon_{n+1}))$, and let $K = K^1 \cup K^2$. Let $K^{1,L}$ and $K^{1,R}$ denote the boundaries of $K^1$ to left and right of $\eta(\tau_{\overline{\alpha}_{m+1}})$ and let $K_M^{2,L} = \eta^{x_{m+1}}([0,\hat{\tau}]) \cap \partial K^2$ be the part of $K^2$ that $\eta^{x_m}$ should hit and merge with.

\begin{figure}[ht!]
\centering
\includegraphics[width=155mm]{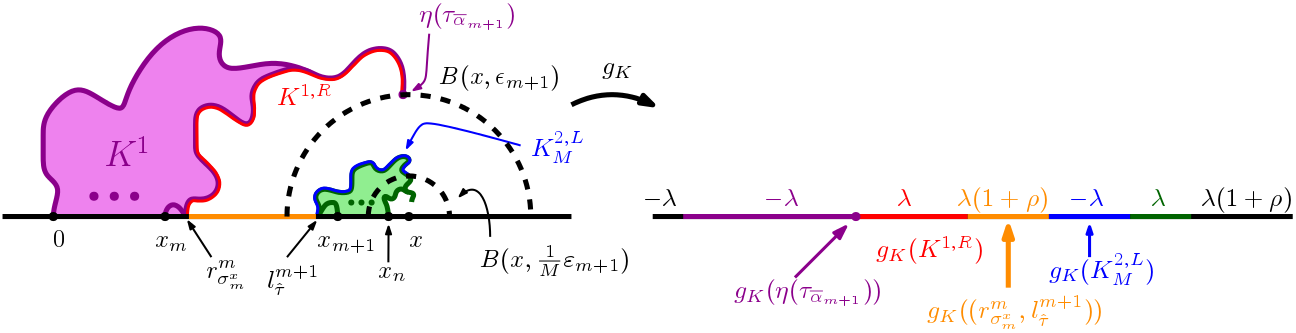}
\caption{$K^1$, shown in purple, and $K^2$, shown in green, are the closures of the complement of the unbounded connected component of $\UH \setminus (\eta([0,\tau_{\overline{\alpha}_{m+1}}]) \cup \eta^{x_m}([0,\sigma_m^x]))$ and $\UH \setminus (\eta^{x_{m+1}}([0,\hat{\tau}])\cup \eta^{x_n}([0,\sigma_n^x]))$, respectively. The boundary data on the left is as in Figure \ref{fig:lem1}. On the right, the images under $g_K$ of the boundary sets of the left are shown, along with their boundary data.  \label{fig:lemma45}}
\end{figure}

Before going on with the proof, we discuss the strategy. Note that the law of the flow line from $\eta(\tau_{\overline{\alpha}_{m+1}}) = \eta^{x_m}(\sigma_m^x)$, given $K^1$ and $K^2$ is that of an $\SLE_\kappa(\rho,-2-\rho,2,\rho)$ process with configuration $(\UH \setminus K,\eta(\tau_{\overline{\alpha}_{m+1}}),(r_{\sigma_m^x}^m,l_{\hat{\tau}}^{m+1},\eta^{x_{m+1}}(\hat{\tau}),r_{\sigma_n^x}^n),\infty)$, where $l_t^{m+1} = \min \{\eta^{x_{m+1}}([0,t]) \cap \R\}$.
The idea of the proof is to map $\UH \setminus K$ to $\UH$, so that the image of the flow line is an $\SLE_\kappa(\rho,-2-\rho,2,\rho)$ process in $\UH$ (see Figure \ref{fig:lemma45}), and use Lemma \ref{IGlem3} to see that the merging, as well as the geometric restriction of property (ii) of $A_{m+1}^2(x)$, occurs with positive probability, independent of $m$. (Note that if the conditions of Lemma \ref{IGlem3} are satisfied, then we can just choose some suitable deterministic curve, such that the properties are satisfied when the flow line follows that curve.) Let $g_K$ denote the mapping-out function of $K = K^1 \cup K^2$ (recall Section \ref{CA}). The image of the flow line from $\eta(\tau_{\overline{\alpha}_{m+1}})$ under $g_K$ is then an $\SLE_\kappa(\rho,-2-\rho,2,\rho)$ with configuration $(\UH, g_K(\eta(\tau_{\overline{\alpha}_{m+1}})),(g_K(r_{\sigma_m^x}^m),g_K(l_{\hat{\tau}}^{m+1}),g_K(\eta^{x_{m+1}}(\hat{\tau})),g_K(r_{\sigma_n^x}^n)),\infty)$, however, the images of the boundary sets under the mapping-out function decrease in length as $m$ increases and thus we need to rescale to be able to use Lemma \ref{IGlem3}. Thus, we want to show that we can find some scaling factor $k$, which will depend on $m$, so that the images of the boundary sets under $\hat{g}_K \coloneqq k g_K$ are of appropriate sizes. More precisely, recall that we want the flow line from $\eta(\tau_{\overline{\alpha}_{m+1}})$ to hit $\eta^{x_{m+1}}([0,\sigma_{m+1,M}^x])$, that is, we want the flow line from $\hat{g}_K(\eta(\tau_{\overline{\alpha}_{m+1}}))$ to hit $\hat{g}_K(K_M^{2,L}) =(\hat{g}_K(l_{\hat{\tau}}^{m+1}),\hat{g}_K(\eta^{x_{m+1}}(\sigma_{m+1,M}^x)))$. Then, to be able to use Lemma \ref{IGlem3}, we need to check that we can fix some $\tilde{\epsilon}>0$, which may depend on $\delta$ and $M$, but is independent of $m$, such that
\begin{align*}
    &\hat{g}_K(r_{\sigma_m^x}^m)-\hat{g}_K(\eta(\tau_{\overline{\alpha}_{m+1}})) \geq \tilde{\epsilon} \\
    &\hat{g}_K(\eta^{x_{m+1}}(\sigma_{m+1,M}^x))-\hat{g}_K(l_{\hat{\tau}}^{m+1}) \geq \tilde{\epsilon}, \textup{ and} \\
    &\hat{g}_K(l_{\hat{\tau}}^{m+1}) -\hat{g}_K(\eta(\tau_{\overline{\alpha}_{m+1}})) \leq \tilde{\epsilon}^{-1}.
\end{align*}
Note that since there is no force point to the left of $\hat{g}_K(\eta(\tau_{\overline{\alpha}_{m+1}}))$, we do not need to care about the force point $x_{2,L}$ of the lemma. Furthermore, while the lemma concerns hitting the interval between two force points, it is still applicable to the interval $(\hat{g}_K(l_{\hat{\tau}}^{m+1}),\hat{g}_K(\eta^{x_{m+1}}(\sigma_{m+1,M}^x)))$, as we can just consider $\hat{g}_K(\eta^{x_{m+1}}(\sigma_{m+1,M}^x))$ as a force point with weight $0$. 

Now, we want to estimate the length of intervals which are images of boundary sets, under $g_K$. Recalling \eqref{eq:harminflength}, it is natural to consider the harmonic measure from infinity of the boundary sets. Rephrasing the above, in terms of harmonic measure from infinity, we need to check that there exists some $\hat{\epsilon}>0$, independent of $m$, such that
\begin{align*}
    &k \omega_\infty(K^{1,R},\UH\setminus K) \geq \hat{\epsilon}, \\ 
    &k \omega_\infty(K_M^{2,L},\UH\setminus K) \geq \hat{\epsilon}, \textup{ and } \\
    &k \omega_\infty(K^{1,R} \cup (r_{\sigma_m^x}^m,l_{\hat{\tau}}^{m+1}), \UH \setminus K) \leq \hat{\epsilon}^{-1},
\end{align*}
if $k$ is chosen properly. We shall show that the three harmonic measures are actually proportional, with proportionality constants independent of $m$, and thus, letting $k = \omega_\infty(K^{1,R},\UH\setminus K)^{-1}$, we are in the setting of Lemma \ref{IGlem3} and the result follows.

We now turn to proving that the above harmonic measures are proportional. Arguing as in the proof of Lemma \ref{lemjust} (i.e., a Brownian motion must first hit the arc of $\partial B(x,\epsilon_{m+1})$ with endpoints $\eta(\tau_{\overline{\alpha}_{m+1}})$ and $x+\epsilon_{m+1}$, from there, the probabilities of hitting the different sets are proportional), we see that 
\begin{align}\label{eq:obvineq}
     \omega_\infty(K^{1,R},\UH\setminus K) \asymp \omega_\infty(K^{1,R},\UH\setminus K^1) \gtrsim \omega_\infty((r_{\sigma_m^x}^m,x],\UH\setminus K^1),
\end{align}
and that
\begin{align}\label{eq:obvineq2}
    \omega_\infty(K_M^{2,L},\UH\setminus K) \asymp \omega_\infty(\eta^{x_{m+1}}([0,\sigma_{m+1}^x]),\UH\setminus K) \asymp \omega_\infty((r_{\sigma_m^x}^m,x],\UH\setminus K^1),
\end{align}
and the implicit constants are independent of $m$. Condition (ii) of $A_m^1(x)$ states that
\begin{align*}
    \omega_\infty(\eta^{x_m,R}([0,\sigma_m^x]) \cup (r_{\sigma_m^x}^m,x], \UH \setminus \eta^{x_m}([0,\sigma_m^x])) \asymp \omega_\infty((r_{\sigma_m^x}^m,x], \UH \setminus \eta^{x_m}([0,\sigma_m^x])),
\end{align*}
and consequently
\begin{align}\label{eq:bypropii}
    \omega_\infty(K^{1,R} \cup (r_{\sigma_m^x}^m,x], \UH \setminus K^1) \asymp \omega_\infty((r_{\sigma_m^x}^m,x],\UH\setminus K^1).
\end{align}
Thus,
\begin{align*}
    \omega_\infty((r_{\sigma_m^x}^m,x],\UH\setminus K^1) \gtrsim \omega_\infty(K^{1,R},\UH\setminus K^1),
\end{align*}
and hence, by \eqref{eq:obvineq},
\begin{align}\label{eq:seqofineqs}
    \omega_\infty(K_M^{2,L},\UH\setminus K) \asymp \omega_\infty((r_{\sigma_m^x}^m,x],\UH\setminus K^1) \asymp \omega_\infty(K^{1,R},\UH\setminus K).
\end{align}
Next, we note that
\begin{align}\label{eq:lastineqs}
    \omega_\infty(K^{1,R} \cup (r_{\sigma_m^x}^m,x], \UH \setminus K^1) &\geq \omega_\infty(K^{1,R} \cup (r_{\sigma_m^x}^m,l_{\hat{\tau}}^{m+1}), \UH \setminus K) \\
    &\geq \omega_\infty(K^{1,R},\UH\setminus K) \nonumber
\end{align}
where the first inequality holds since the left-hand side is the harmonic measure from infinity of a larger set, with fewer obstacles for the Brownian paths. By \eqref{eq:bypropii}, \eqref{eq:seqofineqs} and \eqref{eq:lastineqs},
\begin{align*}
    \omega_\infty(K_M^{2,L},\UH\setminus K) \asymp \omega_\infty(K^{1,R},\UH\setminus K) \asymp \omega_\infty(K^{1,R} \cup (r_{\sigma_m^x}^m,l_{\hat{\tau}}^{m+1}), \UH \setminus K).
\end{align*}
Hence, rescaling by letting $k = \omega_\infty(K^{1,R},\UH\setminus K)^{-1}$, we can use Lemma \ref{IGlem3}, and the proof is done.
\end{proof}

\begin{lem}\label{lem3}
For each $\delta \in (0,\frac{1}{2})$, sufficiently small, there exist a constant $\tilde{c}(\delta)>0$ and a subexponential function $\psi$ such that the for each $x \geq 1$,
\begin{align*}
    \E[E^m(x)] \geq \tilde{c}(\delta)^{m+1} \left(\prod_{j=1}^{m+1}\psi(\alpha_j)^{-|\zeta|}\right) e^{\overline{\alpha}_{m+1}(\zeta\beta-\mu)(1+\rho/2)}.
\end{align*}
\end{lem}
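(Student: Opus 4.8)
The plan is to peel off one scale at a time using Lemma~\ref{lem2}, and to establish the resulting one-scale bound by first reducing $\eta^{x_k}$ to an $\SLE_\kappa(\rho)$ at unit scale by absolute continuity, then changing measure to $\prob^*$ via the martingale of \eqref{eq:mtg}, and finally using the mass concentration of Proposition~\ref{set} together with the deterministic radial decay $\tilde\delta_s(x)=\delta_0 e^{-as}$ under $\prob^*$.

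Applying Lemma~\ref{lem2} to the pair $(n-1,n)$ and noting $E^{n-1,n}(x)=E_n^1(x)$ and $E^0(x)=E_0^1(x)$, induction gives
\begin{align*}
    \E[E^m(x)]\asymp C^m\prod_{k=0}^m\E\big[E_k^1(x)\big],
\end{align*}
with $C=C(\kappa,\rho,\delta,M)$ independent of $m$; since $\sum_{k=0}^m\alpha_{k+1}=\overline\alpha_{m+1}$, it suffices to find a subexponential $\psi$ and $\tilde c_0(\delta)>0$, \emph{independent of $k$}, with
\begin{align}\label{eq:lem3onescale}
    \E\big[1_{A_k^1(x)}\,I_k^{u,\Lambda,k}\big]\geq\tilde c_0(\delta)\,\psi(\alpha_{k+1})^{-|\zeta|}\,e^{(\zeta\beta-\mu)(1+\rho/2)\alpha_{k+1}}.
\end{align}
On $A_k^1(x)$ the curve $\eta^{x_k}$ reaches $B(x,\epsilon_{k+1})$ before leaving $U_k:=B(x,\tfrac12\epsilon_k)$, and $I_k^{u,\Lambda,k}=\E[\tilde I^{u,\Lambda,k}_{\alpha_{k+1}/a+C_k^*(x)}\mid\mathscr F_{\sigma_k^x}]$ is $\mathscr F_{\sigma_k^x}$--measurable, so $1_{A_k^1}I_k^{u,\Lambda,k}$ is determined by $\eta^{x_k}$ stopped on exiting $U_k$. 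Moreover it is a scale-invariant functional (built from the harmonic-measure ratio $Q^k(x)$ and hitting times of balls), and the boundary data of the GFF coupled with $\eta^{x_k}$ agrees near $x$ with that of $\eta^0$, while the force points of $\eta^{x_k}$ other than the one it carries lie at distance $\gtrsim\epsilon_k^{-1}$ from $\epsilon_k^{-1}U_k$ after rescaling by $\epsilon_k^{-1}$. Hence, by Lemma~\ref{IGlem5} (together with Proposition~\ref{abscont}), with constants uniform in $k$,
\begin{align*}
    \E\big[1_{A_k^1(x)}I_k^{u,\Lambda,k}\big]\asymp\E_{\mathrm{ref}}\big[1_A\,I\big]=\E_{\mathrm{ref}}\big[1_A\,\tilde I\big],
\end{align*}
where $\mathrm{ref}$ denotes the $\SLE_\kappa(\rho)$ with force point $0^+$ and tracked point $\hat x$ of size $\asymp1$ (the image of $x$), $\tilde I$ is the event of Proposition~\ref{set} (with the enlarged constant, cf.\ Remark~\ref{setrmk}), $A$ is the corresponding version of $A_k^1$, $I=\E_{\mathrm{ref}}[\tilde I\mid\mathscr F_{\sigma_k^x}]$, and the last equality collapses the conditional expectation since $A\in\mathscr F_{\sigma_k^x}$.

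Now set $s=\alpha_{k+1}/a+C^*$ with $C^*=C^*(\hat x,x_R)$ as in \eqref{C*}; since $\hat x$ and $x_R=0^+$ are at unit scale, $C^*$ is bounded uniformly in $k$ --- this is the point flagged in the remark after Lemma~\ref{time}. By Lemma~\ref{time}, $A\in\tilde{\mathscr F}_s$, so restricting to $\{\tilde t(s)<\infty\}$ and using $d\prob/d\prob^*|_{\tilde{\mathscr F}_s}=\tilde M_0^\zeta/\tilde M_s^\zeta$ with \eqref{eq:Mtilde},
\begin{align*}
    \E_{\mathrm{ref}}[1_A\tilde I] &\geq \tilde M_0^\zeta\,\E^*\big[(\tilde M_s^\zeta)^{-1}1_A 1_{\tilde I}\big]\\
    &= \tilde M_0^\zeta\,\delta_0^{\,\mu(1+\rho/2)}e^{-a\mu(1+\rho/2)s}\,\E^*\big[\tilde g_s'(\hat x)^{-\zeta}\,\tilde Q_s(\hat x)^{-\mu}\,1_A 1_{\tilde I}\big].
\end{align*}
On $\tilde I$, \eqref{eq:setbounds} and Remark~\ref{setrmk} give $\tilde g_s'(\hat x)^{-\zeta}\geq\psi_1(s)^{-|\zeta|}e^{a\zeta\beta(1+\rho/2)s}$, and $\tilde Q_s(\hat x)^{-\mu}\geq1$ as $\tilde Q_s\in(0,1]$, $\mu>0$. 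Collecting the exponentials, with $s=\alpha_{k+1}/a+O(1)$ and $\psi_1$ subexponential, \eqref{eq:lem3onescale} follows once we show
\begin{align}\label{eq:lem3unif}
    \prob^*\big(A\cap\tilde I\big)\geq c_0(\delta)>0\qquad\text{uniformly in }k.
\end{align}

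To prove \eqref{eq:lem3unif}: under $\prob^*$ the reference process is, up to reaching $\hat x$, an $\SLE_\kappa(\rho,-\mu\kappa)$ with force points $(x_R,\hat x)$, hence strongly attracted to $\hat x$; moreover $\prob^*(\tilde t(s)<\infty)=1$ and $\tilde\delta_s(\hat x)=\delta_0 e^{-as}$, so by Lemma~\ref{deltadist} the trace reaches $B(\hat x,e^{-\alpha_{k+1}})$ at radial time $\alpha_{k+1}/a+O(1)$. The event $\mathcal B$ that the trace never leaves $B(\hat x,\tfrac12)$ before hitting $\hat x$ has $\prob^*(\mathcal B)=q>0$ \emph{independent of $k$}, being a statement about the unit-scale $\SLE_\kappa(\rho,-\mu\kappa)$; on $\mathcal B$, conditions (i) and (iii) of $A_k^1$ hold for every $k$. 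For condition (ii): the fixed-scale event that the curve stays in $B(\hat x,\tfrac12)$ and has $\tilde Q\in[\delta,1-\delta]$ at radial time $s_M\asymp\log M$ (where $\tilde\delta_{s_M}\asymp1/M$) has positive $\prob^*$--probability depending only on $\delta,M$; conditioning on it and applying the Markov property and scaling --- the tracked point is then at distance $\asymp1$ from the curve and $\asymp M$ from $\partial B(\hat x,\tfrac12)$ --- the further requirement of staying in $B(\hat x,\tfrac12)$ and of $\tilde Q\in[\delta,1-\delta]$ at radial time $\asymp\alpha_{k+1}/a$ holds with probability bounded below uniformly in $k$, by the positive recurrence and ergodicity of $\tilde Q$ under $\prob^*$ (Corollary~\ref{Qprop}). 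Finally $\prob^*(\tilde I)\geq1-\epsilon$ for $u,\Lambda$ large (Proposition~\ref{set}, Remark~\ref{setrmk}), so for $\epsilon$ small \eqref{eq:lem3unif} holds, giving \eqref{eq:lem3onescale} and the lemma. The main obstacle is precisely this uniformity in $k$: one must rule out that, as the inner radius $e^{-\alpha_{k+1}}\to0$, the probability of descending to that scale before wandering out to distance $\tfrac12$ decays in $k$, and this is exactly where the strong attraction of $\prob^*$ to $\hat x$, the deterministic decay of $\tilde\delta_s$, and the scaling argument are needed.
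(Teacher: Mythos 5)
Your scaffold follows the paper's: peel off scales via Lemma~\ref{lem2}, reduce $\eta^{x_k}$ to a unit-scale reference process by absolute continuity, tilt by the martingale, and combine mass concentration with the deterministic radial decay. The gap is in your identification of the reference process. The flow line $\eta^{x_k}$ is an $\SLE_\kappa(2+\rho,-2-\rho;\rho)$ with configuration $(\UH,x_k,(0,x_k^-),x_k^+,\infty)$: the GFF boundary data immediately to the \emph{left} of $x_k$, on $(0,x_k)$, equals $\lambda(1+\rho)$, not $-\lambda$. Lemma~\ref{IGlem5} can only discard force points at positive distance from $U_k=B(x,\tfrac{1}{2}\epsilon_k)$; after rescaling this removes the weight $2+\rho$ at $0$, but the weight $-2-\rho$ at $x_k^-$ sits at the starting point inside $U_k$ and must be kept. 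The correct reference is therefore an $\SLE_\kappa(-2-\rho;\rho)$ with force points $(0^-;0^+)$ after recentering, not the single-force-point ``$\SLE_\kappa(\rho)$ with force point $0^+$'' you use --- the latter has boundary data $-\lambda$ on the left of its starting point and is not mutually absolutely continuous with the law of $\eta^{x_k}$ restricted to $U_k$.

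This is not a cosmetic slip: it changes every downstream step. The tilted process is an $\SLE_\kappa(-2-\rho;\rho,-\mu\kappa)$ with configuration $(\UH,0,0^-,(0^+,1),\infty)$, and the local martingale must carry the extra factor $(g_t(1)-V_t^L)^{\mu(1+\rho/2)}$, absent from your computation (which uses the single-force-point form \eqref{eq:Mtilde}). Since $\rho>-2$, the left weight $-2-\rho<0$ is attractive, and removing it is precisely why the paper needs the deterministic-curve step you omit: by Lemma~\ref{IGlem1}, the curve follows a fixed curve $\gamma$ with positive probability, after which the conformal map $F$ pushes the left force point to $x_L<-2$, where a second application of Lemma~\ref{IGlem5} lets one drop it, and simultaneously guarantees $1-x_R>\hat\delta$, the uniform-in-$k$ bound on $C^*$ that you assert but do not establish; it is also here that the enlarged constant $\Lambda$ in $\tilde I^{u,\Lambda}$ is actually used. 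Only after this reduction can Proposition~\ref{set} (stated for the process with no left force point) be applied. Likewise, your bare claim $\prob^*(\mathcal B)=q>0$ is proved in the paper via the M\"obius change of coordinates $\varphi(z)=z/(1-z)$ to an $\SLE_\kappa(\rho_L;\rho)$ with $\rho_L=\kappa(1+\mu)-6-\rho>\kappa/2-2$, so that avoiding $B(-1,\tfrac{1}{2})$ has positive probability; without first eliminating the left attraction this uniformity is not justified.
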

\begin{proof}
By Lemma \ref{lem1},
\begin{align*}
    \E[E_k^1(x)|E^{k-1}(x)] 1_{\{E^{k-1}(x)>0\}} \asymp \E[E_k^1(x)] 1_{\{E^{k-1}(x)>0\}},
\end{align*}
so we need to show that there exist a constant $\tilde{c}(\delta)$ and a subexponential function $\psi$ such that
\begin{align}
    &\E[E_k^1(x)] \geq \tilde{c}(\delta) \psi(\alpha_{k+1})^{-|\zeta|} e^{\alpha_{k+1}(\zeta\beta-\mu)(1+\rho/2)}, \label{lem3,1} \\
    &\E[E_k^2(x)|E^{k-1}(x),E_k^1(x)] 1_{\{E^{k-1}(x)>0,E_k^1(x)>0\}} \asymp 1_{\{E^{k-1}(x)>0,E_k^1(x)>0\}}. \label{lem3,2}
\end{align}
However, (\ref{lem3,2}) follows from the very same argument as Lemma \ref{lem2}, so we will now concern ourselves with (\ref{lem3,1}). Since $1_{A_k^1}$ is $\mathscr{F}_{\sigma_k^x}$-measurable, we have
\begin{align*}
    \E[E_k^1(x)] &= \E\left[1_{A_k^1} I_k^{u,\Lambda,k}\right] = \E\left[1_{A_k^1} \E \left[ \tilde{I}_{\frac{\alpha_{k+1}}{a}+C_k^*(x)}^{u,\Lambda,k} \middle| \mathscr{F}_{\sigma_k^x} \right] \right] \\
    &= \prob\left(A_k^1 \cap \tilde{I}_{\frac{\alpha_{k+1}}{a}+C_k^*(x)}^{u,\Lambda,k}\right).
\end{align*}
As stated above, $\eta^{x_k}$ is an $\SLE_\kappa(2+\rho,-2-\rho;\rho)$ curve with configuration $(\UH,x_k,(0,x_k^-),x_k^+,\infty)$ and by Lemma \ref{IGlem5}, the Radon-Nikodym derivative between the law of $\eta^{x_k}$ and an $\SLE_\kappa(-2-\rho;\rho)$ curve with configuration $(\UH,x_k,x_k^-,x_k^+,\infty)$, both stopped upon exiting $B(x,\frac{1}{2}\epsilon_k)$ is bounded above and below by constants and hence we can (and will) instead consider the latter. Also, we can translate and rescale the process so that we consider an $\SLE_\kappa(-2-\rho;\rho)$ curve, $\hat{\eta}$, started from $0$ and the point that we want the curve to get close to being $1$. Then, since $\dist(x_k,x) = \frac{1}{4} \epsilon_k$, the event $\{ \sigma_k^x < \sigma^{x_k}(\UH \setminus B(x,\frac{1}{2}\epsilon_k)) \}$ turns into the event $\{ \hat{\eta} \text{ hits } B(1,e^{-\alpha_{k+1}}) \text{ before leaving } B(1,2) \}$ and the event $\{ Q_{\sigma_{k,M}^x}^k, Q_{\sigma_k^x}^k \in [\delta,1-\delta] \}$ remains roughly the same. More precisely, let $\hat{Q}_t$ denote the process defined by \eqref{eq:Qharm} (but with $\hat{\eta}$ in place of $\eta$), $\sigma_M = \inf \{ t \geq 0: \dist(\hat{\eta}([0,t]),1) < 4/M \}$ and $\sigma_k = \inf \{ t \geq 0: \dist(\hat{\eta}([0,t]),1) < e^{-\alpha_{k+1}} \}$, then (by translation and scaling invariance of $Q_t^k$), $\{ \hat{Q}_{\sigma_k}, \hat{Q}_{\sigma_M} \in [\delta,1-\delta] \} = \{ Q_{\sigma_{k,M}^x}^k, Q_{\sigma_k^x}^k \in [\delta,1-\delta] \}$. We denote by $(g_t)$ the Loewner chain corresponding to $\hat{\eta}$ and weight the probability measure $\prob$ with the local martingale (recall \eqref{eq:krmtg})
\begin{align*}
    M_t^\zeta(1) = g_t'(1)^\zeta \hat{Q}_t^\mu \delta_t^{-\mu(1+\rho/2)}(g_t(1)-V_t^L)^{\mu(1+\rho/2)},
\end{align*}
and denote the resulting measure by $\prob^*$. Note that it is under $\prob^*$ that we can choose $u$ such that $\tilde{I}_{\frac{\alpha_{k+1}}{a}+C_k^*(x)}^{u,\Lambda,k}$ has probability arbitrarily close to $1$ (in the case with no force point to the left). We note that Lemma \ref{condset} implies that on $A_k^1(x) \cap \tilde{I}_{\frac{\alpha_{k+1}}{a}+C_k^*(x)}^{u,\Lambda,k}$
\begin{align*}
    \psi(\alpha_{k+1})^{-|\zeta|} e^{-\alpha_{k+1} \zeta\beta(1+\rho/2)} \leq g_{\sigma_k}'(x)^\zeta \leq \psi(\alpha_{k+1})^{|\zeta|} e^{-\alpha_{k+1} \zeta\beta(1+\rho/2)},
\end{align*}
for some subexponential function $\psi$. Furthermore, $\hat{Q}_{\sigma_k} \asymp 1$ and by Lemma \ref{deltadist} $\delta_{\sigma_k} \asymp e^{-\alpha_{k+1}}$, that is, $\delta_{\sigma_k}^{-\mu(1+\rho/2)} \asymp e^{\alpha_{k+1}\mu(1+\rho/2)}$. Moreover, we see that $g_{\sigma_k}(1) - V_{\sigma_k}^L \asymp 1$, since it is the harmonic measure from infinity of the left side of $\hat{\eta}$, so that it is upper bounded by $\omega_\infty(B(1,2),\UH)$, which is finite, and lower bounded by $\omega_\infty([0,1/2],\UH)=1/2\pi$. Since $\prob^*(A) = \E[M_{\sigma_k}^\zeta(1)1_A]$ and
\begin{align*}
    \psi(\alpha_{k+1})^{-|\zeta|} e^{-\alpha_{k+1}(\zeta\beta-\mu)(1+\rho/2)} \lesssim M_{\sigma_k}^\zeta(1) \lesssim \psi(\alpha_{k+1})^{|\zeta|} e^{-\alpha_{k+1}(\zeta\beta-\mu)(1+\rho/2)},
\end{align*}
we have that
\begin{align*}
    \psi(\alpha_{k+1})^{-|\zeta|} e^{\alpha_{k+1}(\zeta\beta-\mu)(1+\rho/2)} &\prob^*(A_k^1(x) \cap \tilde{I}_{\frac{\alpha_{k+1}}{a}+C_k^*(x)}^{u,\Lambda,k}) \\
    &\lesssim \prob(A_k^1(x) \cap \tilde{I}_{\frac{\alpha_{k+1}}{a}+C_k^*(x)}^{u,\Lambda,k}) \\
    &\lesssim \psi(\alpha_{k+1})^{|\zeta|} e^{\alpha_{k+1}(\zeta\beta-\mu)(1+\rho/2)} \prob^*(A_k^1(x) \cap \tilde{I}_{\frac{\alpha_{k+1}}{a}+C_k^*(x)}^{u,\Lambda,k}).
\end{align*}

\begin{figure}[ht!]
\centering
\includegraphics[width=140mm]{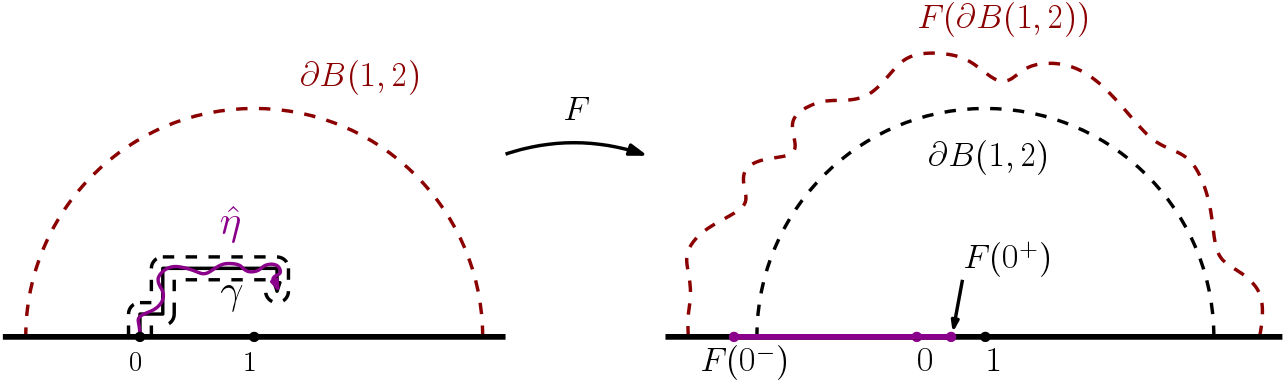}
\caption{We may fix a curve $\gamma$ (black) such that if the curve $\hat{\eta}$ (purple) does not leave the $\tilde{\epsilon}$-neighborhood before coming close to the tip, then after mapping back to $\UH$, the points are as indicated in the picture. \label{fig:OPE}}
\end{figure}

Now we need only show that $\prob^*(A_k^1(x) \cap \tilde{I}_{\frac{\alpha_{k+1}}{a}+C_k^*(x)}^{u,\Lambda,k}) \asymp 1$. Note that under $\prob^*$, $\hat{\eta}$ is an $\SLE_\kappa(-2-\rho;\rho,-\mu\kappa)$ curve with configuration $(\UH,0,0^-,(0^+,1),\infty)$. We shall begin by reducing this to the case of an $\SLE$ process with no force points to the left of $0$. The following procedure is illustrated in Figure \ref{fig:OPE}. Let $\gamma:[0,1] \rightarrow \overline{\UH}$ be a deterministic curve starting at $0$ and remaining in $\UH$ after that, and $\hat{\epsilon} > 0$ be such that if $\hat{\eta}$ comes within distance $\hat{\epsilon}$ of the tip $\gamma(1)$ before exiting the $\hat{\epsilon}$-neighborhood of $\gamma$, then
\begin{align*}
    \dist(1,F(\partial B(1,2) \cap \UH)) \geq 2 \text{ and } F(\min\{\hat{K}_{\hat{\sigma}_1} \cap \R \}) < -2,
\end{align*}
where
\begin{align*}
    F(z) = \frac{\hat{f}_{\hat{\sigma}_1}(z)}{\hat{f}_{\hat{\sigma}_1}(1)},
\end{align*}
$\hat{\sigma}_1 = \inf\{ t \geq 0: \dist(\hat{\eta}([0,t]),\gamma(1)) < \hat{\epsilon}\}$, and $(\hat{f}_t)$ and $(\hat{K}_t)$ are the centered Loewner chain and hulls respectively. Then, the curve $\overline{\eta}(t) = F(\hat{\eta}(\hat{\sigma}_1+t))$ has the law of a time-changed $\SLE_\kappa(-2-\rho;\rho,-\mu\kappa)$ curve with configuration $(\UH,0,x_L,(x_R,1),\infty)$, where $x_L < -2$ and $x_R \in [0^+,1)$. Note that we may choose $\gamma$ and $\hat{\epsilon}$ so that $1-x_R > \hat{\delta}$ for some $\hat{\delta} > 0$ (to get a bound on the constant $C^*$, chosen as remarked after Lemma \ref{time}). By Lemma \ref{IGlem1}, the above happens with positive probability, say $p_0$. By Lemma \ref{IGlem5}, the Radon-Nikodym derivative between the law of $\overline{\eta}$ and the law of a correspondingly time-changed $\SLE_\kappa(\rho,-\mu\kappa)$ curve with force points $(x_R,1)$, is bounded above and below by some constants. Thus we may consider such an $\SLE_\kappa(\rho,-\mu\kappa)$ process. Note that, if $(\hat{g}_t)$ and $(\overline{g}_t)$ are the Loewner chains of $\hat{\eta}$ and $\overline{\eta}$, respectively, then
\begin{align*}
    \hat{g}_{\hat{\sigma}_1+t}'(1) = \frac{1}{\hat{f}_{\hat{\sigma}_1}(1)} \hat{g}_{\hat{\sigma}_1}'(1) \overline{g}_t'(1).
\end{align*}
Thus, we can choose $\Lambda$ to be sufficiently large, so that 
\begin{align*}
    \tilde{I}_{\frac{\alpha_{k+1}}{a}+C_k^*(x)}^{u,\Lambda,k} \supset \{ \hat{\sigma}_1 \leq \hat{\sigma}_2 \} \cap \tilde{I}_{\frac{\alpha_{k+1}}{a}+C_k^*(x)}^{u}(\overline{\eta})
\end{align*}
where $\hat{\sigma}_2 = \inf\{t\geq 0: \dist(\hat{\eta}(t),\gamma([0,1])) > \hat{\epsilon}\}$ and $\tilde{I}_{t}^{u}(\overline{\eta})$ is the event of Proposition \ref{set} for $\overline{\eta}$. We have lower bounded the probability of $\{ \hat{\sigma}_1 \leq \hat{\sigma}_2 \}$ and next, we prove that $\{ \overline{\eta} \textup{ hits } 1 \textup{ before } \partial B(1,2) \} \cap \tilde{I}_{\frac{\alpha_{k+1}}{a}+C_k^*(x)}^{u}(\overline{\eta}) \cap \{ \hat{Q}_{\sigma_k}, \hat{Q}_{\sigma_M} \in [\delta,1-\delta] \}$ has positive probability, which completes the proof of the lemma. (Note that we do not need $\overline{\eta}$ to hit $1$ before exiting $B(1,2)$, but rather that it hits some small set, separating $1$ from $\infty$, before exiting $B(1,2)$, which is of course weaker.)

We begin by lower bounding $\prob^*(\overline{\eta} \textup{ hits } 1 \textup{ before } \partial B(1,2))$. We now make a conformal coordinate change with the Möbius transformation
\begin{align*}
    \varphi(z) = \frac{z}{1-z}.
\end{align*}
The image of an $\SLE_\kappa(\rho,-\mu\kappa)$ curve in $\UH$ from $0$ to $\infty$ is an $\SLE_\kappa(\rho_L;\rho)$ curve in $\UH$ from $0$ to $\infty$, with force points $\varphi(\infty) = -1$ and $\varphi(x_R) = \frac{x_R}{1-x_R}$, where $\rho_L = \kappa-6-(\rho-\mu\kappa) = \kappa(1+\mu)-6-\rho$ (see \cite{SW05}). Furthermore, $1$ is mapped to $\infty$ and $\varphi(\partial B(1,2)) = \partial B(-1,\frac{1}{2})$ and thus the event of hitting $1$ before exiting $B(1,2)$ turns into hitting the event that $\varphi(\overline{\eta})$ does not hit $B(-1,\frac{1}{2})$. We have that $\kappa \leq 4$ and $\rho_L > \frac{\kappa}{2}-2$, so the probability of $\varphi(\overline{\eta})$ avoiding $B(-1,\frac{1}{2})$ is positive.

Next, choosing $\delta$ to be sufficiently small and $M$ to be large enough, we can (by Corollary \ref{Qprop}) guarantee that the $\prob^*$-probability of the event $\{ \hat{Q}_{\sigma_k}, \hat{Q}_{\sigma_M} \in [\delta,1-\delta] \}$ is as close to $1$ as we want. This holds, since as $\hat{\eta}$ comes closer to $1$, a time change $\hat{Q}_{\tilde{t}(s)}$ of $\hat{Q}$ (as in Section \ref{weighted}) converges to its invariant distribution $\tilde{X}_s$, so by choosing $M$ large, we can make sure that $\hat{Q}_{\tilde{t}(s)}$ will be as close to $\tilde{X}_s$ as necessary, in the sense of \eqref{invconvQ}. Moreover, letting $\delta$ be sufficiently small, we have that, for each $s$, $\prob^*(\tilde{X}_s \in [\delta,1-\delta])$ is sufficiently close to $1$ and hence the same follows for $\hat{Q}_{\sigma_k}$ and $\hat{Q}_{\sigma_M}$.

By then choosing $u>0$ sufficiently large, we have that the $\prob^*$-probability of $\tilde{I}_{\frac{k}{a}+C_k^*(x)}^{u,k}$ is arbitrarily close to $1$, and hence that $\prob^*(A_k^1 \cap \tilde{I}_{\frac{k}{a}+C_k^*(x)}^{u,k}) \gtrsim 1$. Thus \eqref{lem3,1} is proven, which gives the result.
\end{proof}
\begin{rmk}
In the above proof, we actually proved an upper bound as well:
\begin{align}\label{eq:IGUB}
    \E[E^m(x)] \leq \tilde{C}(\delta)^{m+1} \left( \prod_{j=1}^{m+1} \psi(\alpha_j)^{|\zeta|} \right) e^{\overline{\alpha}_{m+1}(\zeta\beta-\mu)(1+\rho/2)}.
\end{align}
\end{rmk}

\begin{proof}[Proof of Proposition \ref{2PEIG}]
It holds that
\begin{align*}
    \E[E^n(x) E^n(y)] &\leq \E[E^{m-1}(x) E^{m+1,n}(x) E^{m+1,n}(y)] \\
    &\lesssim \E[E^{m-1}(x)]\E[E^{m+1,n}(x)]\E[E^{m+1,n}(y)] \\
    &= \frac{\E[E^{m-1}(x)]}{\E[E^{m+1}(x)]\E[E^{m+1}(y)]} \\
    & \quad \times \E[E^{m+1}(x)]\E[E^{m+1,n}(x)] \E[E^{m+1}(y)]\E[E^{m+1,n}(y)] \\
    &\lesssim \frac{\E[E^{m-1}(x)]}{\E[E^{m+1}(x)]\E[E^{m+1}(y)]} \E[E^n(x)] \E[E^n(y)] \\
    &\leq \frac{\tilde{C}(\delta)^m \left(\prod_{j=1}^m \psi(\alpha_j)^{|\zeta|}\right)e^{\overline{\alpha}_m (\zeta\beta-\mu)(1+\rho/2)}}{\tilde{c}(\delta)^{2m+4} \left(\prod_{j=1}^{m+2} \psi(\alpha_j)^{-|\zeta|}\right) e^{\overline{\alpha}_{m+2} (\zeta\beta-\mu)(1+\rho/2)}} \E[E^n(x)] \E[E^n(y)] \\
    &\leq c(\delta)^{m+2} \left(\prod_{j=1}^{m+2} \psi(\alpha_j)^{3|\zeta|}\right) e^{-(\overline{\alpha}_{m+2} + \alpha_{m+1} + \alpha_{m+2})(\zeta\beta-\mu)(1+\rho/2)} \E[E^n(x)] \E[E^n(y)] \\
    &= c(\delta)^{m+2} \left(\prod_{j=1}^{m+2} \psi(\alpha_j)^{3|\zeta|}\right) \epsilon_{m+2}^{(1+o_m(1))(\zeta\beta-\mu)(1+\rho/2)} \E[E^n(x)] \E[E^n(y)]
\end{align*}
where we used Lemma \ref{lem1} in the second inequality, Lemma \ref{lem2} in the fourth and Lemma \ref{lem3} and \eqref{eq:IGUB} in the fifth. Thus, we can find a subpower function $\tilde{\Psi}_\delta$ as in the statement of the proposition, and we are done.
\end{proof}

\section{Dimension spectrum}\label{secdim}
In this section, we compute the almost sure Hausdorff dimension of the random sets (\ref{eq:Vbeta*}). We will, however, compute the almost sure dimension of the sets
\begin{align}
V_\beta^* = \left\{ x >0: \lim_{n \rightarrow \infty} \frac{1}{n} \log g_{\tau_n}'(x) = -\beta(1+\rho/2), \ \tau_n = \tau_n(x) < \infty \ \forall n>0 \right\}
\end{align}
and note that this is sufficient, due to the monotonicity of $t \mapsto g_t'$. The theorem that we prove in this section is the following.
\begin{thm}
\label{mainresult*}
Let $\kappa > 0$, $\rho \in ((-2)\vee(\frac{\kappa}{2}-4), \frac{\kappa}{2}-2)$, $x_R = 0^+$ and write $a = 2/\kappa$. Define 
\begin{align*}
    d^*(\beta) \coloneqq 1 - \frac{a\beta}{2}\left( \frac{1-a\rho}{2a} - \frac{1+2\beta}{\beta} \right)^2\left(1+\frac{\rho}{2}\right)
\end{align*}
and let $\beta_-^* = \inf \{\beta:d^*(\beta)>0\}$ and $\beta_+^* = \sup \{\beta:d^*(\beta)>0\}$. Then, almost surely, if $\kappa\in(0,4]$
\begin{align*}
    \textup{dim}_H V_\beta^* = d^*(\beta) \text{ for } \beta \in [\beta_-^*,\beta_+^*].
\end{align*}
\end{thm}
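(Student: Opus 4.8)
**Proof plan for Theorem \ref{mainresult*}.**

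The plan is to establish the two bounds separately, both built on the one-point estimate of Corollary \ref{1PE} and the two-point estimate of Proposition \ref{2PEIG}, after first pinning down the change of variables between $\zeta$ and $\beta$ so that the exponents appearing in those estimates become the exponents of $d^*(\beta)$. First I would record the algebraic identity that, with $\mu = a/\beta + \mu_c$ and $\mu_c = 2a - 1/2 + a\rho/2$, one has
\begin{align*}
    (\mu - \zeta\beta)(1+\rho/2) = \frac{a\beta}{2}\left(\frac{1-a\rho}{2a} - \frac{1+2\beta}{\beta}\right)^2 (1+\rho/2),
\end{align*}
so that $d^*(\beta) = 1 - (\mu-\zeta\beta)(1+\rho/2)$; this is the quantity governing both estimates. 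I would also note that $\beta \in [\beta_-^*, \beta_+^*]$ corresponds exactly to the range $\zeta \geq 0$ with $d^*(\beta) \ge 0$, which is the regime in which Corollary \ref{1PE} and Lemma \ref{lem3} were proved.

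\emph{Upper bound.} For $\beta$ with $d^*(\beta) \ge 0$, fix $\zeta \ge 0$ in correspondence with $\beta$. Cover $[1,2]$ (and by scaling any compact subset of $\R_+$) by dyadic intervals $I$ of length $2^{-k}$; on $V_\beta^*$ one has $g_{\tau_{k\log 2}}'(x) \approx e^{-\beta(1+\rho/2)k\log 2}$ up to subexponential corrections. Using Corollary \ref{1PE} together with a union bound and Markov's inequality (as in the standard first-moment argument, cf.\ \cite{ABV16}, \cite{MW17}), the expected number of intervals $I$ of length $2^{-k}$ that can contain a point of a slightly enlarged version of $V_\beta^*$ is $\lesssim 2^{k}\cdot 2^{-k\,(\mu(1+\rho/2) - \zeta\beta(1+\rho/2))}\phi(2^k)$ for a subpower $\phi$; hence a Borel--Cantelli / Markov argument shows the $s$-dimensional Hausdorff content is a.s.\ zero for every $s > d^*(\beta) = 1 - (\mu - \zeta\beta)(1+\rho/2)$. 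Intersecting over a countable dense set of exponents and enlargements gives $\dim_H V_\beta^* \le d^*(\beta)$ a.s.; this part works for all $\kappa > 0$.

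\emph{Lower bound.} Here we restrict to $\kappa \in (0,4]$ and use the perfect points of Section \ref{sec2PEIG}. For fixed $\beta$ with $d^*(\beta) > 0$ and corresponding $\zeta \ge 0$, let $\mathcal{P}_n = \{x \in [1,2] : E^n(x) > 0\}$ and define the random measure $\nu_n$ on $[1,2]$ with density proportional to $E^n(x)/\E[E^n(x)]$; by Lemma \ref{lem3} and \eqref{eq:IGUB}, $\E[\nu_n([1,2])] \asymp 1$ up to subpower factors, and a second-moment bound via Proposition \ref{2PEIG} shows $\E[\nu_n([1,2])^2]$ stays bounded, so along a subsequence $\nu_n$ converges weakly to a nontrivial measure $\nu$ supported on $\bigcap_n \overline{\mathcal{P}_n}$, which by Lemma \ref{lemjust} is contained in $V_\beta^*$. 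Then for $s < d^*(\beta) = 1 - (\mu - \zeta\beta)(1+\rho/2)$, the energy $\E\big[\iint |x-y|^{-s}\,d\nu(x)\,d\nu(y)\big]$ is controlled, by the remark after Proposition \ref{2PEIG}, by $\int_{[1,2]^2} |x-y|^{-s + (\mu - \zeta\beta)(1+\rho/2)}\Psi_\delta(1/|x-y|)\,dx\,dy < \infty$ since the exponent $-s + (\mu-\zeta\beta)(1+\rho/2) > -1$; by Frostman's lemma $\dim_H V_\beta^* \ge s$ a.s., and letting $s \uparrow d^*(\beta)$ through a countable sequence gives the claim. A final remark handles the endpoints $\beta = \beta_\pm^*$ where $d^*(\beta) = 0$ trivially, and the translation $V_\beta = V_{\beta/(1+\rho/2)}^*$ together with an elementary substitution recovers Theorem \ref{mainresult}.

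The main obstacle is the second-moment/tightness step for the lower bound: one must verify that the measures $\nu_n$ do not degenerate in the limit, i.e.\ that $\liminf_n \E[\nu_n([1,2])^2] < \infty$ and $\E[\nu_n([1,2])] \gtrsim 1$ uniformly, which requires combining Lemma \ref{lem3}, the upper bound \eqref{eq:IGUB}, and Proposition \ref{2PEIG} carefully so that the subpower factors $\prod_j \psi(\alpha_j)^{\pm|\zeta|}$ and the geometric factors $\tilde{c}(\delta)^{m}, \tilde{C}(\delta)^m$ cancel up to something subpower in $1/|x-y|$; tracking these through the sum over scales $m$ with $\alpha_j = \alpha_0 + \log j$ is the delicate bookkeeping, and it is also where the restriction to $\kappa \le 4$ (needed for the imaginary-geometry inputs of Section \ref{sec2PEIG}) enters.
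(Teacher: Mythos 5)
Your overall architecture matches the paper's: record the exponent identity $d^*(\beta) = 1 - (\mu-\zeta\beta)(1+\rho/2)$, use the one-point estimate plus a covering/first-moment argument for the upper bound, and construct Frostman measures from the perfect points of Section \ref{sec2PEIG} with the two-point estimate of Proposition \ref{2PEIG} for the lower bound. But there is a genuine gap in the upper bound. You assert that $\beta \in [\beta_-^*,\beta_+^*]$ ``corresponds exactly to the range $\zeta \ge 0$,'' and on that basis run the Markov-inequality covering argument with $\zeta \ge 0$ and Corollary \ref{1PE} only. This is false: from $\beta = a/\sqrt{\mu_c^2 + 2a\zeta}$ with $\mu_c>0$, $\beta$ is decreasing in $\zeta$, and $\zeta=0$ gives $\beta_0^* = 2a/(4a-1+a\rho)$, which sits strictly in the interior of $[\beta_-^*,\beta_+^*]$ (indeed $d^*(\beta_0^*)$ equals the dimension of $\eta\cap\R_+$, which is positive). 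So for $\beta \in (\beta_0^*,\beta_+^*]$ one has $\zeta<0$, and your one-sided Markov bound goes the wrong way there. The paper splits the upper bound into the two one-sided deviation sets $\overline{V}_\beta = \{g_{\tau_n}' \ge e^{-\beta(1+\rho/2)n} \text{ i.o.}\}$ and $\underline{V}_\beta = \{g_{\tau_n}' \le e^{-\beta(1+\rho/2)n} \text{ i.o.}\}$ and runs Markov with $\zeta>0$ on the first (using Corollary \ref{1PE}) for $\beta<\beta_0^*$, and with $\zeta<0$ on the second (using Proposition \ref{1PEts}, since Corollary \ref{1PE} only has $\zeta\ge 0$) for $\beta>\beta_0^*$; the monotonicity of $d^*$ on either side of $\beta_0^*$ then closes the argument. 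Your ``slightly enlarged version of $V_\beta^*$'' does not distinguish the two one-sided events, and without the $\zeta<0$ branch the upper bound is missing on roughly half the spectrum.

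A smaller but real omission in the lower bound: the second-moment/Frostman argument as you set it up only yields $\prob(\dim_H V_\beta^* \ge s)>0$, not a.s.\ equality. The paper upgrades this to an a.s.\ statement via Lemma \ref{dimconst}, a Blumenthal zero-one law argument using scale invariance of $\SLE_\kappa(\rho)$ with $x_R=0^+$; you should invoke some such zero-one law rather than asserting ``a.s.'' directly from Frostman.
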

With this theorem at hand, noting that $V_\beta = V_{\beta/(1+\rho/2)}^*$ gives the dimension $\textup{dim}_H V_\beta = \newline  \textup{dim}_H V_{\beta/(1+\rho/2)}^* = d^*(\beta/(1+\rho/2))$, we immediately get Theorem \ref{mainresult}.

We start by showing that $\text{dim}_H V_\beta^*$ is an almost surely constant quantity. The proof of this is contained in \cite{ABV16}, but we repeat the proof here for completeness.
\begin{lem}
\label{dimconst}
Let $x_R = 0^+$. For each $\beta$, $\textup{dim}_H V_\beta^*$ is almost surely constant.
\end{lem}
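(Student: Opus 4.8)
The plan is to establish that $\dim_H V_\beta^*$ is measurable with respect to a tail $\sigma$-algebra and then invoke a zero-one law. Concretely, the first step is to show that for any $\epsilon > 0$, the quantity $\dim_H V_\beta^*$ is unchanged if we replace $V_\beta^*$ by the subset of points $x$ whose defining limit is witnessed only through the behaviour of the curve $\eta$ after it first gets within distance $\epsilon$ of $x$. This uses the monotonicity of $t \mapsto g_t'(x)$ together with the domain Markov property: up to a multiplicative error depending only on the (finite, nonzero) derivative accumulated before $\tau_{s_0}$ for a fixed scale $s_0$, the asymptotic rate $\lim_{s\to\infty}\frac1s\log g_{\tau_s}'(x)$ is determined by the increments $\log g_{\tau_{s}}'(x) - \log g_{\tau_{s_0}}'(x)$, which depend only on $\eta$ restricted to a small neighbourhood of $x$ (after the appropriate conformal map). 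Hence membership of $x$ in $V_\beta^*$ — for $x$ in any fixed compact interval away from $0$ — is a ``local'' event.

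Next I would use the scale invariance of $\SLE_\kappa(\rho)$ with force point $0^+$ (stated in Section 2.2.1): for any $m > 0$, the law of $(m^{-1}K_{m^2t})$ is again that of $\SLE_\kappa(\rho)$ with force point $0^+$, and under $x \mapsto mx$ the derivatives transform by the chain rule in a way that preserves the defining limit in \eqref{eq:Vbeta*}. Therefore $m V_\beta^*$ has the same law as $V_\beta^*$, so $\dim_H V_\beta^*$ has a scale-invariant law. Combined with the locality from the first step, $\dim_H V_\beta^*$ is measurable with respect to $\bigcap_{\epsilon>0} \mathscr{G}_\epsilon$, where $\mathscr{G}_\epsilon$ is the $\sigma$-algebra generated by $\eta$ viewed near the real line at scales below $\epsilon$; by the scaling relation this is a trivial $\sigma$-algebra (any event in it has probability $0$ or $1$), so $\dim_H V_\beta^*$ is almost surely constant.

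An alternative and perhaps cleaner route, which is the one I would actually write down, is to partition $\R_+$ into dyadic intervals $[2^k, 2^{k+1})$ and note $\dim_H V_\beta^* = \sup_k \dim_H (V_\beta^* \cap [2^k,2^{k+1}))$. By scale invariance each term $\dim_H(V_\beta^* \cap [2^k,2^{k+1}))$ has the same law, and since the countable supremum of identically distributed (though not independent) random variables whose common law is that of a quantity with a scaling symmetry is itself scale-invariant, one gets that $\dim_H V_\beta^*$ is scale-invariant; then a Kolmogorov-type zero-one law applied to the germ $\sigma$-algebra of $\eta$ at $0$ (which controls the behaviour at all small scales, hence by scaling at all scales) forces it to be a.s. constant. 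The main obstacle is making precise the claim that the germ $\sigma$-algebra near $0$ determines $\dim_H V_\beta^*$: one must argue that a.s. the Hausdorff dimension is not affected by the finitely many ``large-scale'' features of the curve, which is exactly where the monotonicity of $g_t'$ and the domain Markov property do the work, as in the corresponding argument in \cite{ABV16}.
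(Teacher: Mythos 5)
Your two routes both point in the right direction (scale invariance of $\SLE_\kappa(\rho)$ with $x_R=0^+$, plus a zero-one law at the origin), and indeed the paper uses exactly those two ingredients. But you explicitly flag the step that you do not close — ``making precise the claim that the germ $\sigma$-algebra near $0$ determines $\dim_H V_\beta^*$'' — and neither the domain-Markov locality digression nor the dyadic decomposition resolves it. That is a genuine gap: scale invariance by itself says nothing about triviality of the germ $\sigma$-algebra, and ``supremum of identically distributed random variables is scale-invariant'' does not yield a.s.\ constancy without an additional argument.

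The paper's proof (borrowed from \cite{ABV16}) sidesteps both of your obstacles with a single observation that you omit. Set $S_x = V_\beta^* \cap (0,x)$. For $0<y<x$, whether $y\in V_\beta^*$ is determined by the curve up to the swallowing time $T_y \leq T_x$, so $S_x$ is $\mathscr{F}_{T_x}$-measurable; no auxiliary locality/domain-Markov argument is needed. The sets $S_x$ decrease as $x\downarrow 0^+$, so $\dim_H S_x$ decreases to an $\mathscr{F}_{0^+}$-measurable limit (since $T_x\downarrow 0$), which is a.s.\ a deterministic constant by Blumenthal's $0$--$1$ law. Now scale invariance enters: the law of $S_x$ equals that of $xS_1$, hence $\dim_H S_x \overset{d}{=} \dim_H S_1$ for all $x$. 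A monotone sequence of identically distributed random variables converging a.s.\ to a constant is itself a.s.\ equal to that constant, so $\dim_H S_x$ is a.s.\ constant and independent of $x$, and therefore so is $\dim_H V_\beta^* = \lim_{x\to\infty}\dim_H S_x$. In short: the missing step in your write-up is the combination of nestedness $S_x\supset S_{x'}$ for $x>x'$ with the identity in law from scaling, which together transfer the Blumenthal constancy at scale $0^+$ to every scale; replacing the dyadic partition with this monotone family is what makes the argument close.
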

\begin{proof}
Let $x>0$ and write $S_x = V_\beta^* \cap (0,x)$. Since $x_R = 0^+$, the $\SLE_\kappa(\rho)$ process is scaling invariant and hence, the law of $S_x$ is identical to the law of $xS_1$. However, since $\text{dim}_H xS_1 = \text{dim}_H S_1$ (due to the invariance of the Hausdorff dimension under linear scaling), we see that the law of $\text{dim}_H S_x$ is not depending on $x$. The sets $S_x$ are decreasing as $x \rightarrow 0^+$, and hence $\text{dim}_H S_x$ has an almost sure limit (as $x \rightarrow 0^+$) which is measurable with respect to $\mathscr{F}_{0^+}$, since $S_x$ is measurable with respect to $\mathscr{F}_{T_x}$. By Blumenthal's 0-1 law, the limit must be constant and the same for every $x>0$.
\end{proof}
In the following two sections, we prove the upper and lower bounds on the dimension, Theorem \ref{UB} and Theorem \ref{LBIG}, and together they imply Theorem \ref{mainresult}.
\subsection{Upper bound}
We define the random sets
\begin{align*}
    \overline{V}_\beta = \left\{ x \in \R_+: g_{\tau_n}'(x) \geq e^{-\beta(1+\rho/2) n}, \ \tau_n(x) < \infty \ \text{i.o. in} \ n \right\}, \\
    \underline{V}_\beta = \left\{ x \in \R_+: g_{\tau_n}'(x) \leq e^{-\beta(1+\rho/2) n}, \ \tau_n(x) < \infty \ \text{i.o. in} \ n \right\},
\end{align*}
and note that for $\beta_1 < \beta < \beta_2$, we have $V_\beta^* \subset \underline{V}_{\beta_1}$ and $V_\beta^* \subset \overline{V}_{\beta_2}$. Thus, in this subsection, we will find a suitable cover of the above sets, and bound the Hausdorff measure of the above sets using the Minkowski content of the covers. Using this, we prove the following theorem, which gives the upper bound on the dimension. We let $d^*(\beta) = 1 + (\zeta \beta - \mu)(1+\rho/2)$ and write $\beta_-^*$ and $\beta_+^*$ for its left and right zero, respectively. For $\beta_0^* = \beta(0) = \frac{2a}{4a-1+a\rho}$ (where $\beta(0)$ means $\beta(\zeta)$, evaluated at $\zeta=0$), we have $(d^*)'(\beta_0^*)=0$, and hence $d^*(\beta)$ it is increasing for $\beta \in [\beta_-^*,\beta_0^*]$ and decreasing for $\beta \in [\beta_0^*, \beta_+^*]$.
\begin{thm}
\label{UB}
Let $\kappa > 0$, $\rho \in ((-2)\vee(\frac{\kappa}{2}-4),\frac{\kappa}{2}-2)$ and $x_R = 0^+$. Then, the following hold:
\begin{enumerate}[(i)]
\item if $\beta \in [ \beta_-^*, \beta_0^*)$, then $\textup{dim}_H \overline{V}_\beta \leq d^*(\beta)$ almost surely,
\item if $\beta < \beta_-^*$, then $\overline{V}_\beta = \emptyset$ almost surely,
\item if $\beta \in ( \beta_0^*, \beta_+^*]$, then $\textup{dim}_H \underline{V}_\beta \leq d^*(\beta)$ almost surely,
\item if $\beta > \beta_+^*$, then $\underline{V}_\beta = \emptyset$ almost surely.
\end{enumerate}
Together, they imply that if $\beta \in [\beta_-^*,\beta_+^*]$, then
\begin{align*}
    \textup{dim}_H V_\beta^* \leq d^*(\beta),
\end{align*}
almost surely.
\end{thm}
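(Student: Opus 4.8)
The plan is to establish the four assertions via a first-moment (Markov inequality) argument combined with a Minkowski-content covering of $\overline{V}_\beta$ and $\underline{V}_\beta$, using the one-point estimate of Corollary \ref{1PE} as the sole probabilistic input. First I would set up, for each scale $n$, the natural dyadic-type cover: partition a fixed compact interval, say $[1,2]$ (the general case following by scaling and countable union over $[2^{-k},2^{-k+1}]$), into $\lceil e^{n}\rceil$ intervals $J_i$ of length $\asymp e^{-n}$, with centres $x_i$. The key observation is that if $x\in \overline{V}_\beta$ then $\tau_n(x)<\infty$ and $g_{\tau_n}'(x)\ge e^{-\beta(1+\rho/2)n}$ for infinitely many $n$; by the monotonicity and distortion properties of $g_t'$ (Koebe) this transfers, up to a bounded multiplicative error, to the centre $x_i$ of the interval containing $x$: $g_{\tau_n(x_i)}'(x_i)\gtrsim e^{-\beta(1+\rho/2)n}$ and $\tau_n(x_i)<\infty$. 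So $\overline{V}_\beta$ is contained, for every $N$, in $\bigcup_{n\ge N}\bigcup_{i\in \mathcal{G}_n} J_i$, where $\mathcal{G}_n=\{i: g_{\tau_n(x_i)}'(x_i)\ge c\,e^{-\beta(1+\rho/2)n},\ \tau_n(x_i)<\infty\}$.

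Next I would bound $\E|\mathcal{G}_n|$. Applying Markov's inequality to each $x_i$ with the martingale exponent $\zeta\ge 0$ corresponding to $\beta$ (so that $\mu=a/\beta+\mu_c$, recalling the parametrisation in Section \ref{weighted}),
\begin{align*}
\prob\big(i\in\mathcal{G}_n\big)\le \prob\Big(g_{\tau_n(x_i)}'(x_i)^\zeta\ge c^\zeta e^{-\zeta\beta(1+\rho/2)n},\ \tau_n<\infty\Big)\le c^{-\zeta}e^{\zeta\beta(1+\rho/2)n}\,\E\big[g_{\tau_n(x_i)}'(x_i)^\zeta 1\{\tau_n<\infty\}\big],
\end{align*}
and Corollary \ref{1PE} gives $\E[g_{\tau_n(x_i)}'(x_i)^\zeta 1\{\tau_n<\infty\}]\le C e^{-\mu(1+\rho/2)n}$, with $C$ uniform over $x_i\in[1,2]$ (here one must check that the constant $C(x,x_R)$ in Corollary \ref{1PE} can be taken uniform on a compact set bounded away from $x_R=0$; this is immediate from the explicit form, since $C$ is continuous in $x$ away from $0$). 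Hence $\prob(i\in\mathcal{G}_n)\lesssim e^{(\zeta\beta-\mu)(1+\rho/2)n}$, and summing over the $\asymp e^{n}$ indices, $\E|\mathcal{G}_n|\lesssim e^{(1+(\zeta\beta-\mu)(1+\rho/2))n}=e^{d^*(\beta)n}$.

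From here the four conclusions follow by a routine Borel--Cantelli / Hausdorff-measure computation. For (i), fix $s>d^*(\beta)$; the expected $s$-dimensional Hausdorff (pre)measure of $\bigcup_{n\ge N}\bigcup_{i\in\mathcal{G}_n}J_i$ is $\lesssim \sum_{n\ge N}\E|\mathcal{G}_n|\,e^{-sn}\lesssim\sum_{n\ge N}e^{(d^*(\beta)-s)n}\to 0$ as $N\to\infty$, so $\mathcal{H}^s(\overline{V}_\beta)=0$ a.s., giving $\dim_H\overline{V}_\beta\le s$; letting $s\downarrow d^*(\beta)$ finishes it. For (ii), when $\beta<\beta_-^*$ we have $d^*(\beta)<0$; but $d^*(\beta)=1+(\zeta\beta-\mu)(1+\rho/2)$, and since $d^*$ is increasing on $[\beta_-^*,\beta_0^*]$ and one computes $d^*(\beta)\to$ (the relevant sign) as $\beta$ decreases past $\beta_-^*$, the bound $\E|\mathcal{G}_n|\lesssim e^{d^*(\beta)n}\to 0$ actually forces, via Markov and Borel--Cantelli, that a.s. $\mathcal{G}_n=\emptyset$ for all large $n$, so no point lies in $\overline{V}_\beta$. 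For (iii) and (iv) the argument is identical after replacing $\overline{V}_\beta$ by $\underline{V}_\beta$ and choosing the opposite sign of $\zeta$: now one wants an upper bound on $g_{\tau_n}'(x)$, i.e. a lower bound on $g_{\tau_n}'(x)^\zeta$ with $\zeta<0$ (equivalently $\beta>\beta_0^*$, where $\beta$ runs over the branch $\zeta\le 0$, $\mu>\mu_c$), and Corollary \ref{1PE} still applies with $\zeta\ge 0$ after noting $g'\le 1$ so $g'^\zeta$ for negative $\zeta$ is handled by the symmetric version; more precisely one uses $\prob(g'_{\tau_n}(x_i)\le e^{-\beta(1+\rho/2)n})\le e^{-|\zeta|\beta(1+\rho/2)n}\E[g'_{\tau_n}(x_i)^{-|\zeta|}1\{\tau_n<\infty\}]$ with the two-sided bound of Corollary \ref{1PE} (valid for the corresponding $\zeta\ge 0$ parametrisation). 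Finally, the combined statement follows from $V_\beta^*\subset\underline{V}_{\beta_1}\cap\overline{V}_{\beta_2}$ for $\beta_1<\beta<\beta_2$: taking $\beta_2\downarrow\beta$ with $\beta_2\in[\beta_-^*,\beta_0^*]$ (when $\beta<\beta_0^*$) or $\beta_1\uparrow\beta$ with $\beta_1\in[\beta_0^*,\beta_+^*]$ (when $\beta>\beta_0^*$) and using continuity of $d^*$ gives $\dim_H V_\beta^*\le d^*(\beta)$, with the case $\beta=\beta_0^*$ following by monotone approximation from either side.

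The main obstacle I anticipate is not the covering bookkeeping but the passage from the event "$x\in\overline{V}_\beta$" to a statement about the grid centre $x_i$ with a \emph{uniform} constant: one needs that $g'_{\tau_n(x)}(x)\asymp g'_{\tau_n(x_i)}(x_i)$ and that $\tau_n(x)<\infty \iff \tau_n(x_i)<\infty$ up to a shift in $n$, uniformly in the grid, which requires the distortion estimates (Koebe, and Lemma \ref{deltadist}) together with the fact that $|x-x_i|\lesssim e^{-n}$ is comparable to the distance from the curve at the relevant time — essentially the same comparison already packaged in Lemma \ref{time}. The secondary subtlety is ensuring the $O(\cdot)$ constant in Corollary \ref{1PE} is locally uniform; since we only ever apply it on $[1,2]$ (and translates) this is harmless, but it should be stated explicitly.
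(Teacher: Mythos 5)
Your treatment of parts \textit{(i)}, \textit{(ii)} and the final assembly (letting $\beta_2\downarrow\beta$, resp.\ $\beta_1\uparrow\beta$, using monotonicity of $d^*$ on the two branches) matches the paper's approach almost exactly: dyadic cover, distortion to transfer the $g'$ bound to grid centres, Markov inequality against the $\zeta\geq 0$ moment from Corollary \ref{1PE}, and Minkowski-content bookkeeping. That part is fine.

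The gap is in \textit{(iii)} and \textit{(iv)}, where you invoke Corollary \ref{1PE} for the exponent $-|\zeta|=\zeta<0$, claiming it is ``handled by the symmetric version.'' It is not. Corollary \ref{1PE} is stated and proved only for $\zeta\geq 0$, and for a structural reason: its proof sandwiches $g_{\tau_s}'(x)^\zeta$ between $\tilde g_{s/a+C^*}'(x)^\zeta$ and $\tilde g_{s/a-C^*}'(x)^\zeta$ using Lemma \ref{time} and the monotonicity of $t\mapsto g_t'(x)$, and then matches the indicator $1\{\tau_s<\infty\}$ with $1\{\tilde t(s/a\pm C^*)<\infty\}$. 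For $\zeta<0$ the power reverses the derivative inequalities, and the indicator sets $\{\tilde t(s/a+C^*)<\infty\}\subset\{\tau_s<\infty\}\subset\{\tilde t(s/a-C^*)<\infty\}$ no longer line up with the direction you need; on $\{\tau_s<\infty\}\setminus\{\tilde t(s/a+C^*)<\infty\}$ the quantity $\tilde g_{s/a+C^*}'(x)$ is not even available, so you cannot bound $\E[g_{\tau_n}'(x_i)^\zeta 1\{\tau_n<\infty\}]$ by the radial-time estimate in the naive way. The paper sidesteps this by \emph{changing the cover} for $\underline V_\beta$: it shows that $g_{\tau_n}'(x)\leq e^{-\beta(1+\rho/2)n}$ forces $\tilde g_{n/a+C_1}'(x_J)\leq C_2 e^{-\beta(1+\rho/2)n}$ for the grid midpoint $x_J$ (this uses Lemma \ref{time} plus distortion), and then bounds $\underline{\mathcal N}_n(\beta)$ using Chebyshev together with Proposition \ref{1PEts} applied directly in the radial parametrization, where $\zeta<0$ is legitimate (the proposition holds for all $\zeta>-\mu_c^2/2a$, and $\prob^*(\tilde t(s)<\infty)=1$). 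So the fix is: phrase the covering criterion for $\underline V_\beta$ in terms of $\tilde g_{n/a+C_1}'$ rather than $g_{\tau_n}'$, and use Proposition \ref{1PEts} rather than Corollary \ref{1PE}.

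One minor remark: in \textit{(ii)} your assertion that ``$\mathcal G_n=\emptyset$ for all large $n$'' is slightly stronger than needed and would require a Borel--Cantelli for which you should first confirm summability; the paper's cleaner route is simply $1\{\overline V_\beta\cap[1,2]\neq\emptyset\}\leq\sum_{n\geq m}\overline{\mathcal N}_n(\beta)$ and then sending $m\to\infty$ in expectation, which avoids any almost-sure statement about eventual emptiness of the levels.
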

Now, we will construct the cover. It is sufficient to prove the above theorem for the sets intersected with every closed subinterval of $\R_+$. As in \cite{ABV16}, we will do this for the set $[1,2]$, but it will be clear that the very same construction works for any other closed set. We start by constructing the cover for $\overline{V}_\beta \cap [1,2]$.
For every $n \geq 1$, let 
\begin{align*}
J_{j,n} = [1+je^{-n}/2, 1+(j+1)e^{-n}/2), \ j = 0,1,\dots, \lceil 2e^n \rceil -1.
\end{align*}
Then every interval has length $e^{-n}/2$. We denote by $x_{j,n}$ the midpoint of the interval $J_{j,n}$ and write $\mathscr{J}_n \coloneqq \{J_{j,n} \}$. By distortion estimates, we have that there is some constant $c' > 0$, such that if 
\begin{align} \label{eq:lbg'}
    g_{\tau_n(x)}'(x) \geq e^{-\beta(1+\rho/2) n} \ \text{for some} \ x \in J_{j,n},
\end{align}
then,
\begin{align} \label{eq:lbg'2}
    g_{\tau_n(x)}'(x_{j,n}) \geq c'e^{-\beta(1+\rho/2) n}.
\end{align}
We also have that
\begin{align*}
    \tau_{n-2}(x_{j,n}) \leq \inf_{x \in J_{j,n}} \tau_n(x),
\end{align*}
since the curve must hit the ball of radius $e^{-(n-2)}$, centered at $x_{j,n}$ before it hits the ball of radius $e^{-n}$, centered at any point $x$ in $J_{j,n}$, as the former ball contains the latter for any $x \in J_{j,n}$. Combining this with the fact that $t \mapsto g_t(x)$ is decreasing for every fixed $x$, (\ref{eq:lbg'2}) and writing $c = c'e^{-2\beta(1+\rho/2)}$, shows that (\ref{eq:lbg'}) implies that
\begin{align*}
    g_{\tau_{n-2}(x_{j,n})}'(x_{j,n}) \geq ce^{-\beta(1+\rho/2)(n-2)}.
\end{align*}
We let $\mathscr{I}_n^-(\beta) = \left\{ j \in \{ 0,1,..., \lceil 2e^n \rceil -1 \} : g_{\tau_{n-2}(x_{j,n})}'(x_{j,n}) \geq ce^{-\beta(1+\rho/2)(n-2)} \right\}$, and define $J_{n,-}(\beta)$ by
\begin{align*}
    J_{n,-}(\beta) = \bigcup_{j \in \mathscr{I}_n^-(\beta)} J_{j,n}.
\end{align*}
Then $\left\{ x \in [1,2]: g_{\tau_n(x)}'(x) \geq e^{-\beta(1+\rho/2) n} \right\} \subset J_{n,-}(\beta)$, and thus, for every positive integer $m$,
\begin{align*}
    \overline{V}_\beta \cap [1,2] \subset \bigcup_{n \geq m} J_{n,-}(\beta),
\end{align*}
i.e., for every positive integer $m$, $\bigcup_{n \geq m} J_{n,-}(\beta)$ is a cover of $\overline{V}_\beta \cap [1,2]$. We write
\begin{align} \label{eq:Nnu}
    \overline{\mathcal{N}}_n(\beta) = \sum_{j=0}^{\lceil 2e^n \rceil -1} 1\left\{ g_{\tau_{(n-2)}(x_{j,n})}'(x_{j,n}) \geq ce^{-\beta(1+\rho/2)(n-2)} \right\},
\end{align}
that is, $\overline{\mathcal{N}}_n(\beta)$ is the number of intervals $J_{j,n}$ that make up $J_{n,-}(\beta)$.
\begin{lem}
\label{Nu}
Let $\kappa > 0$, $\rho \in ((-2)\vee(\frac{\kappa}{2}-4),\frac{\kappa}{2}-2)$ and $\zeta > 0$, that is, $\beta < \beta_0^* = \frac{2a}{4a-1+a\rho}$. Then,
\begin{align*}
    \E[\overline{\mathcal{N}}_n(\beta)] \lesssim e^{n(1+(\zeta\beta - \mu)(1+\rho/2))}.
\end{align*}
\end{lem}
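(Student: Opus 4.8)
The plan is to bound each term in the sum defining $\overline{\mathcal{N}}_n(\beta)$ by a Markov-type inequality applied to the $\zeta$-th moment of the derivative, and then feed in the one-point estimate of Corollary \ref{1PE}. Concretely, I would first write
\begin{align*}
    \E[\overline{\mathcal{N}}_n(\beta)] = \sum_{j=0}^{\lceil 2e^n\rceil -1} \prob\Big( g_{\tau_{n-2}(x_{j,n})}'(x_{j,n}) \geq ce^{-\beta(1+\rho/2)(n-2)}, \ \tau_{n-2}(x_{j,n}) < \infty \Big).
\end{align*}
Since $\zeta>0$ (which is exactly the hypothesis $\beta<\beta_0^*$), the map $t\mapsto t^\zeta$ is increasing on $[0,1]$, so the event inside the probability equals $\{ g_{\tau_{n-2}}'(x_{j,n})^\zeta \geq c^\zeta e^{-\zeta\beta(1+\rho/2)(n-2)}\}$, and Markov's inequality gives
\begin{align*}
    \prob\Big( g_{\tau_{n-2}(x_{j,n})}'(x_{j,n})^\zeta \geq c^\zeta e^{-\zeta\beta(1+\rho/2)(n-2)}, \ \tau_{n-2} < \infty \Big) \leq c^{-\zeta} e^{\zeta\beta(1+\rho/2)(n-2)} \, \E\Big[ g_{\tau_{n-2}(x_{j,n})}'(x_{j,n})^\zeta 1\{\tau_{n-2}<\infty\} \Big].
\end{align*}

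Next I would invoke Corollary \ref{1PE} with $x_R = 0^+$ to get $\E[ g_{\tau_s}'(x)^\zeta 1\{\tau_s<\infty\}] \leq C(x,0) e^{-\mu(1+\rho/2)s}$. The one point that needs a little care is that the constant should be uniform over $x\in[1,2]$: tracing through Proposition \ref{1PEts} and Lemma \ref{time}, the $x$-dependence enters only through the factor $((x-x_R)/x)^\mu$ (which is $1$ when $x_R=0^+$) and through $C^* = C^*(x,x_R)$, which by the bounds $\frac1a(\log(x-x_R)-\log 4)$ and $\frac1a(\log x+\log 4)$ can be taken uniformly bounded for $x\in[1,2]$, $x_R=0^+$. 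Hence $\E[ g_{\tau_s}'(x)^\zeta 1\{\tau_s<\infty\}] \leq C e^{-\mu(1+\rho/2)s}$ with $C$ independent of $x\in[1,2]$. Substituting $s=n-2$ yields that each probability is at most $Cc^{-\zeta} e^{(\zeta\beta-\mu)(1+\rho/2)(n-2)}$.

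Finally, summing over the $\lceil 2e^n\rceil \asymp e^n$ values of $j$ and absorbing the harmless factor $e^{-2(\zeta\beta-\mu)(1+\rho/2)}$ into the implicit constant gives
\begin{align*}
    \E[\overline{\mathcal{N}}_n(\beta)] \lesssim e^n \cdot e^{(\zeta\beta-\mu)(1+\rho/2)n} = e^{n(1+(\zeta\beta-\mu)(1+\rho/2))},
\end{align*}
which is the claim. The proof is essentially a first-moment computation, so there is no serious obstacle; the only subtlety worth flagging is the uniformity of the constant from Corollary \ref{1PE} over the compact interval $[1,2]$ (and, implicitly, the use of distortion estimates already invoked before the lemma to pass from $J_{j,n}$ to its midpoint $x_{j,n}$, which is why the stopping time $\tau_{n-2}$ rather than $\tau_n$ appears).
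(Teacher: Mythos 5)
Your proposal is correct and follows exactly the same route as the paper's proof: rewrite $\E[\overline{\mathcal{N}}_n(\beta)]$ as a sum of probabilities, raise the event to the $\zeta$-th power (valid since $\zeta>0$), apply Chebyshev/Markov, and invoke Corollary \ref{1PE}. Your additional remark about uniformity of the constant $C^*(x,x_R)$ over $x\in[1,2]$ is a worthwhile observation that the paper's proof leaves implicit.
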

\begin{proof}
By (\ref{eq:Nnu}), we get
\begin{align*}
    \E\left[\overline{\mathcal{N}}_n(\beta)\right] &= \sum_{j=0}^{\lceil 2e^n \rceil -1} \prob \left( g'_{\tau_{n-2}}(x_{j,n}) \geq c e^{-\beta(1+\rho/2)(n-2)} \right) \\
    &= \sum_{j=0}^{\lceil 2e^n \rceil -1} \prob \left( g'_{\tau_{n-2}}(x_{j,n})^\zeta \geq c^\zeta e^{-\zeta \beta(1+\rho/2) (n-2)} ; \tau_{n-2} < \infty \right) \\
    &\lesssim \sum_{j=0}^{\lceil 2e^n \rceil -1} e^{\zeta \beta(1+\rho/2) (n-2)} \E \left[ g_{\tau_{n-2}}'(x_{j,n})^\zeta 1\{\tau_{n-2} < \infty \} \right] \\
    &\lesssim e^{n(1+(\zeta \beta - \mu)(1+\rho/2))},
\end{align*}
using Chebyshev's inequality in the fourth row and Corollary \ref{1PE} in the last.
\end{proof}
Next, we construct the cover for $\underline{V}_\beta \cap [1,2]$. Let $x \in \underline{V}_\beta \cap [1,2]$ and $n > 0$ be such that $g_{\tau_n}'(x) \leq e^{-\beta(1+\rho/2) n}$. By Lemma \ref{time}, there is a constant $C^*$, such that
\begin{align*}
    g_{\tilde{t}(s_n)}'(x) = \tilde{g}_{s_n}'(x) \leq e^{-\beta(1+\rho/2) n},
\end{align*}
where $s_n = \frac{n}{a} + C^*$ (here, $C^*$ can be chosen so that the above holds for every $x \in [1,2]$). By the distortion principle, there is a smallest nonnegative integer $k$ such that, if $J$ is the unique interval in $\mathscr{J}_{n+k}$ such that $x \in J$, then for every $z \in J$, 
\begin{align*}
\tilde{g}_{s_n,x}'(z) \asymp \tilde{g}_{s_n}'(x),
\end{align*}
where the second subscript denotes the point which the time change $\tilde{t}(s)$ is made with respect to (if no second subscript is written out, the time change corresponds to the point in which we evaluate the function). Let $x_J$ denote the midpoint of $J$. Then $\tilde{g}_{s_n,x}(x_J) \lesssim e^{-\beta(1+\rho/2) n}$. Since $\dist(x,x_J) \leq e^{-(n+k)}/4$, we have for geometric reasons and by Lemma \ref{time}, that
\begin{align*}
    \tilde{t}_x \left(\frac{n}{a} + C^* \right) \leq \tau_{n+2aC^*}(x) \leq \tau_{n+2aC^*+1}(x_J) \leq \tilde{t}_{x_J} \left( \frac{n+1}{a} + 3C^* \right), 
\end{align*}
that is, there is a constant $c_1$ such that
\begin{align*}
    \tilde{t}_x \left(s_n \right) \leq \tilde{t}_{x_J} \left( \frac{n}{a} + c_1 \right) = \tilde{t}_{x_J}(s_n').
\end{align*}
Therefore, there is a constant $c_2$ such that $\tilde{g}_{s_n'}'(x_J) \leq c_2 e^{-\beta(1+\rho/2) n}$. The constants above can be chosen to be universal. Let us recap what we have done above; we concluded that there are universal constants $k$, $c_1$, $c_2$ such that every $x \in \underline{V}_\beta$ is contained in an interval $J$ in $\mathscr{J}_{n+k}$ and $\tilde{g}_{\frac{n}{a} + c_1}'(x_J) \leq c_2 e^{-\beta(1+\rho/2) n}$, where $x_J$ is the midpoint of $J$. Therefore, choosing universal constants $C_1$ and $C_2$, we have that if
\begin{align*}
    J_{n,+}(\beta) = \bigcup_{j \in \mathscr{I}_n^+(\beta)} J_{j,n},
\end{align*}
where $\mathscr{I}_n^+(\beta) = \left\{ j \in \{ 0,1,..., \lceil 2e^n \rceil -1 \} : \tilde{g}_{\frac{n}{a} + C_1}'(x_{j,n}) \leq C_2e^{-\beta(1+\rho/2) n} \right\}$, then
\begin{align*}
    \underline{V}_\beta \subset \bigcup_{n \geq m} J_{n,+}(\beta)
\end{align*}
for every $m$. We let $\underline{\mathcal{N}}_n(\beta)$ denote the number of intervals $J_{j,n}$ that make up $J_{n,+}(\beta)$, i.e., 
\begin{align*}
    \underline{\mathcal{N}}_n(\beta) = \sum_{j=0}^{\lceil 2e^n \rceil -1} 1\left\{ \tilde{g}_{\frac{n}{a} + C_1}'(x_{j,n}) \leq C_2 e^{-\beta(1+\rho/2) n} \right\}.
\end{align*}
\begin{lem}
\label{Nl}
Let $\kappa > 0$, $\rho \in ((-2)\vee(\frac{\kappa}{2}-4),\frac{\kappa}{2}-2)$ and $\zeta < 0$, that is, $\beta > \beta_0^* = \frac{2a}{4a-1+a\rho}$. Then,
\begin{align*}
    \E\left[\underline{\mathcal{N}}_n(\beta)\right] \lesssim e^{n(1+(\zeta\beta - \mu)(1+\rho/2))}.
\end{align*}
\end{lem}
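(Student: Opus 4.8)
The plan is to mirror the proof of Lemma \ref{Nu}, reversing the inequalities because now $\zeta<0$ (equivalently $\beta>\beta_0^*$) and using Proposition \ref{1PEts} in place of Corollary \ref{1PE}, which is only stated for $\zeta\geq 0$. First I would record that for every finite $\beta>\beta_0^*$ the associated parameter $\zeta=\frac{1}{2a}\big(\frac{a}{\beta}+\mu_c\big)\big(\frac{a}{\beta}-\mu_c\big)$ lies in the admissible range $(-\mu_c^2/(2a),0)$: it is negative precisely when $\sqrt{\mu_c^2+2a\zeta}<\mu_c$, i.e.\ when $\beta>a/\mu_c=\beta_0^*$, and it exceeds $-\mu_c^2/(2a)$ since $\mu_c^2+2a\zeta=a^2/\beta^2>0$. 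Hence $\mu$, $\beta$ and all of the objects of Section \ref{weighted} are well defined and Proposition \ref{1PEts} is available.

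Writing $s_n=n/a+C_1$, by definition of $\underline{\mathcal{N}}_n(\beta)$ (in which $\tilde{g}_{s_n}'(x_{j,n})$ is counted precisely on $\{\tilde{t}(s_n)<\infty\}$, where it is defined),
\begin{align*}
    \E\big[\underline{\mathcal{N}}_n(\beta)\big]=\sum_{j=0}^{\lceil 2e^n\rceil-1}\prob\Big(\tilde{g}_{s_n}'(x_{j,n})\leq C_2 e^{-\beta(1+\rho/2)n},\ \tilde{t}(s_n)<\infty\Big).
\end{align*}
On $\{\tilde{t}(s_n)<\infty\}$ we have $\tilde{g}_{s_n}'(x_{j,n})\in(0,1]$, and since $t\mapsto t^\zeta$ is decreasing on $(0,\infty)$ for $\zeta<0$, this event coincides with $\{\tilde{g}_{s_n}'(x_{j,n})^\zeta\geq C_2^{\zeta}e^{-\zeta\beta(1+\rho/2)n}\}\cap\{\tilde{t}(s_n)<\infty\}$. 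I would then apply Markov's inequality (valid since $-\zeta>0$ and $e^{-\zeta\beta(1+\rho/2)n}>0$) and Proposition \ref{1PEts}, using $x_R=0^+$ so that $(x_{j,n}-x_R)/x_{j,n}=1$:
\begin{align*}
    \prob\Big(\tilde{g}_{s_n}'(x_{j,n})^\zeta 1\{\tilde{t}(s_n)<\infty\}\geq C_2^{\zeta}e^{-\zeta\beta(1+\rho/2)n}\Big)&\leq C_2^{-\zeta}e^{\zeta\beta(1+\rho/2)n}\,\E\big[\tilde{g}_{s_n}'(x_{j,n})^\zeta 1\{\tilde{t}(s_n)<\infty\}\big]\\
    &=C_2^{-\zeta}e^{\zeta\beta(1+\rho/2)n}\,K\,e^{-a\mu(1+\rho/2)s_n}\big(1+O(e^{-(1-a+\mu)s_n})\big)\\
    &\lesssim e^{(\zeta\beta-\mu)(1+\rho/2)n},
\end{align*}
uniformly in $j$, since $e^{-a\mu(1+\rho/2)C_1}$ and the error factor are bounded constants. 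Summing over the $\lceil 2e^n\rceil\asymp e^n$ values of $j$ gives $\E[\underline{\mathcal{N}}_n(\beta)]\lesssim e^{n(1+(\zeta\beta-\mu)(1+\rho/2))}$, the asserted bound.

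I do not anticipate a substantial obstacle; the one delicate point is the sign of $\zeta$. Because $\zeta<0$, raising $\tilde{g}_{s_n}'(x_{j,n})\leq C_2 e^{-\beta(1+\rho/2)n}$ to the power $\zeta$ turns an upper bound into a \emph{lower} bound on $\tilde{g}_{s_n}'(x_{j,n})^\zeta$, which is exactly the form needed to combine Markov's inequality with the first-moment estimate of Proposition \ref{1PEts}. This is also the reason $\underline{\mathcal{N}}_n$ is built from the radially parametrized derivative $\tilde{g}_{s_n}'$ rather than from $g_{\tau_n}'$: it lets us invoke Proposition \ref{1PEts} (valid for all $\zeta>-\mu_c^2/(2a)$) instead of Corollary \ref{1PE} (restricted to $\zeta\geq 0$).
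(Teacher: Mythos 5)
Your proof is correct and takes the same approach as the paper's (one-line) proof: mirror Lemma \ref{Nu}, converting the upper bound $\tilde{g}_{s_n}'(x_{j,n})\leq C_2e^{-\beta(1+\rho/2)n}$ into a lower bound on $\tilde{g}_{s_n}'(x_{j,n})^\zeta$ because $\zeta<0$, then apply the first-moment (Markov/Chebyshev) inequality together with Proposition \ref{1PEts}. Your preliminary check that $\zeta\in(-\mu_c^2/(2a),0)$ for $\beta>\beta_0^*$, and your remark on why $\underline{\mathcal{N}}_n$ is built from $\tilde{g}_{s_n}'$ rather than $g_{\tau_n}'$ (so that Proposition \ref{1PEts}, valid for all admissible $\zeta$, can be used instead of Corollary \ref{1PE}), are both accurate and clarify the paper's terse argument.
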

\begin{proof}
Applying Chebyshev's inequality and Proposition \ref{1PEts} in the same way as in the previous lemma gives the result.
\end{proof}
With these, we will now prove Theorem \ref{UB}.
\begin{proof}[Proof of Theorem \ref{UB}]
We begin with \textit{(i)}. Lemma \ref{Nu} implies, for $s > d^*(\beta)$ and $n$ sufficiently large, that
\begin{align*}
    \E\left[\overline{\mathcal{N}}_n(\beta)\right] \lesssim e^{ns}.
\end{align*}
Since $\overline{V}_\beta \cap [1,2] \subset \cup_{n \geq m} J_{n,-}(\beta)$ for every $m$, the $t$-dimensional Hausdorff measure of $\overline{V}_\beta \cap [1,2]$, $\mathcal{H}^t(\overline{V}_\beta \cap [1,2])$, is bounded by a constant times the $t$-dimensional lower Minkowski content of \newline
$\cup_{n \geq m} J_{n,-}(\beta)$ (see Section 5.5 of \cite{Mat95}), which implies that
\begin{align*}
    \E\left[\mathcal{H}^t(\overline{V}_\beta \cap [1,2])\right] \leq \lim_{m \rightarrow \infty} c \sum_{n=m}^\infty \E\left[\overline{\mathcal{N}}_n(\beta)\right] e^{-nt} = 0
\end{align*}
for $t>s$. Since such a construction works for every interval, we thus have that $\text{dim}_H\overline{V}_\beta \leq d^*(\beta)$ almost surely. Similarly, using Lemma \ref{Nl}, $\text{dim}_H \underline{V}_\beta \leq d^*(\beta)$ almost surely.
For \textit{(ii)}, note that $1\{ \overline{V}_\beta \cap [1,2] \} \leq \sum_{n=m}^\infty \overline{\mathcal{N}}_n(\beta)$, and thus
\begin{align*}
    \prob \left( \overline{V}_\beta \cap [1,2] \neq \emptyset \right) \leq \lim_{m \rightarrow \infty} \sum_{n=m}^\infty \E\left[\overline{\mathcal{N}}_n(\beta)\right] = 0,
\end{align*}
since $\beta < \beta_-^*$ implies that $d^*(\beta) = 1+ (\zeta \beta - \mu)(1+\rho/2) < 0$, so $\overline{V}_\beta = \emptyset$ for $\beta < \beta_-^*$. The same argument shows that $\underline{V}_\beta = \emptyset$ almost surely for $\beta > \beta_+^*$.

What is left, is to use \textit{(i)} and \textit{(iii)} to prove that $\text{dim}_H V_\beta^* \leq d^*(\beta)$ almost surely for $\beta \in [\beta_-^*,\beta_+^*]$. First, let $\beta \in [\beta_-^*,\beta_0^*)$ and $\epsilon > 0$ be such that $\beta + \epsilon \in (\beta_-^*, \beta_0^*)$. Since $V_\beta^* \subset \overline{V}_\beta$, \textit{(i)} gives that $\text{dim}_H V_\beta^* \leq \text{dim}_H \overline{V}_\beta \leq d^*(\beta+\epsilon)$. By then letting $\epsilon \rightarrow 0^+$ and the fact that $d^*(\beta)$ is increasing on $[\beta_-^*,\beta_0^*]$ gives the upper bound for chosen $\beta$. In the same way, letting $\beta \in (\beta_0^*,\beta_+^*]$ and $\epsilon > 0$ be such that $\beta - \epsilon \in (\beta_0^*,\beta_+^*)$, then since $V_\beta^* \subset \underline{V}_{\beta-\epsilon}$, \textit{(iii)} implies that $\text{dim}_H V_\beta^* \leq \text{dim}_H \underline{V}_\beta \leq d^*(\beta-\epsilon)$. Again, letting $\epsilon \rightarrow 0^+$ and noting that $d^*(\beta)$ is decreasing on $[\beta_0^*,\beta_+^*]$ gives the desired upper bound. Finally, we comment on the case $\beta = \beta_0^*$. In this case, $\text{dim}_H V_{\beta_0^*}^*$ is trivially bounded by $d(\beta_0^*)$, as that is equal to the dimension of the intersection of the curve with $\R_+$, a set which clearly contains $V_{\beta_0^*}^*$. While this is proven previously, we remark that the upper bound follows immediately from Corollary \ref{1PE} with $\zeta = 0$, and a less involved covering argument than the above. Hence, the proof of the upper bound on the dimension is done.
\end{proof}

\subsection{Lower bound}
We shall prove the lower bound using Frostman's lemma, that is, we let $E_s(\nu)$ be the $s$-dimensional energy of the measure $\nu$, i.e.,
\begin{align*}
    E_s(\nu) = \iint \frac{d\nu(x) d\nu(y)}{|x-y|^s}.
\end{align*}
We then construct a Frostman measure on $V_\beta^*$ and show that it has finite $s$-dimensional energy for every $s < d^* = d^*(\beta)$, which implies that the $s$-dimensional Hausdorff measure of $V_\beta^*$ is infinite (see Theorem 8.9 of \cite{Mat95}), and thus that the Hausdorff dimension must be greater than or equal to $s$. Just like in the previous section, we do this for $V_\beta^*$ intersected with the interval $[1,2]$, but again it will be clear that this can be done for any closed interval to the right of $0$. In the following, we will construct a family of Frostman measures and show that it gives the correct lower bound on the dimension of $V_\beta^*$.

\begin{thm}
\label{LBIG}
Let $\kappa \in (0,4]$, $\rho \in (-2,\frac{\kappa}{2}-2)$ and $x_R = 0^+$. Then, for every $\varsigma >0$,
\begin{align*}
    \prob(\textup{dim}_H V_\beta^* \geq d^*(\beta)-\varsigma ) = 1.
\end{align*}
\end{thm}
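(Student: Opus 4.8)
The plan is to prove Theorem \ref{LBIG} via Frostman's lemma, constructing a random measure on $V_\beta^*\cap[1,2]$ supported on perfect points and controlling its $s$-dimensional energy for every $s<d^*(\beta)$. Fix $\zeta\geq 0$ corresponding (via the formulas in Section \ref{weighted}) to the given $\beta$, and fix $\delta$ small enough that Proposition \ref{2PEIG} and Lemma \ref{lem3} apply, together with the parameters $M,\Lambda,u,N$ and the sequence $\alpha_j=\alpha_0+\log j$ set up in Section \ref{sec2PEIG}. For each $n$ define the random measure $\nu_n$ on $[1,2]$ by
\begin{align*}
    d\nu_n(x) = \frac{E^n(x)}{\E[E^n(x)]}\,dx,
\end{align*}
so that $\E[\nu_n([1,2])]=1$ for every $n$. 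By Lemma \ref{lemjust}, if $E^n(x)>0$ for all $n$ then $x\in V_\beta^*$; the strategy is to extract a subsequential weak limit $\nu$ of the $\nu_n$ and show that $\nu$ is almost surely nontrivial and supported on $V_\beta^*$, with finite $s$-energy.

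The key steps, in order. First I would record the first-moment bound: by Lemma \ref{lem3} and the remark \eqref{eq:IGUB}, $\E[E^n(x)]\asymp \tilde c^{\,n}(\prod_{j\le n+1}\psi(\alpha_j)^{\pm|\zeta|})e^{\overline\alpha_{n+1}(\zeta\beta-\mu)(1+\rho/2)}$, and since $\alpha_j=\alpha_0+\log j$ one has $e^{\overline\alpha_{n+1}}\asymp (N^{n+1}(n+1)!)$-type growth, while the exponent $(\zeta\beta-\mu)(1+\rho/2)=d^*(\beta)-1$; collecting the polynomial/subexponential corrections into a subpower function $\Phi$ gives $\E[E^n(x)]=\Phi(\epsilon_{n+1}^{-1})\,\epsilon_{n+1}^{1-d^*(\beta)}$ up to constants depending on $\kappa,\rho,\delta$. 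Second, the second-moment bound: Proposition \ref{2PEIG} gives, for $x,y\in[1,2]$ with $2\epsilon_{m+2}\le|x-y|\le\tfrac12\epsilon_m$,
\begin{align*}
    \E[\nu_n(dx)\,\nu_n(dy)] = \frac{\E[E^n(x)E^n(y)]}{\E[E^n(x)]\E[E^n(y)]}\,dx\,dy \le \tilde\Psi_\delta(\epsilon_{m+2}^{-1})\,\epsilon_{m+2}^{(\zeta\beta-\mu)(1+\rho/2)}\,dx\,dy.
\end{align*}
Since $\epsilon_{m+2}\asymp |x-y|^{1+o(1)}$, this reads $\E[\nu_n(dx)\nu_n(dy)]\le \Psi_\delta(1/|x-y|)\,|x-y|^{d^*(\beta)-1}\,dx\,dy$ for a subpower function $\Psi_\delta$, uniformly in $n$. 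Third, I would deduce $\sup_n \E[E_s(\nu_n)]<\infty$ for every $s<d^*(\beta)$: indeed
\begin{align*}
    \E[E_s(\nu_n)] = \iint \frac{\E[\nu_n(dx)\,\nu_n(dy)]}{|x-y|^s} \le \iint_{[1,2]^2} \frac{\Psi_\delta(1/|x-y|)}{|x-y|^{s-(d^*(\beta)-1)}}\,dx\,dy,
\end{align*}
and the exponent $s-(d^*(\beta)-1)<1$ makes the integral converge (a subpower function does not spoil integrability against $|x-y|^{-\gamma}$ with $\gamma<1$). Fourth, using $\E[\nu_n([1,2])]=1$ and the uniform second-moment bound $\E[\nu_n([1,2])^2]\le\iint\Psi_\delta(1/|x-y|)|x-y|^{d^*(\beta)-1}\le C<\infty$, the Paley–Zygmund inequality gives $\prob(\nu_n([1,2])\ge \tfrac12)\ge c_0>0$ uniformly in $n$.

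Fifth, and this is where one must be careful, I would pass to the limit. The family $(\nu_n)$ is tight (all supported on $[1,2]$ with uniformly bounded total mass in $L^2$), so along a subsequence $\nu_{n_k}\to\nu$ weakly; one needs to check that $\nu$ is almost surely supported on $V_\beta^*$. The right way to handle this is to note that $E^n(x)$ is decreasing in $n$ in the sense that $\{E^{n+1}(x)>0\}\subset\{E^n(x)>0\}$ (each new factor is an indicator times a conditional expectation of an event contained in the previous geometric constraints), so the set $V^n:=\{x:E^n(x)>0\}$ is a decreasing sequence of (closed, modulo the conditional-expectation subtlety) sets with $\bigcap_n V^n\subset V_\beta^*$ by Lemma \ref{lemjust}; a standard argument then shows any weak limit of measures absolutely continuous with respect to $1_{V^n}dx/\E[E^n]$ is supported on $\bigcap_n V^n$. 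Then lower semicontinuity of the energy under weak convergence gives $E_s(\nu)\le\liminf_k E_s(\nu_{n_k})$, and Fatou together with step three gives $\E[E_s(\nu)1_{\{\nu\ne 0\}}]<\infty$, so $E_s(\nu)<\infty$ almost surely on $\{\nu\ne 0\}$. Finally, on the event $\{\nu([1,2])>0\}$, which by steps four and five and a standard argument (e.g. applying Paley–Zygmund to $\nu_{n_k}$ and using weak convergence plus uniform integrability from the $L^2$ bound) has probability at least $c_0>0$, Frostman's lemma (Theorem 8.9 of \cite{Mat95}) yields $\mathcal H^s(V_\beta^*\cap[1,2])>0$, hence $\textup{dim}_H V_\beta^*\ge s$ with probability at least $c_0$. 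Since $s<d^*(\beta)$ was arbitrary and $\textup{dim}_H V_\beta^*$ is almost surely constant by Lemma \ref{dimconst}, the event $\{\textup{dim}_H V_\beta^*\ge d^*(\beta)-\varsigma\}$ has probability $0$ or $1$ and positive probability, hence probability $1$.

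The main obstacle I anticipate is step five: making rigorous that the weak limit $\nu$ is supported on $V_\beta^*$ and is nontrivial with positive probability. The delicate points are (a) that $E^n(x)$ involves conditional expectations $I_k^{u,\Lambda,k}$ rather than genuine indicators, so $\{E^n(x)>0\}$ is only measurable, not closed, and one should work with the events $\tilde I$ and the underlying geometric events to argue that a limit point of $V^n$ lands in $V_\beta^*$ (following \cite{ABV16} and \cite{MW17}); and (b) upgrading "$\nu([1,2])\ge\tfrac12$ with probability $\ge c_0$ for each $n$" to "$\nu([1,2])>0$ with probability $\ge c_0$" for the limit, which requires the uniform $L^2$ bound to get uniform integrability of $\nu_n([1,2])$ and hence convergence of expectations along the subsequence. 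Everything else — the first- and second-moment inputs, the energy integral, Paley–Zygmund, Frostman, and the $0$–$1$ conclusion — is routine given the estimates already established in Sections \ref{secOPE} and \ref{sec2PEIG}.
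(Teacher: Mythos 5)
Your plan mirrors the paper's proof of Theorem \ref{LBIG}: build a random measure from the perfect-point density $E^n(x)/\E[E^n(x)]$, control first and second moments via Lemma \ref{lem3}, \eqref{eq:IGUB} and Proposition \ref{2PEIG}, bound the $s$-energy for $s<d^*(\beta)$, extract a nontrivial limit via a second-moment inequality, apply Frostman's lemma, and upgrade with the $0$--$1$ law of Lemma \ref{dimconst}. The only methodological deviation is that you take $d\nu_n(x)=E^n(x)/\E[E^n(x)]\,dx$ as a continuum density, whereas the paper discretizes $[1,2]$ into intervals of length $\epsilon_n$ with midpoints $\mathscr D_n$ and sets $\nu_n$ to be the corresponding piecewise-constant measure. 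The discretization is not purely cosmetic: it is what guarantees that every off-diagonal pair has $|x_{j,n}-x_{k,n}|\ge\epsilon_n$, so that the two-point estimate can be applied at a scale $m\le n-1$.

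This is where your argument has a genuine gap. Proposition \ref{2PEIG} is proved by factoring through $E^{m+1,n}(x)$ and $E^{m+1}(x)$, which only makes sense for $m+1\le n$; consequently the estimate is available only for $|x-y|\gtrsim\epsilon_{n+1}$. When you write $\E[\nu_n(dx)\,\nu_n(dy)]\le\Psi_\delta(1/|x-y|)\,|x-y|^{d^*(\beta)-1}\,dx\,dy$ ``uniformly in $n$'' and then bound both $\E[\nu_n([1,2])^2]$ and $\E[E_s(\nu_n)]$ by integrating this over all of $[1,2]^2$, you are implicitly using the two-point estimate in the near-diagonal region $|x-y|\lesssim\epsilon_n$ where it has not been established. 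The fix is the analogue of the paper's $j=k$ diagonal term: there you should instead use the trivial bound $\E[E^n(x)E^n(y)]\le\E[E^n(x)]\wedge\E[E^n(y)]$ together with the lower bound on $\E[E^n(\cdot)]$ from Lemma \ref{lem3}, which gives a contribution of order $\Phi(\epsilon_n^{-1})\,\epsilon_n^{d^*-s}$ (respectively $\Phi(\epsilon_n^{-1})\,\epsilon_n^{d^*}$ for the mass) for a subpower $\Phi$; this vanishes as $n\to\infty$ for $s<d^*(\beta)$, so the argument goes through, but as written your uniform second-moment and energy claims are not justified. The subtlety you flag in your step five --- that $\{E^n(x)>0\}$ need not be closed, so the weak limit is a priori only supported on $\bigcap_k\overline{\{E^k>0\}}$ rather than on $\bigcap_k\{E^k>0\}\subset V_\beta^*$ --- is a real one, but it is present in the paper's proof as well (where it is dispatched as ``obvious by construction'' with reference to \cite{ABV16} and \cite{MW17}), so I would not count it against you; you identify it and correctly point to the standard resolution.
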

\begin{proof}
Fix $\kappa \in (0,4)$, $\rho \in (-2,\frac{\kappa}{2}-2)$ and $\delta \in (0,\frac{1}{2})$ small and $u,\Lambda,M>0$ large enough for Proposition \ref{2PEIG} to hold. We fix $n \in \N$, divide $[1,2]$ into $\epsilon_n^{-1}$ intervals of length $\epsilon_n$, and let $x_{j,n} = 1+(j-\frac{1}{2})\epsilon_n$ be the midpoint of the $j$th of these intervals. Let $\mathscr{D}_n=\{x_{j,n}:j=1,...,\epsilon_n^{-1}\}$, let $\mathscr{C}_n = \{ x \in \mathscr{D}_n: E^n(x) > 0 \}$ and let $J_n(x) = [x-\frac{\epsilon_n}{2},x+\frac{\epsilon_n}{2}]$. Then,
\begin{align*}
    \mathscr{C} = \bigcap_{k \geq 1} \overline{\bigcup_{n \geq k} \bigcup_{x \in \mathscr{C}_n} J_n(x)} \subseteq V_\beta^*.
\end{align*}
For each $n \geq 1$, we define the measure $\nu_n$ by
\begin{align*}
    \nu_n(A) = \int_A \sum_{x \in \mathscr{D}_n} \frac{E^n(x)}{\E[E^n(x)]} 1_{J_n(x)}(t) dt,
\end{align*}
for Borel sets $A \subset [1,2]$. We want to take a subsequential limit of the sequence of measures $(\nu_n)$, which we will prove converges to the Frostman measure on $V_\beta^*$. To see that this limit exists, we need that the event on which we want to take the subsequential limit has positive probability and that the support of the limit is contained in $V_\beta^*$. That the support of the limit is contained in $V_\beta^*$ is obvious by construction, so we turn to proving that the event has positive probability. Clearly $\E[\nu_n([1,2])] = 1$. We need to show that there is some constant $c>0$ such that $\E[\nu_n([1,2])^2]<c$. Then, by the Cauchy-Schwarz inequality,
\begin{align*}
    \prob(\nu_n([1,2])>0) \geq \frac{\E[\nu_n([1,2])]^2}{\E[\nu_n([1,2])^2]} \geq \frac{1}{c},
\end{align*}
which implies that the event on which we want to take a subsequence has positive probability. We have that
\begin{align*}
    \E[\nu_n([1,2])^2] = \epsilon_n^2 \sum_{x,y \in \mathscr{D}_n} \frac{\E[E^n(x)E^n(y)]}{\E[E^n(x)]\E[E^n(y)]},
\end{align*}
and we will bound the diagonal and the off-diagonal parts separately below. For the diagonal terms, we have (since $E^n(x)^2 \leq E^n(x)$)
\begin{align*}
    \epsilon_n^2 \sum_{j=1}^{\epsilon_n^{-1}} \frac{\E[E^n(x_{j,n})^2]}{\E[E^n(x_{j,n})]^2} &\leq \epsilon_n^2 \sum_{j=1}^{\epsilon_n^{-1}} \frac{1}{\E[E^n(x_{j,n})]} \lesssim \left(\prod_{j=1}^{n+1} \psi(\alpha_j)^{|\zeta|}\right) \epsilon_n^{d^*+o_n(1)} \lesssim 1,
\end{align*}
for large enough $\alpha_0$ and $n$, since $\psi$ is a subpower function and $o_n(1)$ tends to $0$ as $n \rightarrow \infty$. For the off-diagonal terms, we have, again for large enough $\alpha_0$ and $n$, by Proposition \ref{2PEIG},
\begin{align*}
    \epsilon_n^2 \sum_{j \neq k} &\frac{\E[E^n(x_{j,n})E^n(x_{k,n})]}{\E[E^n(x_{j,n})] \E[E^n(x_{k,n})]} \lesssim \epsilon_n^2 \sum_{j \neq k} \Psi_\delta\left(1/|x_{j,n}-x_{k,n}|\right) |x_{j,n}-x_{k,n}|^{d^*-1} \lesssim 1,
\end{align*}
since $\Psi_\delta$ is a subpower function. Thus, $\E[\nu_n([1,2])]$ is finite, and $\prob(\nu_n([1,2])>0) > 0$. What is left to do is to show that $E_{d^*-\varsigma}(\nu_n)$ is almost surely finite for each $n$ and every $\varsigma > 0$. To do this, it suffices to bound $\E[E_{d^*-\varsigma}(\nu_n)]$ for each $n$ and every $\varsigma > 0$. We have that
\begin{align*}
    \E[E_{d^*-\varsigma}(\nu_n)] = \sum_{x,y \in \mathscr{D}_n} \frac{\E[E^n(x) E^n(y)]}{\E[E^n(x)] \E[E^n(y)]} \iint_{J_n(x) \times J_n(y)} \frac{1}{|u-v|^{d^*-\varsigma}} du dv.
\end{align*}
In order to bound $\E[E_{d^*-\varsigma}(\nu_n)]$ we will use the following:
\begin{align}
    &\iint_{J_n(x_{j,n}) \times J_n(x_{j,n})} \frac{du dv}{|u-v|^s} = \frac{2}{(2-s)(1-s)}\epsilon_n^{2-s}, \label{eq:jj} \\
    &\iint_{J_n(x_{j,n}) \times J_n(x_{k,n})} \frac{du dv}{|u-v|^{s}} \lesssim \epsilon_n^2 \frac{1}{|x_{j,n}-x_{k,n}|^s}. \label{eq:jk}
\end{align}
We first consider the diagonal terms. By \eqref{eq:jj},
\begin{align*}
    &\sum_{j=1}^{\epsilon_n^{-1}} \frac{1}{\E[E^n(x_{j,n})]} \iint_{J_n(x_{j,n}) \times J_n(x_{j,n})} \frac{1}{|u-v|^{d^*-\varsigma}} du dv \\
    &= C \sum_{j=1}^{\epsilon_n^{-1}} \frac{1}{\E[E^n(x_{j,n})]} \epsilon_n^{(2-d^*+\varsigma)} \lesssim \left( \prod_{j=1}^{n+1} \psi(\alpha_j)^{|\zeta|} \right) \epsilon_n^{\varsigma + o_n(1)}.
\end{align*}
Clearly, the sum is uniformly bounded in $n$ for large enough $\alpha_0$. For the off-diagonal terms we have, by Proposition \ref{2PEIG} and \eqref{eq:jk}, that
\begin{align*}
    &\sum_{j \neq k} \frac{\E[E^n(x_{j,n}) E^n(x_{k,n})]}{\E[E^n(x_{j,n})] \E[E^n(x_{k,n})]} \iint_{J_n(x_{j,n}) \times J_n(x_{k,n})} \frac{1}{|u-v|^{d^*-\varsigma}} du dv \\
    &\lesssim \epsilon_n^2 \sum_{j \neq k} \Psi_\delta\left( 1/|x_{j,n}-x_{k,n}| \right) |x_{j,n}-x_{k,n}|^{\varsigma - 1} \lesssim 1.
\end{align*}
The implicit constants do not depend on $n$, and hence we have that for every $\varsigma>0$, the limiting measure $\nu$ satisfies $E_{d^*-\varsigma}(\nu)<\infty$ on an event of positive probability. By Lemma \ref{dimconst}, the Hausdorff dimension is almost surely constant, so a lower bound with positive probability is an almost sure lower bound. Thus, the proof is done.
\end{proof}

Now Theorem \ref{mainresult*} follows from Theorem \ref{UB} and Theorem \ref{LBIG} and thus, as remarked, Theorem \ref{mainresult} is proven.

\appendix

\section{The diffusion $\tilde{Q}_s$}
In this appendix we will state and prove some of the main properties of the diffusion $\tilde{Q}_s$, given by \eqref{eq:Qs} (we will consider it under the measure $\prob^*$) and $\tilde{Q}_0 = \frac{x-x_R}{x}$. First, we note that if we let $Z_s$ be the stochastic process, taking values in $[0,\pi]$, such that
\begin{align*}
    \tilde{Q}_s = \frac{1-\cos(Z_s)}{2},
\end{align*}
then by It\^{o}'s formula we have that
\begin{align*}
    dZ_s = \left[\frac{1-3a-a\rho+\mu}{\sin(Z_s)} + \left(\frac{1}{2}-a+\mu\right)\cot(Z_s)\right] ds + d\tilde{B}_s^*.
\end{align*}
Thus, $Z_s$ is asymptotically Bessel-$\left(\frac{3}{2}-4a-a\rho+2\mu\right)$ at the origin and asymptotically Bessel-\newline
$\left(\frac{1}{2}-2a-a\rho\right)$ at $\pi$ (recall that if $dX_s = b(X_s)ds + dB_s$, then we say that $X_s$ is asymptotically Bessel-$\alpha$ at the origin if $|b(x) - \frac{\alpha}{x}| \leq cx$ and $|b'(x) + \frac{\alpha}{x^2}| \leq c$ for some constant $c$ and all $x \in (0,\frac{\pi}{2}]$, and we say that $X_s$ is asymptotically Bessel-$\alpha$ at $\pi$ if $|b(x) - \frac{\alpha}{\pi-x}| \leq c(\pi-x)$ and $|b'(x) + \frac{\alpha}{(\pi-x)^2}| \leq c$ for $x \in [\frac{\pi}{2},\pi)$). Since
\begin{align*}
    \frac{3}{2}-4a-a\rho+2\mu > \frac{1}{2}
\end{align*}
a standard result on Bessel processes implies that $Z_s$ will almost surely not reach $0$ in finite time, hence, $\tilde{Q}_s$ will almost surely not reach $0$ in finite time, and thus, $\prob^*(\tilde{t}(s) < \infty) = 1$ for every $s$.

The rest of this appendix is devoted to showing that $\tilde{Q}_s$ has an invariant distribution, to which it converges exponentially fast.
We prove this via the eigenvalue method. This was done in \cite{Zha16}, Appendix B, and most of this section follows almost verbatim from that, but we choose to include it here for completeness. We consider a transformation of $\tilde{Q}_s$ as the rather standard form of the eigenvalue problem for the transformed process makes it easier to solve. Let $Y_t$ be the diffusion process $Y_t = 2\tilde{Q}_t-1$, that is, $Y_t$ follows the SDE
\begin{align}\label{SDEY}
    dY_t = \left[-\frac{\delta_+}{4}(Y_t+1) - \frac{\delta_-}{4}(Y_t-1)\right] dt - \sqrt{1-Y_t^2} dB_t,
\end{align}
where $\delta_+ = 4-8a-2a\rho+4\mu>0$, $\delta_- = 4a+2a\rho>0$ and $Y_0 \in (-1,1]$. Note that this diffusion can be defined for all positive time (through reflection at $1$). First, we assume that $Y$ has a smooth transition density. It is then given by the Kolmogorov backward equation,
\begin{align}\label{KBE}
    \partial_t p = \frac{1-x^2}{2} \partial_x^2 p - \left(\frac{\delta_+}{4}(x+1) + \frac{\delta_-}{4}(x-1)\right) \partial_x p.
\end{align}
Assuming that both sides equal $\tilde{\lambda} p$, we arrive at the following differential equation 
\begin{align*}
    (1-x^2)p''(x)+\left( \frac{\delta_-}{2}- \frac{\delta_+}{2} - \left(\frac{\delta_+}{2}+\frac{\delta_-}{2} \right)x \right)p'(x)-2\tilde{\lambda} p(x) = 0,
\end{align*}
which has a solution if and only if $\tilde{\lambda} = \tilde{\lambda}_n = - \frac{n}{2}(n+ \frac{\delta_+}{2} + \frac{\delta_-}{2} -1 )$, where $n$ is a nonnegative integer. Then, for each $n$, the solution is given by the Jacobi polynomial $P_n^{(\frac{\delta_+}{2}-1,\frac{\delta_-}{2}-1)}(x)$ (see \cite{EMOT53}), that is, the orthogonal polynomials on $(-1,1)$ with respect to the inner product
\begin{align*}
\langle f,g \rangle_{\delta_+,\delta_-} = \int_{-1}^1 f(x) g(x) (1-x)^{\frac{\delta_+}{2}-1} (1+x)^{\frac{\delta_-}{2}-1} dx.  
\end{align*}
Hence
\begin{align*}
    p_n(t,x) = P_n^{(\frac{\delta_+}{2}-1,\frac{\delta_-}{2}-1)}(x) \exp\left(-\frac{n}{2}(n+\frac{\delta_+}{2}+\frac{\delta_-}{2}-1)t\right)
\end{align*}
solves \eqref{KBE} for $t>0$ and $-1<x<1$. Thus, the candidate for the transition density of $Y$ is
\begin{align}\label{tdens}
    p_Y(t,x,y) = &\sum_{n=0}^\infty \frac{(1-y)^{\frac{\delta_+}{2}-1} (1+y)^{\frac{\delta_-}{2}-1}P_n^{(\frac{\delta_+}{2}-1,\frac{\delta_-}{2}-1)}(x) P_n^{(\frac{\delta_+}{2}-1,\frac{\delta_-}{2}-1)}(y)}{\big\lVert P_n^{(\frac{\delta_+}{2}-1,\frac{\delta_-}{2}-1)}\big\rVert_{\delta_+,\delta_-}^2} \nonumber \\
    &\qquad \times \exp\left(-\frac{n}{2}(n+\frac{\delta_+}{2}+\frac{\delta_-}{2}-1)t\right),
\end{align}
where $\lVert f \rVert_{\delta_+,\delta_-}^2 = \langle f,f \rangle_{\delta_+,\delta_-}$. First, we need that \eqref{tdens} is absolutely convergent for $t>0$. This holds, since
\begin{align}\label{eq:normasymp}
    \big\lVert P_n^{(\frac{\delta_+}{2}-1,\frac{\delta_-}{2}-1)}\big\rVert_{\delta_+,\delta_-}^2 \asymp \frac{1}{n}
\end{align}
and
\begin{align}\label{eq:maxasymp}
    \max_{-1 \leq x \leq 1} |P_n^{(\frac{\delta_+}{2}-1,\frac{\delta_-}{2}-1)}(x)| \asymp n^{\frac{\delta_+}{2}-1}.
\end{align}
Next, we shall check that
\begin{align*}
    \E^{y_0}[f(Y_t)] = \int_{-1}^{1} f(y) p_Y(t,y_0,y) dy.
\end{align*}
It is sufficient to show this for polynomials. Let $q(x)$ be a polynomial, then
\begin{align*}
    a_n = \frac{\big\langle q,P_n^{(\frac{\delta_+}{2}-1,\frac{\delta_-}{2}-1)}\big\rangle_{\delta_+,\delta_-}}{\big\lVert P_n^{(\frac{\delta_+}{2}-1,\frac{\delta_-}{2}-1)}\big\rVert_{\delta_+,\delta_-}^2}, \quad n \geq 0,
\end{align*}
is zero for all but finitely many $n$ and
\begin{align*}
    q(x) = \sum_{n=0}^\infty a_n P_n^{(\frac{\delta_+}{2}-1,\frac{\delta_-}{2}-1)} (x).
\end{align*}
Next, letting
\begin{align*}
    \tilde{q}(t,x) = \sum_{n=0}^\infty a_n p_n(t,x),
\end{align*}
we note that $\tilde{q}(t,x)$ solves \eqref{KBE} and that $\tilde{q}(0,x) = q(x)$. We fix $t_0 > 0$ and let $M_t = \tilde{q}(t_0-t,Y_t)$ for $0<t \leq t_0$. Then $M_t$ is bounded and by It\^{o}'s formula,
\begin{align*}
    dM_t = -\sqrt{1-Y_t^2} \tilde{q}'(t_0-t,Y_t) dB_t
\end{align*}
(where $'$ denotes the spatial derivative) and hence $M_t$ is a bounded martingale. Furthermore, we have that $\lim_{t \rightarrow t_0} M_t = q(Y_{t_0})$, so by the optional stopping theorem, $\E^{y_0}[q(Y_{t_0})] = M_0 = \tilde{q}(t_0,y_0)$, that is,
\begin{align*}
    \E^{y_0}[q(Y_{t_0})] = \int_{-1}^1 q(y) p_Y(t_0,y_0,y) dy.
\end{align*}
Thus, $p_Y(t,x,y)$ is the transition density of $Y$ and sending $t$ to infinity, we see that $Y$ has a unique stationary distribution with density given by
\begin{align}\label{invdens}
    p_Y(y) = \frac{(1-y)^{\frac{\delta_+}{2}-1} (1+y)^{\frac{\delta_-}{2}-1}}{\int_{-1}^1 (1-x)^{\frac{\delta_+}{2}-1} (1+x)^{\frac{\delta_-}{2}-1} dx}, \quad y \in (-1,1),
\end{align}
that is, the $n=0$ term in \eqref{tdens}, we therefore see that there is a constant $C$, such that
\begin{align}
    |p_Y(t,x,y) - p_Y(y)| \leq C e^{-\frac{\delta_+ +\delta_-}{4} t}, \quad x,y \in [-1,1].
\end{align}
Thus, $p_Y(t,x,y) \rightarrow p_Y(y)$ uniformly in $x,y \in [-1,1]$ as $t \rightarrow \infty$. In fact, a stronger statement is true. Note that, writing $\hat{c} = (\int_{-1}^1 (1-x)^{\frac{\delta_+}{2}-1} (1+x)^{\frac{\delta_-}{2}-1} dx)^{-1}$, we have
\begin{align*}
    p_Y(t,x,y) &= p_Y(y)\left( 1 + \sum_{n=1}^\infty \frac{P_n^{(\frac{\delta_+}{2}-1,\frac{\delta_-}{2}-1)}(x) P_n^{(\frac{\delta_+}{2}-1,\frac{\delta_-}{2}-1)}(y)}{\hat{c}\big\lVert P_n^{(\frac{\delta_+}{2}-1,\frac{\delta_-}{2}-1)}\big\rVert_{\delta_+,\delta_-}^2} e^{-\frac{n}{2}(n+\frac{\delta_+}{2}+\frac{\delta_-}{2}-1)t} \right) \\
    &=p_Y(y) \left(1 + e^{-\frac{\delta_+ +\delta_-}{4}t} \sum_{n=1}^\infty \frac{P_n^{(\frac{\delta_+}{2}-1,\frac{\delta_-}{2}-1)}(x) P_n^{(\frac{\delta_+}{2}-1,\frac{\delta_-}{2}-1)}(y)}{\hat{c}\big\lVert P_n^{(\frac{\delta_+}{2}-1,\frac{\delta_-}{2}-1)}\big\rVert_{\delta_+,\delta_-}^2} e^{-\frac{n-1}{2}(n+\frac{\delta_+ + \delta_-}{2})t} \right).
\end{align*}
Next, we note that by \eqref{eq:normasymp} and \eqref{eq:maxasymp}, uniformly in $x,y \in [-1,1]$,
\begin{align*}
    &\left| \sum_{n=1}^\infty \frac{P_n^{(\frac{\delta_+}{2}-1,\frac{\delta_-}{2}-1)}(x) P_n^{(\frac{\delta_+}{2}-1,\frac{\delta_-}{2}-1)}(y)}{\hat{c}\big\lVert P_n^{(\frac{\delta_+}{2}-1,\frac{\delta_-}{2}-1)}\big\rVert_{\delta_+,\delta_-}^2} \exp\left(-\frac{n-1}{2}(n+\frac{\delta_+ + \delta_-}{2})t\right) \right| \\
    &\lesssim \sum_{n=1}^\infty n^{\delta_+-1} \exp\left(-\frac{n-1}{2}(n+\frac{\delta_+ + \delta_-}{2})t\right).
\end{align*}
Noting that the sum on the second line is bounded for all $t>0$ and decreasing in $t$, we have that for an arbitrary $\epsilon > 0$,
\begin{align*}
    p_Y(t,x,y) = p_Y(y)(1+O(e^{-\frac{\delta_+ +\delta_-}{4}t}))
\end{align*}
for $t>\epsilon$, where the implicit constant depends only on $\epsilon$. Thus, if we denote by $\mathcal{Y}$ the process satisfying the SDE \eqref{SDEY}, started according to the invariant density \eqref{invdens}, then for $y_0 >0$ and $t \geq 1$,
\begin{align}\label{invconvY}
    \E^{y_0}[f(Y_t)] = \E[f(\mathcal{Y}_t)](1+O(e^{-\frac{\delta_++\delta_-}{4} t})).
\end{align}
We have thus proven the following.
\begin{lem}
The transition density, $p_Y(t,x,y)$ for $Y$ is given by \eqref{tdens}. Furthermore, $Y$ has a unique invariant density, $p_Y(y)$, given by \eqref{invdens}, \eqref{invconvY} holds and $Y$ is ergodic.
\end{lem}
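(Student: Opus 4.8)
The plan is to follow the eigenvalue (spectral) method set up in the discussion above. First I would record the reduction: it suffices to analyze the affine image $Y_t = 2\tilde{Q}_t - 1$ of the diffusion \eqref{eq:Qs}, which solves \eqref{SDEY}, is defined for all positive times (with reflection at $1$ where necessary) and lives in $(-1,1]$, not reaching $-1$ by the asymptotic Bessel behaviour at the endpoints noted above; any statement about the transition density, invariant law and ergodicity of $Y$ transfers verbatim back to $\tilde{Q}_s$. Granting for the moment that $Y$ has a smooth transition density $p_Y(t,x,y)$, it must satisfy the Kolmogorov backward equation \eqref{KBE} in the backward variable $x$. Separating variables $p(t,x) = e^{\tilde\lambda t}p(x)$ turns \eqref{KBE} into a Jacobi-type ODE which admits a polynomial solution precisely when $\tilde\lambda = \tilde\lambda_n = -\tfrac n2\big(n+\tfrac{\delta_+}{2}+\tfrac{\delta_-}{2}-1\big)$ for $n$ a nonnegative integer, the eigenfunction being the Jacobi polynomial $P_n^{(\frac{\delta_+}{2}-1,\frac{\delta_-}{2}-1)}$, orthogonal with respect to $\langle\cdot,\cdot\rangle_{\delta_+,\delta_-}$. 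Assembling the eigenfunctions into a spectral expansion produces the candidate \eqref{tdens}.

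Next I would verify that \eqref{tdens} really is the transition density. The two inputs are the classical asymptotics \eqref{eq:normasymp} and \eqref{eq:maxasymp} for the $L^2$- and sup-norms of Jacobi polynomials; together they show that the series \eqref{tdens} converges absolutely and uniformly in $x,y$ for every $t>0$, since for fixed $t>0$ the factor $e^{-\frac n2(n+\cdots)t}$ dominates the polynomial growth $n^{\delta_+-1}$. To identify the series with the law of $Y_t$ it is enough, by density of polynomials, to check $\E^{y_0}[q(Y_{t_0})] = \int_{-1}^1 q(y)\,p_Y(t_0,y_0,y)\,dy$ for polynomials $q$. Expanding $q=\sum_n a_n P_n^{(\frac{\delta_+}{2}-1,\frac{\delta_-}{2}-1)}$ is a finite sum, so $\tilde q(t,x)=\sum_n a_n p_n(t,x)$ is a genuine bounded smooth solution of \eqref{KBE} with $\tilde q(0,\cdot)=q$. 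Fixing $t_0>0$, the process $M_t=\tilde q(t_0-t,Y_t)$ is then, by It\^o's formula, a bounded local martingale — the drift terms cancel exactly because $\tilde q$ solves \eqref{KBE} — hence a true martingale, and optional stopping at $t_0$ gives $\tilde q(t_0,y_0)=\E^{y_0}[q(Y_{t_0})]$, the desired identity.

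Finally, letting $t\to\infty$ in \eqref{tdens} kills every term except $n=0$, whose spatial factor is proportional to $(1-y)^{\frac{\delta_+}{2}-1}(1+y)^{\frac{\delta_-}{2}-1}$; normalizing yields the invariant density \eqref{invdens}, and uniqueness follows because any invariant law is a fixed point of the transition operator and the expansion forces it to coincide with the $n=0$ term. For the rate I would peel off the $n=0$ term and bound the remainder by $e^{-\frac{\delta_++\delta_-}{4}t}\sum_{n\ge1}n^{\delta_+-1}e^{-\frac{n-1}{2}(n+\frac{\delta_++\delta_-}{2})t}$, a sum that is finite and decreasing in $t$ on $[\epsilon,\infty)$; this gives $p_Y(t,x,y)=p_Y(y)\big(1+O(e^{-\frac{\delta_++\delta_-}{4}t})\big)$ uniformly in $x,y$, hence \eqref{invconvY} for $t\ge1$. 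Ergodicity is then immediate from uniqueness of the invariant distribution together with the exponential convergence of the time-$t$ marginals, by the standard argument for one-dimensional diffusions with an invariant probability law. The step I expect to be least routine is making the formal separation of variables rigorous — one must know a priori that a smooth transition density exists and that the eigenfunction series converges uniformly up to the degenerate boundary points $\pm1$, which is exactly what the Jacobi-polynomial estimates \eqref{eq:normasymp}–\eqref{eq:maxasymp} provide; the martingale and optional-stopping verification then cleanly avoids having to treat the forward (Fokker--Planck) uniqueness problem directly.
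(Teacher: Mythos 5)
Your proposal is correct and follows essentially the same route as the paper's appendix: separation of variables in the Kolmogorov backward equation yielding the Jacobi-polynomial expansion \eqref{tdens}, convergence via \eqref{eq:normasymp}--\eqref{eq:maxasymp}, identification of the candidate density by the bounded-martingale/optional-stopping argument on polynomials, and then the $n=0$ term plus the exponentially decaying remainder giving \eqref{invdens}, \eqref{invconvY} and ergodicity. No gaps to report.
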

As a corollary, we have the following (noting that $\frac{\delta_+ + \delta_-}{4} = 1-a+\mu$ and letting $\E^*$ denote the expectation under $\prob^*$, as in Sections 2 and 3).
\begin{cor}\label{Qprop}
The transition density of $\tilde{Q}_s$ is given by $p_{\tilde{Q}}(s,x,y) = 2p_Y(s,2x-1,2y-1)$. Furthermore, it has invariant density $p_{\tilde{Q}}(y) = 2p_Y(2y-1)$ and if $\tilde{X}$ follows the same SDE as $\tilde{Q}$ and is started according to the invariant density, then for each $f \in L^1(\nu_{\tilde{Q}})$, where $\nu_{\tilde{Q}}(dx) = p_{\tilde{Q}}(x)dx$, we have, for $s \geq 1$, that
\begin{align}\label{invconvQ}
    \E^*[f(\tilde{Q}_s)] = \E^*[f(\tilde{X}_s)](1+O(e^{-(1-a+\mu)s})),
\end{align}
for each value of $\tilde{Q}_0 \in (0,1]$. Moreover, $\tilde{Q}$ is ergodic.
\end{cor}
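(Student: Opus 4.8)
The plan is to deduce every clause of the corollary from the preceding Lemma about $Y$ by transporting statements through the deterministic affine change of variables linking the two diffusions. First I would record that, as shown at the start of this appendix, the $C^\infty$ bijection $\phi(q) = 2q-1$ of $(0,1]$ onto $(-1,1]$ intertwines the SDE \eqref{eq:Qs} for $\tilde Q$ (under $\prob^*$) with the SDE \eqref{SDEY} for $Y$, so that $Y_s = 2\tilde Q_s - 1$ is exactly the process of the Lemma, started from $Y_0 = 2\tilde Q_0 - 1 \in (-1,1]$, with $\tilde Q_s = (1+Y_s)/2$. Since $Y$ has a smooth transition density, so does $\tilde Q$, and the change-of-variables rule for densities immediately gives $p_{\tilde Q}(s,x,y) = |\phi'(y)|\,p_Y(s,\phi(x),\phi(y)) = 2\,p_Y(s,2x-1,2y-1)$; letting $s\to\infty$, or repeating the identity with the stationary law in place of the conditional one, yields $p_{\tilde Q}(y) = 2\,p_Y(2y-1)$. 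To check that this matches the form \eqref{eq:indensQ} and pins down the constant $\tilde c$, I would substitute \eqref{invdens} together with the explicit values of $\delta_\pm$ and evaluate the normalizing integral $\int_{-1}^1 (1-v)^{\delta_+/2-1}(1+v)^{\delta_-/2-1}\,dv$ via $v = 2u-1$ and the Beta--Gamma identity — a purely routine computation.

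For the quantitative convergence I would use that $\tfrac{\delta_+ + \delta_-}{4} = 1-a+\mu$ and the uniform estimate established in the Lemma, $|p_Y(s,x,v) - p_Y(v)| \le C\,e^{-(1-a+\mu)s}\,p_Y(v)$ for $x,v\in[-1,1]$ and $s\ge 1$. Given $f \in L^1(\nu_{\tilde Q})$, set $g(v) = f\big((1+v)/2\big)$, which lies in $L^1$ of the invariant measure of $Y$; then $\E^*[f(\tilde Q_s)] = \int_{-1}^1 g(v)\,p_Y(s,2\tilde Q_0-1,v)\,dv$, while $\E^*[f(\tilde X_s)] = \int_{-1}^1 g(v)\,p_Y(v)\,dv$ since $\tilde X$ starts from the invariant density. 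Subtracting and applying the uniform bound gives $\big|\E^*[f(\tilde Q_s)] - \E^*[f(\tilde X_s)]\big| \le C\,e^{-(1-a+\mu)s}\int|g|\,p_Y$, i.e. the claimed multiplicative error $1 + O(e^{-(1-a+\mu)s})$, with the degenerate case $\E^*[f(\tilde X_s)] = 0$ read directly off the additive bound. Finally, ergodicity of $\tilde Q$ will follow from ergodicity of $Y$: the two processes are conjugate by the fixed homeomorphism $\phi$, which commutes with time shifts and pushes the invariant law of $Y$ onto that of $\tilde Q$, so $\prob^*$-a.s.\ time averages of $\tilde Q$ converge to the corresponding space averages.

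I do not expect a genuinely hard step here: the analytic heavy lifting — smoothness of $p_Y$, the Jacobi-polynomial expansion \eqref{tdens}, and the uniform exponential relaxation bound — was already carried out in the Lemma, and the corollary is essentially a push-forward along $\phi$. The only places demanding a little care are the integrability bookkeeping when carrying the class $L^1(\nu_{\tilde Q})$ across $\phi$ and confirming the error is genuinely multiplicative, and the Beta--Gamma manipulation needed to recover the exact normalization constant $\tilde c$ appearing in \eqref{eq:indensQ}.
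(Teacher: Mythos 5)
Your proposal is correct and matches the paper's implicit argument: the paper states the corollary as an immediate consequence of the lemma on $Y$ (the only hint given is the identity $\frac{\delta_++\delta_-}{4}=1-a+\mu$), and you simply make the push-forward along $\phi(q)=2q-1$ explicit, including the Jacobian factor $2$, the transfer of $L^1(\nu_{\tilde Q})$ to $L^1$ of the invariant measure of $Y$, and the derivation of the multiplicative error from the paper's uniform bound $p_Y(t,x,y)=p_Y(y)\big(1+O(e^{-\frac{\delta_++\delta_-}{4}t})\big)$. Your caveat about the multiplicative form degenerating when $\E^*[f(\tilde X_s)]=0$ for sign-changing $f$ is a fair observation, but it is harmless here since the paper only ever applies the estimate to nonnegative $f$ (e.g.\ $f(x)=x^{-\mu}$).
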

\renewcommand{\abstractname}{Acknowledgements}
\begin{abstract}
We thank Fredrik Viklund, Jason Miller and Ewain Gwynne for helpful discussions. We also thank Nathana\"{e}l Berestycki and an anonymous referee for good comments on an earlier version. The research was conducted at KTH Royal Institute of Technology, Stockholm, as part of the author's PhD thesis and was supported by the Knut and Alice Wallenberg Foundation.
\end{abstract}

\end{document}